\documentclass[11pt]{amsart}

\usepackage[utf8]{inputenc}
\usepackage[T1]{fontenc}
\usepackage{lmodern}         
\usepackage{amsmath, amsthm, amssymb, amsfonts,quiver}
\usepackage{mathtools}        
\usepackage{mathrsfs}         
\usepackage{hyperref}        
\usepackage{enumitem}         
\usepackage{tikz-cd}        
\usepackage{microtype}       
\usepackage{cleveref}    
\usepackage{stmaryrd}          

\usepackage[toc,page]{appendix}

\usetikzlibrary{cd, calc}

\tikzset{
  snake left/.style={
    rounded corners,
    to path={
      let \p1 = (\tikztostart.east),
          \p2 = (\tikztotarget.west),
          \p3 = ($(\p1)!0.5!(\p2)$),
          \n1 = {8pt} 
      in
      (\p1)
      -- (\x1 + \n1, \y1)
      -- (\x1 + \n1, \y3)
      -- (\x2 - \n1, \y3) \tikztonodes
      -- (\x2 - \n1, \y2)
      -- (\p2)
    }
  }
}

\usepackage[margin=1in]{geometry}
\usepackage[dvipsnames]{xcolor}

\definecolor{burgundy}{RGB}{128, 0, 32}
\definecolor{teal}{RGB}{0, 128, 128}

\theoremstyle{plain}
\newtheorem{theorem}{Theorem}[section]
\newtheorem{lem}[theorem]{Lemma}
\newtheorem{prop}[theorem]{Proposition}
\newtheorem{cor}[theorem]{Corollary}
\newtheorem{claim}[theorem]{Claim}

\theoremstyle{definition}
\newtheorem{definition}[theorem]{Definition}
\newtheorem{example}[theorem]{Example}

\theoremstyle{remark}
\newtheorem{remark}[theorem]{Remark}
\newtheorem{notation}[theorem]{Notation}

\makeatletter
\newcommand{\xtwoheadrightarrow}[2][]{\ext@arrow 0579\twoheadrightarrowfill@{#1}{#2}}
\def\twoheadrightarrowfill@{\arrowfill@\relbar\relbar\twoheadrightarrow}
\makeatother

\newcommand{\Q}{\mathbb{Q}}
\newcommand{\Z}{\mathbb{Z}}
\newcommand{\F}{\mathbb{F}}
\newcommand{\Qp}{\Q_p}

\newcommand{\Zp}{\Z_p}

\newcommand{\Fp}{\F_p}

\newcommand{\OF}{\calO_F}

\newcommand{\KUbar}{K_{\overline{U}}}

\newcommand{\SVi}{\mathcal{S}_V^i}

\newcommand{\Ski}{\mathcal{S}_k^i}

\newcommand{\id}{\operatorname{id}}
\newcommand{\Id}{\operatorname{Id}}

\newcommand{\conjtI}{\conj_{t_I^{-1}}}
\newcommand{\longtwoheadrightarrow}{\relbar\joinrel\twoheadrightarrow}
\newcommand{\longhookrightarrow}{\lhook\joinrel\longrightarrow}
\newcommand{\Matn}{\operatorname{Mat}_n}
\newcommand{\KUI}{K_{U,I}}

\newcommand{\HT}{\mathcal{H}_T}

\newcommand{\valpi}{\mathrm{val}_{\varpi}}

 \newcommand{\HH}{\mathcal{H}}
\newcommand{\Prx}{\operatorname{Pr}_X}
\newcommand{\Pry}{\operatorname{Pr}_Y}

\newcommand{\diag}{\operatorname{diag}}

\newcommand{\calO}{\mathcal{O}}

\newcommand{\Tr}{\operatorname{Tr}}

\newcommand{\BBB}{\mathbf{B}}

\newcommand{\Hom}{\operatorname{Hom}}

\newcommand{\End}{\operatorname{End}}
\newcommand{\GL}{\operatorname{GL}}
\newcommand{\Sym}{\operatorname{Sym}}
\newcommand{\Supp}{\operatorname{Supp}}

\newcommand{\res}{\mathrm{res}}

\newcommand{\loongmapsto}{\mathrel{\tikz[baseline]{
  \draw[->] (0.3,0) -- (1.8,0);
  \draw[-|] (0.3,0) -- (0,0);
}}}
\newcommand{\RInd}{\mathrm{RInd}}
\newcommand{\Inf}{\mathrm{Inf}}
\newcommand{\RInf}{\mathrm{RInf}}
\newcommand{\ind}{\mathrm{ind}}
\newcommand{\Ind}{\mathrm{Ind}}
\newcommand{\Res}{\mathrm{Res}}
\newcommand{\Rep}{\mathrm{Rep}}
\newcommand{\Ab}{\mathrm{Ab}}
\newcommand{\cores}{\mathrm{cores}}
\newcommand{\conj}{\mathrm{conj}}

\newcommand{\pre}[2]{\prescript{#1}{}{#2}}
\newcommand{\iso}{\cong}
\newcommand{\inv}{^{-1}}
\newcommand{\into}{\hookrightarrow}
\newcommand{\onto}{\twoheadrightarrow}
\newcommand{\pr}{\mathrm{pr}}
\newcommand{\mc}{\mathcal}
\newcommand{\ol}{\overline}

\newcommand{\ul}{\underline}
\newcommand{\xto}{\xrightarrow}
\newcommand{\supp}{\mathrm{supp}}

\def\bal#1\nal{\begin{align*}#1\end{align*}}

\newcommand{\scrJ}{\mathscr{I}}
\newcommand{\scrJi}{\mathcal{P}_i(\scrJ)}

\newcommand{\AV}{\mathbf{A}_V}
\newcommand{\AVi}{\mathbf{A}_{V,i}}
\newcommand{\BV}{\mathbf{B}_V}
\newcommand{\BVi}{\mathbf{B}_{V,i}}
\newcommand{\Bk}{\mathbf{B}_k}
\newcommand{\Bki}{\mathbf{B}_{k,i}}



\def\BAL#1\NAL{\[
\resizebox{\textwidth}{!}{$#1$}
\]}

\makeatletter
\renewcommand{\paragraph}{\@startsection{paragraph}{4}{\z@}%
  {3.25ex \@plus1ex \@minus.2ex}%
  {-1em}%
  {\normalfont\normalsize\bfseries}}
\makeatother

\numberwithin{equation}{section}

\hypersetup{
    colorlinks=true,
    linkcolor=burgundy,   
    citecolor=NavyBlue,       
    urlcolor=NavyBlue,        
    pdftitle={Higher mod p Satake transforms},
    pdfauthor={Hargun Bhatia, Yashi Jain, Casey Perdue, Ben Savoie},
}

\begin{document}
 
\title[Higher mod $p$ Satake transforms: explicit formulas and images for $\operatorname{GL}_2$]{Higher mod $p$ Satake transforms: \\explicit formulas and images for $\operatorname{GL}_2$}
\author{Hargun Bhatia, Yashi Jain, 
Casey Perdue, Ben Savoie}
\date{}

\vspace{-.75\baselineskip}
\begin{abstract}
Heyer's left adjoint to derived parabolic induction gives rise to 
higher mod $p$ Satake transforms
\[
\mathcal S_V^i: \mathcal H_G(V) \longrightarrow \mathcal H_M\bigl(H^i(K_U,V)\bigr),
\]
whose extreme degrees recover Herzig's invariant and coinvariant transforms. For $G=\mathbf G(F)$, where $\mathbf G$ is a connected reductive group defined over a finite extension $F/\Qp$, we describe these maps explicitly using Mackey decompositions and corestriction in group cohomology. For $G=\GL_n(F)$, we evaluate them on the standard Hecke generators, showing that their support lies in a single Weyl orbit and that their values on the individual support cosets are concrete corestriction operators.  

For $G=\GL_2(F)$, we determine the image in every cohomological degree for
each non-Steinberg Serre weight. The endpoint images are the classical
polynomial Satake images, while every interior image is of the form
\[
k[x,y^{\pm1}]/(x^r)
\]
for some positive integer $r$. Thus, every interior image is a finite
nilpotent thickening of the Laurent polynomial algebra $k[y^{\pm1}]$. We give explicit bounds for its thickness, prove that they are sharp, and
show that in the unramified case every interior image is either reduced or
a square-zero thickening.
\end{abstract}

\maketitle
\vspace{-1.5\baselineskip}
\tableofcontents

\section{Introduction}
The Satake isomorphism is a fundamental tool in the representation theory
of reductive groups over local fields, relating the spherical Hecke algebra
of a reductive group to a Hecke algebra for its maximal torus \cite{satake1963}. Let
$F/\Qp$ be a finite extension and $\mathbf G$ be a connected unramified
reductive group over $F$, and set $G=\mathbf G(F)$. Fix a hyperspecial
maximal compact subgroup $K\subset G$ and a Borel subgroup $B=TU$, with
maximal torus $T$ and unipotent radical $U$. The spherical Hecke algebra
\bal
\mathcal H(G,K)=C_c^\infty(K\backslash G/K,\mathbb C)
\nal
acts naturally on the space $\pi^K$ of $K$-invariants of a smooth
representation $\pi$ of $G$.

To study its eigenvalues, one uses the classical Satake transform
\bal
\mathcal S^{\mathrm{cl}}(\varphi)(t) =
\delta_B(t)^{-1/2}
\sum_{u\in (U\cap K)\backslash U}\varphi(ut),
\nal
where $\delta_B$ denotes the modulus character of $B$. The sum over $U$ is the constant term of $\varphi$, while the factor $\delta_B^{-1/2}$ gives the normalized Satake transform. The Satake isomorphism identifies
\bal
\mathcal H(G,K)
\overset{\sim}{\longrightarrow}
\mathcal H(T,T\cap K)^W
\nal
thereby reducing the study of the spherical Hecke algebra of the noncommutative group $G$ to the Weyl-invariant part of the Hecke algebra of the torus $T$.
Denoting by $\mathbf 1$ the trivial representation of $K$, one has a natural identification
\bal
C_c^\infty(K\backslash G/K,\mathbb C)
\cong
\operatorname{End}_G\left(\ind_K^G \mathbf 1\right),
\nal
where $\ind$ denotes the compact induction.

Now let $k$ be a coefficient field of characteristic $p$. In this setting,
the trivial coefficient representation of $K$ is naturally replaced by a
Serre weight $V$ of $K$.\footnote{By a Serre weight of $K$, we mean an
irreducible $k$-representation of $K$.} The classical normalizing factor is
no longer available, and the resulting transform is no longer
Weyl-invariant. The spherical Hecke algebra attached to $V$ is
\bal
\mathcal H_G(V)\coloneqq \End_G(\ind_K^G V).
\nal
Set $K_U\coloneqq K\cap U$. In \cite{herzig2011satake}, Herzig constructed an
injective algebra homomorphism
\bal
\mathcal H_G(V)
\longhookrightarrow
\mathcal H_T(V_{K_U}),
\nal
obtained by summing along $U$ and projecting from $V$ to its
$K_U$-coinvariants. Its image is supported on the dominant cone. The mod $p$ Satake isomorphism was subsequently extended by
Henniart--Vign\'eras to connected reductive groups over general local
fields and special parahoric subgroups
\cite{henniartvigneras2015}.

Let $\overline U$ denote the unipotent radical of the Borel subgroup
opposite to $B$, and set $\KUbar\coloneqq K\cap\overline U$.
Herzig also constructed an invariant form of the mod $p$ Satake transform,
with coefficient representation $V^{K_U}$ and support on the opposite cone.
The natural composite
\bal
V^{K_U}\longrightarrow V\longrightarrow V_{K_{\overline U}}
\nal
is an isomorphism of $K_T\coloneqq K\cap T$-representations. Thus, the
invariant and coinvariant transforms have canonically isomorphic coefficient
representations but are supported on opposite cones. 


For $G=\GL_n(F)$, Herzig's Satake transform also plays an important role
in mod $p$ representation theory: its compatibility with parabolic induction
relates Hecke eigensystems arising from proper parabolic induction to those
for the corresponding Levi subgroup
\cite[Lemma~2.14]{herzig2011classification}. In particular, this
compatibility is a basic ingredient in the distinction between parabolically
induced and supersingular eigensystems in the classification of irreducible
admissible mod $p$ representations of $\GL_n(F)$
\cite{herzig2011classification}. 

These two forms of the mod $p$ Satake transform fit into a broader family
arising from Heyer's construction of a left adjoint to derived parabolic
induction \cite{heyer2023left}. Let $P=UM$ be a parabolic subgroup and set
\bal
K_P\coloneqq K\cap P,
\qquad
K_U\coloneqq K\cap U,
\qquad
K_M\coloneqq K\cap M.
\nal
Let $V$ be a smooth finite-dimensional $k$-representation of $K$, not
necessarily irreducible. In this setting, this left adjoint gives
rise to algebra homomorphisms
\bal
\SVi:
\mathcal H_G(V)
\longrightarrow
\mathcal H_M\bigl(H^i(K_U,V)\bigr),
\qquad
0\leq i\leq\dim K_U,
\nal
which we call \emph{higher mod $p$ Satake transforms}. At the two extreme
degrees, these recover Herzig's invariant and coinvariant transforms, while
the transforms in the intermediate degrees $0<i<\dim K_U$ are far less
transparent. The central problem of this paper is to make these intermediate transforms explicit and determine their supports and images. We first derive double-coset formulas for a general reductive group in terms
of corestriction in group cohomology. We then specialize to $\GL_n(F)$ and
evaluate these formulas on the standard Hecke generators.

For $\GL_2(F)$, this problem admits a complete answer for every
non-Steinberg Serre weight: we determine the image in every cohomological
degree. The two endpoint images are supported on the
dominant and antidominant cones. In every interior degree, the dominant and
antidominant contributions annihilate one another and are individually
truncated by nilpotence. Consequently, the support collapses from two infinite cones to a strip of finite width along the diagonal; see Figure~\ref{fig:gl2-support}. Algebraically, the image is a finite nilpotent thickening of the Laurent polynomial algebra
generated by diagonal translation. This nilpotent thickening of the
diagonal is the central new phenomenon of the paper.

\subsection{Main results} 
We begin with a cohomological analogue of the usual double-coset formula
for the Satake transform. The hypotheses for the general results below are
specified in Subsection~\ref{subsec-notation-assumptions}, while the notation
appearing below is developed in Section~\ref{expl-formulas-sec}.

\begin{theorem}[Double-coset and cochain formulas]
\label{thm-intro-general-formula}
Let $G$, $P=UM$, and $K$ be as above, and let $V$ be a smooth
$k$-representation of $K$. For
$\Phi\in\mathcal H_G(V)$ and $\xi\in H^i(K_U,V)$, the Mackey
decomposition in cohomology gives
\bal
\SVi(\Phi)([1,\xi])
= \sum_{g\in K_P\backslash P/K_U}
\left[
\pr_M(g^{-1}),
\cores_{g^{-1}}^{(i)}
\left(
\pr_{g^{-1}}^{(i)}(\Phi([1,\xi]))
\right)
\right].
\nal
Here $\operatorname{pr}_{g^{-1}}^{(i)}$ is projection to the Mackey summand
indexed by $g^{-1}$, and $\cores_{g^{-1}}^{(i)}$ is the corresponding
corestriction map to $H^i(K_U,V)$.

Moreover, the transform admits an explicit cochain-level description in
the function model for the Hecke algebras. For every $m\in M$, there is a map
\bal
C_m^{(i)}:
H^i\left(K_U,\ind_{K_P}^P V\right)
\longrightarrow
H^i(K_U,V)
\nal
such that, if $\Psi$ denotes the function corresponding to $\Phi$, then
\bal
\SVi(\Psi)(m)(\xi) = C_m^{(i)}\bigl(\Psi(-)(\xi)\bigr).
\nal
\end{theorem}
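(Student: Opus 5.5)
We will unwind the definition of $\SVi$ coming from Heyer's left adjoint. Concretely, the transform should arise as follows. A $\Phi\in\End_G(\ind_K^G V)$ is first restricted to $P$. By the Iwasawa decomposition $G=PK$ one has $\Res^G_P\ind_K^G V\cong\ind_{K_P}^P V$. Next we apply the functor $H^i(K_U,-)$, viewed as the $i$-th derived $K_U$-invariants (or its compact/coinvariant variant in Heyer's setup). Finally we use the identification
\bal
H^i\bigl(K_U,\ind_{K_P}^P V\bigr)\iso \ind_{K_M}^M H^i(K_U,V),
\nal
which yields an element of $\End_M(\ind_{K_M}^M H^i(K_U,V))=\mathcal H_M(H^i(K_U,V))$. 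The first step is therefore to make this last identification explicit, because both formulas in the statement come out of it.

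\textbf{Mackey step.} As a $K_U$-representation,
\bal
\ind_{K_P}^P V=\bigoplus_{g\in K_P\backslash P/K_U}\ind_{K_U\cap g^{-1}K_Pg}^{K_U}\,{}^{g^{-1}}V .
\nal
Each summand is a finite induction, since $K_U$ is compact and the intersection is open. Shapiro's lemma then gives
\bal
H^i\bigl(K_U,\ind_{K_P}^P V\bigr)\iso\bigoplus_{g}H^i\bigl(K_U\cap g^{-1}K_Pg,{}^{g^{-1}}V\bigr),
\nal
and $\pr^{(i)}_{g^{-1}}$ is the projection onto the $g$-th summand. Conjugation by $g$, which normalizes $U$ since $g\in P$, identifies each summand with cohomology of an open subgroup of $K_U$ with coefficients in $V$. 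Corestriction up to $K_U$ then lands in $H^i(K_U,V)$; we call this composite $\cores^{(i)}_{g^{-1}}$. We must check that the $M$-component of the resulting element is located at $\pr_M(g^{-1})$. This should follow from the $M$-equivariance of the isomorphism with $\ind_{K_M}^M$, using that $U$ acts trivially after taking $K_U$-cohomology in the limit.

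Evaluating $\SVi(\Phi)$ on the generator $[1,\xi]$ amounts to applying $H^i(K_U,\Phi)$ to the class of $[1,\xi]$ and decomposing via Mackey. This produces exactly the displayed sum. Two checks remain. First, the sum is finite, since $\Phi([1,\xi])$ is compactly supported and meets only finitely many double cosets. Second, the terms are independent of the choice of representatives $g$; this follows from the standard compatibility of corestriction with conjugation.

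\textbf{Cochain formula.} For the function model, $\Psi\colon G\to\End_k(V)$ is bi-$K$-equivariant, and $\Phi([1,\xi])$ is represented at the cochain level by $x\mapsto\Psi(x)$ applied to a cocycle representing $\xi$. We define $C^{(i)}_m$ as the sum of the terms above over those $g$ with $\pr_M(g^{-1})\in K_M m$. At the cocycle level, $C^{(i)}_m$ is given by the standard corestriction formula with coset representatives, composed with the Shapiro restriction-evaluation map. Evaluating the double-coset identity at $m$ then yields $\SVi(\Psi)(m)(\xi)=C^{(i)}_m(\Psi(-)(\xi))$.

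\textbf{Main obstacle.} The hardest part will be the precise compatibility between Heyer's abstract construction of the left adjoint (in the derived category, possibly involving a duality twist or $\delta$-type shifts) and the concrete Shapiro/Mackey/corestriction isomorphisms, including getting the $g$ versus $g^{-1}$ conventions right. To handle this, I would first verify the formula in degree $i=0$, where it must reduce to Herzig's invariant transform (the sum over $U/K_U$), and use that to pin down normalizations. The higher degrees would then follow by $\delta$-functoriality, since all the maps involved are morphisms of $\delta$-functors in $V$ that agree in degree zero.
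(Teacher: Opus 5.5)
Your starting identification $H^i(K_U,\ind^P_{K_P}V)\iso\ind^M_{K_M}H^i(K_U,V)$ is false, and the idea that replaces it is missing.

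By Mackey, $H^i(K_U,\ind^P_{K_P}V)\iso\bigoplus_{g\in K_U\backslash P/K_P}H^i(K_U^g\cap K_P,V)$. When $U\neq 1$ this has infinitely many summands over each class of $M/K_M$, because the double cosets $K_U\backslash U/mK_Um^{-1}$ are infinite. It also carries no $M$-action, since $M$ does not normalize $K_U$. It only carries the Hecke action of the monoid $M^+$, given by conjugation followed by corestriction.

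In Heyer's Theorem~B the left adjoint is $\ind^M_{M^+}\mathrm{R}H^0(K_U,\omega_{P^+}\otimes-)\Res^P_{P^+}$, and $\ind^M_{K_M}H^i(K_U,V)$ appears only after inducing from $M^+$. The paper builds $\kappa\colon\ind^M_{M^+}H^i(K_U,\ind^P_{K_P}W)\iso\ind^M_{K_M}H^i(K_U,W)$ (Proposition~\ref{Exp1}) in two steps:
\begin{enumerate}[label=(\roman*)]
\item It proves $m\star j_g(\xi)=j_{mg}(\cores_{m,g}(\xi))$ (Claim~\ref{mplusaction}).
\item It chooses $m\in M^+$ with $mg\in P^+$, so that $K_U^{mg}\cap K_P=K_U$.
\end{enumerate}
This is exactly what produces both the corestriction and the label $\pr_M(g)$. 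Your justification (``$M$-equivariance, $U$ acting trivially in the limit'') has nothing to act on. As a result, the shape of the double-coset formula is asserted rather than derived.

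The second gap is the compatibility you call the main obstacle. You need Heyer's abstract isomorphism $\theta$, read through Theorem~B and Heyer's Corollary~3.1.8, to equal $\kappa$, so that $\SVi(\Phi)=\kappa\circ\ind^M_{M^+}H^i(\Phi_1)\circ\kappa^{-1}$. This is the real content, and the paper proves it at the derived level in Appendix~B. There, a mates argument identifies Heyer's composite of isomorphisms with a map whose $i$-th cohomology is $\kappa^{-1}$, up to the $\delta_P$-twist.

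Your shortcut does not work as stated. $\SVi(\Phi)$ is not a morphism of $\delta$-functors in $V$, because $\Phi$ depends on $V$, so agreement with Herzig's transform in degree $0$ cannot be propagated upward. Moreover, that degree-$0$ agreement only says that $\kappa^{-1}\circ\theta$ commutes with the operators $\ind^M_{M^+}H^0(\Phi_1)$, which does not force $\theta=\kappa$.

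A $\delta$-functor argument can be made to work, but only for the natural transformations in $W\in\Rep_k(K_P)$:
\begin{enumerate}[label=(\roman*)]
\item Compare $\theta$ with $\kappa$, or with $\kappa(1\otimes-)$ on $H^i(K_U,\ind^P_{K_P}W)$.
\item Check that both are morphisms of universal $\delta$-functors. This requires Heyer's isomorphisms to be compatible with shifts and with the $\omega$-twists.
\item Take as base case that they coincide on $H^{-\dim U}$ for \emph{every} $W$, which must be extracted from Heyer's degree-$0$ work.
\end{enumerate}
So repaired, your route would be a genuine and shorter alternative to Appendix~B.

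Your cochain part is then fine: taking $C^{(i)}_m$ to be the composite of the Shapiro and corestriction cochain formulas over the double cosets lying over $m$ makes it compatible with the first formula by construction. The paper instead writes a closed formula for $C^{(i)}_m$ and matches it to that composite by universality.
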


The first formula expresses the transform in terms of Mackey projection and
corestriction, while the maps $C_m^{(i)}$ give an explicit realization of
the same procedure at the level of cocycles; see
Definition~\ref{def-delta-functor}.

For $\GL_n(F)$, the general formula becomes particularly simple on the
standard Hecke generators. Let $\OF$ be the ring of integers of $F$, let
$\varpi$ be a uniformizer, and set
\bal
G=\GL_n(F),
\qquad
K=\GL_n(\OF).
\nal
Let $T$ be the diagonal torus, let $U$ be the upper unitriangular subgroup,
and set
\bal
K_T\coloneqq K\cap T,
\qquad
K_U\coloneqq K\cap U.
\nal
For $1\leq j\leq n$, let $T_j$ be the standard Hecke operator supported on
$Kt_jK$, where
\bal
t_j \coloneqq \diag(\underbrace{\varpi,\ldots,\varpi}_{j},1,\ldots,1).
\nal
If $I\subseteq\{1,\ldots,n\}$ has cardinality $j$, let $t_I$ be the diagonal
matrix with $\varpi$ in the positions indexed by $I$ and $1$ elsewhere, and
set $K_{U,I}\coloneqq t_IK_Ut_I^{-1}\cap K_U$.  

\begin{theorem}[The standard generators for $\GL_n$]
\label{thm-intro-GLn}
For every $0\leq i\leq\dim K_U$ and every $1\leq j\leq n$,
\bal
\Supp\SVi(T_j) \subseteq \bigsqcup_{\substack{I\subseteq\{1,\ldots,n\}\\ \#I=j}} K_T t_I.
\nal
For each subset $I$ of cardinality $j$,
\bal
\SVi(T_j)(t_I) =\cores_{K_{U,I}}^{K_U}\circ T_{j,I},
\nal
where $T_{j,I}:H^i(K_U,V)\rightarrow H^i(K_{U,I},V)$ is induced by conjugation by $t_I^{-1}$ and the coefficient map $T_j(t_I)$.
\end{theorem}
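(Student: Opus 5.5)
We will deduce the statement from the double-coset formula of Theorem~\ref{thm-intro-general-formula}, specialized to $P=B=TU$, $M=T$ and $\Phi=T_j$. The first step is to make the Mackey decomposition explicit. The element $\Phi([1,\xi])$ is a class in $H^i(K_U,\ind_K^G V)$, and $T_j([1,v])$ is supported on $Kt_jK$. We restrict to $P$ and decompose $Kt_jK\cap P$ into $K_P$-cosets. By the Iwasawa decomposition and the standard coset decomposition of $Kt_jK$, every left coset in $Kt_jK/K$ has a representative $u\,t_I$ with $u\in K_U$ and $\#I=j$. More precisely, $u$ ranges over $K_U/(K_U\cap t_IK_Ut_I^{-1})$, which is where $K_{U,I}$ enters. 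Consequently $Kt_jK\cap P=\bigsqcup_{\#I=j}K_P\,t_I\,(\text{stuff in }K_U\cdot K)$. In particular, after reducing to $K_P\backslash P/K_U$, the only double cosets $g$ with nonzero Mackey projection are those with $\pr_T(g^{-1})\in K_Tt_I$. This gives the support statement, once we also check that distinct $I$ give distinct $K_T$-cosets, which is clear since the $t_I$ are distinct modulo $K_T$.

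The second step computes the coefficient at $t_I$. For the double coset of $g=t_I^{-1}$ (so that $\pr_T(g^{-1})=t_I$), the Mackey summand is $H^i$ of the stabilizer $K_U\cap g K_P g^{-1}$ acting on $V$ through the twisted action. That stabilizer is $K_U\cap t_I^{-1}K_Ut_I$, which is conjugate to $K_{U,I}$. To land in $K_{U,I}=t_IK_Ut_I^{-1}\cap K_U$ with the correct coefficients, we identify, via conjugation by $t_I^{\pm1}$, the projection $\pr^{(i)}_{g^{-1}}(T_j([1,\xi]))$ with the image of $\xi$ under restriction to the appropriate subgroup, followed by $\conjtI$, followed by the coefficient map $T_j(t_I)\colon V\to V$. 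Here we use that $T_j$ corresponds to a function $\Psi$ with $\Psi(t_I)=T_j(t_I)$, which is $K_{U,I}$-equivariant for the conjugated action. Together these form the map $T_{j,I}$. Applying $\cores_{g^{-1}}^{(i)}$, which in the Mackey formula is exactly corestriction from this stabilizer to $K_U$, then yields $\cores_{K_{U,I}}^{K_U}\circ T_{j,I}$. Alternatively, one can argue through the cochain maps $C^{(i)}_{t_I}$: evaluating $\Psi(-)(\xi)$ on coset representatives $u t_I$ and summing over $u\in K_U/K_{U,I}$ is the standard cochain formula for corestriction.

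The main obstacle is bookkeeping: we must ensure that the conjugation direction ($t_I$ versus $t_I^{-1}$) and the subgroup ($K_{U,I}$ versus $t_I^{-1}K_Ut_I\cap K_U$) match the convention in $T_{j,I}$, and that each $K_Tt_I$ receives exactly one double coset, with no further contributions from other elements of $Kt_jK\cap B$. For the latter, we would first verify that $Kt_jK\cap t K_U$ for $t\in T$ is nonempty only if $t\in K_T t_I$ for some $I$. This holds by comparing the elementary divisors of $ut$ (which equal the valuations of the diagonal of $t$, since $ut$ is triangular with diagonal $t$ up to units) with those of $t_j$. Given this, no other torus cosets appear, and the remaining computation is a direct transcription of the Mackey formula.
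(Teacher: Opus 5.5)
Your route differs from the paper's, and its computational part is sound. The paper works with the cochain maps $C^{(i)}_m$ of Corollary~\ref{explicit-formula-reinterpreted}. It must therefore cut the sum over $K_U\backslash U$ down and reindex it by $K_{U,I}\backslash K_U$ before it recognizes the corestriction cochain. You use the double-coset formula instead, where corestriction is built in. For the Mackey index $h=t_I^{-1}$, Remark~\ref{mackey-on-cochains} gives directly $\pr^{(i)}_h(y)(\sigma_\bullet)=y(t_I^{-1}\sigma_\bullet t_I)(t_I)=T_j(t_I)\bigl(x(t_I^{-1}\sigma_\bullet t_I)\bigr)$ on $K_U^{h}\cap K_P=K_{U,I}$, i.e.\ $T_{j,I}$.

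Fix the indexing, though. In the formula of Theorem~\ref{thm-intro-general-formula} the summation variable must be $g=t_I$, with Mackey index $g^{-1}=t_I^{-1}$. The term $[\pr_M(g^{-1}),\cdot]=[t_I^{-1},\cdot]$ is then the function supported on $K_Tt_I$. Read literally, your choice $g=t_I^{-1}$ selects $K_Pt_I^{-1}K_U$, which does not meet $Kt_jK$.

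The gap is the step you call bookkeeping. Over the single torus coset $K_Tt_I$ lie infinitely many double cosets $K_Pt_IuK_U$, parametrized by $(t_I^{-1}K_Ut_I)\backslash U/K_U$. Each one that meets $Kt_jK$ adds an extra term to $\SVi(T_j)(t_I)$. You therefore need that $t_Iu\in Kt_jK$ with $u\in U$ forces $u\in (t_I^{-1}K_Ut_I)K_U$. After inversion this is exactly the paper's Lemma~\ref{lem-U-cap-KtKt}, the key nontrivial input.

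You assert this fact (representatives $ut_I$ with $u\in K_U$, not merely $u\in U$) but do not prove it. The check in your last paragraph cannot supply it:
\begin{itemize}
\item Knowing which torus cosets occur says nothing about several double cosets lying over the same $K_Tt_I$.
\item Testing $Kt_jK\cap tK_U$ instead of $Kt_jK\cap tU$ excludes precisely the elements at issue.
\item The parenthetical claim is false: the elementary divisors of $ut$ are not the diagonal valuations of $t$. For example, $\left(\begin{smallmatrix}\varpi&1\\0&\varpi\end{smallmatrix}\right)\in K\diag(1,\varpi^2)K$.
\end{itemize}
So even the support statement needs the minuscule property, e.g.\ the integrality of $ut$ and of $\varpi(ut)^{-1}$, as in the paper.

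One way to close the gap inside your approach is to count.
\begin{itemize}
\item The cosets $ut_IK$ with $\#I=j$ and $u\in K_U/K_{U,I}$ are pairwise distinct. If two coincide, the upper triangular element $t_I^{-1}u^{-1}u't_{I'}$ lies in $K$, so it has unit diagonal; this forces $I=I'$ and then $u^{-1}u'\in K_{U,I}$.
\item With $q=\#k_F$, one has $\sum_{\#I=j}[K_U:K_{U,I}]=\sum_{\#I=j}q^{\#\{a<b:\ a\in I,\ b\notin I\}}=\binom{n}{j}_q$.
\item This equals $\#(Kt_jK/K)$, the number of codimension-$j$ subspaces of $k_F^n$.
\end{itemize}
Hence $P\cap Kt_jK=\bigcup_{\#I=j}K_Ut_IK_P=\bigcup_{\#I=j}K_Pt_IK_U$. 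This gives the support statement and the uniqueness of the contributing double coset at once.
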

Thus, the support of $\SVi(T_j)$ is contained in the $K_T$-cosets
represented by the Weyl orbit of $t_j$, and its value at each such coset is
a concrete corestriction operator. At the two extreme cohomological degrees,
these formulas recover Herzig's invariant and coinvariant transforms. For
$n>2$, the interior support components need not be nilpotent: they can
retain nonzero cohomology arising from proper Levi subgroups. Thus,
determining the image in higher rank requires additional control over both
the individual support components and their interactions under convolution.

For $\GL_2(F)$, the situation is more rigid. Write $[F:\Q_p]=ef$, where $e$ and $f$ are the ramification index and residue degree, respectively. For
every Serre weight $V$, the spherical Hecke algebra is
\bal
\mathcal H_G(V)=k[T_1,T_2^{\pm1}].
\nal
The central generator $T_2$ acts by diagonal translation and remains
invertible under every higher Satake transform. For a non-Steinberg weight, that is, a Serre weight which is not a determinant twist of the Steinberg weight, the two off-diagonal support components of $\SVi(T_1)$ annihilate one another, and in every interior degree they are nilpotent.

\begin{theorem}[The image for $\GL_2$]
\label{thm-intro-GL2-image}
Let $V$ be a non-Steinberg Serre weight. In degree $0$, the image is
supported on the antidominant cone, and in degree $ef$, it is supported on
the dominant cone. For every $1\leq i\leq ef-1$, there is a positive
integer $R_i(V)$ such that
\bal
\operatorname{im}\SVi \cong k[x,y^{\pm1}]/\bigl(x^{R_i(V)}\bigr),
\nal
where $\SVi(T_1)$ maps to $x$ and $\SVi(T_2)$ maps to $y$. At the
endpoints,
\bal
\operatorname{im}\mathcal S_V^0
\cong
\operatorname{im}\mathcal S_V^{ef}
\cong
k[x,y^{\pm1}].
\nal
The integers $R_i(V)$ satisfy
\bal
R_i(V)=R_{ef-i}(V),
\nal
and, for every $1\leq i\leq ef-1$,
\bal
R_i(V) \in \{e-1-r_i,\ e-r_i,\ e+1-r_i\},
\qquad r_i \coloneqq \left\lfloor
\frac{\min\{i,ef-i\}-1}{f}
\right\rfloor.
\nal
\end{theorem}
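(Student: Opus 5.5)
Everything is reduced to the two values $\SVi(T_1)(t_{\{1\}})$ and $\SVi(T_1)(t_{\{2\}})$ provided by Theorem~\ref{thm-intro-GLn}, together with the fact that $\SVi(T_2)$ is a unit supported on $K_T t_2$. Here $K_U\cong\OF\cong\Zp^{ef}$ and $k$ has characteristic $p$, so
\[
H^\bullet(K_U,k)\cong \Lambda^\bullet\bigl(\Hom(\OF,\Fp)\bigr)\otimes k
\]
in degrees up to $ef$. Because $K_U$ acts trivially on $V^{K_U}$ and the lower-triangular part of $K$ controls the rest, the coefficient map $T_1(t_I)$ factors through the rank-one projection $V\to V_{K_U}\cong V^{K_U}$ (up to $K_T$-twist). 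Consequently both components are rank-one corestriction-type operators on $H^i(K_U,V)$.

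Conjugation by $t_{\{1\}}^{-1}$ acts on $K_U$ as multiplication by $\varpi$, and $K_{U,\{1\}}=\varpi\OF$. On cohomology, corestriction from $\varpi\OF$ to $\OF$ composed with the pullback has a concrete description. The Frattini quotient of $\OF$ is $\OF/p\OF$ of dimension $ef$. The map $\varpi\cdot:\OF/p\to\OF/p$ is nilpotent, with Jordan type $f$ blocks of size $e$. Corestriction is dual to this, via transfer. So the $t_{\{1\}}$-component acts on $\Lambda^i$ through $\Lambda^i$ of a nilpotent operator $N$ of this Jordan type, and similarly for $t_{\{2\}}$ with the dual degree.

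Write $A=\SVi(T_1)(t_{\{1\}})$ and $B=\SVi(T_1)(t_{\{2\}})$, so that $\SVi(T_1)=A\,\tau_1+B\,\tau_2$ in the torus Hecke algebra, with $\tau_j$ the translation elements. The structure of the image will follow from three steps.

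\textbf{Step 1 (annihilation).} Show that the mixed products vanish, $A\cdot{}^{t}B=0=B\cdot{}^{t}A$ after the appropriate twist. For the endpoints this comes from the dominant/antidominant support. For non-Steinberg $V$, one of the coefficient projections kills the other's image. This is where the non-Steinberg hypothesis enters: for Steinberg, $V^{K_U}$ and $V_{K_U}$ fail to pair nontrivially. Given Step 1, $\SVi(T_1)^m=A^{(m)}\tau_1^m+B^{(m)}\tau_2^m$, where $A^{(m)}$ and $B^{(m)}$ are twisted iterates.

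\textbf{Step 2 (image).} Since the $\tau_1^m$ and $\tau_2^m$ have distinct supports modulo $\tau_1\tau_2=\SVi(T_2)$, the monomials $x^m y^l$ are linearly independent as long as $A^{(m)}$ or $B^{(m)}$ is nonzero. Hence the image is $k[x,y^{\pm1}]/(x^R)$ with
\[
R=\min\{m : A^{(m)}=B^{(m)}=0\}.
\]
At the endpoints $i=0$ and $i=ef$, $H^i$ is one-dimensional and one of $A,B$ is an isomorphism (Herzig's case), giving the polynomial image.

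\textbf{Step 3 (computing $R$).} Determine the nilpotency index of $\Lambda^i N$ composed with the rank-one coefficient maps. Take a basis $\{x_{a,b}\}$ of $\Hom(\OF,\Fp)$, with $a$ the block index and $b$ the position in the Jordan chain of length $e$. Wedge monomials of degree $i$ survive at most $e-1-(\text{number of forced entries in the last rows})$ applications. A counting argument gives the maximal surviving length as $e-r_i$ with $r_i=\lfloor(\min\{i,ef-i\}-1)/f\rfloor$, up to $\pm1$ from the coefficient twist. The symmetry $R_i=R_{ef-i}$ follows from Poincaré duality $H^i\times H^{ef-i}\to H^{ef}$ on the Poincaré group $\OF$, under which corestriction and restriction are adjoint, exchanging $A$ and $B$.

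The main obstacle is Step 3: pinning down exactly when the twisted iterates vanish. This requires tracking the interaction of the $K_T$-action (the weight characters of $V$) with the Frobenius-semilinear decomposition of $\Hom(\OF,\Fp)\otimes k$ into $f$ eigenlines. That interaction is what produces the ambiguity among $e-1-r_i$, $e-r_i$ and $e+1-r_i$. I would first handle the unramified case $e=1$, where $N=0$ and $R_i\in\{1,2\}$, and then induct on the chain length.
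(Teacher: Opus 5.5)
Your architecture is the paper's: reduce to the two components, show they annihilate each other, read off $k[x,y^{\pm1}]/(x^R)$ from disjoint supports, and use Poincar\'e duality (corestriction adjoint to restriction) to trade the corestriction component in degree $i$ for the restriction component in degree $ef-i$. The gap is Step~3, which carries all the content of the bounds $R_i(V)\in\{e-1-r_i,e-r_i,e+1-r_i\}$: it is not proved, and the model you propose for it does not work.

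\emph{Why the $\Lambda^iN$ model fails.} The coefficients $V$ form a nontrivial $\OF$-module, so $H^i(\OF,V)$ is not an exterior power, and neither component is $\Lambda^iN$ on it. Even on $H^\ast(\OF,k)$, the corestriction component is not $\Lambda^\bullet$ of a nilpotent map. Indeed, $\cores_{\varpi\OF}^{\OF}\circ m_{\varpi^{-1}}^\ast$ is multiplication by $p^f=0$ in degree $0$, but the unit $c_\varpi$ in degree $ef$. You also have the labels reversed: the pullback component is the one at $t_{\{2\}}$, namely $\BVi=(\Prx)_*\circ m_\varpi^*\circ\res_{\varpi\OF}^{\OF}$.

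\emph{What the paper uses instead.} There are three ingredients.
(a) $\BVi$ lands in the image of $H^i(\OF,kX^d)=H^i(\OF,k)\cup X^d$, and $\BV$ is semilinear over $\Bk=\mu_\varpi^\ast$ for cup product. Hence $\BVi^N[x]=\Bki^{N-1}[\phi_x]\cup X^d$. Since $\mathbf{B}_{k,1}[\kappa_{m,j}]=[\kappa_{m-1,j}]$, this gives $N_i(V)\le e-q+1$ with $q=\lfloor (i-1)/f\rfloor$.
(b) If $\Bki[\phi]\neq0$, then $[\phi]\cup X^d\neq0$ in $H^i(\OF,V)$. The reason is that $\res[\phi]\neq0$ and $\varpi\OF$ acts trivially on $V$, so $H^i(\varpi\OF,kX^d)\to H^i(\varpi\OF,V)$ is injective. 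Applied to $\kappa_{S^*}$ with $\min_{(m,j)\in S^*}m=e-q-1$, this gives $N_i(V)\ge e-q-1$.
(c) Duality, together with $V^\vee$ being a determinant twist of $V$, gives $M_i(V)=N_{ef-i}(V)$. So $R_i=\max\{N_i,N_{ef-i}\}$, and $\min\{q_i,q_{ef-i}\}=r_i$ produces the window.

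The $\pm1$ has nothing to do with $K_T$-weights or Frobenius eigenlines. The $+1$ comes from the first application of $\BVi$ to a class not of the form $[\phi]\cup X^d$; the paper realizes this with a Witt-carry cocycle. The $-1$ comes from $[\kappa_{0,j}]\cup X^d$ being the coboundary of $X^{d-p^j}Y^{p^j}$ when $d_j>0$.

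Your justification of Step~1 misplaces the non-Steinberg hypothesis. For $\AVi\BVi=\cores_{\varpi\OF}^{\OF}\circ(\Pry\Prx)_*\circ\res_{\varpi\OF}^{\OF}$ it is true that $\Pry\Prx=0$ once $d>0$. But $\Prx\Pry=0$ also holds for every $d>0$, Steinberg included, so this is not where the hypothesis enters. In $\BVi\AVi$ the operator $\res_{\varpi\OF}^{\OF}\circ\cores_{\varpi\OF}^{\OF}=\sum_{a\in k_F}(a\cdot)_*$ sits between the coefficient maps. So $\BVi\AVi$ is induced by $\sum_a\Prx\circ a\circ\Pry$, which sends $Y^d$ to $\bigl(\sum_{a\in k_F}a^d\bigr)X^d$. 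This vanishes precisely because $d<p^f-1$, and for Steinberg it is $-X^d$. The case $d=0$, where $\Prx=\Pry=\id$, needs $[\OF:\varpi\OF]=p^f=0$ in $k$ instead.

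At the endpoints, the cone supports require the other component to vanish, namely $\mathbf{A}_{V,0}=0$ and $\mathbf{B}_{V,ef}=0$; invertibility of one component alone is not enough. You may import these from Herzig via Heyer's identification. Otherwise, prove them as the paper does: by a finite-field power-sum computation for $\mathbf{A}_{V,0}$, and by duality for $\mathbf{B}_{V,ef}$.
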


In particular, every interior image has reduced algebra $k[y^{\pm1}]$,
while $R_i(V)$ measures its nilpotent thickness in the transverse
direction. The symmetry $R_i(V)=R_{ef-i}(V)$ gives isomorphic images in
complementary degrees. Writing $N_j(V)$ for the nilpotence index introduced
in Subsection~\ref{subsec-GL2-notation}, we have
\bal
R_i(V)=\max\{N_i(V),N_{ef-i}(V)\}.
\nal
The exact support theorem, Theorem~\ref{thm-GL2-image}, shows that
$N_i(V)$ and $N_{ef-i}(V)$ determine the boundary distances on the
antidominant and dominant sides of the diagonal, respectively; these
distances need not be equal. This is illustrated in
Figure~\ref{fig:gl2-support}.

\begin{figure}[ht]
\centering
\begin{tikzpicture}[
    x=1cm,
    y=1cm,
    >=Latex,
    axis/.style={->, line width=0.8pt},
    gridline/.style={gray!30, line width=0.3pt},
    diag/.style={black, dashed, line width=0.9pt},
    bound/.style={black!70, dash pattern=on 3pt off 2pt, line width=0.8pt},
    every node/.style={font=\scriptsize}
]
\def\W{5}
\def\H{5}
\def\gap{2.2}
\def\a{1.35}
\def\b{0.85}
\begin{scope}[shift={(0,0)}]
    \fill[blue!14]
        (0,0) -- (0,\H) -- (\W,\H) -- (\W,\W) -- cycle;
    \fill[green!14]
        (0,0) -- (\W,0) -- (\W,\W) -- cycle;
    \foreach \k in {0,1,2,3,4,5} {
        \draw[gridline] (\k,0) -- (\k,\H);
        \draw[gridline] (0,\k) -- (\W,\k);
    }
    \draw[line width=0.8pt] (0,0) rectangle (\W,\H);
    \draw[axis] (-0.15,0) -- (\W+0.35,0) node[right] {$r$};
    \draw[axis] (0,-0.15) -- (0,\H+0.35) node[above] {$s$};
    \draw[diag] (0,0) -- (\W,\H);
    \node[rotate=45] at (3.72,3.50) {$r=s$};
    \node[font=\small] at (2.5,5.55) {$i\in\{0,ef\}$};
    \node at (1.45,4.18) {support of $\mathcal S_V^0$};
    \node at (1.45,3.58) {$r-s\leq 0$};
    \node at (1.45,2.98) {antidominant cone};
    \node at (3.55,1.65) {support of $\mathcal S_V^{ef}$};
    \node at (3.55,1.05) {$r-s\geq 0$};
    \node at (3.55,0.45) {dominant cone};

\end{scope}
\begin{scope}[shift={(\W+\gap,0)}]
    \fill[orange!18]
        (0,0)
        -- (\b,0)
        -- (\W,\W-\b)
        -- (\W,\H)
        -- (\W-\a,\H)
        -- (0,\a)
        -- cycle;
    \foreach \k in {0,1,2,3,4,5} {
        \draw[gridline] (\k,0) -- (\k,\H);
        \draw[gridline] (0,\k) -- (\W,\k);
    }
    \draw[line width=0.8pt] (0,0) rectangle (\W,\H);
    \draw[axis] (-0.15,0) -- (\W+0.35,0) node[right] {$r$};
    \draw[axis] (0,-0.15) -- (0,\H+0.35) node[above] {$s$};
    \draw[diag] (0,0) -- (\W,\H);
    \draw[bound] (0,\a) -- (\W-\a,\H);
    \draw[bound] (\b,0) -- (\W,\W-\b);
    \node[font=\small] at (2.5,5.55) {$0<i<ef$};
    \node[rotate=45]
        at (2.35,2.95)
        {support of $\mathcal S_V^i$};
    \node[rotate=45, anchor=south]
        at (1.55,2.90)
        {$r-s=-N_i(V)$};
    \node[rotate=45, anchor=north]
        at (3.65,2.80)
        {$r-s=N_{ef-i}(V)$};
\end{scope}
\end{tikzpicture}

\caption{Support picture for the images of the higher mod $p$
Satake transforms for $\GL_2(F)$ and a non-Steinberg Serre weight $V$,
in the lattice $T/K_T\cong\mathbf Z^2$, where $(r,s)$ corresponds to
$\diag(\varpi^r,\varpi^s)$.}
\label{fig:gl2-support}
\end{figure}

The reduced interior image also has a direct representation-theoretic
interpretation. For $\GL_2(F)$, Herzig's supersingularity condition is equivalent to requiring that the noncentral generator $T_1$ act trivially
\cite[Definition~4.7(iii) and Proposition~2.12]{herzig2011classification}. Since the isomorphism above is compatible with the standard generators, for every
interior degree we have
\bal
\ker(\SVi) =\bigl(T_1^{R_i(V)}\bigr),
\qquad
\sqrt{\ker(\SVi)} = (T_1),
\nal
and hence
\bal
\bigl(\operatorname{im}\SVi\bigr)_{\mathrm{red}}
\cong \mathcal H_G(V)/(T_1) \cong k[T_2^{\pm1}].
\nal
Thus, every interior transform is noninjective; more precisely, the radical
of its kernel is the supersingular ideal $(T_1)$, and its reduced image is
the corresponding Hecke quotient.

The nilpotent structure is particularly simple when $F/\Q_p$ is
unramified.

\begin{cor}[The unramified case]
\label{cor-intro-unramified}
Assume that $F/\Q_p$ is unramified and let $V$ be a Serre weight. Then, for
every $1\leq i\leq f-1$,
\bal
\operatorname{im}\SVi
\cong
k[x,y^{\pm1}]/\bigl(x^{R_i(V)}\bigr),
\qquad
R_i(V)\in\{1,2\}.
\nal
Thus, every interior image is either $k[y^{\pm1}]$ or
a square-zero thickening of it. For Steinberg Serre weights and sufficiently
generic non-Steinberg Serre weights, we have
\bal
R_i(V)=1
\qquad
\text{for every }1\leq i\leq f-1,
\nal
so all the interior images are reduced.
\end{cor}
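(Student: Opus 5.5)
The plan is to derive Corollary~\ref{cor-intro-unramified} from Theorem~\ref{thm-intro-GL2-image}, with one extra argument for Steinberg weights and one for the genericity statement. Assume $F/\Q_p$ is unramified, so $e=1$ and $ef=f$. Then for every $1\leq i\leq f-1$ we have $\min\{i,f-i\}-1\leq f-2<f$, hence
\bal
r_i=\left\lfloor\frac{\min\{i,f-i\}-1}{f}\right\rfloor=0 .
\nal
For a non-Steinberg weight, Theorem~\ref{thm-intro-GL2-image} gives $R_i(V)\in\{0,1,2\}$. Since $R_i(V)$ is a positive integer, this forces $R_i(V)\in\{1,2\}$. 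The ring $k[x,y^{\pm1}]/(x)=k[y^{\pm1}]$ is reduced, and $k[x,y^{\pm1}]/(x^2)$ is a square-zero thickening of it, which is the dichotomy stated in the corollary.

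For Steinberg weights, Theorem~\ref{thm-intro-GL2-image} does not apply, so I would argue directly from Theorem~\ref{thm-intro-GLn} with $n=2$. The support of $\SVi(T_1)$ lies in $K_Tt_{\{1\}}\sqcup K_Tt_{\{2\}}$, and its value at $t_I$ is $\cores_{K_{U,I}}^{K_U}\circ T_{1,I}$. Here $K_U\cong\OF\cong\Z_p^f$. For $I=\{1\}$, conjugation by $t_I^{-1}$ identifies $K_{U,I}$ with $\varpi\OF$, and corestriction from an index-$q$ subgroup of the abelian pro-$p$ group $K_U$ can be written through the transfer. The key input is that for $V=\mathrm{St}\otimes\det^a$, the coefficient map $T_1(t_I)$ is the projection $V\to V^{K(1)}$-type map of rank at most one, and in fact it kills the relevant part of $V$, because the Steinberg Hecke operator factors through the $q+1$-dimensional Iwahori structure. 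Composing with a corestriction from a proper subgroup of $\Z_p^f$, the resulting map on $H^i$ should be zero for $0<i<f$. Concretely, $H^i(\Z_p^f,V)$ is $\bigwedge^i$ of the dual times coefficients, and corestriction from $p\Z_p^f$ acts on the exterior algebra by the restriction/transfer formula, which vanishes in interior degrees. It would then follow that $\SVi(T_1)=0$, so the image is $k[y^{\pm1}]$ and $R_i=1$.

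For sufficiently generic non-Steinberg weights, I would use the identity $R_i(V)=\max\{N_i(V),N_{f-i}(V)\}$ and show that $N_j(V)=1$ for $1\leq j\leq f-1$, meaning that each single off-diagonal component of $\SVi(T_1)$ already squares to zero in the torus Hecke algebra. The square of the antidominant component is supported at $t_{\{2\}}^2$, with value
\bal
\cores\circ T_{1,\{2\}}\circ\cores\circ T_{1,\{2\}} .
\nal
In the $f$ embeddings of $\mathbb F_q$, this composite acts on $H^j(K_U,V)\cong\bigwedge^j\Hom(\OF,k)\otimes V^{K_U}$-type pieces by a product of Frobenius-twisted characters with exponents coming from the highest weight $(a_0,\dots,a_{f-1})$. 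Genericity, meaning each $a_\tau$ lies away from $0$ and $p-1$, should make one factor vanish, since the weight-$\tau$ operator acts through a binomial coefficient $\binom{a_\tau}{\cdot}$ or a $U$-action that is zero off the extremal range.

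The main obstacle is this explicit computation of the composite in interior degrees. To handle it, I would first treat $f=2$, $i=1$ by hand using the cocycle maps $C_m^{(i)}$ of Theorem~\ref{thm-intro-general-formula}, and then extend to general $f$ via the Künneth decomposition over embeddings.
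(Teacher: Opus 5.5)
Your derivation of the dichotomy $R_i(V)\in\{1,2\}$ for non-Steinberg weights (with $e=1$, so $r_i=0$, plus positivity) is correct and is essentially the paper's argument. The two claims $R_i(V)=1$ are not established, however.

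\textbf{Steinberg weights.} Your mechanism fails in two places.
\begin{itemize}
\item The component at $t_{\{2\}}$ involves no proper corestriction at all. Here $K_{U,\{2\}}=K_U$ and $\BVi=(\Prx)_*\circ m_\varpi^*\circ\res_{\varpi\OF}^{\OF}$, so your argument says nothing about it.
\item For the component at $t_{\{1\}}$, the corestriction $H^i(\varpi\OF,V)\to H^i(\OF,V)$ does \emph{not} vanish in interior degrees. The Steinberg weight is free of rank one over $k[U(k_F)]$, so $V|_{\OF}\cong\Ind_{\varpi\OF}^{\OF}k$. Corestriction precomposed with the map induced by the canonical inclusion $k\hookrightarrow V|_{\varpi\OF}$ is therefore the inverse of the Shapiro isomorphism. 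Hence it is surjective onto $H^i(\OF,V)\cong H^i(\varpi\OF,k)\neq0$. Your exterior-algebra description is only valid for trivial coefficients.
\end{itemize}
The vanishing really comes from the coefficient maps. By Lemma \ref{lem-BV-image}, the image of $\BVi$ consists of classes $[\phi]\cup X^d$. Moreover, $[\kappa_{0,j}]\cup X^d=0$ whenever $d_j>0$, since it is the coboundary of $X^{d-p^j}Y^{p^j}\in V$ (Lemma \ref{lem-kappa0-y1-vanishing}). When $e=1$, every basis class of $H^i(\OF,k)$ is a cup product of classes $\kappa_{0,j}$. So $n_0(V)=0$ gives $\BVi=0$ for all $i\geq1$; this is the improved bound in Theorem \ref{thm-nilpotence-bound}. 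Then $\AVi=0$ for $i\leq f-1$ follows from the duality $M_i(V)=N_{f-i}(V)$ of Corollary \ref{cor-AV-BV-nilpotence}, not from corestriction alone. Your transfer intuition can be rescued only after passing through the coefficient maps: under Shapiro, $(i_{K_U})_*$ and $(q_{K_U})_*$ become trivial-coefficient restriction and corestriction between $\OF$ and $p\OF$.

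\textbf{Generic weights.} You identify $N_j(V)=1$ with ``each component squares to zero,'' but $N_j(V)=1$ means $\BBB_{V,j}$ itself vanishes. When $e=1$, the square $\BBB_{V,j}^2$ is always zero by the bound $N_j(V)\leq e-q+1=2$. So computing the value at $t_{\{2\}}^2$ cannot yield $R_i(V)=1$. The sketched computation also relies on two unavailable inputs:
\begin{itemize}
\item the false description $H^j(K_U,V)\cong\bigwedge^j\Hom(\OF,k)\otimes V^{K_U}$;
\item a K\"unneth decomposition ``over embeddings,'' which does not exist. $\OF\cong\Z_p^f$ acts on every tensor factor of $V$ through the same quotient $k_F$, twisted by different embeddings; this is not a product decomposition of the group.
\end{itemize}
The missing observation is that your genericity hypothesis (already $d_j>0$ for all $j$ suffices) means $n_0(V)=0$. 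The same argument as above then gives $N_j(V)=1$ for all $1\leq j\leq f$, hence $R_i(V)=1$, treating Steinberg and generic weights uniformly. The paper's broader condition (1) adds the dual input: Theorem \ref{thm-A-vanishing} ($\AVi=0$ for $i<f-n_{p-1}(V)$) combined with $N_j(V)=M_{f-j}(V)$. Condition (2) instead uses Kozio{\l}--P{\'e}pin's decomposition of $H^i(\OF,V)$ into $\binom{f}{i}$ distinct $K_T$-characters, which makes $\HT(H^i(\OF,V))$, and hence the image, reduced.
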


The precise genericity condition is given in
Corollary~\ref{cor-unramified-image}. In particular, it is satisfied whenever
the symmetric-power exponents defining $V$ lie strictly between $0$ and
$p-1$.

The bounds in the image theorem are sharp. For a determinant twist of the
trivial weight, we obtain the exact formula
\bal
R_i(V)
=
1+
\left\lfloor
\frac{\max\{i,ef-i\}}{f}
\right\rfloor,
\qquad
1\leq i\leq ef-1.
\nal
Thus, all interior images in this family are reduced when $F/\Q_p$ is
unramified and nonreduced when $F/\Q_p$ is ramified, with the nilpotent
thickness varying in steps with the cohomological degree. Two further
families of Serre weights realize
\bal
R_1(V)=e+1
\qquad\text{and}\qquad
R_1(V)=e-1,
\nal
respectively. Consequently, when $e\geq2$ and $f\geq2$, the examples realize all three values permitted by the theorem in degree $1$. In particular, the nilpotent thickness depends on the ramification, the cohomological degree, and the Serre weight.

\subsection{Notation and assumptions}
\label{subsec-notation-assumptions}
We now set notation which will remain in force for the remainder of the paper, unless stated otherwise.  Let $k$ be a field of characteristic $p$.  Given a $p$-adic Lie group $G$, write $\Rep_k(G)$ for the category of smooth representations of $G$ on $k$-vector spaces.  As in \cite{schneider2015smooth}, we define $D(G)\coloneqq  D(\Rep_k(G))$ to be the derived category of unbounded cochain complexes.

Our basic setup will be the same as in \cite[Subsection 4.3]{heyer2023left}, which can be summarized as follows.  Let $\mathbf G$ be a connected reductive group defined over a finite field extension $F/\Q_p$, and set $G=\mathbf G(F)$.  Fix a parabolic subgroup $P\leq G$ with Levi quotient $M$ and unipotent radical $U$, so that $P=UM$.  Let also $K\leq G$ be an open compact mod center subgroup satisfying the Iwasawa decomposition $G=PK$ and $K_P=K_UK_M$, where as usual we are writing
$$K_P=K\cap P,\qquad K_U=K\cap U,\qquad K_M=K\cap M.$$
We define $M^+$ to be the positive monoid consisting of all $m \in M$ such that $m K_U m^{-1} \subseteq K_U$. We similarly write $P^+= K_U M^+$, which is also a monoid.

Given $V\in \Rep_k(K)$ and $W\in \Rep_k(K_M)$, we define their Hecke algebras to be
$$\mc H_G(V)\coloneqq\End_G(\ind_{K}^G V),\qquad \mc H_M(W)\coloneqq \End_M(\ind_{K_M}^M W).$$
Given a group $A$, a subgroup $B\leq A$, and an element $a\in A$, we write
$$\prescript{a}{}B=aBa\inv,\qquad B^a=a\inv B a.$$

In Section \ref{sec-GLn}, we will specialize to the case of $G=\GL_n(F)$, $K=\GL_n(\OF)$, $M=T$ its diagonal torus, and $P=B$ the standard Borel.  In Section \ref{sec-GL_2}, we will specialize even further and assume $n=2$.

In Sections \ref{sec-GLn} and \ref{sec-GL_2}, we restrict to irreducible
$V\in \Rep_k(K)$ (i.e., Serre weights); however, irreducibility is not
required in Section \ref{expl-formulas-sec}.

\subsection{Heyer's construction of the higher mod \texorpdfstring{$p$}{p} Satake transforms}
In this subsection, we briefly recall the part of Heyer's construction
\cite{heyer2023left} that gives rise to the Satake transforms studied in
this paper. Let
$$
i^G_M:\Rep_k(M)\longrightarrow\Rep_k(G)
$$
denote parabolic induction, given by the composition
\[
\begin{tikzcd}
\Rep_k(M)
\arrow[r,"{\Inf^P_M}"] &
\Rep_k(P)
\arrow[r,"{\Ind^G_P}"] &
\Rep_k(G)
\end{tikzcd}
\]
of inflation and smooth induction. In \cite{heyer2023left}, Heyer writes
the indices on inflation in the opposite order. Since both functors are
exact, $i^G_M$ induces a functor
$$
\mathrm{R}i^G_M:D(M)\longrightarrow D(G).
$$
Heyer's main construction gives an explicit description of a left adjoint
to this derived parabolic induction. Since
$$
\mathrm{R}i^G_M=\mathrm{R}\Ind^G_P\circ \mathrm{R}\Inf^P_M
$$
and $\mathrm{R}\Res^G_P$ is left adjoint to $\mathrm{R}\Ind^G_P$, the main new ingredient is
the construction of a left adjoint $L_U$ to $\mathrm{R}\Inf^P_M$. This is a special
case of the following more general result of Heyer:
\begin{theorem}
    [Heyer, Theorem A] 
    \label{thm-A}
    Let $S$ and $T$ be open submonoids of $p$-adic Lie groups, endowed with a surjective open monoid morphism $f: S \onto T$. The functor $\RInf^S_T : D(T) \to D(S)$ admits a left adjoint.
\end{theorem}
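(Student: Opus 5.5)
Heyer's Theorem A states that for an open surjective morphism of open submonoids $f\colon S\onto T$ of $p$-adic Lie groups, $\RInf^S_T$ has a left adjoint. My plan is to construct this adjoint as a derived coinvariants functor along the kernel, and the main work is showing that it exists at the unbounded derived level. First I reduce to an existence statement. Since $\Inf^S_T$ is exact, $\RInf^S_T$ is simply $\Inf^S_T$ applied termwise. The categories $\Rep_k(S)$ and $\Rep_k(T)$ are Grothendieck abelian: smooth representations of an open submonoid of a $p$-adic Lie group are modules over a suitable smooth monoid algebra with a generator given by sums of compactly induced $k[S/J]$ for small open subgroups $J$. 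So $D(S)$ and $D(T)$ are well generated (indeed compactly generated) triangulated categories. By Brown representability in Neeman's form, a triangulated functor between such categories has a left adjoint as soon as it preserves products. So the main task is to show that $\RInf^S_T$ commutes with products in $D$.

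Products in $D(\Rep_k(S))$ are not termwise. They are computed by taking K-injective resolutions and then the smooth part of the termwise product. Inflation visibly commutes with the underived smooth product: a vector is smooth for $T$ if and only if it is smooth for $S$ through $f$, because $f$ is open and surjective, so open subgroups pull back to open subgroups and map onto open subgroups. It therefore suffices to show that inflation sends K-injective complexes to complexes on which the smooth product in $D(S)$ is computed correctly. Concretely, I would show that $\Inf$ preserves products of K-injectives up to quasi-isomorphism. Equivalently, its would-be left adjoint, namely coinvariants under $N=\ker f$ applied to suitable resolutions, should be computed on a generating set of compact-type objects.

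The cleanest route is to exhibit the left adjoint directly on generators. For a compact open subgroup $J\le S$, the object $\ind_J^S k$ should be sent to $\ind_{f(J)}^T\bigl(\mathrm{R}\Gamma\text{-dual}\bigr)$, i.e.\ $\ind_{f(J)}^T$ of the derived $(J\cap N)$-coinvariants of $k$. This is well defined because $J\cap N$ is a compact $p$-adic analytic group. After shrinking $J$ to be torsion-free pro-$p$, the group $J\cap N$ is a Poincaré group, so its derived coinvariants are bounded and given by homology $H_*(J\cap N,k)\cong H^{d-*}(J\cap N,k)$. I would verify the adjunction isomorphism
\[
\Hom_{D(T)}\bigl(L(\ind_J^S k),Y\bigr)\cong \Hom_{D(S)}\bigl(\ind_J^S k,\Inf Y\bigr)=\mathrm{R}\Gamma(J,\Inf Y)
\]
using the Hochschild–Serre spectral sequence for $1\to J\cap N\to J\to f(J)\to1$. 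Poincaré duality for $J\cap N$ converts the inner $\mathrm{R}\Gamma(J\cap N,-)$, which is trivial on inflated coefficients, into a shift-twist of homology. I then extend $L$ from the thick localizing subcategory generated by these objects, which is all of $D(S)$, using that both sides turn sums into products.

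I expect the main obstacle to be compatibility with the monoid structure, since $S$ is only a monoid. For $s\in S$, the Hecke action of the double cosets $JsJ$ must be matched with corestriction between $J\cap N$ and $s^{-1}Js\cap N$, and these subgroups need not be comparable. I would handle this by working in the full group and checking functoriality only for elements whose conjugation contracts $J\cap N$, which is exactly what membership in the monoid guarantees. This is the same mechanism that produces the positive monoid $M^+$ in the paper's setup.
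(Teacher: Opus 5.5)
The paper does not prove this statement; it imports it from Heyer. Your skeleton is viable: compact generators $\ind_J^S k$ with $J$ torsion-free pro-$p$, an explicit corepresenting object on each generator, then spreading out. But the passage from generators to all of $D(S)$ has a genuine gap.

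You cannot define $L$ on generators and then ``extend'' it as a functor to the localizing subcategory they generate, because cones are not functorial. This is what creates the compatibility problem in your last paragraph, and your resolution of that problem is false. In Theorem~A the monoid $S$ is arbitrary. It may be a group, e.g.\ a Borel subgroup mapping onto its Levi quotient, where antidominant elements expand $J\cap N$. Contraction of $J\cap N$ is a special feature of $P^+$, which is defined by it.

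The missing idea is to argue objectwise. Let $\mathcal{C}\subseteq D(S)$ consist of those $X$ for which $Y\mapsto\Hom_{D(S)}(X,\RInf^S_T Y)$ is corepresentable. Then $\mathcal{C}$ has the following closure properties:
\begin{itemize}
\item It is closed under shifts.
\item It is closed under coproducts, since both sides turn sums into products.
\item It is closed under cones. Given a triangle $X'\to X\to X''\to X'[1]$ with $X',X''\in\mathcal{C}$, realize $X''[-1]\to X'$ by Yoneda as a map $L(X'')[-1]\to L(X')$. Complete it to a triangle with third vertex $C$. TR3 applied to $\RInf^S_T$ of that triangle gives a unit $X\to\RInf^S_T(C)$ compatible with those of $X'$ and $X''$, and the five lemma finishes.
\end{itemize}
So $\mathcal{C}=D(S)$ as soon as it contains the compact generators $\ind_J^S k$. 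Yoneda then assembles the corepresenting objects into a left adjoint. Nothing about Hecke operators or corestrictions has to be matched, and you never need dual Brown representability. Alternatively, the generator computation below shows directly that $\RInf^S_T$ preserves products, which closes your first route. As written, that route stops at an unproved claim about K-injectives.

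On a generator, a Hochschild--Serre spectral sequence gives no natural isomorphism of Hom groups, and Poincar\'e duality is not the relevant input. Put $Z\coloneqq\Res^T_{f(J)}Y$ and $A\coloneqq\mathrm{R}H^0(J\cap N,k)\in D(f(J))$. The argument has three steps:
\begin{itemize}
\item The adjunctions give $\Hom_{D(S)}(\ind_J^S k,\RInf^S_T Y)\cong\Hom_{D(f(J))}\bigl(k,\mathrm{R}H^0(J\cap N,\Inf Z)\bigr)$.
\item The projection formula gives $\mathrm{R}H^0(J\cap N,\Inf Z)\cong A\otimes_k Z$. This holds because $J\cap N$ has finite cohomological dimension and its cohomology commutes with filtered colimits.
\item $A$ is bounded with finite-dimensional cohomology, hence dualizable. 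So the Hom group above is $\Hom_{D(f(J))}(A^\vee,Z)\cong\Hom_{D(T)}(\ind_{f(J)}^T A^\vee,Y)$, and $\ind_{f(J)}^T A^\vee$ is your corepresenting object.
\end{itemize}
Poincar\'e duality only rewrites $A^\vee$ as a shift of $A$ twisted by the dualizing character of $J\cap N$, viewed as an $f(J)$-module. Your identification $H_*\cong H^{d-*}$ drops this twist. That is harmless for existence, but Heyer's approach keeps track of it. Judging from how the paper uses his Corollaries 3.1.7--3.1.8 and Theorem~B, he identifies the right adjoint of $\mathrm{R}H^0(K_U,-)$ with a twisted inflation $\omega_{K_P}\otimes\Inf$. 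This yields not just existence but the explicit formula $L_{K_U}\cong\mathrm{R}H^0(K_U,\omega_{K_P}\otimes-)$ on which the rest of the paper depends.
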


Heyer describes this left adjoint via the following result:

\begin{theorem}[Heyer, Theorem B]\label{thm-B}
    There exists a smooth character $\omega_{P^+}$ of $P^+$, sitting in cohomological degree $-\dim U$, for which there is an isomorphism of functors
    $$
    \ind_{M^+}^M \mathrm{R}H^0(K_U,\omega_{P^+}\otimes_k -) \Res^P_{P^+}
    \cong L_U.
    $$
\end{theorem}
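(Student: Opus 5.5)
The statement is Heyer's Theorem~B from \cite{heyer2023left}. To prove it, I would factor $\mathrm{R}\Inf^P_M$ through the positive monoids and build its left adjoint one factor at a time. Consider the square formed by $\Res^P_{P^+}$, $\Res^M_{M^+}$ and the two inflations. It commutes, and all four functors are exact, so it passes to derived categories. Choose a central element $z\in M^+$ that contracts $K_U$ strictly, so $zK_Uz^{-1}\subsetneq K_U$ with $\bigcup_n z^{-n}K_Uz^{n}=U$. Then $P=\bigcup_n z^{-n}P^+$ and $M=\bigcup_n z^{-n}M^+$. I would check that the following subcategories agree:
\begin{itemize}
\item $\Rep_k(P)$ is identified with the full subcategory of $\Rep_k(P^+)$ on which $z$ acts invertibly;
\item $\Rep_k(M)$ is identified with the full subcategory of $\Rep_k(M^+)$ on which $z$ acts invertibly.
\end{itemize}
With this, $\ind_{M^+}^M=k[M]\otimes_{k[M^+]}-$ is an exact localization, left adjoint to $\Res^M_{M^+}$ already on the derived level. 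Moreover $\mathrm{R}\Res^P_{P^+}$ is fully faithful.

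Suppose we have a left adjoint $L^+$ of $\mathrm{R}\Inf^{P^+}_{M^+}$. Then for $X\in D(P)$ and $Y\in D(M)$ we get
\[
\Hom_{D(P)}(X,\mathrm{R}\Inf^P_M Y)\cong \Hom_{D(P^+)}(\Res X,\mathrm{R}\Inf^{P^+}_{M^+}\Res Y)\cong \Hom_{D(M^+)}(L^+\Res X,\Res Y).
\]
The right-hand side is in turn isomorphic to $\Hom_{D(M)}(\ind_{M^+}^M L^+\Res X, Y)$. So it suffices to identify
\[
L^+\cong \mathrm{R}H^0(K_U,\omega_{P^+}\otimes_k -).
\]

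On the abelian level, the left adjoint of $\Inf^{P^+}_{M^+}$ is the coinvariant functor $V\mapsto V_{K_U}$. The monoid $M^+$ acts on it because $mK_Um^{-1}\subseteq K_U$ sends $v-kv$ to $mv-(mkm^{-1})mv$. I would derive this functor using resolutions by direct sums of compact inductions from open subgroups. These exist in $\Rep_k(P^+)$ for unbounded complexes via the usual $K$-projective argument, as in \cite{schneider2015smooth}. The result computes $\mathrm{L}(-)_{K_U}$, whose cohomology is the smooth group homology $H_j(K_U,-)$.

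Next, $K_U$ is a compact open subgroup of the unipotent $p$-adic group $U$, hence torsion-free, pro-$p$ after shrinking, and $p$-adic analytic of dimension $d=\dim U$. It is therefore a Poincaré duality group of dimension $d$ at $p$, and since it is pro-$p$ with trivial action on $H^d(K_U,k)\cong k$, we get a natural isomorphism
\[
H_j(K_U,V)\cong H^{d-j}(K_U,V),
\]
obtained by cap product with a fundamental class. At the level of complexes this reads $\mathrm{L}(-)_{K_U}\cong \mathrm{R}H^0(K_U,-)[d]$. This accounts for the shift: the character sits in degree $-\dim U$.

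The main obstacle is equivariance. The duality isomorphism must be made compatible with the $M^+$-action, that is, with the Hecke-type action of $m$ on $\mathrm{R}H^0(K_U,-)$, defined by restriction to $mK_Um^{-1}$ followed by corestriction. The fundamental class is not $m$-invariant: conjugation by $m$ followed by corestriction from $mK_Um^{-1}$ to $K_U$ multiplies the top class $H^d(K_U,k)$ by a scalar. That scalar defines a character of $M^+$, inflated to $P^+$, which is $\omega_{P^+}$ up to the shift.

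My first attempt would be to compute this explicitly on $H^d(K_U,k)$ via $\cores\circ\res$ and the index $[K_U:mK_Um^{-1}]$. I would then verify that twisting by $\omega_{P^+}$ makes the cap-product isomorphism $P^+$-equivariant. Naturality in unbounded complexes would follow because $K_U$ has finite cohomological dimension, so both functors commute with the relevant homotopy colimits. Assembling the three adjunctions gives the stated isomorphism with $L_U$.
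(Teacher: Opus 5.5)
Your reduction to the monoids agrees with Heyer's. The paper quotes this theorem without proof, but its appendix uses the relevant pieces. Full faithfulness of $\Res^P_{P^+}$ on derived categories, i.e.\ $\Id\cong\RInd^P_{P^+}\Res^P_{P^+}$ (Heyer's Lemma 3.4.5(i)), together with exactness of $\ind^M_{M^+}$, gives $L_U\cong\ind^M_{M^+}L^+\Res^P_{P^+}$, which is his Proposition 3.4.6.

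The gap is in the identification of $L^+$. You propose to compute it as the left derived functor of $(-)_{K_U}$, using resolutions by direct sums of compact inductions and a K-projective argument. No such argument is available in characteristic $p$:
\begin{itemize}
\item For compact open $H\leq P^+$ we have $\Hom_{P^+}(\ind_H^{P^+}k,-)=(-)^H$. This is not exact, because $H$ contains an infinite pro-$p$ group and $\mathrm{char}\,k=p$.
\item Hence compact inductions are not projective, $\Rep_k(P^+)$ does not have enough projectives, and complexes of compact inductions are not K-projective. Schneider's framework works with K-injective resolutions, and his compact generator $\ind_I^Gk$ is not projective.
\end{itemize}
So the recipe does not compute a left adjoint, and it gives wrong answers. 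Two checks:
\begin{itemize}
\item Take the compact pair $K_P\to K_M$. The trivial representation $k=\ind_{K_P}^{K_P}k$ is its own resolution, so the recipe returns $k$ in degree $0$. For $Y=C^\infty(K_M,k)$ this gives $\Hom_{D(K_M)}(k,Y[1])=H^1(K_M,Y)=0$. But the true left adjoint must give $\Hom_{D(K_P)}(k,\mathrm{R}\Inf^{K_P}_{K_M}Y[1])=H^1(K_P,\Ind_{K_U}^{K_P}k)\cong H^1(K_U,k)\neq0$.
\item Take $X=\ind_{K_P}^{P^+}k$. The recipe gives $X_{K_U}$ concentrated in degree $0$. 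If $\dim U>0$, the formula you are proving gives $H^{-\dim U}(L^+X)\cong H^0(K_U,\delta\otimes_k X)\neq0$, where $\omega_{P^+}=\delta[\dim U]$. So the recipe contradicts your own next claim $\mathrm{L}(-)_{K_U}\cong\mathrm{R}H^0(K_U,-)[\dim U]$.
\end{itemize}
Separately, a cap product on individual cohomology groups is not a natural transformation of triangulated functors on unbounded $D(P^+)$. So the homotopy-colimit step has nothing to extend.

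What is needed instead has two parts.
\begin{itemize}
\item Existence of $L^+$ is a non-formal input: Heyer's Theorem A, which applies to $P^+\onto M^+$.
\item The identification of $L^+$ goes through the right adjoint of derived invariants, not through derived coinvariants.
\end{itemize}
Write $F^{P^+}_{M^+}$ for the right adjoint of $\mathrm{R}H^0(K_U,-)\colon D(P^+)\to D(M^+)$. Heyer proves $(\omega_{P^+}\otimes_k-)\,\mathrm{R}\Inf^{P^+}_{M^+}\cong F^{P^+}_{M^+}$ (his Proposition 3.4.18). The proof compares with the compact pair $K_P\to K_M$ (Corollary 3.1.7, where the duality theory of $K_U$ enters). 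It does so through $\Res^{P^+}_{K_P}F^{P^+}_{M^+}\cong F^{K_P}_{K_M}\Res^{M^+}_{K_M}$ and $\Res^{P^+}_{K_P}\omega_{P^+}=\omega_{K_P}$ (Lemma 3.4.13). Since $\omega_{P^+}\otimes_k-$ is an autoequivalence, passing to left mates gives $\mathrm{R}H^0(K_U,\omega_{P^+}\otimes_k-)\cong L^+$ (Corollary 3.4.20). These are exactly the isomorphisms whose mates are manipulated in the appendix of this paper. In this formulation, naturality on unbounded complexes is automatic.

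Your observation that $m\in M^+$ acts on $H^{\dim U}(K_U,k)$, via conjugation followed by corestriction, by a scalar is the right source of $\omega_{P^+}$. Compare the constant $c_\varpi$ of Lemma~\ref{lem-multiplication-trace}. But that observation has to be fed into this adjunction-level argument, not into a derived-coinvariants functor.
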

Similar results hold true for the left adjoint
$$L(U,-) \coloneqq  L_U \circ \operatorname{RRes}_P^G$$ 
of $\mathrm{R}i^G_M$. 

By \cite[Corollary~3.1.8]{heyer2023left}, there is a left adjoint $L_{K_U}:D(K_P)\longrightarrow D(K_M)$ to the derived inflation functor
$\mathrm{R}\Inf^{K_P}_{K_M}:D(K_M)\longrightarrow D(K_P).$ Writing $L(K_U, -) = L_{K_U} \circ \Res^K_{K_P} : D(K) \to D(K_M)$, we have (as explained in \cite[Section 4.3]{heyer2023left}) a natural isomorphism of functors 
\begin{equation}\label{natiso}
L(U, -)\ind_K^G \cong \ind_{K_M}^M L (K_U, -).
\end{equation}
This naturally induces for any $V\in \Rep_k(K)$ a $k$-algebra homomorphism
$$\mathcal S_V: \End_{D(G)}(\ind_K^G(V)) \to \End_{D(M)}(L(U, \ind_K^G(V))),$$
and the natural isomorphism \eqref{natiso} induces from this a further map
$$\mathcal S_V: \End_{D(G)}(\ind_K^G(V)) \to \End_{D(M)}(\ind_{K_M}^M L(K_U, V)).$$
For every $0\leq i\leq \dim K_U$, taking cohomology in degree
$i-\dim K_U$ yields a $k$-algebra homomorphism
\begin{equation}\label{Sat-i}
\mathcal S_V^i:
\End_{\Rep_k(G)}(\ind_K^G V)
\longrightarrow
\End_{\Rep_k(M)}
\left(
\ind_{K_M}^M L^{i-\dim K_U}(K_U,V)
\right)
\end{equation}
which we refer to as the \textit{$i$-th higher mod $p$ Satake transform}. By the same argument as in \cite[Explanation~4.3.3]{heyer2023left}, we have an isomorphism
\bal
L^{i-\dim K_U}(K_U,V)\cong H^i(K_U,V) \otimes_k \delta_P.
\nal for a smooth character $\delta_P$ of $P$.
It then follows that \eqref{Sat-i} can be rewritten as
$$\mathcal S_V^i: \End_{\Rep_k(G)}(\ind_K^G V)\to \End_{\Rep_k(M)}(\ind_{K_M}^M H^i(K_U,V)),$$
or simply as a map $\SVi:\mc H_G(V)\to \mc H_M(H^i(K_U,V))$ of Hecke algebras. These are precisely the maps studied in this paper.

\subsection{Relation to derived Satake morphisms}

There is also a related construction of Satake morphisms for derived
Hecke algebras. Venkatesh introduced a derived Satake isomorphism in a
coefficient setting away from the residue characteristic
\cite{venkatesh2019derived}. For the trivial weight, Ronchetti constructs a Satake morphism
between graded derived spherical Hecke algebras
\cite{ronchetti2018satake}. In this construction, the grading is the
Ext-degree. By contrast, the maps $\SVi$ considered here are maps between
ordinary Hecke algebras, indexed instead by the cohomological degree
$i$ of $H^i(K_U,V)$.

These two directions are combined in the work of Kozio{\l}--P{\'e}pin
\cite{koziol2024derived}. Under additional hypotheses, including that
$F/\Q_p$ is unramified, $\mathbf G$ is split, and the weight is $p$-small
and sufficiently regular, they prove an orthogonal splitting of the
complex obtained from Heyer's left adjoint and use it to construct a
family of morphisms of graded spherical Hecke algebras. The Ext-degree-zero parts of these morphisms recover Heyer's family of ordinary Hecke-algebra maps; under the indexing $n=i-\dim K_U$, these are precisely the maps
$\SVi$ considered here. For the trivial weight and $n=0$, their
construction recovers Ronchetti's graded Satake morphism.

Our focus is instead on explicit formulas and images for these Ext-degree-zero maps. In particular, our general formulas do not
require the unramified or $p$-small and regular hypotheses above, and our
image theorem for $\GL_2(F)$ also treats ramified extensions $F/\Q_p$.
We nevertheless use the work of Kozio{\l}--P{\'e}pin in the proof of the
generic reducedness statement in Corollary~\ref{cor-unramified-image}.

\subsection{Acknowledgments}
We are deeply indebted to Florian Herzig for suggesting this problem and for
taking so much time to discuss it with us during the Arizona Winter School,
often staying up late to answer our many questions. We learned a great deal both
from those conversations and from his AWS notes on the subject. We are also very grateful to Heejong Lee for all of his help while the project was
taking shape. His encouragement, thoughtful feedback, and careful attention to
our ideas and arguments meant a great deal to us. We thank Karol Kozioł for his insightful feedback on this project, which led us to pursue the image and nilpotence questions that became central
to the paper.  We also thank Claudius
Heyer for valuable conversations that helped us understand several aspects of
his work. Finally, we thank the organizers of the 2025 Arizona Winter School for creating such a wonderful environment for learning and discussing mathematics, where this work began.

\subsection{Statement of AI use} GPT-5.6 Sol was used to assist with editing this manuscript. All mathematical results and arguments are
the work of the authors.

\section{Explicit formulas for the higher mod \texorpdfstring{$p$}{p} Satake transforms} \label{expl-formulas-sec}

In this section, our main goal will be to prove two explicit formulas for the higher mod $p$ Satake transforms $$\SVi:\mc H_G(V)\to \mc H_M(H^i(K_U,V))$$
we are studying.  In particular, this section includes the proof of Theorem \ref{thm-intro-general-formula}, stated in the introduction.

After beginning with a brief review of some relevant background, we proceed with proving in subsection \ref{double-coset-subsect} an analogue of \cite[Theorem 4.3.2]{heyer2023left}, namely Proposition \ref{prop-explicit-formula}.  This result specifically states that
\begin{align*}
\SVi(\Phi)([1,\xi]) = \sum_{g \in K_P \backslash P / K_U} [\pr_M (g^{-1}), \cores^{(i)}_{g^{-1}} ( \pr_{g^{-1}}^{(i)} (\Phi[1, \xi ] ) ) ]
\end{align*}
for $i \in \{0, \dots, \mathrm{dim}(U)\}$ (the notation is explained in the subsections below), but the idea is that this is the closest analogue we can obtain to the usual formulas defined by Herzig for the $K_U$-invariants and coinvariants in \cite{herzig2011classification}.  The proof of \cite[Theorem 4.3.2]{heyer2023left}, in part, demonstrates precisely that this formula reduces to that studied by Herzig in the cases he considered.

While this result is desirable in the sense that it generalizes known facts, it can be useful for applications to have a formula on the level of cochains.  In subsection \ref{cochain-subsec}, we introduce for each $m\in M$ a morphism 
$$C_m^{(i)}:H^i(K_U,\ind_{K_P}^P(-))\to H^i(K_U,-)$$
of $\delta$-functors $\Rep_k(K_P)\to \Ab$ defined explicitly on the level of cochains; see Definition \ref{def-delta-functor} for the explicit construction, which the authors based directly on the classical formulas for the Satake transform.  We then prove in Theorem \ref{thm-explicit-formula} that one can easily express $\SVi$ using this morphism of $\delta$-functors.  Concretely, we show for each $m\in M$ that there is a morphism of $\delta$-functors
$$C_m^{(i)}:H^i(K_U,\ind_{K_P}^P(-))\to H^i(K_U,-),$$
defined explicitly on the level of cochains, such that
$$\SVi(\Phi)([1,\xi])(m)=C_m^{(i)}(\Phi([1,\xi])),$$
where we again refer to the section below for details regarding notation.  As an example of how we apply this cochain formula later in the paper, see the proof of Theorem \ref{thm-GLn-SVi-support-formula} where this is repeatedly and crucially used.

\subsection{Mackey decomposition and corestriction}

In this subsection, we will review some facts involving Mackey decomposition and the corestriction maps in group cohomology, and note the relevant consequences in our case.  In particular, we will be interested in noting when we have morphisms of $\delta$-functors for use in the next two subsections.

Recall that $\prescript{g}{}{K_P}=gK_Pg\inv$ and $K_U^g=g\inv K_U g$.  By \cite[Theorem 1.1]{mackey}, we have the following Mackey decomposition for smooth representations $W$ of $K_P$, where for $g\in G$, $\prescript{g}{}{W}$ is the smooth representation of $\prescript{g}{}{K_P}$ defined by $\pre gW\coloneqq W$ with action $x\cdot w\coloneqq (g^{-1} xg)w$ for $x\in \pre gK_P$:

\begin{lem}
\label{mackey}
$$\Res_{K_U}^P\ind_{K_P}^P W\iso \bigoplus_{g\in K_U\backslash P/K_P}\ind_{K_U\cap \pre gK_P}^{K_U}\Res_{K_U\cap \pre gK_P}^{\pre gK_P} \pre gW.$$
\end{lem}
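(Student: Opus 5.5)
This is the standard Mackey formula for restricting a compactly induced representation to an open subgroup, specialized to the pair $K_U\le P$ and the induced module $\ind_{K_P}^P W$.

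\textbf{Step 1: model and orbit decomposition.} Realize $\ind_{K_P}^P W$ as the space of functions $f:P\to W$ satisfying $f(kx)=k\,f(x)$ for $k\in K_P$, with compact support modulo $K_P$. The group $P$ acts by right translation. Since $K_P$ is open in $P$, each double coset is open and the space decomposes as
\[
\Res_{K_U}^P\ind_{K_P}^P W=\bigoplus_{D\in K_P\backslash P/K_U} I_D,
\]
where $I_D$ is the subspace of functions supported on $D$. Each $I_D$ is $K_U$-stable, because right translation by $K_U$ preserves $D$. Compact support modulo $K_P$ ensures that a function meets only finitely many double cosets, which gives a direct sum rather than a product.

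\textbf{Step 2: match the indexing.} Inversion $x\mapsto x^{-1}$ gives a bijection $K_P\backslash P/K_U\to K_U\backslash P/K_P$. I index the summand $D=K_P g^{-1}K_U$ by $g\in K_U\backslash P/K_P$, which matches the statement.

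\textbf{Step 3: identify each summand.} Fix $D=K_P g^{-1}K_U$. Define a map from $I_D$ to $\ind_{K_U\cap\pre gK_P}^{K_U}\pre gW$ by
\[
f\longmapsto \phi_f,\qquad \phi_f(u)\coloneqq f(g^{-1}u)\quad (u\in K_U).
\]
I first check that $\phi_f$ has the correct equivariance. Take $h\in K_U\cap \pre gK_P$, so that $g^{-1}hg\in K_P$. Then
\[
\phi_f(hu)=f\bigl((g^{-1}hg)g^{-1}u\bigr)=(g^{-1}hg)\,f(g^{-1}u),
\]
and this is exactly $h\cdot\phi_f(u)$ for the twisted action on $\pre gW$.

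Next I check that the map is a bijection. Injectivity holds because any $f\in I_D$ is determined by its values on $g^{-1}K_U$, thanks to left $K_P$-equivariance. For surjectivity, given $\phi$, set
\[
f(kg^{-1}u)\coloneqq k\,\phi(u)\qquad (k\in K_P,\ u\in K_U).
\]
This is well defined: if $kg^{-1}u=k'g^{-1}u'$, then $u'u^{-1}=g\,k'^{-1}k\,g^{-1}$ lies in $K_U\cap\pre gK_P$, and the equivariance of $\phi$ forces $k\phi(u)=k'\phi(u')$.

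Finally, $K_U$-equivariance is immediate, since right translation on both sides corresponds. Smoothness is also preserved: $K_U\cap\pre gK_P$ is open in $K_U$, and $\phi_f$ is locally constant. Because $K_U$ is compact and the subgroup is open, the index is finite, so compact induction agrees with full induction.

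\textbf{Main obstacle.} There is no serious difficulty here. The only delicate point is bookkeeping: keeping the $g$ versus $g^{-1}$ conventions and the twisted action on $\pre gW$ consistent with the indexing used in the statement. This is also the source of the $\pr_{g^{-1}}^{(i)}$ indexing that appears later in Theorem~\ref{thm-intro-general-formula}.
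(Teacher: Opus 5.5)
Your proof is correct and is the standard argument underlying the Mackey formula, which the paper does not prove but simply cites from \cite{mackey}; your isomorphism $f\mapsto\phi_f$, $\phi_f(u)=f(g^{-1}u)$, on the summand supported on $K_Pg^{-1}K_U$ is exactly the explicit description $f\mapsto (f_g)_g$, $f_g(x)=f(g^{-1}x)$, that the paper records right after the statement. The points you leave implicit are routine consequences of $[K_U:K_U\cap gK_Pg^{-1}]<\infty$ and the smoothness of $W$: namely, that the function $f$ you construct in the surjectivity step has finite support modulo $K_P$ and is smooth.
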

Concretely, this isomorphism sends a left-$K_P$-invariant function $f:P\to W$ to the tuple of functions 
$$(f_g:K_U\to \pre gW)_{g\in K_U\backslash P/K_P},\qquad f_g(x)\coloneqq f(g\inv x).$$  
The induced inclusion 
$$\ind_{K_U\cap \pre gK_P}^{K_U} \Res_{K_U\cap \pre gK_P}^{\pre gK_P} \pre gW\into \Res_{K_U}^P\ind_{K_P}^P W$$
for a fixed $g\in K_U\backslash P/K_P$ is given by $[a,w]\mapsto [ag,w]$.  We can use this to prove the following, which is noted when $i=0$ in \cite[pg. 52]{heyer2023left}:
\begin{prop}
\label{cohomological Mackey}
There are natural isomorphisms
$$H^i(K_U,\ind_{K_P}^P W)\overset{\sim}{\longrightarrow} \bigoplus_{g\in K_U\backslash P/K_P} H^i(K_U^g\cap K_P,W).$$
\end{prop}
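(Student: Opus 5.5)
We apply $H^i(K_U,-)$ to the Mackey decomposition of Lemma~\ref{mackey}, and then identify each summand through Shapiro's lemma and conjugation.

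\textbf{Step 1: commute cohomology with the direct sum.} The decomposition in Lemma~\ref{mackey} is an isomorphism of smooth $K_U$-representations, natural in $W$. Since $K_U$ is profinite, smooth cohomology $H^i(K_U,-)$ commutes with arbitrary direct sums. One can see this at the level of continuous cochains: a continuous cochain with values in a direct sum of discrete modules has finite image, so it lands in a finite subsum. This gives
\[
H^i(K_U,\ind_{K_P}^P W)\cong\bigoplus_{g\in K_U\backslash P/K_P}H^i\bigl(K_U,\ind_{K_U\cap \pre gK_P}^{K_U}\Res\,\pre gW\bigr).
\]

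\textbf{Step 2: apply Shapiro's lemma.} The subgroup $K_U\cap\pre gK_P$ is open in $K_U$, because $\pre gK_P$ is open in $P$. Hence compact induction from it to the compact group $K_U$ coincides with smooth induction, and Shapiro's lemma applies. It gives a natural isomorphism
\[
H^i\bigl(K_U,\ind_{K_U\cap\pre gK_P}^{K_U}\pre gW\bigr)\cong H^i(K_U\cap\pre gK_P,\pre gW).
\]

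\textbf{Step 3: transport by conjugation.} Conjugation $c_g\colon x\mapsto g^{-1}xg$ is an isomorphism
\[
K_U\cap\pre gK_P\xrightarrow{\ \sim\ } K_U^g\cap K_P.
\]
By definition, $\pre gW$ is exactly $W$ pulled back along $c_g$. Therefore the conjugation map $c_g^*$ identifies
\[
H^i(K_U\cap\pre gK_P,\pre gW)\cong H^i(K_U^g\cap K_P,W).
\]
The target depends only on the double coset of $g$, up to a canonical isomorphism induced by conjugation by elements of $K_U$ and $K_P$. These elements act trivially on cohomology: inner conjugation by an element of the group acts trivially on that group's cohomology. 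One then fixes a set of representatives.

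\textbf{Naturality.} The isomorphisms in all three steps are natural in $W$. They are also compatible with connecting homomorphisms, since each step is an isomorphism of $\delta$-functors in $W$. The main obstacle is this $\delta$-functor compatibility, together with the commutation with infinite direct sums. The cleanest route is to check that both sides are effaceable $\delta$-functors agreeing in degree $0$, where the isomorphism is the one noted in \cite[pg.~52]{heyer2023left}. Effaceability needs two facts: $\ind_{K_P}^P$ is exact, and $\Res_{K_U}^P\ind_{K_P}^P$ carries injectives of $\Rep_k(K_P)$ to $H^{>0}(K_U,-)$-acyclic objects. The second fact follows from Steps 1–2 applied to injectives, since restriction of an injective smooth representation to the open subgroup $K_U^g\cap K_P$ remains injective. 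The universal property of universal $\delta$-functors then upgrades the degree-$0$ isomorphism to all degrees.
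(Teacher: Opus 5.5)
Your proof is correct and follows the paper's route: the Mackey decomposition, passing $H^i(K_U,-)$ through the direct sum (the paper does this on homogeneous cochains), Shapiro's lemma for the finite-index open subgroup $K_U\cap\pre gK_P$, and conjugation to $K_U^g\cap K_P$. The paper works at the cochain level only to extract the explicit formulas it uses later, and it treats the $\delta$-functor compatibility separately in Remark~\ref{deltafunctorremark}. There is one small slip in your optional effaceability aside: $K_U^g\cap K_P$ is open in $K_U$ but not in $K_P$, so the claim that injectivity survives restriction must rest on the closed-subgroup fact for profinite groups, or on effacing with $\Ind_{\{1\}}^{K_P}W$ as the paper does.
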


\begin{proof}
We will prove this using a string of isomorphisms at the level of cochains, using the notation of \cite[Chapter 1]{NSW}.  (This can also be seen without considering cochains, however we do this for the sake of obtaining explicit formulas for later use.)  From the Mackey decomposition, we get natural isomorphisms at the level of homogeneous cochains

$$X^i(K_U, \ind_{K_P}^P W)^{K_U}\overset{\sim}\longrightarrow\bigoplus_{g\in K_U\backslash P/K_P} X^i(K_U,\ind_{K_U\cap \pre gK_P}^{K_U} \Res_{K_U\cap \pre gK_P}^{\pre gK_P} \pre gW)^{K_U}.$$
Since $K_U\cap \pre gK_P$ is open and hence of finite index in $K_U$, the compact induction and full induction coincide by \cite[Lemma 6.3.4]{Weibel}, so we can conclude from Shapiro's Lemma (see \cite[Proposition 1.6.4]{NSW}) that the above is naturally isomorphic to
$$\bigoplus_{g\in K_U\backslash P/K_P} X^i(K_U,\Res_{K_U\cap \pre gK_P}^{\pre gK_P} \pre gW)^{K_U\cap \pre gK_P}\overset{\sim}\longrightarrow \bigoplus_{g\in K_U\backslash P/K_P} X^i(K_U^g,W)^{K_U^g\cap K_P}.$$
One can then conclude by taking cohomology and noting that $X^i(K_U^g,W)$ is an acyclic resolution of $W$ as a representation of $K_U^g\cap K_P$ by \cite[Proposition 1.3.7]{NSW}.
\end{proof}

\begin{notation}
This canonical decomposition gives us natural projection and inclusion maps for any double coset representative $g\in P$, which we denote by
\begin{align*}
j_g^{(i)}: &H^i(K_U^g\cap K_P, W)\into H^i(K_U,\ind_{K_P}^P W),\\
\pr_g^{(i)}:&H^i(K_U,\ind_{K_P}^P W)\onto H^i(K_U^g\cap K_P, W).
\end{align*}
\end{notation}

\begin{remark}\label{mackey-on-cochains}
One can chase through the proof of Proposition \ref{cohomological Mackey}, using the explicit equation of the map coming from Shapiro's lemma given in \cite[Proposition 1.6.4]{NSW}, to arrive at explicit formulas at the level of cochains for 
\begin{align*}
\pr_g^{(i)}:&X^i(K_U,\ind_{K_P}^P W)^{K_U}\to X^i(K_U^g, W)^{K_U^g\cap K_P}.
\end{align*}

Concretely, given $x\in X^i(K_U,\ind_{K_P}^P W)^{K_U}$, its image under $\pr_g^{(i)}$ is given on $(\sigma_0,\ldots, \sigma_i)\in (K_U^g)^{i+1}$ by
$$\pr_g^{(i)}(x)(\sigma_0,\ldots, \sigma_i)=x(g\sigma_0g\inv,\ldots,g\sigma_ig\inv)(g\inv)\in W.$$
One can also deduce a general (quite complicated) cochain formula for $j_g^{(i)}$, however we will only need a description of it when $i=0$ in cohomology.  Explicitly, given $w\in H^0(K_U^g\cap K_P,W)=W^{K_U^g\cap K_P}$, we have that
$$j_g^{(0)}(w)=\sum_{u\in K_U/K_U\cap \prescript{g}{}K_P} [ug,w]$$
in $H^0(K_U,\ind_{K_P}^P W)=(\ind_{K_P}^P W)^{K_U}$; this fact is implicitly used by Heyer in \cite[pg. 52]{heyer2023left}.
\end{remark}

\begin{remark}
\label{deltafunctorremark}
We note for future use that the above maps $\pr_g^{(i)}$ and $j_g^{(i)}$ form morphisms of universal $\delta$-functors, and Lemma \ref{cohomological Mackey} can itself be rephrased as an isomorphism of $\delta$-functors. More precisely, fix any $g\in P$, and consider the following families of additive functors $\Rep_k(K_P)\to \Ab$:
$$F=\big(H^i(K_U,\ind_{K_P}^P (-))\big)_{i\geq 0},$$
$$F_g=\big(H^i(K_U^g\cap K_P,-)\big)_{i\geq 0}.$$
Then we claim the following:
\begin{enumerate}[label = (\roman*)]
\item $F$ and $F_g$ are universal $\delta$-functors $\Rep_k(K_P)\to \Ab$.
\item The natural maps $j_g^{(i)}$, $\pr_g^{(i)}$ form morphisms of $\delta$-functors
$$j_g:F_g\to F\qquad\text{and}\qquad \pr_g:F\to F_g,$$
respectively.
\end{enumerate}
Moreover, $\bigoplus_{g\in K_U\backslash P/K_P} F_g=\left(\bigoplus_{g\in K_U\backslash P/K_P} H^i(K_U^g\cap K_P,-)\right)_{i\geq 0}$ is itself a universal $\delta$-functor $\Rep_k(K_P)\to \Ab$, and our isomorphism above is in fact an isomorphism of universal $\delta$-functors.
\end{remark}

\begin{proof} (i) That $F_g$ is merely a $\delta$-functor is an easy observation: it is the family resulting from precomposing the terms in the $\delta$-functor $\big(H^i(K_U^g\cap K_P,-)\big)_{i\geq 0}:\Rep_k(K_U^g\cap K_P)\to \Ab$ with the exact functor $\Res^{K_P}_{K_U^g\cap K_P}:\Rep_k(K_P)\to \Rep_k(K_U^g\cap K_P)$.  To see that it is universal, it suffices, by \cite[Theorem 2.6.2]{NSW}, to show that each term for $i>0$ is effaceable; in particular, it suffices to find, for any $i> 0$ and any $W\in \Rep_k(K_P)$, a monomorphism $\iota:W\into I$ in $\Rep_k(K_P)$ such that $H^i(K_U^g\cap K_P,I)=0$.  To do this, we let $I\coloneqq \Ind_{\{1\}}^{K_P} W$, which is cohomologically trivial by \cite[Proposition 1.3.7]{NSW} and has a natural $K_P$-equivariant embedding $W\into I$, hence is sufficient for our purposes.  Note also that the independence of our choice of $I$ here from $g\in P$ implies that the $\delta$-functor $\bigoplus_{g\in K_U\backslash P/K_P} F_g$ is also universal, as its $i$-th term also vanishes on $I$ for $i>0$.

The argument to show $F$ is a universal $\delta$-functor is almost identical, so we omit it.


\noindent (ii) We will only argue why $\pr_g$ gives a morphism of $\delta$-functors $F\to F_g$; the argument for $j_g:F_g\to F$ is exactly the same idea.

To prove this, we recall that the proof of Proposition \ref{cohomological Mackey} shows that the construction of $\pr_g^{(i)}$ involves a chain of homomorphisms of cohomology groups, which we will show gives morphisms of $\delta$-functors one piece at a time.  Fix any $g\in P$.  Since the Mackey decomposition is functorial, we have a natural transformation of exact additive functors $\Rep_k(K_P)\to \Rep_k(K_U)$ coming from a single Mackey summand
$$\Res_{K_U}^P\ind_{K_P}^P(-)\to \ind_{K_U\cap \pre gK_P}^{K_U}\Res_{K_U\cap \pre gK_P}^{\pre gK_P} \pre g{(-)}.$$
Since $(H^i(K_U,-))_{i\geq 0}$ is a $\delta$-functor $\Rep_k(K_U)\to \Ab$, it follows from Lemma \ref{delta1} that the induced natural transformations
$$H^i\left(K_U,\ind_{K_P}^P(-)\right)\to H^i\left(K_U,\ind_{K_U\cap \pre gK_P}^{K_U}\Res_{K_U\cap \pre gK_P}^{\pre gK_P} \pre g{(-)}\right)$$
for $i\geq 0$ together form a morphism of $\delta$-functors $\Rep_k(K_P)\to \Ab$.

Next, we have a natural isomorphism 
$$\mathrm{sh}:H^i(K_U,\ind_{K_U\cap \pre gK_P}^{K_U}(-))\overset{\sim}\longrightarrow H^i(K_U\cap \pre gK_P,-)$$
of functors $\Rep_k(K_U\cap \pre gK_P)\to \Ab$ for all $i\geq 0$ coming from Shapiro's Lemma, which is in fact a morphism of $\delta$-functors as discussed in \cite[pg. 129]{NSW}.  It is then immediate from Lemma \ref{delta2} that precomposing with the exact functor 
$$\Res_{K_U\cap \pre gK_P}^{\pre gK_P}\pre g{(-)}:\Rep_k(K_P)\to \Rep_k(K_U\cap \pre gK_P)$$
gives us a morphism of $\delta$-functors $\Rep_k(K_P)\to \Ab$ which is given by
$$H^i\left(K_U,\ind_{K_U\cap \pre gK_P}^{K_U}\Res_{K_U\cap \pre gK_P}^{\pre gK_P} \pre g{(-)}\right)\to H^i\left(K_U\cap \pre gK_P,\Res_{K_U\cap \pre gK_P}^{\pre gK_P} \pre g(-)\right).$$

For the final piece, we will apply Lemma \ref{delta3}.  Consider the following four functors:
\begin{align*}
F_1&:\Rep_k(K_U\cap \pre gK_P)\to \Ab,\qquad\qquad V\mapsto V^{K_U\cap \pre gK_P};\\
F_2&:\Rep_k(K_U^g\cap K_P)\to \Ab,\qquad\qquad W\mapsto W^{K_U^g\cap K_P};\\
G_1\coloneqq \Res_{K_U\cap \pre gK_P}^{\pre gK_P} \pre g{(-)}&:\Rep_k(K_P)\to \Rep_k(K_U\cap \pre gK_P);\\
G_2\coloneqq \Res_{K_U^g\cap K_P}^{K_P}&:\Rep_k(K_P)\to \Rep_k(K_U^g\cap K_P).
\end{align*}
Note that $F_1,F_2$ are left-exact, $G_1,G_2$ are exact, and our earlier computations directly show that the $G_i$ take injective objects to $F_i$-acyclic objects for $i=1,2$.  One can easily check the following claim from the definition of $\pre gW$: given a representation $W$ of any subgroup $K\leq H$ and an element $g\in H$, there is a natural identification $(\pre gW)^{\pre gK}=W^K$ given by $w\mapsto w$ in the underlying sets.  In our case, this amounts to a natural isomorphism of functors $F_1\circ G_1\to F_2\circ G_2$, allowing us to apply our lemma.  Thus, we conclude that there is an induced morphism of $\delta$-functors $R(F_1)\circ G_1\to R(F_2)\circ G_2$, the construction of which can be unwound to show that the maps 
$$H^i\left(K_U\cap \pre gK_P,\Res_{K_U\cap \pre gK_P}^{\pre gK_P}\pre g{(-)}\right)\to H^i\left(K_U^g\cap K_P,\Res_{K_U^g\cap K_P}^{K_P}(-)\right)$$
we defined for all $i\geq 0$ in the proof of Proposition \ref{cohomological Mackey} form a morphism of $\delta$-functors.

The composition of all these morphisms of $\delta$-functors recovers exactly for $i\geq 0$ our maps
$$\pr_g^{(i)}:H^i(K_U,\ind_{K_P}^P(-))\to H^i(K_U^g\cap K_P,-)$$
of functors $\Rep_k(K_P)\to \Ab$, which thus form a morphism of $\delta$-functors.

The final remark that our isomorphisms in Proposition \ref{cohomological Mackey} in fact form an isomorphism of universal $\delta$-functors $\Rep_k(K_P)\to \Ab$
$$\left(H^i(K_U,\ind_{K_P}^P(-))\right)_{i\geq 0}\overset{\sim}\longrightarrow \left(\bigoplus_{g\in K_U\backslash P/K_P} H^i(K_U^g\cap K_P,\Res^{K_P}_{K_U^g\cap K_P}(-))\right)_{i\geq 0}$$
is now immediate: it is a $\delta$-functor morphism which is an isomorphism in every degree, hence the family of homomorphisms consisting of inverses on each term is itself a morphism of $\delta$-functors.
\end{proof}

We now briefly review the definition and basic properties of the corestriction map on group cohomology. For just the rest of this section, let $G$ be any profinite group, and let $K$ and $H$ be open (hence finite index) subgroups.  All facts will be taken straight from \cite[Section 1.5]{NSW}.

\begin{definition}
\label{def-cores}
Let $A$ be any discrete $G$-module.  The \textbf{corestriction} map $\cores: H^i(H,A)\to H^i(G,A)$ is defined on cochains as
\begin{equation}\label{cores-on-cochains}
\begin{aligned}
\cores : X^i(H,A)^H \to X^i(G,A)^G,\hspace{2.65cm} \\
(\cores(x))(\sigma_0,\ldots, \sigma_i) 
= \sum_{g \in H \backslash G} \overline{g}^{-1} 
x\left(\overline{g} \sigma_0 (\overline{g\sigma_0})^{-1},\ldots, 
\overline{g} \sigma_i (\overline{g\sigma_i})^{-1}\right).
\end{aligned}
\end{equation}

\end{definition}

\begin{prop} 
\begin{enumerate}
\item If $H=G$, then $\cores:H^i(G,A)\to H^i(G,A)$ is the identity.
\item If $K\leq H\leq G$, then the composition
$$H^i(K,A)\overset{\cores}\longrightarrow H^i(H,A)\overset{\cores}\longrightarrow H^i(G,A)$$
is exactly $\cores: H^i(K,A)\to H^i(G,A)$.
\item The maps $\cores:H^i(H,A)\to H^i(G,A)$  are functorial in $A$, and together form a morphism of $\delta$-functors $\Rep_k(G)\to \Ab$,
$$\cores:(H^i(H,\Res^G_H(-)))_{i\geq 0}\to (H^i(G,-))_{i\geq 0}.$$
\end{enumerate}
\end{prop}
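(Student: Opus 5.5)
The three parts are standard facts, and I would prove each directly from the cochain formula \eqref{cores-on-cochains}. The only wrinkle is dependence on choices: the formula uses fixed coset representatives $\overline{g}$ for $H\backslash G$. So I would first check that the induced map on cohomology does not depend on this choice. Changing $\overline{g}$ to $h\overline{g}$ with $h\in H$ changes the cochain by something cohomologous, because $x$ is $H$-invariant as a homogeneous cochain. Concretely, $h^{-1}$ can be pulled through $x$, and the resulting reindexing is a chain homotopy. Alternatively, I would identify $\cores$ with the composite
\[
H^i(H,A)\xrightarrow{\ \mathrm{sh}^{-1}\ } H^i\bigl(G,\Ind_H^G A\bigr)\longrightarrow H^i(G,A)
\]
of the inverse of Shapiro's isomorphism with the map induced by the $G$-equivariant trace $\Ind_H^G A\to A$, $f\mapsto\sum_{g\in H\backslash G}\overline{g}^{-1}f(\overline{g})$. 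Writing out the explicit Shapiro map of \cite[Proposition 1.6.4]{NSW} gives exactly \eqref{cores-on-cochains}. This description is intrinsic, and it is what I would use for (3).

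For (1), take $H=G$ with the single representative $\overline{g}=1$. Then $\overline{g}\sigma_j(\overline{g\sigma_j})^{-1}=\sigma_j$, so the formula is the identity on cochains. For (2), choose representatives for $K\backslash G$ as products $\overline{a}\,\overline{b}$, with $\overline{a}$ running over $K\backslash H$ and $\overline{b}$ over $H\backslash G$. Substituting the cochain formula for $\cores_K^H$ into that for $\cores_H^G$ gives a sum over the $\overline{b}$ of $\overline{b}^{-1}$ times a sum over the $\overline{a}$ of $\overline{a}^{-1}x(\cdots)$. The arguments have the form $\overline{a}\,\tau\,(\overline{a\tau})^{-1}$ with $\tau=\overline{b}\sigma_j(\overline{b\sigma_j})^{-1}\in H$. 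One then checks that the product $\overline{a}\,\overline{b}$ is the chosen representative of $K\overline{a}\,\overline{b}\sigma_j$ after the appropriate bookkeeping. This is a direct reindexing, and by the independence of representatives established above the result equals $\cores_K^G$.

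For (3), functoriality in $A$ is clear, since the formula is natural in $A$ on cochains. For compatibility with connecting homomorphisms I would use the Shapiro/trace description. Shapiro's isomorphism is a morphism of $\delta$-functors, as already used in Remark~\ref{deltafunctorremark}. The trace $\Ind_H^G\Res^G_H(-)\to(-)$ is a natural transformation of exact functors, so it induces a morphism of $\delta$-functors (Lemma~\ref{delta1}). Composing the two gives the claim. Alternatively, one can appeal to universality: both families are universal $\delta$-functors on $\Rep_k(G)$, since $H^i(H,\Ind_{\{1\}}^G A)=0$ for $i>0$. A morphism of universal $\delta$-functors is then determined by its degree-$0$ component, the norm map $A^H\to A^G$. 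Hence the cochain maps, which commute with connecting maps because the formula is compatible with the snake-lemma construction on cochain complexes, form the unique extension.

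I expect the main obstacle to be the bookkeeping in (2), and the verification that the cochain formula really matches Shapiro composed with trace. Both are index chases. I would handle them through the universality argument: each side is a $\delta$-functor morphism agreeing in degree $0$, where (2) reduces to transitivity of norm maps $\sum_{K\backslash G}=\sum_{H\backslash G}\sum_{K\backslash H}$. This avoids most of the explicit computation.
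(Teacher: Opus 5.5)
Your proposal is correct, and it is essentially the paper's argument with the details filled in. The paper only calls (1) and (2) easy and cites \cite[Proposition~1.5.2]{NSW} for (3); your argument for (3) — a natural chain map induces a morphism of $\delta$-functors, and universality pins it down via the norm in degree $0$ — is the standard proof of that cited fact. One small caveat: replacing $\overline{\tau}$ by $h_\tau\overline{\tau}$ is not absorbed by a reindexing, because after pulling $h_\tau^{-1}$ through $x$ you are left with right factors $h_{\tau\sigma_j}^{-1}$ that depend on $j$; so independence of the choice of representatives is best justified, as you also indicate, by noting that every choice gives a morphism of universal $\delta$-functors equal to the norm $A^H\to A^G$ in degree $0$.
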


\begin{proof}
The first two are easy to see, and the third is part of \cite[Proposition 1.5.2]{NSW}.
\end{proof}

We now go back to our usual notation and, following \cite[Notation 4.3.6]{heyer2023left}, introduce notation for a specific corestriction map that will be used regularly in the sequel.

\begin{notation}
Given any $g\in P$ and any $m\in M^+$, we use the following notation to denote a particular corestriction:
$$\cores^{(i)}_{m,g}: H^i(K_U^g\cap K_P,V)\to H^i(K_U^{mg}\cap K_P,V).$$
Since $m$ is positive, $K_U^{mg}\cap K_P\supseteq K_U^g\cap K_P$, hence this is well-defined.

Similarly, for any $g\in P$, pick any $m\in M^+$ such that $mg\in P^+$, and write 
$$\cores^{(i)}_g\coloneqq \cores^{(i)}_{m,g}:H^i(K_U^g\cap K_P,V)\to H^i(K_U,V).$$
Note that this is also just the usual corestriction from the open subgroup $K_U^g\cap K_P$ to $K_U$.  That this is well-defined and independent of the choice of $m$ follows from the observation that for any $g\in P^+$, we have $K_U^{g}\cap K_P=K_U$ just as in \cite[Lemma 4.3.5]{heyer2023left}.  (As Heyer explains, there always exists such a choice of $m\in M^+$.)
\end{notation}

\subsection{The double coset formula for \texorpdfstring{$\SVi$}{S(V,i)}} \label{double-coset-subsect}
For now, let $W\in \Rep_k(K_P)$ be any smooth representation of $K_P$.

\begin{prop} 
\label{Exp1}
There is an $M$-equivariant isomorphism 
\begin{align*} 
\kappa: \ind_{M^+}^M H^i (K_U, \ind_{K_P}^P W) &\overset{\sim}{\longrightarrow} \ind_{K_M}^M H^i(K_U, W)\\
1\otimes j_g^{(i)}(\xi) &\longmapsto [\mathrm{pr}_M(g), \cores_g^{(i)} (\xi)]
\end{align*}
for all $i \in \{0, \dots, \mathrm{dim}(U)\}$, where $\xi \in H^i(K_U^g\cap K_P,W), g\in K_U\backslash P/K_P$ and $\mathrm{pr}_{M}$ denotes the projection $P\twoheadrightarrow M$.
\end{prop}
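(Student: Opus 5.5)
We use the Mackey decomposition of Proposition~\ref{cohomological Mackey} to identify the left-hand side with a direct sum over double cosets. Each summand is then paired with a coset of $M$ by tracking how $M^+$ permutes the double cosets and how the corestrictions compose. First we make the $M^+$-module structure on $H^i(K_U,\ind_{K_P}^P W)$ explicit, as in Heyer's construction (the case $i=0$ is \cite[pg.~52]{heyer2023left}). An element $m\in M^+$ acts by the composite of translation by $m$ on $\ind_{K_P}^P W$ with the map $H^i(K_U,-)\to H^i(mK_Um^{-1},-)\xrightarrow{\cores}H^i(K_U,-)$. Under Proposition~\ref{cohomological Mackey}, this action carries the summand indexed by $g$ to the summand indexed by $mg$. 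On that summand it acts by
\[
\cores^{(i)}_{m,g}:H^i(K_U^g\cap K_P,W)\to H^i(K_U^{mg}\cap K_P,W).
\]
We check this on cochains using the formula for $\pr_g^{(i)}$ in Remark~\ref{mackey-on-cochains} and the cochain formula \eqref{cores-on-cochains}. Equivalently, it is a statement about morphisms of universal $\delta$-functors (Remark~\ref{deltafunctorremark}). So it suffices to verify it in degree $0$, where it is the explicit formula for $j_g^{(0)}$, and then conclude by universality.

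Next we define $\kappa$. Since $\ind_{M^+}^M$ is left adjoint to restriction from $M$ to $M^+$, it suffices to give an $M^+$-equivariant map $H^i(K_U,\ind_{K_P}^PW)\to \ind_{K_M}^M H^i(K_U,W)$. On the $g$-summand we send $\xi\mapsto[\pr_M(g),\cores_g^{(i)}\xi]$. This is well defined on double cosets because replacing $g$ by $ugk$ with $u\in K_U$, $k\in K_P$ changes $\pr_M(g)$ by $\pr_M(k)\in K_M$. The class $\xi$ is transported compatibly, and the relation $[mk,w]=[m,kw]$ absorbs the difference. Corestriction is also equivariant for conjugation. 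Equivariance for $m\in M^+$ follows from the transitivity of corestriction (part (2) of the proposition on corestriction): $\cores_{mg}^{(i)}\circ\cores_{m,g}^{(i)}=\cores_g^{(i)}$, together with $\pr_M(mg)=m\,\pr_M(g)$. Taking the adjoint gives the $M$-equivariant map $\kappa$ with the stated formula.

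To show $\kappa$ is an isomorphism, we build the inverse. We send $[m,\eta]$ to $m^{-1}\otimes j_{m}^{(i)}(\eta)$, with the caveat that this must be arranged so that $m$ lies in $M^+$ after translation. Concretely, $\ind_{M^+}^M X=\varinjlim$ of $X$ along the $M^+$-action, taken over a cofinal element $z\in M^+$ that is strictly contracting. We then show that every summand $j_g(\xi)$ becomes, after applying a sufficiently positive $m$ with $mg\in P^+$, a class in the summand indexed by $mg\in P^+$. There $K_U^{mg}\cap K_P=K_U$ (\cite[Lemma 4.3.5]{heyer2023left}), so that summand is just $H^i(K_U,W)$ placed at the coset $\pr_M(mg)K_M$.

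The \emph{main obstacle} is this localization step. We must show that the colimit identifies the summands indexed by $P^+$-representatives, modulo the $M$-translation, exactly with $\bigoplus_{M/K_M}H^i(K_U,W)$. We also need that no extra kernel arises from the corestrictions. Surjectivity is clear from the above. For injectivity, it is enough to observe that for $g,g'\in P^+$ the maps between $P^+$-summands are corestrictions from $K_U$ to itself, hence identities, so the transition maps are injective on these summands. Every class is eventually carried into these summands, so the colimit is the stated induced module. We follow Heyer's degree-$0$ argument \cite[Theorem 4.3.2]{heyer2023left} and upgrade it to all $i$ via the $\delta$-functor formalism.
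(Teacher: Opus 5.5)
Your proof is correct in substance. The first step, the formula $m\star j^{(i)}_g(\xi)=j^{(i)}_{mg}(\cores^{(i)}_{m,g}(\xi))$ obtained by $\delta$-functor universality, is exactly Claim~\ref{mplusaction}. After that step your argument is organized differently from the paper's.

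The paper never uses the adjunction. It factors $\kappa=\ind_{M^+}^M(\alpha)\circ f$ through $\ind_{M^+}^M H^i(K_U,\ind_{K_P}^{P^+}W)$:
\begin{itemize}
\item $\alpha$ (Lemma~\ref{Exp1Step1}) is the $M^+$-isomorphism $H^i(K_U,\ind_{K_P}^{P^+}W)\cong\ind_{K_M}^{M^+}H^i(K_U,W)$ coming from the Mackey decomposition over $P^+$.
\item $f$ (Lemma~\ref{Exp1Step2}) is checked by hand to be independent of the auxiliary $m\in M^+$ with $mg\in P^+$, and to be compatible with the tensor relation over $M^+$.
\item The inverse $\phi$ of $f$ is induced by the inclusion $\ind_{K_P}^{P^+}W\subseteq\ind_{K_P}^PW$, and this needs the extra $\delta$-functor argument of Lemma~\ref{jplus}.
\end{itemize}

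You instead get well-definedness and $M$-equivariance for free from $\ind_{M^+}^M\dashv\Res^M_{M^+}$. You also replace $H^i(K_U,\ind_{K_P}^{P^+}W)$ by the $M^+$-stable subspace $X^+\subseteq X\coloneqq H^i(K_U,\ind_{K_P}^PW)$ spanned by the Mackey summands with $g\in P^+$. Bijectivity then follows from $\ind_{M^+}^MX=\varinjlim(X,z\star)=\varinjlim(X^+,z\star)$. This holds because a power of $z$ pushes every class into $X^+$, and $z\star$ acts on $X^+$ as an injective shift of summands by identity maps.

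Your route is shorter and bypasses Lemma~\ref{jplus}. The cost is that it uses the description of $\ind_{M^+}^M$ as the localization at a strictly positive element $z$ that is central in $M$ (your sketch should say "central"). The paper's factorization pays off later: $\alpha^{-1}$ and $\phi=f^{-1}$ are exactly $H^i(\gamma_1)$ and $H^i(\gamma_2)$. That is how Proposition~\ref{Exp2} identifies $H^i(\Gamma)$ with $\kappa^{-1}$. With your construction, that compatibility would have to be recovered separately.

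One slip to fix: the inverse you write down, $[m,\eta]\mapsto m^{-1}\otimes j^{(i)}_m(\eta)$, is not the inverse. For $m\in M^+$ you have $\cores^{(i)}_m=\mathrm{id}$, so your own $\kappa$ sends this element to $m^{-1}\cdot[m,\eta]=[1,\eta]$. The correct inverse is $[m,\eta]\mapsto m\otimes j^{(i)}_1(\eta)$, which is defined for every $m\in M$ with no positivity caveat. Your colimit argument does not depend on this formula, so the rest of the proof is unaffected.
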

We build towards a proof of this proposition by proving some intermediate lemmas.

By an analogous Mackey decomposition to that of Lemma \ref{cohomological Mackey}, we have natural isomorphisms
$$H^i(K_U,\ind_{K_P}^{P^+} W)\overset{\sim}{\longrightarrow} \bigoplus_{g\in K_U\backslash P^+/K_P} H^i(K_U^g\cap K_P,W) =  \bigoplus_{g\in K_U\backslash P^+/K_P} H^i(K_U,W).$$
For any $g\in K_U\backslash P^+/K_P$, we denote by $\pre +j^{(i)}_{g}$ the natural inclusion map $$\pre +j^{(i)}_g:H^i(K_U,W)=H^i(K_U^g\cap K_P,W)\hookrightarrow H^i(K_U,\ind^{P^+}_{K_P}W).$$ As in the previous section, $\pre +j^{(i)}_g: H^i(K_U,-)\rightarrow H^i(K_U,\ind^{P^+}_{K_P}(-))$ is a morphism of $\delta$-functors. Let $\iota^{(i)}_W$ denote the morphism
$$H^i(K_U,\ind^{P^+}_{K_P}(W))\longrightarrow H^i(K_U,\ind^P_{K_P}(W))$$ induced by the inclusion $\ind^{P^+}_{K_P}(W)\subseteq \ind ^P_{K_P}(W)$.

\begin{lem}
\label{jplus}
We have $(\iota^{(i)}_W\circ \pre +j^{(i)}_g)(\xi)=j^{(i)}_g(\xi)$ for any $g\in K_U\backslash P^+/K_P, \xi \in H^i(K_U,W)$, i.e., the diagram
\begin{center}
\begin{tikzcd}
{H^i(K_U,W)} \arrow[rr, "\pre +j^{(i)}_g"] \arrow[rrdd, "j^{(i)}_g"'] &  & {H^i(K_U,\ind^{P^+}_{K_P}(W))} \arrow[dd, "\iota^{(i)}_W"] \\
                                                                                      &  &                                                           \\
                                                                                      &  & {H^i(K_U,\ind^P_{K_P}(W))}                               
\end{tikzcd}
\end{center} commutes.
\end{lem}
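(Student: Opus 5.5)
The plan is to reduce to degree $0$ by a universality argument for $\delta$-functors, and then check degree $0$ directly.

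\textbf{Step 1: both sides are morphisms of $\delta$-functors.} As functors $\Rep_k(K_P)\to\Ab$, consider the universal $\delta$-functor $(H^i(K_U,-))_{i\geq 0}$, which is universal by the argument of Remark \ref{deltafunctorremark}(i). Here $g\in P^+$, so $K_U^g\cap K_P=K_U$.
\begin{itemize}
\item The maps $j_g^{(i)}$ form a morphism of $\delta$-functors by Remark \ref{deltafunctorremark}(ii).
\item The maps ${}^+j_g^{(i)}$ form a morphism of $\delta$-functors by the analogous argument for $P^+$, as noted just before the lemma.
\item The maps $\iota^{(i)}$ are induced by the natural transformation $\ind_{K_P}^{P^+}(-)\subseteq\ind_{K_P}^{P}(-)$ between exact functors. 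Hence they form a morphism of $\delta$-functors, by the same Lemma \ref{delta1} used in Remark \ref{deltafunctorremark}.
\end{itemize}
So both $\iota\circ{}^+j_g$ and $j_g$ are morphisms of $\delta$-functors out of a universal $\delta$-functor. By universality, two such morphisms agree in all degrees as soon as they agree in degree $0$.

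\textbf{Step 2: the degree-$0$ check.} Let $w\in W^{K_U}$. By Remark \ref{mackey-on-cochains},
$$j_g^{(0)}(w)=\sum_{u\in K_U/K_U\cap {}^gK_P}[ug,w].$$
The map ${}^+j_g^{(0)}$ comes from the identical Mackey decomposition of $\ind_{K_P}^{P^+}W$. Its inclusion is $[a,w]\mapsto[ag,w]$, which gives the same formula
$$\sum_{u}[ug,w]\in(\ind_{K_P}^{P^+}W)^{K_U}.$$
Here $ug\in P^+$ because $P^+$ is a monoid containing $K_U$. Applying $\iota^{(0)}$, which is just the inclusion of functions, yields $j_g^{(0)}(w)$.

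\textbf{Expected obstacle.} The main point needing care is that the two Mackey decompositions are compatible, so that the index sets and the Shapiro isomorphisms match under the inclusion $\ind_{K_P}^{P^+}W\subseteq\ind_{K_P}^PW$. Concretely, $K_U\backslash P^+/K_P$ injects into $K_U\backslash P/K_P$. I would also verify that the $P^+$-version of the explicit formula for ${}^+j_g^{(0)}$ holds; this follows by exactly the argument in Remark \ref{mackey-on-cochains}.

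If one prefers to avoid universality, there is an alternative. One can check directly that the Mackey isomorphism for $\ind_{K_P}^{P^+}W$ is the restriction of that for $\ind_{K_P}^PW$ to the summands indexed by $P^+$. This holds because the sum decomposition $f\mapsto(f_g)$ is compatible with extending functions by zero. Applying $H^i(K_U,-)$ together with Shapiro's lemma, which is natural, then gives the commutative diagram in all degrees at once.
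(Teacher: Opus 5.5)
Your proposal is correct and matches the paper's proof. Both arguments reduce to degree $0$, using that $j_g$, ${}^+j_g$ and $\iota$ are morphisms of $\delta$-functors out of the universal $\delta$-functor $H^\bullet(K_U,-)$ on $\Rep_k(K_P)$, and in degree $0$ the inclusion visibly sends ${}^+j_g^{(0)}(\xi)=\sum_{u}[ug,\xi]$ to $j_g^{(0)}(\xi)$.

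The only difference is how you show that $\iota$ is compatible with the $\delta$-structure. You get this directly from Lemma~\ref{delta1}, applied to the inclusion of exact functors $\Res^{P^+}_{K_U}\ind^{P^+}_{K_P}\Rightarrow\Res^{P}_{K_U}\ind^{P}_{K_P}$. The paper instead uses Lemma~\ref{delta3}, after checking that these functors send injectives to $H^0(K_U,-)$-acyclic objects. Your route is slightly more direct and equally valid.
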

\begin{proof}
First, we will apply Lemma \ref{delta3} to show that $(\iota^{(i)})_{i\geq 0}$ is a morphism of $\delta$-functors.  Consider the following four functors:
\begin{align*}
F_1=F_2&:\Rep_k(K_U)\to \Ab,\qquad\qquad V\mapsto V^{K_U}=H^0(K_U,V);\\
G_1\coloneqq \Res_{K_U}^{P^+} \ind^{P^+}_{K_P}{(-)}&:\Rep_k(K_P)\to \Rep_k(K_U);\\
G_2\coloneqq \Res_{K_U}^{P}\ind ^P_{K_P}(-)&:\Rep_k(K_P)\to \Rep_k(K_U).
\end{align*}    
Note that $F_1,F_2$ are left-exact, $G_1,G_2$ are exact, and by Mackey decomposition (\cite[Lemma 3.2.15]{heyer2023left} or Lemma \ref{mackey}) we have
$$G_1=\Res_{K_U}^{P^+} \ind^{P^+}_{K_P}{(-)} \cong \bigoplus_{g\in K_U\backslash P^+/K_P}\ind^{K_U}_{\pre gK_U}\Res^{\pre gK_P}_{\pre gK_U}\pre g(-).
$$ Each functor $\ind^{K_U}_{\pre gK_U}, \Res^{\pre gK_P}_{\pre gK_U}, \pre g(-)$ preserves injective objects (as they have exact left adjoints) and a direct sum of injective $K_U$-representations is a $F_1=F_2=H^0(K_U,-)$-acyclic object. Similarly, $G_2$ also sends injective objects to $F_1=F_2$-acyclic objects. The natural transformation $\mathrm{Id}\rightarrow \Res^{P}_{P^+}\ind^{P}_{P^+}$ induces a natural transformation $$G_1=\Res_{K_U}^{P^+} \ind^{P^+}_{K_P}{(-)}\rightarrow \Res_{K_U}^{P^+}\Res^{P}_{P^+}\ind^{P}_{P^+} \ind^{P^+}_{K_P}{(-)}=\Res_{K_U}^{P} \ind^{P}_{K_P}{(-)}=G_2$$ which further induces a natural transformation $\eta:F_1\circ G_1\rightarrow F_2\circ G_2$. By Lemma \ref{delta3}, we get an induced morphism of $\delta$-functors
$$\mathrm{R}^i\eta: \mathrm{R}^iF_1\circ G_1\longrightarrow \mathrm{R}^iF_2\circ G_2$$ which are precisely the morphisms $(\iota^{(i)})_{i\geq 0}: H^i(K_U,\ind^{P^+}_{K_P}(-))\rightarrow H^i(K_U,\ind^P_{K_P}(-))$. Thus, each arrow in the above diagram is a morphism of $\delta$-functors and $\{H^i(K_U^g\cap K_P, -): \Rep_k(K_P)\rightarrow \Ab\}_{i\geq 0}$ is a universal $\delta$-functor, hence it suffices to check that the diagram commutes for $i=0$; this trivially follows from $\pre +j^{(0)}_g(\xi)\coloneqq \sum_{u\in K_U/\pre gK_U}[ug,\xi]$ and $\iota^{(0)}_W([ug,\xi])=[ug,\xi]$.
\end{proof}
\begin{lem}
\label{Exp1Step1}
For any $W\in \Rep_{k}(K_P)$, the map 
$$\alpha: H^i(K_U, \ind^{P^+}_{K_P}W)\overset{\sim}{\longrightarrow} \ind^{M^+}_{K_M}H^i(K_U,W)$$
given by $$\pre +j^{(i)}_{g}(\xi)\longmapsto [\pr_{M}(g),\xi]$$ is an $M^{+}$-equivariant isomorphism, where $g\in P^{+}$ and $\xi \in H^i(K_U^{g}\cap K_P, W)= H^i(K_U,W)$
\end{lem}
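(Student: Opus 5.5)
The plan is to build $\alpha$ out of the Mackey decomposition for $P^+$ stated just before Lemma~\ref{jplus}, and then to identify the index set $K_U\backslash P^+/K_P$ with $M^+/K_M$.

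\textbf{Step 1: the bijection on indices.} Since $P^+=K_UM^+$ and $K_P=K_UK_M=K_MK_U$, every double coset $K_UgK_P$ with $g\in P^+$ contains an element of $M^+$. For $m\in M^+$ we have $mK_Um^{-1}\subseteq K_U$, and hence
\[
K_UmK_P=K_UmK_MK_U=K_UmK_M .
\]
Therefore $\pr_M$ induces a bijection
\[
K_U\backslash P^+/K_P\overset{\sim}{\longrightarrow} M^+/K_M .
\]
Injectivity holds because $\pr_M$ kills $K_U$, and $K_UmK_M\subseteq P$ projects onto $mK_M$.

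\textbf{Step 2: $\alpha$ is an isomorphism.} The summands are $H^i(K_U^g\cap K_P,W)=H^i(K_U,W)$ for $g\in P^+$, as noted in the paper (Heyer's Lemma~4.3.5). So the Mackey decomposition gives
\[
H^i(K_U,\ind_{K_P}^{P^+}W)\cong\bigoplus_{g}H^i(K_U,W).
\]
On the other side, $\ind_{K_M}^{M^+}H^i(K_U,W)=\bigoplus_{m\in M^+/K_M}[m,H^i(K_U,W)]$.

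The map $\pre+j_g^{(i)}(\xi)\mapsto[\pr_M(g),\xi]$ must be checked to be independent of the representative $g$. Replacing $g$ by $ugk$ with $u\in K_U$ and $k=k_Mk_U\in K_P$ changes the summand identification by conjugation and the action of $k$. At the level of $H^0$, the relation $[ugk,w]=[ug,kw]$ matches $[\pr_M(g)k_M,\xi]=[\pr_M(g),k_M\xi]$. In degree $i$ I would verify compatibility via the cochain formula for $\pr_g^{(i)}$ in Remark~\ref{mackey-on-cochains}. Here conjugation by $u\in K_U$ acts trivially on $H^i(K_U,-)$, and $k_U$ likewise acts trivially. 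Once well-definedness is settled, $\alpha$ is a bijection summand by summand because of Step~1.

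\textbf{Step 3: $M^+$-equivariance (main obstacle).} For $m'\in M^+$ we must show
\[
m'\cdot\pre+j_g^{(i)}(\xi)=\pre+j_{m'g}^{(i)}(\xi),
\]
where $m'$ acts on $H^i(K_U,\ind^{P^+}_{K_P}W)$ through Heyer's Hecke-type action of the monoid. This action is corestriction from $m'K_Um'^{-1}$ to $K_U$ composed with conjugation, i.e.\ the structure underlying $\ind_{M^+}^M\mathrm RH^0(K_U,-)$.

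Both sides are morphisms of $\delta$-functors in $W$:
\begin{itemize}
\item $\pre+j$ is one, as noted before Lemma~\ref{jplus};
\item the $m'$-action is built from conjugation and corestriction, both $\delta$-morphisms;
\item $H^i(K_U,-)$ is universal on $\Rep_k(K_P)$ by the effaceability argument of Remark~\ref{deltafunctorremark}.
\end{itemize}
So it suffices to check degree $0$. There the $m'$-action on invariants is
\[
f\mapsto\sum_{u\in K_U/m'K_Um'^{-1}}u m' f .
\]
Applied to $\pre+j_g^{(0)}(w)=\sum_{v\in K_U/\pre gK_U}[vg,w]$, this gives
\[
\sum_{u}\sum_{v}[um'vg,w],
\]
and $um'vm'^{-1}$ runs over $K_U/\pre{m'g}K_U$, which is exactly $\pre+j_{m'g}^{(0)}(w)$.

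The delicate point is pinning down precisely how $M^+$ acts on $H^i(K_U,\ind_{K_P}^{P^+}W)$ in Heyer's construction, and confirming that this action is a $\delta$-morphism. I would take it from Heyer's description (Theorem~\ref{thm-B}), using Lemma~\ref{delta3}-style arguments as in Lemma~\ref{jplus}.
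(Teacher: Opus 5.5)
Your proposal is correct and follows essentially the same route as the paper. The paper also reduces $M^+$-equivariance to degree $0$ by the universal $\delta$-functor argument (its Claim~\ref{mplusaction}, stated for all $g\in P$ with a corestriction factor that becomes the identity when $g\in P^+$), and it cites Heyer's Lemma~4.3.7 for $i=0$ where you compute that case directly; it then gets bijectivity from $K_U\backslash P^+/K_P\cong M^+/K_M$ via $\pr_M$, exactly as you do. Your check that the map does not depend on the choice of representative $g$ is a detail the paper leaves implicit.
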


\begin{proof}
For $X\in \Rep_k(P^+)$, we first recall that the action of $M^+$ on $H^i(K_U, X)$ is given by the \textit{Hecke action}:
$$H^i(K_U,X)\overset{\mathrm{conj}_m}{\longrightarrow}H^i(\pre mK_U, X)\overset{\cores^{K_U}_{\pre m K_U}}{\longrightarrow}H^i(K_U,X)$$
That this is an action easily follows from the fact that corestriction and conjugation maps commute (\cite[Proposition 1.5.4]{NSW}). We denote this action by $x\mapsto m\star x$. To show that the map $\alpha$ is $M^+$-equivariant, we first prove a slightly more general result:
\begin{claim}
\label{mplusaction}
    $m\star j^{(i)}_g(\xi) = j^{(i)}_{mg}(\cores_{m,g}^{(i)}(\xi))$ for any $g\in K_U\backslash P/K_P$. The same holds true for $\pre +j^{(i)}_{g}$ and $g\in K_U\backslash P^+/K_P$
\end{claim}
\begin{proof}
The $i=0$ case is proved in \cite[Lemma 4.3.7]{heyer2023left}. We then recall that 
$$j_{g}^{(i)}: H^i(K_U^g\cap K_P, -)\rightarrow H^i(K_U, \ind^P_{K_P}(-))$$
is a morphism of $\delta$-functors by Remark \ref{deltafunctorremark}. Moreover, 
$$\mathrm{conj}_m: H^i(K_U, \ind^P_{K_P}(-))\rightarrow H^i(\pre mK_U, \ind^P_{K_P}(-))$$ 
and 
$$\cores^{K_U}_{\pre mK_U}: H^i(\pre mK_U, \ind^P_{K_P}(-))\rightarrow H^i(K_U, \ind^P_{K_P}(-))$$
are also morphisms of $\delta$-functors by \cite[Proposition 1.5.2]{NSW} and Lemma \ref{delta2}. Thus, each arrow in the diagram below is a morphism of $\delta$-functors:
\begin{center}
\begin{tikzcd}
{H^i(K_U^g\cap K_P, -)} \arrow[rr, "j^{(i)}_g"] \arrow[dd, "\cores^{K_U^{mg}\cap K_P}_{K_U^g\cap K_P}"'] &  & {H^i(K_U, \ind^P_{K_P}(-))} \arrow[d, "\mathrm{conj}_m"]                   \\
                                                                                                         &  & {H^i(\pre mK_U, \ind^P_{K_P}(-))} \arrow[d, "\cores^{K_U}_{\pre m K_U}"] \\
{H^i(K_U^{mg}\cap K_P, -)} \arrow[rr, "j^{(i)}_{mg}"']                                                   &  & {H^i(K_U, \ind^P_{K_P}(-))}                                             
\end{tikzcd}
\end{center}
Since $\{H^i(K_U^g\cap K_P, -): \Rep_k(K_P)\rightarrow \Ab\}_{i\geq 0}$ is a universal $\delta$-functor and the above square commutes for $i=0$, we must have that the square commutes for any $i\geq 0$. The statement for $\pre +j^{(i)}_g$ with $g\in K_U\backslash P^+/K_P$ follows from the same argument.
\end{proof}
By the above claim, we see that for $m'\in M^+$, we have $m'\star \pre +j_g^{(i)}(\xi) = \pre +j^{(i)}_{m'g}(\cores^{(i)}_{m',g}(\xi)) = \pre +j^{(i)}_{m'g}(\xi)$ as $\cores^{(i)}_{m',g} = \cores^{K_U^{m'g}\cap K_P}_{K_U^g\cap K_P} = \cores^{K_U}_{K_U} = \id$ for any $g\in P^+$. Thus, $\alpha$ is an $M^+$-equivariant map. We also observe that as $k$-vector spaces, we have the isomorphism
$$\ind^{M^+}_{K_M}H^i(K_U,W) \overset{\sim}{\longrightarrow}\bigoplus_{m\in M^{+}/K_M}H^i(K_U,W)$$
given by $$[m,\xi]\longmapsto (\xi)_m$$
where $(\xi)_m$ denotes the element in the direct sum which is $\xi$ at the $m$-th place and $0$ everywhere else. Since $P^+=K_UM^+$ we have a bijection $K_U\backslash P^+/K_P\cong M^+/K_M$ given by $\pr_{M}$. 
Thus, we get an isomorphism of $k$-vector spaces
$$H^i(K_U,\ind^{P^+}_{K_P}W)\overset{\sim}{\longrightarrow}\bigoplus_{g\in K_U\backslash P^+/K_P}H^i(K_U,W) \overset{\sim}{\longrightarrow}\bigoplus_{m\in M^+/K_M}H^i(K_U,W)\iso \ind^{M^+}_{K_M}H^i(K_U,W)$$
given by $\pre +j^{(i)}_g(\xi)\mapsto (\xi)_{g} \mapsto (\xi)_{\pr_{M}(g)}\mapsto [\pr_{M}(g),\xi]$ which exactly matches with the map $\alpha$. This shows that $\alpha$ must be an isomorphism. 
\end{proof}

\begin{lem}
\label{Exp1Step2}
For any $W\in \Rep_{k}(K_P)$, the map
$$f: \ind^M_{M^+}H^i(K_U,\ind^P_{K_P}W)\longrightarrow \ind^M_{M^+}H^i(K_U,\ind^{P^+}_{K_P}W)$$
given by 
$$1\otimes j^{(i)}_g(\xi)\longmapsto m^{-1}\otimes \pre +j^{(i)}_{mg}(\cores^{(i)}_{m,g}(\xi))$$
is a well-defined $M$-equivariant isomorphism, where $g\in K_U\backslash P/K_P, \xi \in H^i(K_U^g\cap K_P,W)$ and $m\in M^+$ is chosen such that $mg\in P^+$ (Note: the same $m$ works for any double coset representative for $g\in K_U\backslash P/K_P$).
\end{lem}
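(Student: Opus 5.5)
The plan is to construct $f$ as the inverse of a more transparent map. On the level of $M$-representations, I would first show that the inclusion $\iota_W^{(i)}$ induces an $M$-equivariant map
\[
g_W:\ind^M_{M^+}H^i(K_U,\ind^{P^+}_{K_P}W)\longrightarrow \ind^M_{M^+}H^i(K_U,\ind^{P}_{K_P}W),\qquad m\otimes x\longmapsto m\otimes \iota^{(i)}_W(x).
\]
This is well defined because $\iota^{(i)}_W$ is $M^+$-equivariant for the Hecke actions. Conjugation and corestriction are natural in the coefficient module, and the inclusion $\ind^{P^+}_{K_P}W\subseteq\ind^P_{K_P}W$ is $P^+$-equivariant. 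Here I use that $H^i(K_U,\ind^P_{K_P}W)$ is an $M^+$-module by restriction of the $P$-structure via the Hecke action.

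Next I would check that $f$ is a two-sided inverse of $g_W$. First, $f$ is well defined. Independence of the choice of $m$ is proved as follows. If $m,m'\in M^+$ both satisfy $mg,m'g\in P^+$, then by symmetry we may compare both with $m'' m$ for $m''\in M^+$. Using Claim~\ref{mplusaction} for ${}^+j$ together with transitivity of corestriction, we get
\[
(m''m)^{-1}\otimes {}^+j_{m''mg}(\cores_{m''m,g}\xi)=m^{-1}\otimes m''^{-1}\cdot\bigl(m''\star {}^+j_{mg}(\cores_{m,g}\xi)\bigr)=m^{-1}\otimes {}^+j_{mg}(\cores_{m,g}\xi),
\]
where the middle step uses the relation $m''^{-1}\otimes m''\star x=1\otimes x$ in $\ind_{M^+}^M$. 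Independence of the coset representative of $g$ holds because $j_g$ only depends on the double coset. Since $M$ acts on $\ind_{M^+}^M$ by left multiplication, the rule on generators $1\otimes j_g(\xi)$ extends uniquely $M$-equivariantly. This uses the fact that the $j_g^{(i)}(\xi)$ span $H^i(K_U,\ind^P_{K_P}W)$ by Proposition~\ref{cohomological Mackey}, that $f$ is additive in $\xi$ summand by summand, and that $f$ is compatible with the relation $1\otimes m'\star x=m'\otimes x$ for $m'\in M^+$. The last compatibility follows again from Claim~\ref{mplusaction}: we have $m'\star j_g(\xi)=j_{m'g}(\cores_{m',g}\xi)$, and choosing $m$ for $m'g$ gives $mm'$ for $g$, together with transitivity of corestriction.

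For the composite $g_W\circ f$, Lemma~\ref{jplus} and Claim~\ref{mplusaction} give
\[
g_W\circ f\bigl(1\otimes j_g(\xi)\bigr)=m^{-1}\otimes j_{mg}(\cores_{m,g}\xi)=m^{-1}\otimes m\star j_g(\xi)=1\otimes j_g(\xi).
\]
For the composite $f\circ g_W$, a generator of the source is $m\otimes {}^+j_h(\xi)$ with $h\in P^+$. We have $g_W(m\otimes {}^+j_h(\xi))=m\otimes j_h(\xi)$, and $f$ sends this to $m\otimes {}^+j_h(\xi)$, taking $1\in M^+$ as the choice. Hence $f$ and $g_W$ are mutually inverse $M$-equivariant maps.

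The main obstacle I expect is the well-definedness of $f$ with respect to the tensor relations in $\ind^M_{M^+}$. This forces careful bookkeeping of which $m$ is chosen and uses transitivity $\cores_{m',mg}\circ\cores_{m,g}=\cores_{m'm,g}$. I would dispose of this first, using Claim~\ref{mplusaction} as the key identity.
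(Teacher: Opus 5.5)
Your proposal is correct and follows the paper's proof essentially step by step. You establish well-definedness via Claim~\ref{mplusaction} and transitivity of corestriction, then show that the map induced by $\iota^{(i)}_W$ is a two-sided inverse; your naturality argument for its $M^+$-equivariance is a harmless variant of the paper's appeal to Claim~\ref{mplusaction} and Lemma~\ref{jplus}. The one point you leave implicit is the existence of $m'',m'''\in M^+$ with $m''m=m'''m'$ when comparing two admissible choices $m,m'$; for non-abelian $M$ this requires Heyer's Lemma~3.4.4(a), which the paper cites explicitly.
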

\begin{proof}
First, we want to show that $f$ is independent of our choice of $m\in M^+$. Let $m_1,m_2\in M^+$ such that $m_1g\in P^+$ and $m_2g\in P^+$. We first claim that it suffices to check that the definition of $f$ is independent of $m$ when $m_2=mm_1$ for some $m\in M^+$. Indeed, we can find $m_1'\in M^+$ such that $m_1'm_1m_2^{-1}\in M^+$ (because $m_1m_2^{-1}\in M$ and \cite[Lemma 3.4.4(a)]{heyer2023left}). So, $m_1'm_1=m_2'm_2$ for some $m_1',m_2'\in M^+$. Now, if $m_2=mm_1$ then we have
\begin{align*}
m_2^{-1}\otimes \pre +j^{(i)}_{m_2g}(\cores^{(i)}_{m_2,g}(\xi))
&= m_1^{-1}m^{-1}\otimes \pre +j^{(i)}_{mm_1g}(\cores^{(i)}_{mm_1,g}(\xi)) \\
&= m_1^{-1}m^{-1}\otimes m\star \pre +j^{(i)}_{m_1g}(\cores^{(i)}_{m_1,g}(\xi))\\ 
&=  m_1^{-1}\otimes \pre +j^{(i)}_{m_1g}(\cores^{(i)}_{m_1,g}(\xi))   
\end{align*}
using Claim \ref{mplusaction} and that $m\in M^+$ and $\pre +j^{(i)}_{m_1g}(\cores^{(i)}_{m_1,g}(\xi))\in H^i(K_U,\ind^{P^+}_{K_P}W)$. This shows that our definition of $f$ does not depend on the choice of $m\in M^+$.\\
We also need to check that $f(1\otimes m'\star j^{(i)}_g(\xi))=f(m'\otimes j^{(i)}_g(\xi))$ for any $m'\in M^+$. Now, using Claim \ref{mplusaction}, the left side is given by
$$f(1\otimes j^{(i)}_{m'g}(\cores^{(i)}_{m',g}(\xi)))= m^{-1}\otimes \pre +j^{(i)}_{mm'g}(\cores^{(i)}_{m,m'g}(\cores^{(i)}_{m',g}(\xi)))=m^{-1}\otimes \pre +j^{(i)}_{mm'g}(\cores^{(i)}_{mm',g}(\xi))$$ by transitivity of corestriction maps. Here $m\in M^+$ is chosen so that $mm'g\in P^+$. For computing the right side, we need to find $m_1\in M^+$ such that $m_1g\in P^+$. Choosing $m_1=mm'\in M^+$ makes the right side become
\begin{align*}
f(m'\otimes j^{(i)}_g(\xi)) = m'\cdot f\left(1\otimes j^{(i)}_g(\xi)\right) &= m'\cdot\left(m_1^{-1}\otimes \pre +j^{(i)}_{m_1g}(\cores^{(i)}_{m_1,g}(\xi))\right)\\
&= m^{-1}\otimes \pre +j^{(i)}_{mm'g}\left(\cores^{(i)}_{mm',g}(\xi)\right),
\end{align*}
giving our desired equality. To show that $f$ is an isomorphism, we construct an inverse explicitly. Recall that the inclusion $\ind^{P^+}_{K_P}(W)\subseteq \ind^P_{K_P}(W)$ induces a map on cohomology 
$$\iota^{(i)}_W: H^i(K_U, \ind^{P^+}_{K_P}(W))\longrightarrow H^i(K_U,\ind^P_{K_P}(W))$$ which satisfies $$\iota^{(i)}_W(\pre +j^{(i)}_{g}(\xi))=j^{(i)}_{g}(\xi)$$ for any $g\in P^+, \xi \in H^i(K_U,W)$ by Lemma \ref{jplus}. Now, Claim \ref{mplusaction} implies that the morphism $\iota^{(i)}_{W}$ is $M^+$-equivariant.
This further induces the map 
$$\phi: \ind^M_{M^+}H^i(K_U, \ind^{P^+}_{K_P}(W))\longrightarrow \ind^M_{M^+}H^i(K_U,\ind^P_{K_P}(W))$$ given by
$$ 1\otimes \pre +j^{(i)}_g(\xi)\longmapsto 1\otimes j^{(i)}_{g}(\xi)$$ which is $M$-equivariant. We claim that $f$ and $\phi$ are mutual inverses. Indeed, 
$$f\circ \phi(1\otimes \pre +j^{(i)}_g(\xi))= f(1\otimes j^{(i)}_g(\xi))=m^{-1}\otimes \pre +j^{(i)}_{mg}(\cores^{(i)}_{m,g}(\xi))$$
where $m\in M^+$ such that $mg\in P^+$. Since $g\in P^+$, we can choose $m=1$, in which case we get $f\circ \phi(1\otimes \pre +j^{(i)}_{g}(\xi))=1\otimes \pre +j^{(i)}_g(\cores^{(i)}_{1,g}(\xi))=1\otimes \pre +j^{(i)}_g(\xi)$ as 
$$\cores^{(i)}_{1,g}=\cores^{K_U^{g}\cap K_P}_{K_U^{g}\cap K_P}:H^i(K_U^g\cap K_P,W)\rightarrow H^i(K_U^g\cap K_P,W)=\mathrm{Id}.$$
Also, 
\begin{align*}
\phi\circ f(1\otimes j^{(i)}_g(\xi))=\phi(m^{-1}\otimes \pre +j^{(i)}_{mg}(\cores^{(i)}_{m,g}(\xi)))&=m^{-1}\cdot\phi(1\otimes \pre +j^{(i)}_{mg}(\cores^{(i)}_{m,g}(\xi)))\\
&=m^{-1}\otimes (m\star j^{(i)}_g(\xi))=1\otimes j^{(i)}_g(\xi)
\end{align*}
because $j^{(i)}_g(\xi)\in H^i(K_U,\ind^{P}_{K_P}(W))$.
\end{proof}
We are now ready to prove Proposition \ref{Exp1}.
\begin{proof}[Proof of Proposition \ref{Exp1}] We note that this proposition is proved for $i=0$ in \cite[Proposition 4.3.8]{heyer2023left}. We consider the $M$-equivariant isomorphism
$$\ind^M_{M^+}(\alpha): \ind^M_{M^+}H^i(K_U,\ind^{P^+}_{K_P}W)\overset{\sim}{\longrightarrow}\ind^M_{M^+}\ind ^{M^+}_{K_M}H^i(K_U,W)\cong \ind^M_{K_M}H^i(K_U,W)$$
given by 
$$1\otimes \pre +j^{(i)}_{g}(\xi)\longmapsto [\pr_{M}(g),\xi]$$ for $g\in P^+$ and $\xi \in H^i(K_U^g\cap K_P,W)=H^i(K_U,W)$ coming from Lemma \ref{Exp1Step1} and $$f: \ind^M_{M^+}H^i(K_U,\ind^P_{K_P}W)\overset{\sim}{\longrightarrow}\ind^M_{M^+}H^i(K_U,\ind^{P^+}_{K_P}W)$$ coming from Lemma \ref{Exp1Step2}. For $g\in K_U\backslash P/K_P$ and $\xi\in H^i(K_U^g\cap K_P,W)$, we compute the composite
\begin{align*}
(\ind^M_{M^+}(\alpha)\circ f)(1\otimes j^{(i)}_g(\xi))
&=(\ind^M_{M^+}(\alpha))(m^{-1}\otimes \pre +j^{(i)}_{mg}(\cores^{(i)}_{m,g}(\xi))) \\
&= m^{-1}\cdot[\pr_{M}(mg),\cores^{(i)}_{m,g}(\xi)]\\ 
&=m^{-1}\cdot[m\pr_{M}(g),\cores^{(i)}_{m,g}(\xi)]\\&=[\pr_{M}(g),\cores^{(i)}_{m,g}(\xi)]\\
&=[\pr_{M}(g),\cores^{(i)}_{g}(\xi)].
\end{align*}
Thus, the composite $\kappa\coloneqq  \ind^{M}_{M^+}(\alpha)\circ f: \ind^M_{M^+}H^i(K_U,\ind^P_{K_P}W)\rightarrow \ind^M_{K_M}H^i(K_U,W)$ provides the required $M$-equivariant isomorphism.
\end{proof}
We note that the inverse of the isomorphism in Proposition \ref{Exp1} is given by
$$\kappa^{-1}: \ind^M_{K_M}H^i(K_U,W)\overset{\sim}{\longrightarrow} \ind^M_{M^+}H^i(K_U,\ind^P_{K_P}W)$$
$$[m,\xi]\longmapsto m\otimes j^{(i)}_1(\xi)$$ for $m\in M$ and $\xi\in H^i(K_U,W)$.

We can now give an explicit formula for the higher mod $p$ Satake transform, which should be compared with the analogous formula given in the proof of \cite[Theorem 4.3.2(b)]{heyer2023left}.

\begin{prop}\label{prop-explicit-formula} For $V\in \Rep_k(K)$, the higher mod $p$ Satake transform
$$
\mathcal{S}_V^i: \mathcal{H}_G(V) \longrightarrow \mathcal{H}_M(H^i(K_U, V))
$$
is explicitly given by 
\begin{align}\label{explicit-formula} 
\mathcal{S}_V^i(\Phi) ([1,\xi]) = \sum_{g \in K_P \backslash P / K_U} [\pr_M (g^{-1}), \cores^{(i)}_{g^{-1}} ( \pr_{g^{-1}}^{(i)} (\Phi[1, \xi ] ) ) ]
\end{align}
for $i \in \{0, \dots, \mathrm{dim}(U)\}.$
    
\end{prop}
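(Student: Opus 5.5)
We unwind the definition of $\SVi$ as a composite and insert the isomorphism $\kappa$ of Proposition~\ref{Exp1} at the relevant spot. By construction, $\mathcal S_V$ is obtained by applying $L(U,-)$ to an endomorphism $\Phi$ of $\ind_K^G V$ and transporting along the natural isomorphism \eqref{natiso}. Taking cohomology in degree $i-\dim K_U$, we use Theorem~\ref{thm-B} (with the twist by $\omega_{P^+}$ absorbed into $\delta_P$, as in \cite[Explanation~4.3.3]{heyer2023left}). This identifies the degree-$(i-\dim K_U)$ cohomology of $L(U,\ind_K^G V)$ with $\ind_{M^+}^M H^i(K_U,\Res^G_P\ind_K^G V)$. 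The Iwasawa decomposition $G=PK$ gives $\Res^G_P\ind_K^GV\cong\ind_{K_P}^PV$, so the $i$-th Satake transform becomes
\[
\SVi(\Phi)=\kappa\circ\ind_{M^+}^M H^i(K_U,\Phi)\circ\kappa^{-1}.
\]
Here $\Phi$ is viewed as a $P$-endomorphism of $\ind_{K_P}^P V$, and $\kappa$ is the isomorphism of Proposition~\ref{Exp1} with $W=V|_{K_P}$. The first step is to justify that this composite is exactly the map induced by \eqref{natiso}. For $i=0$ this is \cite[Theorem~4.3.2]{heyer2023left}. For general $i$ I would argue that both sides are morphisms of $\delta$-functors in $V$, using Remark~\ref{deltafunctorremark}, Claim~\ref{mplusaction} and Lemma~\ref{jplus}, and then reduce to degree $0$ by universality.

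Granting this, the formula follows by evaluation. Start with $[1,\xi]$ for $\xi\in H^i(K_U,V)$. By the explicit inverse, $\kappa^{-1}([1,\xi])=1\otimes j^{(i)}_1(\xi)$, and here $j_1^{(i)}(\xi)$ is the class in $H^i(K_U,\ind_{K_P}^PV)$ induced by the $K_U$-equivariant inclusion $V\to\ind_{K_P}^PV$, $v\mapsto[1,v]$. Applying $H^i(K_U,\Phi)$ gives the class we write $\Phi([1,\xi])\in H^i(K_U,\ind_{K_P}^P V)$. Next we decompose this class using Proposition~\ref{cohomological Mackey}:
\[
\Phi([1,\xi])=\sum_{h\in K_U\backslash P/K_P} j_h^{(i)}\bigl(\pr_h^{(i)}(\Phi([1,\xi]))\bigr).
\]
Applying $\kappa$ and using Proposition~\ref{Exp1} termwise gives
\[
\sum_h\bigl[\pr_M(h),\cores_h^{(i)}\bigl(\pr_h^{(i)}\Phi([1,\xi])\bigr)\bigr].
\]
Finally we reindex by $h=g^{-1}$, which is a bijection $K_P\backslash P/K_U\to K_U\backslash P/K_P$; this yields \eqref{explicit-formula}.

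The main obstacle is the first step: checking that the abstract map coming from \eqref{natiso} in degree $i-\dim K_U$ really agrees with conjugation by $\kappa$. This requires that $\kappa$ is compatible with Heyer's identification of $L(K_U,V)$ and with the $\omega_{P^+}$/$\delta_P$ twist in every degree, not only in degree $0$. I would handle it by noting that all the identifications involved are natural in $V\in\Rep_k(K_P)$ and are built from morphisms of universal $\delta$-functors. Agreement in degree $0$, which is Heyer's computation, then forces agreement in all degrees. The twist is harmless because it is a character of $P$ that is trivial on $K_U$, and it cancels between source and target. A secondary point is that the sum over $h$ is finite, because $\Phi([1,\xi])$ is a finite sum of classes; this is clear since $\Phi$ has compact support modulo $K$.
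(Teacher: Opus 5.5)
\textbf{Comparison with the paper's proof.} Your evaluation step is the same as the paper's: $\kappa^{-1}([1,\xi])=1\otimes j_1^{(i)}(\xi)$; the identification of $j_1^{(i)}$ with the map induced by $v\mapsto[1,v]$ (Lemma \ref{eta_lemma}, which the paper proves by universality, though it also follows from the $g=1$ Mackey summand, where Shapiro's map is the identity); the Mackey decomposition; Proposition \ref{Exp1} applied termwise; and the reindexing $h=g^{-1}$.

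The genuine difference is how you justify $\SVi(\Phi)=\kappa\circ\ind^M_{M^+}H^i(\Phi_1)\circ\kappa^{-1}$, which is Lemma \ref{lem-kappa&satake}.

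\emph{The paper's route.} In Appendix B it works at the derived level. It lifts $\kappa^{-1}$ to a natural isomorphism $\Gamma$ of functors $D(K_P)\to D(M)$ (Proposition \ref{Exp2}) and twists it by $\omega$. It then checks by a mates argument (Proposition \ref{diag}, Corollaries \ref{cor1} and \ref{cor2}) that this agrees with the composite of Heyer's isomorphisms. Cohomology is taken only at the end.

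\emph{Your route.} For every $W\in\Rep_k(K_P)$ you compare $\kappa^i_W$ with the composite identification $\Theta^i_W$ built from Theorem \ref{thm-B}, \cite[Proposition~3.2.11]{heyer2023left} and \cite[Corollary~3.1.8]{heyer2023left}. You treat both as morphisms of $\delta$-functors out of the effaceable, hence universal, functor $W\mapsto\ind^M_{M^+}H^i(K_U,\ind^P_{K_P}W)$, and reduce to degree $0$. This is the same device the paper uses for Claim \ref{mplusaction}, Lemma \ref{jplus} and Theorem \ref{thm-explicit-formula}. It works and is much shorter, but you should state its inputs explicitly:
\begin{enumerate}
\item Heyer's isomorphisms must be transformations of triangulated functors, so that on cohomology they commute with connecting maps. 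The shift by $\dim U$ inside $\omega$ changes those maps by at most a sign common to source and target, which $\kappa$ tolerates.
\item The degree-$0$ input must be the equality $\Theta^0_W=\kappa^0_W$ of natural transformations on all of $\Rep_k(K_P)$. This comes from \cite[Proposition~4.3.8]{heyer2023left} together with the identification in the proof of \cite[Theorem~4.3.2]{heyer2023left}. Knowing only the resulting formula for $\mathcal S_V^0(\Phi)$ would not suffice, because a conjugation formula does not determine the intertwiner.
\item $\Phi_1$ is not induced from a $K_P$-map. It passes through the identification because Theorem \ref{thm-B} is natural on $D(P)$, not because of naturality in $W$, so keep these two naturalities separate.
\end{enumerate}

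What each buys: the paper's route needs no degree-by-degree extension and no degree-$0$ black box, and it yields the compatibility already in $D(M)$. The cost is a long 2-categorical verification, which your argument replaces with a short universality step plus the degree-$0$ input above.
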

Here, $\Phi\in \mathcal{H}_G(V)=\End_G(\ind^G_K V)$, and $\xi \in H^i(K_U,V)$ which gives us an element 
$$[1,\xi]\in \ind^M_{K_M}H^i(K_U,V).$$
By a slight abuse of notation, we write
$$\Phi([1,\xi])\coloneqq H^i(\Phi_1)\circ H^i(\eta)(\xi)$$
where $\eta: V\rightarrow \ind^P_{K_P}V$ is the $K_P$-equivariant map $x\mapsto [1,x]$ and $\Phi_1: \ind^P_{K_P}V\rightarrow \ind^P_{K_P}V$ is induced by the Iwasawa decomposition ($G=PK$) from the map $\Phi: \ind^G_KV\rightarrow \ind^G_K V$ (by noting that $\Res^G_P\ind^G_K\cong \ind^P_{K_P}\Res^K_{K_P}$). That is, we have the composition
$$H^i(\Phi_1)\circ H^i(\eta): H^i(K_U,V)\rightarrow H^i(K_U, \ind^P_{K_P}V)\rightarrow H^i(K_U,\ind^P_{K_P}V)$$ 
and hence $\Phi([1,\xi])\in H^i(K_U,\ind^P_{K_P}V)$, meaning that $\pr^{(i)}_{g^{-1}}(\Phi([1,\xi]))\in H^i(K_U^{g^{-1}}\cap K_P,V)$.  This implies that $\cores^{(i)}_{g^{-1}}(\pr^{(i)}_{g^{-1}}(\Phi([1,\xi])))\in H^i(K_U,V)$, so 
$$[\pr_{M}(g^{-1}), \cores^{(i)}_{g^{-1}}(\pr^{(i)}_{g^{-1}}(\Phi([1,\xi])))]\in \ind^M_{K_M}H^i(K_U,V),$$ 
showing that the right side of our formula in the above proposition is well-defined.

Before proving the proposition, we make an observation:
\begin{lem}\label{eta_lemma}
The map $j^{(i)}_1: H^i(K_U,V)\hookrightarrow H^i(K_U, \ind^P_{K_P}V)$ is exactly the map $H^i(\eta)$ induced by $\eta:V\rightarrow \ind^P_{K_P}V$.
\end{lem}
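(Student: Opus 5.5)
The plan is to use the same universal $\delta$-functor argument that has already been used repeatedly in this section, for instance in Lemma \ref{jplus} and Claim \ref{mplusaction}. We regard both $j_1^{(i)}$ and $H^i(\eta)$ as families of maps between the functors $\Rep_k(K_P)\to\Ab$
\[
W\longmapsto H^i(K_U^1\cap K_P,W)=H^i(K_U,W)
\qquad\text{and}\qquad
W\longmapsto H^i(K_U,\ind_{K_P}^P W).
\]
Here we use $K_U\subseteq K_P$, so that $K_U^1\cap K_P=K_U$. By Remark \ref{deltafunctorremark}, the source family $F_1=(H^i(K_U,-))_{i\geq0}$ is a universal $\delta$-functor on $\Rep_k(K_P)$, and $j_1=(j_1^{(i)})_{i\ge0}$ is a morphism of $\delta$-functors $F_1\to F$.

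Next we check that $(H^i(\eta_W))_{i\geq 0}$ is also a morphism of $\delta$-functors $F_1\to F$. The maps $\eta_W:W\to\ind_{K_P}^PW$, $w\mapsto[1,w]$, are natural in $W$. They therefore form a natural transformation of exact functors
\[
\Res^{K_P}_{K_U}\longrightarrow \Res^P_{K_U}\ind_{K_P}^P
\]
from $\Rep_k(K_P)$ to $\Rep_k(K_U)$. Composing with the $\delta$-functor $(H^i(K_U,-))_{i\ge0}$ on $\Rep_k(K_U)$ and applying Lemma \ref{delta1}, exactly as in the first step of the proof of Remark \ref{deltafunctorremark}(ii), shows that the induced maps commute with connecting homomorphisms.

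By universality of $F_1$, a morphism out of $F_1$ is determined by its degree-$0$ component. So it remains to compare $j_1^{(0)}$ and $H^0(\eta)$ on $W^{K_U}$. By Remark \ref{mackey-on-cochains}, for $w\in W^{K_U}$ we have
\[
j_1^{(0)}(w)=\sum_{u\in K_U/(K_U\cap K_P)}[u,w]=[1,w],
\]
since $K_U\cap K_P=K_U$ makes the sum consist of the single term $u=1$. This equals $\eta(w)$, so the two morphisms agree in degree $0$, and hence in every degree $i$.

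The only point requiring care is that the degree-$0$ formula for $j_1^{(0)}$ is really the $g=1$ summand of the Mackey isomorphism of Proposition \ref{cohomological Mackey}. This amounts to unwinding Shapiro's lemma for the trivial index $[K_U:K_U\cap K_P]=1$, where the Shapiro map is the identity. Alternatively, we can verify directly with the cochain formula for $\pr_1^{(i)}$: the composite $\pr_1^{(i)}\circ H^i(\eta)$ is the identity, because evaluating $[1,x(\sigma_0,\dots,\sigma_i)]$ at $1$ returns $x(\sigma_0,\dots,\sigma_i)$. The cochain formula also shows $\pr_g^{(i)}\circ H^i(\eta)=0$ for $g\notin K_UK_P$, since $[1,-]$ is supported on $K_P$. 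Thus $H^i(\eta)$ lands in the $g=1$ Mackey summand and is the inclusion $j_1^{(i)}$, which gives an independent check of the statement.
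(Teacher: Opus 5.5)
Your proposal is correct and follows essentially the same route as the paper: both show that $(H^i(\eta))_{i\ge0}$ and $(j_1^{(i)})_{i\ge0}$ are morphisms of $\delta$-functors out of the universal $\delta$-functor $H^i(K_U,-)$ on $\Rep_k(K_P)$, and then compare them in degree $0$ using Remark \ref{mackey-on-cochains}. The differences are minor: the paper gets the $\delta$-compatibility of $H^i(\eta)$ from Lemma \ref{delta3} rather than Lemma \ref{delta1}, and it does not include your cochain-level cross-check via $\pr_g^{(i)}\circ H^i(\eta)$, which is also valid.
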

\begin{proof}
First, we note that $j^{(0)}_1(\xi)=H^0(\eta)(\xi)=[1,\xi]$ for any $\xi \in H^0(K_U,V)$. This follows from the explicit description of $j^{(i)}_g$ provided in Remark \ref{mackey-on-cochains}. Now, we will apply Lemma \ref{delta3} to show that $\{H^i(\eta)\}_{i\geq 0}$ is a morphism of $\delta$-functors.  Consider the following four functors:
\begin{align*}
F_1=F_2&:\Rep_k(K_U)\to \Ab,\qquad\qquad V\mapsto V^{K_U}=H^0(K_U,V);\\
G_1\coloneqq \Res_{K_U}^{K_P}{(-)}&:\Rep_k(K_P)\to \Rep_k(K_U);\\
G_2\coloneqq \Res_{K_U}^{P}\ind ^P_{K_P}(-)&:\Rep_k(K_P)\to \Rep_k(K_U).
\end{align*}    
Note that $F_1,F_2$ are left-exact, $G_1,G_2$ are exact, and $G_2$ sends injective objects to $F_1=F_2$ acyclic objects (as in the proof of Lemma \ref{jplus}). Similarly, $G_1$ also sends injective objects to $F_1=F_2$-acyclic objects (actually $G_1$ preserves injectives since it is right adjoint to an exact functor $\ind^{K_P}_{K_U}$). Also, the natural transformation $\eta: \mathrm{Id}\rightarrow \Res^{P}_{K_P}\ind^{P}_{K_P}$ induces a natural transformation $$G_1=\Res_{K_U}^{K_P} {(-)}\rightarrow \Res_{K_U}^{K_P}\Res^{P}_{K_P}\ind^{P}_{K_P} {(-)}=\Res_{K_U}^{P} \ind^{P}_{K_P}{(-)}=G_2$$ which further induces a natural transformation $\tilde{\eta}:F_1\circ G_1\rightarrow F_2\circ G_2$. By Lemma \ref{delta3}, we get an induced morphism of $\delta$-functors
$$\mathrm{R}^i\tilde{\eta}: \mathrm{R}^iF_1\circ G_1\longrightarrow \mathrm{R}^iF_2\circ G_2$$ which are precisely the morphisms $(H^i(\eta))_{i\geq 0}: H^i(K_U,-)\rightarrow H^i(K_U,\ind^P_{K_P}(-))$. Also, we have seen that $\{j^{(i)}_1\}_{i\geq 0}$ are also morphisms of $\delta$-functors. Since these two morphisms of $\delta$-functors agree for $i=0$ and $\{H^i(K_U,-):\Rep_k(K_P)\rightarrow \Ab\}_{i\geq 0}$ is a universal $\delta$-functor, we must have $j^{(i)}_1=H^i(\eta)$ for all $i\geq 0$. 
\end{proof}
\begin{proof}[Proof of Proposition \ref{prop-explicit-formula}]
Recall for clarity that our definition of $\Phi_1$ can be repackaged as saying that the following diagram commutes:
$$\begin{tikzcd}
\Res^G_P\ind^G_K V \arrow[rr,swap]{ur}{\cong}[swap]{\mathrm{Iwasawa}} \arrow[dd, "\Phi"']&  & \ind^P_{K_P}\Res^K_{K_P}V \arrow[dd, "\Phi_1"]\\
\\
\Res^G_P\ind^G_K V \arrow[rr,swap]{ur}{\cong}[swap]{\mathrm{Iwasawa}}                    &  & \ind^P_{K_P}\Res^K_{K_P}V
\end{tikzcd}$$

We also have by Lemma \ref{lem-kappa&satake} that the following diagram commutes:
\begin{equation}\label{SVi-comm-diagram}\begin{tikzcd}
{\ind^M_{M^+}H^i(K_U,\ind^P_{K_P}V)} \arrow[dd, "\ind^M_{M^+}H^i(\Phi_1)"'] \arrow[rr,swap]{ur}{\cong}[swap]{\kappa} &  & {\ind^M_{K_M}H^i(K_U,V)} \arrow[dd, "\mathcal{S}^i_V(\Phi)"] \\
\\
{\ind^M_{M^+}H^i(K_U,\ind^P_{K_P}V)} \arrow[rr,swap]{ur}{\cong}[swap]{\kappa}                                        &  & {\ind^M_{K_M}H^i(K_U,V)} 
\end{tikzcd}\end{equation}


Now, given $[1,\xi]\in \ind^M_{K_M}H^i(K_U,V)$, we get $\kappa^{-1}([1,\xi])=1\otimes j^{(i)}_1(\xi)$. Then 
$$\ind^M_{M^+}H^i(\Phi_1)(1\otimes j^{(i)}_1(\xi))=1\otimes H^i(\Phi_1)(j^{(i)}_1(\xi))=1\otimes H^i(\Phi_1)\circ H^i(\eta)(\xi)=1\otimes \Phi([1,\xi]),$$
where the penultimate equality holds by Lemma \ref{eta_lemma}. For any $x\in H^i(K_U, \ind^P_{K_P}V)$, by our Mackey decomposition (with $g$ replaced by $g^{-1}$ and double cosets $K_UgK_P$ and $K_Pg^{-1}K_U$ switched appropriately), we have 
$$x = \sum_{g\in K_P\backslash P/K_U} j^{(i)}_{g^{-1}}\circ \pr^{(i)}_{g^{-1}}(x).$$
Thus, after applying this to $x=\Phi([1,\xi])$, we get
$$\ind^M_{M^+}H^i(\Phi_1)(1\otimes j^{(i)}_1(\xi))=\sum_{g\in K_P\backslash P/K_U}1\otimes j^{(i)}_{g^{-1}}\circ \pr^{(i)}_{g^{-1}}(\Phi([1,\xi])).$$ 
Applying $\kappa$ to both sides then yields
\begin{align*}
\kappa \circ \ind^M_{M^+}H^i(\Phi_1)\circ \kappa^{-1}([1,\xi])&=\kappa(\ind^M_{M^+}H^i(\Phi_1)(1\otimes j^{(i)}_1(\xi))) \\
&=\kappa\left(\sum_{g\in K_P\backslash P/K_U}1\otimes j^{(i)}_{g^{-1}}\circ \pr^{(i)}_{g^{-1}}(\Phi([1,\xi]))\right) \\
&=\sum_{g\in K_P\backslash P/K_U}\kappa\left(1\otimes j^{(i)}_{g^{-1}}\circ \pr^{(i)}_{g^{-1}}(\Phi([1,\xi]))\right)\\
&= \sum_{g\in K_P\backslash P/K_U}[\pr_M(g^{-1}),\cores^{(i)}_{g^{-1}}(\pr^{(i)}_{g^{-1}}(\Phi([1,\xi])))].
\end{align*}
Since we have $\mathcal{S}^i_V(\Phi)=\kappa\circ \ind^M_{M^+}H^i(\Phi_1)\circ \kappa^{-1}$ by diagram \ref{SVi-comm-diagram}, we get the required formula.     
\end{proof}

\subsection{The cochain-level formula for \texorpdfstring{$\SVi$}{S(V,i)}} \label{cochain-subsec}

Recall from the previous two subsections that for any fixed $g\in P$, we have morphisms of $\delta$-functors $\Rep_k(K_P)\to \Ab$
$$\pr_{g\inv}^{(i)}:H^i\left(K_U,\Res_{K_U}^{P}\ind_{K_P}^P (-)\right)\to H^i\left(K_U^{g\inv}\cap K_P,\Res^{K_P}_{K_U^{g\inv}\cap K_P}(-)\right),$$
$$\cores_{g\inv}^{(i)}:H^i\left(K_U^{g\inv}\cap K_P,\Res^{K_P}_{K_U^{g\inv}\cap K_P}(-)\right)\to H^i\left(K_U,\Res_{K_U}^{K_P}(-)\right).$$

Composing yields a morphism of $\delta$-functors $\Rep_k(K_P)\to \Ab$
$$\cores_{g\inv}^{(i)}\pr_{g\inv}^{(i)}:H^i\left(K_U,\ind_{K_P}^P (-)\right)\to H^i\left(K_U,-\right).$$

\begin{lem}
\label{KMnote}
The maps $H^i\left(K_U,-\right)$ in fact form a $\delta$-functor $\Rep_k(K_P)\to \Rep_k(K_M)$.
\end{lem}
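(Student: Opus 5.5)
Since $K_P=K_UK_M$ and $K_U$ is normal in $K_P$ (it is the kernel of $K_P\to K_M$, or equivalently $K_P\cap U$ with $U$ normal in $P$), for any $W\in\Rep_k(K_P)$ the group $K_P$ acts on $H^i(K_U,W)$ through conjugation. On cochains, an element $h\in K_P$ sends a homogeneous cochain $x\in X^i(K_U,W)^{K_U}$ to $(h\cdot x)(\sigma_0,\ldots,\sigma_i)=h\,x(h\inv\sigma_0h,\ldots,h\inv\sigma_ih)$. This is exactly the conjugation map $\conj_h:H^i(K_U,W)\to H^i(\pre hK_U,W)=H^i(K_U,W)$ of \cite[Section 1.5]{NSW}. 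The plan is first to check that this defines a $K_P$-action on each $H^i(K_U,W)$. By \cite[Proposition 1.6.3]{NSW} (conjugation by an element of the subgroup acts trivially on cohomology), $K_U$ acts trivially, so the action factors through $K_P/K_U\cong K_M$.

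Next I check smoothness. The representation $W$ is smooth and cohomology of the profinite group $K_U$ commutes with filtered colimits, so $H^i(K_U,W)=\varinjlim H^i(K_U,W')$ over finitely generated $K_P$-stable subrepresentations $W'$. Each class is represented by a continuous cochain with values in a finite-dimensional smooth $W'$. Such a cochain is locally constant, hence takes finitely many values and factors through $K_U/N$ for some open normal $N$. Choosing an open subgroup of $K_P$ that normalizes $N$, fixes those values, and acts trivially on $K_U/N$, I get a subgroup fixing the cochain. Hence the stabilizer of each class is open.

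Then I check the $\delta$-functor structure. Functoriality in $W$ is clear from the cochain formula. The remaining point is that the connecting homomorphisms $\delta:H^i(K_U,W'')\to H^{i+1}(K_U,W')$ attached to a short exact sequence in $\Rep_k(K_P)$ are $K_M$-equivariant. I would argue this at the cochain level: $\delta$ is computed by lifting a cocycle, applying the coboundary, and pulling back. The conjugation action on cochains commutes with the coboundary map and with the maps induced by $W'\to W\to W''$, so $\delta\circ\conj_h=\conj_h\circ\delta$. Equivalently, one can invoke the statement in \cite[Proposition 1.5.2]{NSW} that conjugation is a morphism of $\delta$-functors. Exactness of the long exact sequence is then inherited from the underlying sequence of abelian groups, because the forgetful functor $\Rep_k(K_M)\to\Ab$ is faithful and exact.

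I expect the only mildly delicate step to be this compatibility of $\delta$ with conjugation, together with making sure that the action on homogeneous cochains matches the one used in the Hecke action of Lemma \ref{Exp1Step1}. Both are routine cochain checks.
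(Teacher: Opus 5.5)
Your proposal is correct and follows essentially the paper's route: the $K_M\cong K_P/K_U$-action is the conjugation action on $H^i(K_U,-)$, and $K_M$-equivariance of the connecting maps comes from conjugation being a morphism of $\delta$-functors \cite[Proposition 1.5.2]{NSW}, which is exactly what the paper uses. Your explicit smoothness check is something the paper leaves to the citation of \cite[Proposition 1.6.3]{NSW}, and it does not need the passage to finitely generated subrepresentations, since a continuous cochain on the compact group $K_U^{i+1}$ already takes only finitely many values.
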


\begin{proof}
Here, for any $V\in \Rep_k(K_P)$, $K_M\iso K_P/K_U$ acts on $H^i(K_U,V)$ in the usual way via the conjugation map $\mathrm{conj}^{(i)}_m:H^i(K_U,V)\to H^i(K_U,V)$; see \cite[Proposition 1.6.3]{NSW} for details.  To check that this gives a $\delta$-functor $\Rep_k(K_P)\to \Rep_k(K_M)$, we note that $\mathrm{conj}^{(i)}_m$ is itself a morphism of $\delta$-functors $\Rep_k(K_P)\to \Ab$ by \cite[Proposition 1.5.2]{NSW}, so the following diagram commutes for any short exact sequence $0\to V''\to V\to V'\to 0$ in $\Rep_k(K_P)$ and any $m\in K_M$:
\[\begin{tikzcd}[cramped]
	\cdots & {H^i(K_U,V'')} & {H^i(K_U,V)} & {H^i(K_U,V')} & {H^{i+1}(K_U,V'')} & \cdots \\
	\cdots & {H^i(K_U,V'')} & {H^i(K_U,V)} & {H^i(K_U,V')} & {H^{i+1}(K_U,V'')} & \cdots
	\arrow[from=1-1, to=1-2]
	\arrow[from=1-2, to=1-3]
	\arrow["{\mathrm{conj}^{(i)}_m}"', from=1-2, to=2-2]
	\arrow[from=1-3, to=1-4]
	\arrow["{\mathrm{conj}^{(i)}_m}"', from=1-3, to=2-3]
	\arrow["\delta", from=1-4, to=1-5]
	\arrow["{\mathrm{conj}^{(i)}_m}"', from=1-4, to=2-4]
	\arrow[from=1-5, to=1-6]
	\arrow["{\mathrm{conj}^{(i+1)}_m}"', from=1-5, to=2-5]
	\arrow[from=2-1, to=2-2]
	\arrow[from=2-2, to=2-3]
	\arrow[from=2-3, to=2-4]
	\arrow["\delta", from=2-4, to=2-5]
	\arrow[from=2-5, to=2-6]
\end{tikzcd}\]
This exactly says that the action of $m\in K_M$ commutes with every map in the long exact sequence, meaning that the maps $H^i(K_U,-):\Rep_k(K_P)\to \Rep_k(K_M)$ indeed form a $\delta$-functor. 
\end{proof}

We now consider the composition of our morphism of $\delta$-functors 
$$\cores_{g\inv}^{(i)}\pr_{g\inv}^{(i)}:H^i\left(K_U,\ind_{K_P}^P(-)\right)\to H^i(K_U,-)$$
with the natural maps $H^i(K_U,-)\to \ind_{K_M}^M H^i(K_U,-)$ sending $\xi\mapsto \pr_M(g\inv)\otimes \xi$.

\begin{lem}
The maps 
$$\pr_M(g\inv)\otimes \cores_{g\inv}^{(i)}\pr_{g\inv}^{(i)}(-):H^i\left(K_U,\ind_{K_P}^P(-)\right)\to \ind_{K_M}^M H^i(K_U,-)$$
form a morphism of universal $\delta$-functors $\Rep_k(K_P)\to \Ab$.
\end{lem}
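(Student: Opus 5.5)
The plan is to write the map as a composite of two morphisms of $\delta$-functors and to check separately that both the source and the target are universal. We already know that
$$\cores_{g\inv}^{(i)}\pr_{g\inv}^{(i)}:H^i\left(K_U,\ind_{K_P}^P(-)\right)\to H^i(K_U,-)$$
is a morphism of $\delta$-functors $\Rep_k(K_P)\to\Ab$. It is the composite of $\pr_{g\inv}$, which is a $\delta$-morphism by Remark \ref{deltafunctorremark}(ii), with corestriction, which is a $\delta$-morphism by part (3) of the corestriction proposition. The precomposition is with the exact restriction functor, so Lemma \ref{delta2} applies. It therefore suffices to show that the maps
$$\theta^{(i)}:H^i(K_U,-)\to \ind_{K_M}^M H^i(K_U,-),\qquad \xi\mapsto \pr_M(g\inv)\otimes\xi,$$
form a morphism of $\delta$-functors, and that $\bigl(\ind_{K_M}^M H^i(K_U,-)\bigr)_{i\ge0}$ is a $\delta$-functor at all.

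For the target, we use Lemma \ref{KMnote}: $H^i(K_U,-)$ is a $\delta$-functor $\Rep_k(K_P)\to\Rep_k(K_M)$. Compact induction $\ind_{K_M}^M$ is exact, since as a vector space it is $\bigoplus_{m\in M/K_M}$ of copies of the input. Postcomposing a $\delta$-functor with an exact additive functor again gives a $\delta$-functor: long exact sequences stay exact, and the connecting maps are $\ind_{K_M}^M(\delta)$. Forgetting to $\Ab$ is also exact.

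The naturality of $\theta^{(i)}$ in the coefficient is clear. The only real point is compatibility with the connecting homomorphisms, namely
$$\ind_{K_M}^M(\delta)\bigl(\pr_M(g\inv)\otimes\xi\bigr)=\pr_M(g\inv)\otimes\delta(\xi).$$
This holds because $\ind_{K_M}^M(\delta)$ acts by $[m,\xi]\mapsto[m,\delta\xi]$, which is well defined precisely because $\delta$ is $K_M$-equivariant (Lemma \ref{KMnote}). The map $\theta^{(i)}$ itself is only $k$-linear, not $K_M$-equivariant, which is exactly why we regard the result as a $\delta$-functor to $\Ab$ rather than to representations. Composing the two $\delta$-morphisms then gives the claim.

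It remains to prove universality of source and target. For the source, $F=\bigl(H^i(K_U,\ind_{K_P}^P(-))\bigr)$ was shown to be universal in Remark \ref{deltafunctorremark}(i). For the target, we again apply effaceability via \cite[Theorem 2.6.2]{NSW}. Given $W$, embed it in $I=\Ind_{\{1\}}^{K_P}W$, which is cohomologically trivial for $K_U$ by \cite[Proposition 1.3.7]{NSW}. Then $H^i(K_U,I)=0$ for $i>0$, and hence $\ind_{K_M}^M H^i(K_U,I)=0$ as well.

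The only step requiring care is the connecting-map compatibility for $\theta^{(i)}$, and in particular the bookkeeping that $\ind_{K_M}^M$ of the $K_M$-equivariant $\delta$ is computed termwise on $[m,\xi]$. I expect that step to be the main (though mild) obstacle.
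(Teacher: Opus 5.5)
Your proposal is correct and follows essentially the same route as the paper. The paper also factors through the $\delta$-morphism $\cores_{g\inv}^{(i)}\pr_{g\inv}^{(i)}$ and then checks that $\xi\mapsto \pr_M(g\inv)\otimes\xi$ commutes with the connecting maps, using the $K_M$-equivariance from Lemma \ref{KMnote}. The paper packages this last check as an application of Lemma \ref{delta4}, with the forgetful functor and forgetful$\,\circ\,\ind_{K_M}^M$ as the two exact functors, which is exactly your termwise computation; universality is handled by the same effaceability argument.
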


\begin{proof}
One can show that these are universal $\delta$-functors $\Rep_k(K_P)\to \Ab$ using the same form of argument as in the proof of Remark \ref{deltafunctorremark}.

Let now
$$0\to V''\to V\to V'\to 0$$
be a short exact sequence in $\Rep_k(K_P)$.  Because the original maps $\cores_{g\inv}^{(i)}\pr_{g\inv}^{(i)}$ formed a morphism of $\delta$-functors $\Rep_k(K_P)\to \Ab$, we know that the following diagram of morphisms in $\Ab$ commutes:
\[\begin{tikzcd}[cramped]
	{H^i\left(K_U,\ind_{K_P}^P V'\right)} & {H^{i+1}\left(K_U,\ind_{K_P}^P V''\right)} \\
	{H^i(K_U,V')} & {H^{i+1}(K_U,V'').}
	\arrow["\delta\circ \ind_{K_P}^P", from=1-1, to=1-2]
	\arrow["{\cores_{g\inv}^{(i)}\pr_{g\inv}^{(i)}}"', from=1-1, to=2-1]
	\arrow["{\cores_{g\inv}^{(i+1)}\pr_{g\inv}^{(i+1)}}", from=1-2, to=2-2]
	\arrow["\delta", from=2-1, to=2-2]
\end{tikzcd}\]
We now apply Lemma \ref{delta4} with the exact functors $F:\Rep_k(K_M)\to \Ab$ given by the forgetful functor, $G:\Rep_k(K_M)\to \Ab$ the composition of $\ind_{K_M}^M:\Rep_k(K_M)\to \Rep_k(M)$ with the forgetful functor $\Rep_k(M)\to \Ab$, the natural transformation $\eta:F\to G$ given by $\eta_W:W\to \ind_{K_M}^M W$, $w\mapsto \pr_M(g\inv)\otimes w$ for $W\in \Rep_k(K_M)$, and $H=H^i(K_U,-)$ our $\delta$-functor $\Rep_k(K_P)\to \Rep_k(K_M)$ (using Lemma \ref{KMnote}).  This demonstrates that the following diagram commutes in $\Ab$:
\[\begin{tikzcd}[cramped]
	{H^i(K_U,V')} & {H^{i+1}(K_U,V'')} \\
	{\ind_{K_M}^M H^i(K_U,V')} & {\ind_{K_M}^M H^{i+1}(K_U,V'').}
	\arrow["\delta", from=1-1, to=1-2]
	\arrow["{\pr_M(g\inv)\otimes(-)}"', from=1-1, to=2-1]
	\arrow["{\pr_M(g\inv)\otimes(-)}", from=1-2, to=2-2]
	\arrow["{\ind_{K_M}^M\circ \delta}", from=2-1, to=2-2]
\end{tikzcd}\]
Combining the two commuting diagrams above gives the commutative diagram needed to verify that the maps
$$\pr_M(g\inv)\otimes \cores_{g\inv}^{(i)}\pr_{g\inv}^{(i)}(-):H^i\left(K_U,\ind_{K_P}^P(-)\right)\to \ind_{K_M}^M H^i(K_U,-)$$
form a morphism of universal $\delta$-functors $\Rep_k(K_P)\to \Ab$:
\[\begin{tikzcd}[cramped]
	{H^i\left(K_U,\ind_{K_P}^P V'\right)} & {H^{i+1}\left(K_U,\ind_{K_P}^P V''\right)} \\
	{\ind_{K_M}^M H^i(K_U,V')} & {\ind_{K_M}^M H^{i+1}(K_U,V'').}
	\arrow["\delta", from=1-1, to=1-2]
	\arrow["{\pr_M(g\inv)\otimes \cores_{g\inv}^{(i)}\pr_{g\inv}^{(i)}}"', from=1-1, to=2-1]
	\arrow["{\pr_M(g\inv)\otimes \cores_{g\inv}^{(i+1)}\pr_{g\inv}^{(i+1)}}", from=1-2, to=2-2]
	\arrow["{\ind_{K_M}^M \circ \delta}", from=2-1, to=2-2]
\end{tikzcd}\]
\end{proof}

For any $V\in \Rep_k(K_P)$ and any $\xi\in H^i(K_U,\ind_{K_P}^P V)$, we know that $\pr_{g\inv}^{(i)}\xi$ is zero for all but finitely many double coset representatives $g\in K_P\backslash P/K_U$, hence we also get a well-defined morphism of $\delta$-functors $\Rep_k(K_P)\to \Ab$ given by 
$$\sum_{g\in K_P\backslash P/K_U} \pr_M(g\inv)\otimes \cores_{g\inv}^{(i)}\pr_{g\inv}^{(i)}(-):H^i\left(K_U,\ind_{K_P}^P(-)\right)\to \ind_{K_M}^M H^i(K_U,-).$$
Fix any $m\in M$, so that evaluating at $m$ yields similarly by Lemma \ref{delta4} a morphism of $\delta$-functors $\Rep_k(K_P)\to \Ab$,
$$\ind_{K_M}^M H^i(K_U,-)\xto{\mathrm{ev}_m} H^i(K_U,-).$$
Composing these yields a morphism of $\delta$-functors $\Rep_k(K_P)\to \Ab$
$$H^i\left(K_U,\ind_{K_P}^P (-)\right)\to H^i(K_U,-).$$
\begin{lem}
\label{lemma-explicit-morphism}
The above construction gives a morphism of universal $\delta$-functors $\Rep_k(K_P)\to \Ab$
$$H^i\left(K_U,\ind_{K_P}^P (-)\right)\to  H^i(K_U,-)$$
which is given when $i=0$ for any $V\in \Rep_k(K_P)$ by the well-defined map
$$H^0(K_U,\ind_{K_P}^P V)\to H^0(K_U,V),$$
$$f\mapsto \sum_{u\in U/K_U} f(mu).$$
\end{lem}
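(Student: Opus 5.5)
The plan is to treat the two assertions separately: first universality and the $\delta$-morphism property, then the formula in degree $0$.

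\emph{The first assertion.} The morphism property is already built into the construction. The composite consists of $\sum_g \pr_M(g\inv)\otimes\cores^{(i)}_{g\inv}\pr^{(i)}_{g\inv}$, which is a morphism of $\delta$-functors by the preceding lemma, followed by $\mathrm{ev}_m$, which is one by Lemma \ref{delta4}. A composite of morphisms of $\delta$-functors is again a morphism of $\delta$-functors.

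Universality of source and target will be handled as in the proof of Remark \ref{deltafunctorremark}, via \cite[Theorem 2.6.2]{NSW}. Given $V\in\Rep_k(K_P)$, I embed $V\into I\coloneqq\Ind_{\{1\}}^{K_P}V$. This $I$ is cohomologically trivial for every open subgroup of $K_P$, so $H^i(K_U,I)=0$ for $i>0$. By Proposition \ref{cohomological Mackey} we also get $H^i(K_U,\ind_{K_P}^PI)=\bigoplus_g H^i(K_U^g\cap K_P,I)=0$ for $i>0$. Hence both families are effaceable in positive degrees. Since a morphism out of a universal $\delta$-functor is determined by its degree-$0$ component, the whole morphism is then pinned down by the $i=0$ formula.

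\emph{The degree-$0$ formula.} Here I unwind each map on $f\in(\ind_{K_P}^PV)^{K_U}$, i.e.\ $f(kp)=kf(p)$ for $k\in K_P$ and $f(pu)=f(p)$ for $u\in K_U$.
\begin{enumerate}[label=(\roman*)]
\item By Remark \ref{mackey-on-cochains}, $\pr^{(0)}_{g\inv}(f)=f(g)$, which lies in $V^{K_U^{g\inv}\cap K_P}$. Equivalently, $f(g)$ is invariant under $H_g\coloneqq K_U\cap \pre gK_U$.
\item By \eqref{cores-on-cochains} in degree $0$, corestriction from $H_g$ to $K_U$ gives
\[
\cores^{(0)}_{g\inv}(f(g))=\sum_{h\in K_U/H_g} h\,f(g)=\sum_{h\in K_U/H_g} f(hg).
\]
The second equality uses left $K_P$-equivariance of $f$.
\item Evaluating $\sum_g[\pr_M(g\inv),-]$ at $m$ picks out exactly the double cosets $K_PgK_U$ whose $M$-component lies in the $K_M$-coset matching $m$. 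Using $K_M$-equivariance, I normalise the representative to the form $g=mu$ with $u\in U$.
\end{enumerate}

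\emph{Reassembling the sum.} It remains to show that the resulting double sum is $\sum_{u\in U/K_U}f(mu)$. I would check the following. As $g=mu$ runs over representatives of $K_P\backslash K_PmU/K_U$ and $h$ runs over $K_U/H_g$, the elements $hmu$ run over $K_U\backslash\!\!\backslash$-translates that reduce, after moving $h$ across via $P=UM$ and $K_P=K_UK_M$, to a complete set of representatives of $mU/K_U$, each appearing exactly once. The required stabiliser computation is that the stabiliser of $K_PmuK_U$ under left $K_U$-translation is $K_U\cap\pre{mu}K_U$, which is precisely $H_g$. Finiteness of each sum follows from compact support of $f$ modulo $K_P$.

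\emph{Main obstacle.} I expect this reindexing bijection to be the hardest step. It is a double-coset counting argument in which the conventions matter: the direction of $[\pr_M(g\inv),\xi]$ as a function, the use of $g$ versus $g\inv$, and left versus right cosets. I would first verify it in the case $M=T$, $G=\GL_2$, and then write the general bijection.
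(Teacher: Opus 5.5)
Your proposal is correct. The $\delta$-morphism and universality parts coincide with the paper's argument: compose the previously built morphisms, then efface with $\Ind_{\{1\}}^{K_P}V$ as in Remark \ref{deltafunctorremark}. One small fix is needed there. The acyclicity you use is for \emph{closed} subgroups, which is what cohomological triviality in \cite[Proposition 1.3.7]{NSW} provides. It is not a statement about open subgroups, and $K_U$ and $K_U^{g}\cap K_P$ are closed but in general not open in $K_P$.

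The genuine difference is the degree-$0$ formula. The paper does not compute it; it defers to the proof of \cite[Theorem 4.3.2(b)]{heyer2023left}. You instead derive it from three ingredients: $\pr^{(0)}_{g\inv}(f)=f(g)$ (Remark \ref{mackey-on-cochains}), degree-zero corestriction $\sum_{h\in K_U/H_g}h(-)$, and an orbit--stabiliser count.

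That count does go through, as follows.
\begin{enumerate}
\item If $k\in K_P$ and $kmu_1\sigma\in mU$, then $\pr_M(k)=1$, so $k\in K_U$ by $K_P=K_UK_M$.
\item Hence $K_P\backslash K_PmU/K_U$ is the set of left $K_U$-orbits on $mU/K_U$.
\item The stabiliser of $muK_U$ is $K_U\cap{}^{mu}K_U=H_g$, so $h\mapsto hmuK_U$ enumerates each orbit exactly once.
\item Right $K_U$-invariance of $f$ then turns $\sum_g\sum_h f(hg)$ into $\sum_{u\in U/K_U}f(mu)$.
\end{enumerate}

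Two convention checks settle your ``main obstacle.'' First, $[\pr_M(g\inv),\xi]$ is the function supported on $K_M\pr_M(g)$; this reading is what produces $f(mu)$ rather than $f(m\inv u)$. Second, your normalisation $g\mapsto kg$ with $k\in K_M$ is legitimate, because the degree-zero term satisfies $\xi_{kg}=k\,\xi_g$ (use $H_{kg}=kH_gk\inv$).

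Your route is self-contained and makes these conventions explicit. The paper's citation is shorter but leaves the reader to reconcile Heyer's conventions with the ones used here.
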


\begin{proof}
First, we note that one can prove that these are universal $\delta$-functors by the same form of argument as in the proof of Remark \ref{deltafunctorremark}.

Next, note that for any fixed $f\in H^0(K_U,\ind_{K_P}^P V)$, $f$ has compact support on $P$. Since $K_U$ is open in $U$, this sum is nonzero for only finitely many cosets $u\in U/K_U$.

After unwinding our construction, the claim that this morphism of universal $\delta$-functors is given by the claimed formula when $i=0$ is precisely provided in the proof of \cite[Theorem 4.3.2(b)]{heyer2023left}, so we omit it.
\end{proof}

We seek to obtain a similarly formed concrete description of this morphism of $\delta$-functors above the level of $i=0$.  To do this, we introduce a collection of maps which will ultimately be shown to form a morphism of $\delta$-functors agreeing with the one considered above.  The definition should be compared to the usual definition of corestriction; see \ref{cores-on-cochains} above.

\begin{definition}
\label{def-delta-functor}
Fix any $i\geq 0$, $V\in \Rep_k(K_P)$, and $m\in M$.  For each coset $u\in K_U\backslash U$, fix a representative $\ol u\in u$ once and for all.  Define
$$C_m^{(i)}:X^i(K_U,\ind_{K_P}^P V)^{K_U}\to X^i(K_U,V)^{K_U}$$
 for any $x\in X^i(K_U,\ind_{K_P}^P V)^{K_U}$ and any $\sigma_0,\ldots, \sigma_i\in K_U$ by
$$(C_m^{(i)}x)(\sigma_0,\ldots, \sigma_i)\coloneqq \sum_{u\in K_U\backslash U} x\left(\ol u m\inv \sigma_0 m(\ol{u m\inv \sigma_0 m})\inv,\ldots, \ol u m\inv\sigma_i m(\ol{u m\inv \sigma_i m})\inv\right)(m\ol u\inv).$$
\end{definition}

\begin{lem}\label{lem-Cmi-maps-of-cohom}
The maps $C_m^{(i)}$ are well-defined and induce maps
$$C_m^{(i)}:H^i(K_U,\ind_{K_P}^P V)\to H^i(K_U,V)$$
at the level of cohomology.
\end{lem}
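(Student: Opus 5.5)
The plan is to verify directly, at the level of homogeneous cochains, that the formula in Definition~\ref{def-delta-functor} makes sense and is compatible with the cochain structure. Concretely, I will show three things: (a) each summand is a well-defined element of $V$ and only finitely many summands are nonzero; (b) $C_m^{(i)}x$ is a continuous $K_U$-invariant homogeneous cochain; and (c) $C_m^{(i)}$ commutes with the coboundary maps. Then (c) shows that cocycles go to cocycles and coboundaries to coboundaries, so $C_m^{(i)}$ descends to $H^i$. The model throughout is the classical argument that the cochain formula \eqref{cores-on-cochains} for corestriction is well defined, as in \cite[Section~1.5]{NSW}. There the "transfer" $\sigma\mapsto \ol g\sigma(\ol{g\sigma})^{-1}$ is replaced by the twisted transfer
\[
\tau_u(\sigma) \coloneqq \ol u\, m^{-1}\sigma m\,\bigl(\ol{u m^{-1}\sigma m}\bigr)^{-1}.
\]

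For (a), I first note that $m^{-1}\sigma m\in U$ because $M$ normalizes $U$. Since $\ol u m^{-1}\sigma m$ and $\ol{u m^{-1}\sigma m}$ lie in the same right coset $K_U\backslash U$, we get $\tau_u(\sigma)\in K_U$. Hence $x(\tau_u(\sigma_0),\ldots,\tau_u(\sigma_i))\in\ind_{K_P}^P V$, and evaluating it at $m\ol u^{-1}\in P$ gives an element of $V$. For finiteness, $x$ is continuous on the compact group $K_U^{i+1}$ with values in a discrete module, so $x$ takes only finitely many values $f_1,\ldots,f_N$. Each $f_k$ has support in a finite union $\bigcup_\ell K_P p_\ell$. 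If $f_k(m\ol u^{-1})\neq 0$, then $\ol u\in p_\ell^{-1}K_P m$, so $\ol u\in U\cap p_\ell^{-1}K_Pm$. This set is compact, hence a finite union of cosets $K_U\ol u$. Therefore, for fixed $(\sigma_0,\ldots,\sigma_i)$, only finitely many $u$ contribute. Continuity of $C_m^{(i)}x$ then follows because each $\tau_u$ is locally constant in $\sigma$: the coset of $\ol u m^{-1}\sigma m$ depends only on $\sigma$ modulo the open subgroup $K_U\cap {}^{m}K_U$, up to conjugation.

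For (b), which I expect to be the main obstacle since it is the only step requiring care with bookkeeping, fix $\rho\in K_U$. Recall that $K_U$ acts on $\ind_{K_P}^P V$ by right translation, that $K_U\subseteq K_P$ acts on $V$, and that invariance of $x$ reads $x(\rho\sigma_0,\ldots,\rho\sigma_i)=\rho\cdot x(\sigma_0,\ldots,\sigma_i)$. I will compute $(C_m^{(i)}x)(\rho\sigma_0,\ldots,\rho\sigma_i)$ by reindexing the sum via the bijection $u\mapsto u' \coloneqq u m^{-1}\rho m$ of $K_U\backslash U$. The cocycle-type identity
\[
\tau_u(\rho\sigma)=k_u\,\tau_{u'}(\sigma),\qquad k_u\coloneqq \ol u m^{-1}\rho m\,\ol{u'}^{-1}\in K_U,
\]
which is checked by direct multiplication, together with invariance of $x$ gives
\[
x\bigl(\tau_u(\rho\sigma_\bullet)\bigr)(m\ol u^{-1}) = x\bigl(\tau_{u'}(\sigma_\bullet)\bigr)(m\ol u^{-1}k_u).
\]
Since $m\ol u^{-1}k_u=\rho\, m\ol{u'}^{-1}$ and the functions are left $K_P$-equivariant, this equals $\rho\cdot x(\tau_{u'}(\sigma_\bullet))(m\ol{u'}^{-1})$. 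Summing over $u'$ yields $\rho\cdot (C_m^{(i)}x)(\sigma_0,\ldots,\sigma_i)$, as required.

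For (c), the homogeneous coboundary is $\partial y(\sigma_0,\ldots,\sigma_{i+1})=\sum_j(-1)^j y(\sigma_0,\ldots,\widehat{\sigma_j},\ldots,\sigma_{i+1})$. Since $C_m^{(i)}$ applies the same map $\tau_u$ to each entry separately, followed by a linear evaluation and a sum over $u$, we get $C_m^{(i+1)}\partial = \partial C_m^{(i)}$ term by term. The finiteness from (a) justifies interchanging the two finite sums. This completes the plan.
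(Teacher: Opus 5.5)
Your proposal is correct and follows the paper's proof essentially step for step: you check that the entries lie in $K_U$ and that only finitely many cosets contribute, you obtain $K_U$-invariance from the reindexing $u\mapsto um^{-1}\rho m$ together with the identity $\tau_u(\rho\sigma)=k_u\,\tau_{u'}(\sigma)$ (the paper's $\tau_{\sigma,u}$), and you verify the chain-map property. The only slip is in your continuity remark, which the paper does not address: $\tau_u$ is continuous but not locally constant (for $m=1$ it equals $\sigma\mapsto \ol u\,\sigma\,\ol u^{-1}$ near the identity); this still suffices, because $x$ is locally constant and your finiteness bound is in fact uniform in $\sigma_\bullet$.
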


\begin{proof}
Fix any $x\in X^i(K_U,\ind_{K_P}^P V)^{K_U}$.  We first justify why we can evaluate $x$ at the point of $P^{i+1}$ in the definition, i.e., why each of its entries actually lies in $K_U$.  First, $U\leq P$ being a normal subgroup means that $m\inv\sigma_jm\in U$ for $\sigma_j\in K_U$.  Now the terms $\ol u m\inv \sigma_j m$ and $\ol{u m\inv\sigma_jm}$ represent the same coset in $K_U\backslash U$, from which it follows that indeed
$$\ol u m\inv\sigma_j m(\ol{u m\inv \sigma_j m})\inv$$
lies in $K_U$ for each $j$.  It is now sufficient to verify the following three facts.
\begin{enumerate}[label = (\roman*)]
\item For our fixed $x$, there are only finitely many terms in this sum which are nonzero.
\item $C_m^{(i)}x$ is $K_U$-invariant, i.e., $\sigma\cdot (C_m^{(i)}x)=C_m^{(i)}x$ for all $\sigma\in K_U$.
\item The $C_m^{(i)}$ form a chain map, i.e., $dC_m^{(i)}=C_m^{(i+1)}d$.
\end{enumerate}
We will prove these in sequence.
\begin{enumerate}[label = (\roman*)]
\item Our fixed $x$ is a continuous function $x:K_U^{i+1}\to \ind_{K_P}^P V$ where $\ind_{K_P}^P V$ is endowed with the discrete topology.  Thus, the preimage of any set under $x$ will be open in $K_U^{i+1}$.  For each point $\sigma_\bullet=(\sigma_0,\ldots, \sigma_i)\in K_U^{i+1}$, we know that $x(\sigma_\bullet)\in \ind_{K_P}^P V$ has compact support $S_{\sigma_\bullet}\subseteq P$.  Let $\mc U_{\sigma_\bullet}\subseteq K_U^{i+1}$ be the open neighborhood of $\sigma_\bullet$ consisting of points whose image under $x$ has support contained in $S_{\sigma_\bullet}$, giving us an open cover $\{\mc U_{\sigma_\bullet}\}_{\sigma_\bullet\in K_U^{i+1}}$ of $K_U^{i+1}$.  By compactness, this has a finite subcover, producing for us a compact set $S\subseteq P$ such that $\supp(x(\sigma_\bullet))\subseteq S$ for every $\sigma_\bullet\in K_U^{i+1}$.  Now $(m\inv\cdot S)\cap U$ is compact in $U$, so it follows from $K_U\leq U$ being open that there are only finitely many cosets $u\in K_U\backslash U$ with $x(\sigma_\bullet)(m\ol u\inv )\neq 0$ for any $\sigma_\bullet\in K_U^{i+1}$.  Thus, only finitely many terms of our sum are nonzero for each $x\in X^i(K_U,\ind_{K_P}^P V)^{K_U}$.

\item Say $\sigma,\sigma_0,\ldots, \sigma_i\in K_U$.  Now for each coset $u\in K_U\backslash U$, $\ol u(m\inv \sigma\inv m)=\tau_{\sigma,u}\cdot\left(\ol{u(m\inv \sigma\inv m)}\right)$ for some $\tau_{\sigma,u}\in K_U$, so we can compute that
\begin{align*}
(\sigma\cdot& C_m^{(i)} x)(\sigma_0,\ldots, \sigma_i)\\
&= \sigma(C_m^{(i)}x)(\sigma\inv\sigma_0,\ldots, \sigma\inv \sigma_i)\\
&= \sum_{u\in K_U\backslash U} \sigma x\left(\ol u m\inv \sigma\inv\sigma_0 m(\ol{u m\inv \sigma\inv\sigma_0 m})\inv,\ldots\right)(m\ol u\inv)\\
&= \sum_{u\in K_U\backslash U} x\left(\ol u (m\inv \sigma\inv m)m\inv \sigma_0 m\left(\ol{u (m\inv \sigma\inv m)m\inv \sigma_0 m}\right)\inv,\ldots\right)(m(m\inv \sigma m)\ol u\inv)
\end{align*}
where the last equality follows from elements of $\ind_{K_P}^P V$ being left $K_P$-equivariant.  Using our $\tau_{\sigma,u}$ and then invoking the fact that $x\in X^i(K_U,\ind_{K_P}^P V)^{K_U}$ is $K_U$-invariant, this equals
\begin{align*}
\sum_{u\in K_U\backslash U}\! x\left(\tau_{\sigma,u} (\ol{u(m\inv\sigma \inv m)}) m\inv \sigma_0 m(\ol{u (m\inv \sigma\inv m)m\inv \sigma_0 m})\inv,\ldots\right)\left(m(\ol{u(m\inv \sigma\inv m) })\inv\tau_{\sigma,u}\inv\right)\\
=\sum_{u\in K_U\backslash U} x\left( (\ol{u(m\inv\sigma \inv m)}) m\inv \sigma_0 m(\ol{u (m\inv \sigma\inv m)m\inv \sigma_0 m})\inv,\ldots\right)\left(m(\ol{u(m\inv \sigma\inv m) })\inv\right).
\end{align*}
Since $u\mapsto u(m\inv\sigma \inv m)$ is a bijection on $K_U\backslash U$, we can do a simple substitution to show that the above equals
\begin{align*}
\sum_{u\in K_U\backslash U} &x\left(\ol u m\inv \sigma_0 m(\ol{u m\inv \sigma_0 m})\inv,\ldots, \ol u m\inv\sigma_i m(\ol{u m\inv \sigma_i m})\inv\right)(m\ol u\inv )\\
&=(C_m^{(i)}x)(\sigma_0,\ldots, \sigma_i).
\end{align*}
\item One can immediately check from the definitions of the maps $C_m^{(i)}$ and the differentials that $dC_m^{(i)}=C_m^{(i+1)}d$; there are no nontrivial steps in the computation (see \cite[pg. 12]{NSW} for a similar computation).
\end{enumerate}
\end{proof}

Note that the maps $C_m^{(i)}$ are functorial with respect to $V\in \Rep_k(K_P)$, so the following proposition makes sense.

\begin{prop}\label{prop-Cmi-univ-delta-functors}
The maps $C_m^{(i)}$ form a morphism of universal $\delta$-functors $\Rep_k(K_P)\to \Ab$,
$$C_m^{(i)}:H^i\left(K_U, \Res_{K_U}^P\ind_{K_P}^P(-)\right)\to H^i\left(K_U,\Res_{K_U}^{K_P}(-)\right).$$
\end{prop}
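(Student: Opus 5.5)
Universality of both the source and target $\delta$-functors has already been established: for $\big(H^i(K_U,\ind_{K_P}^P(-))\big)_{i\geq 0}$ this is the argument of Remark \ref{deltafunctorremark}(i), and for $\big(H^i(K_U,\Res^{K_P}_{K_U}(-))\big)_{i\geq 0}$ it is the same effaceability argument using $\Ind_{\{1\}}^{K_P}W$. So the content of the proposition is that the family $(C_m^{(i)})_{i\geq 0}$, which Lemma \ref{lem-Cmi-maps-of-cohom} already shows is a family of well-defined maps on cohomology, is natural in $V$ and commutes with the connecting homomorphisms. I would argue directly at the cochain level, exactly as one proves that corestriction is a morphism of $\delta$-functors in \cite[Proposition 1.5.2]{NSW}.

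\textbf{Naturality.} Let $\phi:V\to V'$ be a morphism in $\Rep_k(K_P)$. The formula in Definition \ref{def-delta-functor} evaluates the cochain $x$ at certain tuples and then evaluates the resulting function at $m\ol u\inv$. Since $\ind_{K_P}^P(\phi)$ acts pointwise on functions $P\to V$, we have
\[
C_m^{(i)}\bigl(\ind_{K_P}^P(\phi)\circ x\bigr)=\phi\circ C_m^{(i)}(x)
\]
on cochains. Hence the $C_m^{(i)}$ are natural transformations in each degree.

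\textbf{Compatibility with $\delta$.} Let $0\to V''\to V\to V'\to 0$ be exact in $\Rep_k(K_P)$. Since $\ind_{K_P}^P$ is exact, applying it gives a short exact sequence of induced modules. The homogeneous cochain complexes $X^\bullet(K_U,-)^{K_U}$ of both sequences are short exact sequences of complexes, because $X^i(K_U,-)$ is exact and the resulting modules are induced, so taking $K_U$-invariants stays exact. The maps $C_m^{(\bullet)}$ are chain maps by Lemma \ref{lem-Cmi-maps-of-cohom}(iii), and they are natural by the previous step, so they give a morphism between these two short exact sequences of complexes. The connecting homomorphism is functorial for such morphisms (snake lemma naturality), and this yields $\delta\circ C_m^{(i)}=C_m^{(i+1)}\circ\delta$.

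\textbf{Main obstacle.} The delicate point is the exactness of the invariants of the cochain sequences, i.e.\ lifting a $K_U$-invariant cochain valued in $\ind_{K_P}^PV'$ to a $K_U$-invariant one valued in $\ind_{K_P}^PV$. I would handle it as in \cite[Chapter 1]{NSW}: $X^i(K_U,A)^{K_U}$ is naturally identified with the continuous inhomogeneous cochains $C(K_U^{i},A)$. Because $K_U$ is profinite, every continuous map into a discrete quotient lifts continuously through a surjection of discrete modules, so the sequence of invariants remains exact. Once this holds, the remaining verification is formal.
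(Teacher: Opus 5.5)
Your proposal is correct and follows the same route as the paper. The paper omits the proof, calling it standard and identical to the proof in \cite[Proposition 1.5.2]{NSW} that corestriction is a morphism of $\delta$-functors, i.e.\ cochain-level naturality plus naturality of the connecting homomorphism for the induced short exact sequences of cochain complexes, which is exactly what you carry out (including the exactness of the invariant cochain sequences and universality via Remark \ref{deltafunctorremark}).
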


\begin{proof}
This argument is completely standard, and is identical to that given in the proof of \cite[Proposition 1.5.2]{NSW}.  We thus omit it.
\end{proof}

\begin{cor} \label{cor-independence}
The morphism of $\delta$-functors $C_m^{(i)}:H^i(K_U,\ind_{K_P}^P(-))\to H^i(K_U,-)$ is independent of the choices of coset representatives made in Definition \ref{def-delta-functor}.
\end{cor}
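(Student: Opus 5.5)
The approach is to use universality of the $\delta$-functor $\bigl(H^i(K_U,\ind_{K_P}^P(-))\bigr)_{i\geq 0}$ on $\Rep_k(K_P)$. By the effaceability argument of Remark~\ref{deltafunctorremark}, a morphism of $\delta$-functors out of it is determined by its degree-$0$ component. Let $C_m$ and $C'_m$ be the morphisms built from two systems of coset representatives $\{\ol u\}$ and $\{\ol u'\}$. Both are morphisms of $\delta$-functors by Proposition~\ref{prop-Cmi-univ-delta-functors}. It therefore suffices to show that $C_m^{(0)}=C_m'^{(0)}$ as natural transformations $H^0(K_U,\ind_{K_P}^P(-))\to H^0(K_U,-)$. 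Once that is done, uniqueness in the universal property (applied to $C_m$ extending $C_m^{(0)}$) gives $C_m^{(i)}=C_m'^{(i)}$ for all $i$.

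The degree-$0$ computation runs as follows. A homogeneous $0$-cochain class in $H^0(K_U,\ind_{K_P}^PV)$ is a constant function $x(\sigma_0)=f$ with $f\in(\ind_{K_P}^PV)^{K_U}$. Definition~\ref{def-delta-functor} then gives
\[
(C_m^{(0)}x)(\sigma_0)=\sum_{u\in K_U\backslash U} f(m\ol u^{-1}).
\]
This is independent of $\sigma_0$, so no representative of $u m^{-1}\sigma_0 m$ enters. It remains to see that $f(m\ol u^{-1})$ does not depend on the representative $\ol u$. Replace $\ol u$ by $\tau\ol u$ with $\tau\in K_U$. Then $m\ol u^{-1}\tau^{-1}$ differs from $m\ol u^{-1}$ by right translation by $\tau^{-1}\in K_U$. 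Since $f$ is $K_U$-invariant under the right-translation action, $f(m\ol u^{-1}\tau^{-1})=(\tau^{-1}\cdot f)(m\ol u^{-1})=f(m\ol u^{-1})$. Hence $C_m^{(0)}$ is independent of the choices. This also recovers the formula $f\mapsto\sum_{u\in U/K_U}f(mu)$ of Lemma~\ref{lemma-explicit-morphism}, after reindexing $u\mapsto u^{-1}$.

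The main point to be careful about is the appeal to universality. We need that two morphisms of $\delta$-functors with source a universal $\delta$-functor, agreeing in degree $0$, agree in all degrees. This is exactly the uniqueness clause of the universal property (cf.\ \cite[Theorem 2.6.2]{NSW}), and it was already used this way in the proofs of Lemma~\ref{jplus} and Claim~\ref{mplusaction}. Universality of the source was established in Remark~\ref{deltafunctorremark}, via effacement by $\Ind_{\{1\}}^{K_P}W$. A minor point is that the target $\bigl(H^i(K_U,-)\bigr)_{i\geq 0}$ only needs to be a $\delta$-functor, which it is.
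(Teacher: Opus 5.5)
Your proof is correct and follows essentially the same route as the paper: compute $C_m^{(0)}$ as $f\mapsto\sum_{u\in K_U\backslash U}f(m\ol u^{-1})$, observe that each term is unchanged under $\ol u\mapsto\tau\ol u$ with $\tau\in K_U$ because $f$ is $K_U$-invariant, and conclude by uniqueness of morphisms out of the universal $\delta$-functor $\bigl(H^i(K_U,\ind_{K_P}^P(-))\bigr)_{i\geq0}$. Your additional remark, that a $0$-cocycle is constant so the $\sigma_0$-dependent representatives $\ol{um^{-1}\sigma_0 m}$ never enter, spells out a step the paper leaves implicit.
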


\begin{proof}
When $i=0$, the construction of $C_m^{(0)}$ ensures that it is explicitly given for any $V\in \Rep_k(K_P)$ by the map
$$C_m^{(0)}:H^0(K_U,\ind_{K_P}^P V)\to H^0(K_U,V),$$
$$f\mapsto \sum_{u\in K_U\backslash U} f(m\ol u\inv).$$
Fix any $u\in K_U\backslash U$, and suppose that $\tilde u\in u$ is any other representative of this coset.  Then $\tilde u=\sigma \ol u$ for some $\sigma\in K_U$, hence $f(m\tilde u\inv)=f(m\ol u\inv \sigma\inv)=(\sigma\inv\cdot f)(m\ol u\inv)$.  Since $f\in (\ind_{K_P}^P V)^{K_U}$ is $K_U$-invariant, this means that 
$$f(m\tilde u\inv)=(\sigma\inv\cdot f)(m\ol u\inv)=f(m\ol u\inv),$$
from which it immediately follows that $C_m^{(0)}$ is independent of the choice of coset representatives.  Because different choices of coset representatives give morphisms of the same universal $\delta$-functors which agree when $i=0$, it follows that they agree on all $i$.
\end{proof}

Altogether, these results prove the following:

\begin{theorem}
\label{thm-explicit-formula}
The higher mod $p$ Satake transform
$$\SVi:\mc H_G(V)\to \mc H_M(H^i(K_U,V))$$
is explicitly given by
$$\SVi(\Phi)([1,\xi])(m)=C_m^{(i)}(\Phi([1,\xi]))$$
for $i\in \{0,\ldots, \dim U\}$ for all $\Phi\in \mc H_G(V)$, $\xi\in H^i(K_U,V)$, and $m\in M$, where again $\Phi([1,\xi])$ is shorthand for $H^i(\Phi_1)\circ H^i(\eta)(\xi)\in H^i(K_U,\ind_{K_P}^P V)$ as described after Proposition \ref{prop-explicit-formula}.
\end{theorem}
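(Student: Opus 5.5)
The plan is to compare the two morphisms of universal $\delta$-functors $\Rep_k(K_P)\to \Ab$,
$$H^i\left(K_U,\ind_{K_P}^P(-)\right)\longrightarrow H^i(K_U,-),$$
that are now available for each fixed $m\in M$. The first is the composite
$$\mathrm{ev}_m\circ \sum_{g\in K_P\backslash P/K_U}\pr_M(g\inv)\otimes \cores^{(i)}_{g\inv}\pr^{(i)}_{g\inv}$$
constructed before Lemma \ref{lemma-explicit-morphism}. The second is $C_m^{(i)}$, which is a morphism of universal $\delta$-functors by Proposition \ref{prop-Cmi-univ-delta-functors} and is well defined by Corollary \ref{cor-independence}. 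Since the source is universal, two such morphisms agree in every degree once they agree in degree $0$. So I would first show that these two morphisms coincide, and then specialize to $V$ restricted to $K_P$ and to the class $x=\Phi([1,\xi])$.

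For degree $0$, Lemma \ref{lemma-explicit-morphism} says the first morphism is $f\mapsto\sum_{u\in U/K_U}f(mu)$. The proof of Corollary \ref{cor-independence} says $C_m^{(0)}$ is $f\mapsto \sum_{u\in K_U\backslash U} f(m\ol u\inv)$. Inversion $u\mapsto u\inv$ is a bijection $K_U\backslash U\to U/K_U$, and $K_U$-invariance of $f$ makes each term independent of the representative. Hence the two degree-$0$ maps are equal. Universality of $H^i(K_U,\ind_{K_P}^P(-))$, which is effaceable via $\Ind_{\{1\}}^{K_P}$ exactly as in Remark \ref{deltafunctorremark}, then gives equality in all degrees $i$.

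Now take $V\in\Rep_k(K)$, restrict it to $K_P$, and let $x=\Phi([1,\xi])\in H^i(K_U,\ind_{K_P}^P V)$. Proposition \ref{prop-explicit-formula} gives
$$\SVi(\Phi)([1,\xi])=\sum_{g}[\pr_M(g\inv),\cores^{(i)}_{g\inv}\pr^{(i)}_{g\inv}(x)],$$
and evaluating this at $m$ is precisely the first morphism applied to $x$. By the agreement above, this equals $C_m^{(i)}(x)$, which is the claimed formula.

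The main obstacle is checking conventions in the evaluation map $\mathrm{ev}_m$ on $\ind_{K_M}^M H^i(K_U,-)$. The identification of $[\pr_M(g\inv),\eta]$ with a function on $M$ must match the normalization implicit in Lemma \ref{lemma-explicit-morphism}, since this affects whether one evaluates at $m$ or at $m\inv$. I would fix this convention directly in degree $0$ using Heyer's formula cited in Lemma \ref{lemma-explicit-morphism}, and then let universality carry it to all degrees.
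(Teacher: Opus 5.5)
Your proposal is correct and takes essentially the same route as the paper. The paper also identifies $\SVi(\Phi)([1,\xi])(m)$ with the morphism $D_m^{(i)}$ of Lemma \ref{lemma-explicit-morphism} via Proposition \ref{prop-explicit-formula}, matches $D_m^{(0)}$ with $C_m^{(0)}$ using the bijection $u\mapsto u^{-1}$ between $U/K_U$ and $K_U\backslash U$, and concludes $D_m=C_m$ by universality of the source $\delta$-functor.
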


\begin{proof}
Let 
$$D_m^{(i)}:H^i\left(K_U,\ind_{K_P}^P(-)\right)\to H^i(K_U,-)$$
denote the morphism of universal $\delta$-functors described in Lemma \ref{lemma-explicit-morphism}, which we noted was given when $i=0$ by
$$H^0(K_U,\ind_{K_P}^P V)\to H^0(K_U,V),$$
$$f\mapsto \sum_{u\in U/K_U} f(mu)$$
for any $V\in \Rep_k(K_P)$.  By the construction of $D_m^{(i)}$, Proposition \ref{prop-explicit-formula} tells us that $\SVi(\Phi)([1,\xi])(m)=D_m^{(i)}(\Phi([1,\xi]))$.  As noted in the previous corollary, directly from its explicit description, the morphism of universal $\delta$-functors
$$C_m^{(i)}:H^i\left(K_U,\ind_{K_P}^P(-)\right)\to H^i(K_U,-)$$
can be described when $i=0$ by
$$H^0(K_U,\ind_{K_P}^P V)\to H^0(K_U,V),$$
$$f\mapsto \sum_{u\in K_U\backslash U} f(mu\inv).$$
Clearly $u\mapsto u\inv$ induces a bijection $U/K_U\to K_U\backslash U$, showing that $C_m^{(0)}$ and $D_m^{(0)}$ agree as functions $H^0(K_U,\ind_{K_P}^PV)\to H^0(K_U,V)$.  Since they are morphisms of universal $\delta$-functors, we conclude that $D_m=C_m$.
\end{proof}

We now recall from \cite[\S1.1]{herzig2011satake} that there is an alternate description of the Hecke algebra, which is used more often in applications.  In particular, we have the following explicit correspondence for $\mc H_G(V)$, with a similar correspondence for $\mc H_M\left(H^i(K_U,V)\right)$:
\begin{align*}
\HH_G(V) \coloneqq  \End_G(\ind_K^G V)
&\overset{\sim}{\xleftrightarrow{\hspace{1.5cm}}}
\left\{ 
\Psi: G \rightarrow \End_k V \; \middle| \;
\begin{array}{c}
\Psi(h_1 g h_2) = h_1 \circ \Psi(g) \circ h_2 \text{ for all }\\ h_1, h_2 \in K, g \in G, \quad 
\text{supp}(\Psi) \text{ compact}
\end{array}
\right\}\\
\big([1,v] \mapsto f_v \big) &\loongmapsto \big( g \longmapsto ( v \mapsto f_v(g) ) \big).
\end{align*}
Note that the inverse of this correspondence is given by taking any $\Psi:G\to \End_k(V)$ lying in our Hecke algebra, and sending it to the map in $\End_G(\ind_K^GV)$ sending any $[1,v]\in \ind_K^G V$ to the map $G\to V$ in $\ind_K^G V$ defined by 
$$g\mapsto \Psi(g)(v).$$
We seek to reformulate our description of the Satake morphism using this new interpretation of the Hecke algebras.  Recall that the isomorphism $\ind_K^G V\to \ind_{K_P}^P V$ coming from the Iwasawa decomposition is merely given by restriction to $P$.  In particular, given any $v\in V$, $[1,v]\in \ind_K^G V$ is sent to $[1,v]\in \ind_{K_P}^P V$ under this isomorphism.

In our description of the Satake morphism above, we used a $K_U$-linear map $V\to \ind_{K_P}^P V$ coming from $\Phi\in \End_G(\ind_K^G V)$ to induce a map on cohomology, which was constructed as follows:
$$V\xto{v\mapsto [1,v]}\ind_{K_P}^P V\xto{\sim} \ind_K^G V\xto{\Phi} \ind_K^G V\xto{\sim} \ind_{K_P}^P V.$$
Using our above explicit correspondences, given the element $\Psi\in \mc H_G(V)\subseteq \mathrm{Map}(G,\End_k(V))$ corresponding to $\Phi\in \End_G(\ind_K^G V)$, we can produce this same map via
\begin{align*}
\Psi(-):V&\to \ind_{K_P}^P V,\\
v&\mapsto \left(p\mapsto \Psi(p)(v)\right).
\end{align*}
This allows us to reformulate Theorem \ref{thm-explicit-formula} as follows.

\begin{cor}
\label{explicit-formula-reinterpreted}
With the above interpretation of the Hecke algebras, the higher mod $p$ Satake transform
$$\SVi:\mc H_G(V)\to \mc H_M(H^i(K_U,V))$$
is explicitly given by
$$\SVi(\Psi)(m)(\xi)=C_m^{(i)}\left(\Psi(-)(\xi)\right)$$
for any $\Psi\in \mc H_G(V)\subseteq \mathrm{Map}(G,\End_k(V))$, $m\in M$, and $\xi\in H^i(K_U,V)$, where $\Psi(-):H^i(K_U,V)\to H^i(K_U,\ind_{K_P}^P V)$ is the map on cohomology induced by $\Psi(-):V\to \ind_{K_P}^P V$.
\end{cor}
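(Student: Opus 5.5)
The plan is to deduce the corollary directly from Theorem~\ref{thm-explicit-formula} by translating both sides through the function-model correspondence of \cite[\S1.1]{herzig2011satake}. Nothing new about cohomology is needed; we only have to track elements through the dictionary between endomorphisms and $\End_k$-valued functions.

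The first step is to identify the $K_U$-linear map $V\to\ind_{K_P}^P V$ used in Theorem~\ref{thm-explicit-formula}. There it is the composite
\[
V\xrightarrow{\eta}\ind_{K_P}^PV\cong\ind_K^GV\xrightarrow{\Phi}\ind_K^GV\cong\ind_{K_P}^PV .
\]
We check it pointwise. The element $\eta(v)=[1,v]$ goes to $[1,v]\in\ind_K^GV$. Its image under $\Phi$ is the function $g\mapsto\Psi(g)(v)$, and restricting to $P$ gives $p\mapsto\Psi(p)(v)$. So as maps of representations of $K_U$ (indeed of $K_P$) we have $\Phi_1\circ\eta=\Psi(-)$. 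Applying $H^i(K_U,-)$ then gives $\Phi([1,\xi])=H^i(\Phi_1)\circ H^i(\eta)(\xi)=\Psi(-)(\xi)$ for every $\xi\in H^i(K_U,V)$.

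The second step is to translate the left-hand side. Under the analogous correspondence for $\mathcal H_M(H^i(K_U,V))$, the endomorphism $\SVi(\Phi)$ corresponds to the function $m\mapsto\bigl(\xi\mapsto\SVi(\Phi)([1,\xi])(m)\bigr)$. Hence $\SVi(\Psi)(m)(\xi)=\SVi(\Phi)([1,\xi])(m)$. Theorem~\ref{thm-explicit-formula} identifies this with $C_m^{(i)}(\Phi([1,\xi]))$, which by the first step equals $C_m^{(i)}(\Psi(-)(\xi))$.

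The only delicate point is bookkeeping. The evaluation convention in the function model (value at $m$ of $f_v$) must match the evaluation used in Theorem~\ref{thm-explicit-formula}. We must also keep track of the twist by $\delta_P$ identifying $L^{i-\dim K_U}(K_U,V)$ with $H^i(K_U,V)$; this was already absorbed into the definition of $\SVi$ as a map to $\mathcal H_M(H^i(K_U,V))$. Once the conventions are fixed consistently, both identities are formal, and the corollary follows.
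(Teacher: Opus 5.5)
Your argument is correct and follows the paper's own route. The paper likewise notes that, under Herzig's function-model dictionary and restriction to $P$, the composite $V\to\ind_{K_P}^PV$ built from $\Phi$ is $v\mapsto(p\mapsto\Psi(p)(v))$, and then reads Theorem~\ref{thm-explicit-formula} through that dictionary; your identity $\SVi(\Psi)(m)(\xi)=\SVi(\Phi)([1,\xi])(m)$ simply makes explicit the step the paper leaves implicit.
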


\section{The higher mod \texorpdfstring{$p$}{p} Satake transforms for \texorpdfstring{$\GL_n(F)$}{GLn(F)}}\label{sec-GLn}

In this section, our goal will be to apply Corollary \ref{explicit-formula-reinterpreted} to the case when $\mathbf G=\GL_n$ and find simpler expressions for the higher mod $p$ Satake transforms.  In part, this will make precise the naïve qualitative connection remarked before Definition \ref{def-delta-functor} between the morphism of $\delta$-functors $C_m$ and the notion of corestriction.

For the rest of the paper, we will be working with the function model of the Hecke algebra used in Corollary \ref{explicit-formula-reinterpreted}, where elements of $\mc H_G(V)$ are in particular functions $G\to \End_k(V)$.  We will also assume for the remainder of the paper that $V\in \Rep_k(K)$ is now irreducible.

\subsection{Hecke generators and coefficient maps}\label{subsec-GLn-notation}

Let $F / \Q_p$ be a finite extension with ring of integers $\mathcal{O}_F$, uniformizer $\varpi$, and residue field $k_F \cong \F_{p^f}$. Let $G = \GL_n(F)$, and let $K = \GL_n(\mathcal{O}_F)$ denote its usual maximal compact subgroup. 

We fix the standard upper-triangular Borel subgroup $B = T U$, where $T$ is the diagonal torus and $U$ is the unipotent radical. We denote their integral versions by $K_T \coloneqq  T \cap K$ and $K_U \coloneqq  U \cap K$. 

For any $\lambda = (\lambda_1, \dots, \lambda_n) \in \Z^n$, we define the associated torus element
$$t_\lambda \coloneqq  \diag(\varpi^{\lambda_1}, \dots, \varpi^{\lambda_n}) \in T.$$
We call $\Lambda^+ \coloneqq  \{ \lambda \in \Z^n \mid \lambda_1 \ge \lambda_2 \ge \dots \ge \lambda_n \}$ the \textit{dominant cone}.

For any subset $I \subset \{1, \dots, n\}$, we let $\omega_I \in \Z^n$ be such that $(\omega_I)_\ell = 1$ if $\ell \in I$ and $0$ otherwise. The corresponding torus element is denoted by $t_I \coloneqq  t_{\omega_I}$. We let $N_I$ denote the unipotent radical of the parabolic subgroup associated to $I$; i.e., $N_I$ is generated by the root groups $U_{a,b}$ where $a \in I$ and $b \not\in I$. This parabolic subgroup has Levi given by the centralizer of $t_I$; see \cite[3.2.15, 13.4, 15.4.4]{Springer}.  Note that the opposite unipotent group of $N_I$ is exactly $N_{I^c}$, the unipotent radical associated to the complement subset $I^c$. For each nonempty $I \subset \{1, \dots, n\}$, we also define the maps:
\begin{itemize}
    \item $q_I: V \twoheadrightarrow V_{N_I \cap K}$, the natural quotient map to the $(N_I \cap K)$-coinvariants,
    \item $i_I:  V^{N_I \cap K} \hookrightarrow V$, the natural inclusion of the $(N_I \cap K)$-invariants. 
\end{itemize}
For each integer $j \in \{1, \dots, n\}$, we also let $\omega_j \coloneqq  (1, \dots, 1, 0, \dots, 0) \in \Z^n$ consisting of exactly $j$ ones, and we write $t_j \coloneqq  t_{\omega_j}$ for simplicity. 

The Hecke algebra $\mathcal{H}_G(V)$ has a $k$-basis $\{T_\lambda\}_{\lambda \in \Lambda^+}$, where $T_\lambda$ is uniquely determined by the properties:
\bal
\Supp(T_\lambda) = K t_\lambda K \quad \text{and} \quad T_\lambda(t_\lambda) \in \End_k(V) \text{ is a linear projection.}
\nal
As a $k$-algebra, $\mathcal{H}_G(V)$ is isomorphic to $k[T_1, \dots, T_{n-1}, T_n^{\pm 1}],$ where $T_j \coloneqq  T_{\omega_j}$. For any $j \in \{1, \dots, n\}$, the linear projection $T_j(t_j)$ is given by the composition of $K_T$-equivariant maps:
\bal 
T_j(t_j): V \overset{q_J}{\longtwoheadrightarrow} V_{N_J \cap K} \overset{\eta_J}{\longrightarrow} V^{N_{J^c} \cap K} \overset{i_{J^c}}{\longhookrightarrow} V,
\nal 
where $J \coloneqq  \{1, \dots, j\}$ and $\eta_J: V_{N_J \cap K} \overset{\sim}{\longrightarrow} V^{N_{J^c} \cap K}$ is the canonical isomorphism of \cite[Lemma 2.5(ii), (2.8)]{herzig2011satake}.

For any subset $I \subset \{1,\dots, n\}$ of size $j$, there exists a permutation matrix $w \in K$ such that $t_I = w t_j w^{-1}$. By the $K$-biequivariance of $T_j$, we have the factorization of $K_T$-equivariant maps:
\bal 
T_j(t_I): V \overset{q_I}{\longtwoheadrightarrow} V_{N_I \cap K} \overset{\eta_I}{\longrightarrow} V^{N_{I^c} \cap K} \overset{i_{I^c}}{\longhookrightarrow} V,
\nal 
where $\eta_I \coloneqq  w \circ \eta_J \circ w^{-1}$ is an isomorphism independent of the choice of $w$. For $I=\emptyset$, we set $N_{\emptyset} = \{1\}$ and $q_{\emptyset} = i_{\emptyset} = \id_V$. 


    
\subsection{The transforms of the standard Hecke generators}
In order to describe $\SVi(T_j): T \rightarrow \End_k(H^i(K_U,V))$, we will need the following definitions.

\begin{definition}\label{defn-cI} For any nonempty subset $I \subset \{1, \dots, n\}$, define
\bal 
K_{U,I} \coloneqq  t_I K_U t_I^{-1} \cap K_U.
\nal 
\end{definition}

\begin{lem}\label{lem-KUI-compatible-pair} The pair of maps $\conjtI: K_{U,I} \rightarrow K_U$ and $T_j(t_I): V\rightarrow V$ induce a $K_T$-equivariant map on cohomology, which we denote by
\begin{align}\label{eqn-induced-map-conj-Tj-tI}
T_{j,I}: H^i(K_U,V) \rightarrow H^i(K_{U,I},V).
\end{align}
At the level of cocycles, $T_{j,I}(x) = T_j(t_I)_* \circ x \circ \conjtI^*$ for all $x \in C^i(K_U,V)$.
\end{lem}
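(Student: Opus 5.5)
This is a compatible-pair statement in the sense of \cite[Section 1.5]{NSW}. The plan is to check the two compatibility conditions for the pair $(\conjtI, T_j(t_I))$, then invoke the general fact that a compatible pair induces a cochain map, and finally verify $K_T$-equivariance.

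\textbf{Well-definedness of the pair.} For $\sigma\in K_{U,I}$ we have $\sigma\in t_IK_Ut_I^{-1}$, so $\conjtI(\sigma)=t_I^{-1}\sigma t_I$ lies in $K_U$. Hence $\conjtI:K_{U,I}\to K_U$ is a continuous homomorphism.

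\textbf{Compatibility.} We must show that $T_j(t_I)$ intertwines the two actions, namely
\[
T_j(t_I)\bigl(t_I^{-1}\sigma t_I\cdot v\bigr)=\sigma\cdot T_j(t_I)(v)\qquad (\sigma\in K_{U,I},\ v\in V).
\]
I would deduce this from the $K$-biequivariance $\Psi(h_1gh_2)=h_1\circ\Psi(g)\circ h_2$ of the Hecke function $T_j$. Take $g=t_I$, $h_1=\sigma$ and $h_2=t_I^{-1}\sigma^{-1}t_I$; both $h_1$ and $h_2$ lie in $K$. Then $h_1t_Ih_2=\sigma t_I t_I^{-1}\sigma^{-1}t_I=t_I$, so
\[
T_j(t_I)=\sigma\circ T_j(t_I)\circ t_I^{-1}\sigma^{-1}t_I,
\]
which is exactly the required identity. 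This is the only nonformal input.

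\textbf{Cochain map.} With compatibility in hand, the formula $x\mapsto T_j(t_I)_*\circ x\circ(\conjtI^{(i+1)})^*$ on homogeneous cochains has two properties. It sends $K_U$-invariant cochains to $K_{U,I}$-invariant ones; this is again the intertwining identity. It also commutes with the coboundaries, because precomposing with a group homomorphism and postcomposing with a linear map both commute with the alternating-sum differential. It therefore induces the map $T_{j,I}$ on $H^i$, with the stated cocycle description. This is standard and follows \cite[Section 1.5]{NSW}.

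\textbf{$K_T$-equivariance.} This is the step requiring slight care, and I expect it to be the main one. The group $K_T$ normalizes both $K_U$ and $K_{U,I}$, since it commutes with $t_I$. It therefore acts on both cohomology groups by the usual conjugation action $\conj_s$ (Lemma \ref{KMnote}), via $s_*\circ x\circ\conj_{s^{-1}}^*$.

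For $s\in K_T$, conjugation by $s$ commutes with $\conjtI$, since $s$ and $t_I$ commute. Moreover, $T_j(t_I)\circ s=s\circ T_j(t_I)$; this follows from biequivariance with $h_1=s$, $h_2=s^{-1}$, using $st_Is^{-1}=t_I$. Consequently the two composites agree already at the level of cochains, and this gives $K_T$-equivariance on cohomology.
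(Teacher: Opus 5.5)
Your proposal is correct and follows the paper's route. Both arguments verify the intertwining identity $T_j(t_I)(\conjtI(h)\cdot v)=h\cdot T_j(t_I)(v)$, invoke the compatible-pair formalism of \cite[pg.~45]{NSW} to obtain the induced map with the stated cocycle formula, and read off $K_T$-equivariance from that formula using that $T$ is abelian. The paper calls the intertwining identity "straightforward"; your derivation of it (and of $T_j(t_I)\circ s=s\circ T_j(t_I)$ for $s\in K_T$) from $K$-biequivariance, with $h_1=\sigma$ and $h_2=t_I^{-1}\sigma^{-1}t_I\in K_U$, simply supplies that omitted detail.
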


\begin{proof}
It is straightforward to verify that
\bal 
T_j(t_I)(\conjtI(h) \cdot v) = h\cdot T_j(t_I)(v) \quad \text{for all }h \in \KUI, \, v \in V.
\nal 
By \cite[pg. 45]{NSW}, it follows that the pair $(\conjtI, T_j(t_I))$ induces maps $H^i(K_U,V) \rightarrow H^i(\KUI,V)$ for all $i \geq 0$ which satisfy the desired cocycle formula. Furthermore, it is easy to show that these maps are $K_T$-equivariant via this cocycle formula, using that $T$ is abelian.
\end{proof}

\begin{theorem}\label{thm-GLn-SVi-support-formula} Let $F / \Qp$ be a finite extension and let $G = \GL_n(F)$ for some $n \geq 1$. For any Serre weight $V$ and integer $i \geq 0$, we have 
\begin{align}\label{eqn-GLn-SVi-support}
    \Supp \SVi(T_j) \subseteq \bigsqcup\limits_{\substack{I \subset \{1, \dots, n\},\\ \#I = j}} K_T t_I
\end{align}
for all $j \in \{1, \dots, n\}$. Moreover, for each $I \subset \{1, \dots, n\}$ with $\#I = j$, we have 
\begin{align}\label{eqn-GLn-SVi-formula}
\SVi(T_j)(t_I) =\cores_{K_{U,I}}^{K_U} \circ T_{j,I} \in \End_k(H^i(K_U,V)).
\end{align}
\end{theorem}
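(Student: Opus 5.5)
We will compute directly from Corollary \ref{explicit-formula-reinterpreted}, which gives $\SVi(T_j)(m)(\xi)=C_m^{(i)}(T_j(-)(\xi))$ for $m\in T$. The first step is to describe the function $p\mapsto T_j(p)$ on $B$. Its support is $B\cap Kt_jK$. By the Iwasawa decomposition and the Cartan decomposition for $\GL_n$, we have $B\cap Kt_jK=\bigsqcup_{\#I=j}K_B\,t_I\,U_I'$. Here $U_I'$ is a finite set of representatives in $U$: for $u\in U$, the element $t_Iu$ lies in $Kt_jK$ exactly when all entries of $t_I u t_I^{-1}$ are integral after the appropriate rescaling. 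Concretely, we will show that $B\cap Kt_jK=\bigsqcup_{\#I=j}K_U K_T t_I K_U$. This is the standard fact that the minors of $t_Iu$ have valuation at least the minimal one, with equality only if $u$ contributes no extra denominators.

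The second step is to insert this into the cochain formula of Definition \ref{def-delta-functor}. Fix a cocycle $x=T_j(-)\circ\xi$. The sum defining $C_m^{(i)}x$ runs over $u\in K_U\backslash U$ and evaluates the induced function at $m\bar u^{-1}$. This value is nonzero only if $m\bar u^{-1}\in K_UK_Tt_IK_U$ for some $I$. Projecting to $T/K_T$ forces $m\in K_Tt_I$, which proves the support statement \eqref{eqn-GLn-SVi-support}.

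The third step is to compute the value at $m=t_I$. We have $t_I\bar u^{-1}\in K_Bt_IK_U$ exactly when $\bar u^{-1}\in t_I^{-1}K_Ut_I\,K_U$, that is, when the coset $K_U\bar u$ lies in $K_U t_I^{-1}K_Ut_I$. These cosets are in bijection with $(t_I^{-1}K_Ut_I\cap K_U)\backslash t_I^{-1}K_Ut_I$. Conjugating by $t_I$ identifies them with $K_{U,I}\backslash K_U$. We then choose representatives $\bar u=t_I^{-1}\bar g t_I$ with $\bar g$ running over representatives of $K_{U,I}\backslash K_U$. Using $T_j(t_I k)=T_j(t_I)\circ k$ for $k\in K_U$, each summand becomes
\[
\bar g^{-1}\, T_j(t_I)\, \xi\bigl(t_I^{-1}\,\bar g\sigma_0(\overline{g\sigma_0})^{-1}t_I,\ldots\bigr),
\]
after moving $K_U$-elements through via $K$-equivariance and the invariance of homogeneous cochains. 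This is precisely the corestriction formula \eqref{cores-on-cochains} applied to the cochain $T_{j,I}(\xi)=T_j(t_I)_*\circ\xi\circ\conjtI^*$ of Lemma \ref{lem-KUI-compatible-pair}. This yields \eqref{eqn-GLn-SVi-formula}.

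The main obstacle is the bookkeeping in this last identification. One must check that the action factor $\bar g^{-1}$ and the arguments $\bar u m^{-1}\sigma m(\overline{um^{-1}\sigma m})^{-1}$ match the corestriction cochain exactly, including left versus right coset conventions. As a fallback we would avoid cochains altogether. Both sides are morphisms of universal $\delta$-functors in $V$, in the sense of Proposition \ref{prop-Cmi-univ-delta-functors} and the corestriction proposition, once $T_j(t_I)$ is expressed as a natural map of functors on $\Rep_k(K_P)$-type data. It would then suffice to verify agreement at $i=0$, which is Herzig's classical computation $\sum_{u}T_j(t_Iu)$. Making this fallback precise requires the naturality of $T_j(t_I)$ in the coefficient, so we would attempt the direct cochain comparison first.
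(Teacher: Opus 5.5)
\textbf{The gap is Step 1.} Two later steps depend on the identity $B\cap Kt_jK=\bigsqcup_{\#I=j}K_UK_Tt_IK_U$: your support argument, and the restriction in Step 3 of the sum over $K_U\backslash U$ to cosets in $K_Ut_I^{-1}K_Ut_I$. After the torus part is fixed, this identity is equivalent to $U\cap Kt_I^{-1}Kt_I=K_U(t_I^{-1}K_Ut_I)$, which is exactly the paper's Lemma~\ref{lem-U-cap-KtKt}. Your appeal to minors does not prove it.

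A minors (elementary divisor) criterion only tells you when a matrix lies in $Kt_jK$. For this minuscule coweight that condition is: $g$ and $\varpi g^{-1}$ are integral and $\valpi(\det g)=j$. What you need is stronger: the off-diagonal part of $t_Iu$ can actually be absorbed into $K_U$ on both sides. There are two ways to supply this.
\begin{itemize}
\item The paper's route clears the off-diagonal entries of $A=ut_I^{-1}$ one at a time, ordered by distance from the diagonal. If $A_{c,c}=\varpi^{-1}$, left multiplication by an elementary matrix in $K_U$ removes the $(r,c)$ entry. If $A_{r,r}=\varpi^{-1}$, right multiplication does. If both diagonal entries are $1$, the identity $(AA^{-1})_{r,c}=0$, together with the entries already cleared, gives $A_{r,c}=-(A^{-1})_{r,c}\in\OF$, so a left elementary matrix in $K_U$ removes it.
\item A lattice argument also works. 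The lattice $t_Iu\OF^n$ lies between $\varpi\OF^n$ and $\OF^n$, and its reduction is a codimension-$j$ subspace of $k_F^n$. By the Bruhat decomposition of the Grassmannian, this subspace equals $\bar v\cdot\mathrm{span}\{e_a:a\notin I'\}$ for some $\bar v\in U(k_F)$ and some $\#I'=j$. Lifting $\bar v$ to $v\in K_U$ gives $t_Iu\in vt_{I'}K\cap B=vt_{I'}K_B$. Comparing torus parts forces $I'=I$, and hence $t_Iu\in K_Ut_IK_U$.
\end{itemize}
Either argument closes the gap; as written, the key input is asserted rather than proved.

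Granting that lemma, your Step 3 is the paper's computation. You reindex by $K_{U,I}\backslash K_U$ through $\bar u=t_I^{-1}\bar g t_I$, which Corollary~\ref{cor-independence} permits and which is compatible with $\overline{ut_I^{-1}\sigma t_I}=t_I^{-1}\overline{g\sigma}t_I$. The sum then becomes the corestriction cochain of $T_{j,I}(\xi)$.

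There is one slip. Since $t_I\bar u^{-1}=\bar g^{-1}t_I$, the identity you need is left equivariance, $T_j(\bar g^{-1}t_I)=\bar g^{-1}\circ T_j(t_I)$, not $T_j(t_Ik)=T_j(t_I)\circ k$. Your displayed summand is nonetheless the correct one.

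For the support statement alone you do not need the double-coset decomposition. Integrality of the upper-triangular matrices $mu$ and $\varpi(mu)^{-1}$ forces every diagonal valuation of $m$ into $\{0,1\}$, and the determinant fixes their sum at $j$. This is how the paper proves the support claim; it reserves Lemma~\ref{lem-U-cap-KtKt} for the evaluation at $t_I$.
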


\begin{proof} Let $j \in \{1, \dots, n\}$ and let $m = \diag(m_1, \dots, m_n) \in T$. By Corollary \ref{explicit-formula-reinterpreted}, we have the following formula for $\SVi(T_j)(m)$ at the level of cochains:
\begin{align}\label{eqn-SVi-Tj-m-cochains}
\SVi(T_j)(m)(x)(\sigma_\bullet) = \sum_{u \in K_U\backslash U}T_j(m \overline{u}^{-1}) \left( x\left( \left( \overline{u} m^{-1} \sigma_\ell m (\overline{u m^{-1} \sigma_\ell m})^{-1} \right)_{\ell=0}^i\right) \right). 
\end{align}
Thus, if $T_j(m u) = 0$ for all $u \in U$, then $\SVi(T_j)(m)=0 \in \End_k(H^i(K_U,V))$. In other words:
\bal 
m \in \Supp(\SVi(T_j)) \implies mu \in \Supp(T_j) = K t_j K \text{ for some }u \in U.
\nal 

Now assume there exists $u \in U$ such that $mu \in K t_j K$. Then $mu \in \Matn(\OF)$ since $t_j \in \Matn(\OF)$. Because $u \in U$ is upper unitriangular, the matrix $mu$ is upper-triangular with diagonal entries $m_1, \dots, m_n$. Then $mu \in \Matn(\OF)$ implies that $\valpi(m_r)\geq 0$ for all $r \in \{1, \dots, n \}.$ Similarly, $\varpi (m u)^{-1} \in K (\varpi t_j^{-1}) K \subset \Matn(\OF)$ is upper-triangular with diagonal entries $ \varpi m_1^{-1}, \dots, \varpi m_n^{-1}$, and so $\valpi(\varpi m_r^{-1}) = 1 - \valpi(m_r) \geq 0$ for all $r \in \{1, \dots, n\}$. Thus, 
\bal 
0 \leq \valpi(m_r) \leq 1 \quad \text{for all }r \in \{1, \dots, n\}.
\nal 

Taking the determinant yields $\valpi(\det(mu)) = \valpi(\det(m))= \valpi(\det(t_j)) = j$, and so we also have $\sum_{r=1}^n \valpi(m_r) = j$. Therefore, exactly $j$ diagonal entries of $m$ must have valuation $1$, and the remaining $n-j$ entries must have valuation $0$. Consequently, $m \in K_T t_I$ for some $I \subset \{1, \dots, n\}$ with $\#I = j$. This establishes \eqref{eqn-GLn-SVi-support}.

Now fix some subset $I \subset \{1, \dots, n\}$ satisfying $\#I = j$. To prove (\ref{eqn-GLn-SVi-formula}), we will plug $m = t_I$ into (\ref{eqn-SVi-Tj-m-cochains}). First, suppose there exists $ \overline{u} \in U$ such that $t_I \overline{u}^{-1} \in K t_j K$. Then $ \overline{u}\in U \cap K t_I^{-1} K t_I$ since $K t_j^{-1} K = K t_I^{-1} K$. By Lemma \ref{lem-U-cap-KtKt}, we then have $\overline{u} \in K_U(t_I^{-1} K_U t_I)$. Thus, when $m= t_I$, the only nonzero summands in (\ref{eqn-SVi-Tj-m-cochains}) correspond to $u$ that belong to the set $K_U \backslash K_U ( t_I^{-1} K_U t_I)$. Moreover, there is a bijection
\begin{align}\label{eqn-bij-cosets}
    K_{U,I} \backslash K_U \rightarrow K_U \backslash K_U ( t_I^{-1} K_U t_I), \quad K_{U,I} v\mapsto K_U(t_I^{-1} v t_I).
\end{align}

For each coset $K_{U,I}v\in K_{U,I}\backslash K_U$, fix a representative $\widetilde{v}\in K_U$. By Corollary \ref{cor-independence}, we are free to choose any set of lifts of $K_U\backslash U$. For the coset $u=K_U(t_I^{-1}vt_I)$ corresponding to $K_{U,I}v$ under \eqref{eqn-bij-cosets}, choose the representative $\overline{u}=t_I^{-1}\widetilde{v}t_I$. We write $\widetilde{v\sigma_\ell}$ for the chosen representative of the coset $K_{U,I}v\sigma_\ell$. Evaluating (\ref{eqn-SVi-Tj-m-cochains}) at $m=t_I$ then yields:
\bal 
\SVi(T_j)(t_I)(x)(\sigma_\bullet) &= \sum_{u \in K_U \backslash K_U ( t_I^{-1} K_U t_I)} T_j(t_I \overline{u}^{-1}) \left( x\left( \left( \overline{u} t_I^{-1} \sigma_\ell  t_I(\overline{u t_I^{-1} \sigma_\ell t_I})^{-1} \right)_{\ell=0}^i\right) \right) \\
&=\sum_{v \in K_{U,I} \backslash K_U} T_j\left(\widetilde{v}^{-1} t_I\right) \left( x\left( \left( t_I^{-1} \widetilde{v} \sigma_\ell t_I \left(\ \overline{ t_I^{-1} \widetilde{v} \sigma_\ell t_I} \ \right)^{-1} \right)_{\ell=0}^i \right) \right).
\nal 
Note that $T_j(\widetilde{v}^{-1} t_I) = \widetilde{v}^{-1} \circ T_j(t_I)$ by the $K$-equivariance of $T_j$. Also, since $K_{U,I} \widetilde{v} \sigma_\ell = K_{U,I} \widetilde{v \sigma_\ell}$, there exists $h \in K_{U,I}$ such that $\widetilde{v} \sigma_\ell = h \widetilde{v \sigma_\ell}$. But then 
\bal 
K_U t_I^{-1} \widetilde{v} \sigma_\ell t_I = K_U (t_I^{-1} h t_I) (t_I^{-1} \widetilde{v \sigma_\ell} t_I) = K_U (t_I^{-1} \widetilde{v \sigma_\ell}t_I)
\nal 
since $h \in K_{U,I} \subset t_I K_U t_I^{-1}$. Thus, $\overline{t_I^{-1} \widetilde{v} \sigma_\ell t_I} = t_I^{-1} \widetilde{v \sigma_\ell} t_I$ by our choice of lifts. Hence, 
\bal 
\SVi(T_j)(t_I)(x)(\sigma_\bullet) &=\sum_{v \in K_{U,I} \backslash K_U} \widetilde{v}^{-1} \cdot T_j\left(t_I\right) \left( x\left( \left( t_I^{-1} \widetilde{v} \sigma_\ell t_I\left( t_I^{-1} \widetilde{v \sigma_\ell} t_I \right)^{-1} \right)_{\ell=0}^i \right) \right) \\
&= \sum_{v \in K_{U,I} \backslash K_U} \widetilde{v}^{-1} \cdot T_j(t_I) \left( x\left( \left( t_I^{-1} \widetilde{v} \sigma_\ell (\widetilde{v \sigma_\ell})^{-1} t_I \right)_{\ell=0}^i \right) \right) \\
&= \sum_{v \in K_{U,I} \backslash K_U} \widetilde{v}^{-1} \cdot T_{j,I} (x)\left( \left( \widetilde{v} \sigma_\ell (\widetilde{v \sigma_\ell})^{-1} \right)_{\ell=0}^i \right)  \\
&= \bigl(\cores_{K_{U,I}}^{K_U}\circ T_{j,I}\bigr)(x)(\sigma_\bullet).
\nal 
\end{proof}

\begin{lem}\label{lem-U-cap-KtKt}
For all $I \subset \{1, \dots, n\}$ with $j \coloneqq  \# I \in \{1,\dots,n\}$, we have
\bal
U \cap K t_I^{-1} K t_I = K_U ( t_I^{-1} K_U t_I).
\nal
\end{lem}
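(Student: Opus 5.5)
The inclusion $\supseteq$ is immediate: if $a,b\in K_U$, then $a\,(t_I^{-1}bt_I)$ lies in $U$ because both factors do, and it lies in $K\,t_I^{-1}Kt_I$ because $a\in K$ and $b\in K$. So the content of the lemma is the inclusion $\subseteq$.

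\emph{Step 1: integrality conditions on $u$.} Let $u\in U$ satisfy $u\,t_I\inv\in Kt_I\inv K$. Two integrality statements follow, both with the same reasoning as in the proof of Theorem~\ref{thm-GLn-SVi-support-formula}:
\begin{itemize}[label=$\bullet$]
\item $t_Iu\inv\in Kt_IK\subset\Matn(\OF)$;
\item $\varpi\, u\,t_I\inv\in K(\varpi t_I\inv)K\subset \Matn(\OF)$.
\end{itemize}
Writing $\chi_I$ for the indicator function of $I$, these say:
\begin{itemize}[label=$\bullet$]
\item $(u\inv)_{ab}\in\varpi^{-\chi_I(a)}\OF$;
\item $u_{ab}\in\varpi^{\chi_I(b)-1}\OF$.
\end{itemize}
Since $u$ is unipotent, these sharpen considerably. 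The entries of $u$ in columns $b\in I$ are integral. The entries of $u\inv$ in rows $a\notin I$ are integral.

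\emph{Step 2: describe the target.} The group $t_I\inv K_Ut_I$ consists of the upper unitriangular matrices whose $(a,b)$ entry lies in:
\begin{itemize}[label=$\bullet$]
\item $\varpi^{-1}\OF$ if $a\in I$ and $b\notin I$;
\item $\varpi\OF$ if $a\notin I$ and $b\in I$;
\item $\OF$ otherwise.
\end{itemize}
To produce the factorization, I would use the decomposition of $U$ relative to the parabolic attached to $I$. Let $L_I$ be the centralizer of $t_I$. Then $U=(U\cap N_{I^c})(U\cap L_I)(U\cap N_I)$, and this product decomposition is unique. Accordingly write
\[
u=u_-\,u_L\,u_+ .
\]
Since $t_I$ centralizes $L_I$, it suffices to show three things:
\begin{itemize}[label=$\bullet$]
\item $u_-$ and $u_L$ are integral, so that $u_-u_L\in K_U$;
\item $u_+$ has entries in $\varpi^{-1}\OF$;
\item consequently $u_+\in t_I\inv(K\cap N_I)t_I\subset t_I\inv K_Ut_I$.
\end{itemize}
Then $u=(u_-u_L)\,u_+\in K_U(t_I\inv K_Ut_I)$, as required.

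\emph{Step 3: the main obstacle.} The expected difficulty is transferring the entrywise integrality of $u$ and $u\inv$ from Step 1 to the three factors $u_-,u_L,u_+$. The factors are obtained by a nonlinear procedure, essentially block Gaussian elimination, when $I$ is not an initial segment. I would handle this by induction on $n$, peeling off the last index $n$:
\begin{itemize}[label=$\bullet$]
\item If $n\notin I$, the last column of $u$ is constrained only to $\varpi\inv\OF$. First split off the last column via the last row of $u\inv$, which is integral. Then apply the induction hypothesis to the upper-left $(n-1)\times(n-1)$ block, for the set $I$.
\item If $n\in I$, the last column of $u$ is integral and lies in $K_U$. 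Factor it off on the left and recurse.
\end{itemize}
At each stage, one must check that the $(n-1)\times(n-1)$ block still satisfies the two integrality conditions of Step 1. This is a short computation using that $u$ and $u\inv$ are block upper triangular. If the induction becomes unwieldy, the fallback is an alternative description of the double coset: for upper triangular $u$, the condition $u\in Kt_I\inv Kt_I$ is equivalent to Step 1's two integrality conditions together with $\det=1$. Via the elementary-divisor characterization, one would then verify directly that the explicit product $u_-u_Lu_+$ satisfies the entry bounds of Step 2.
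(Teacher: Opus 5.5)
\textbf{There is a gap in the case $n\notin I$ of your Step 3.} Steps 1 and 2 are fine: the integrality conditions are right, and $U=(U\cap N_{I^c})(U\cap L_I)(U\cap N_I)$ with unique factors. (Write $K\cap U\cap N_I$ rather than $K\cap N_I$, since the paper's $N_I$ also contains lower-triangular root groups.) The problem is the case $n\notin I$, which is where the actual content lies.

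You propose to split off the last column ``via the last row of $u^{-1}$, which is integral.'' But $u^{-1}$ is upper unitriangular, so its last row is $(0,\dots,0,1)$ and carries no information. Splitting off the last column of $u$ on the left, as you do when $n\in I$, genuinely fails. Take $n=3$, $I=\{2\}$ and
$u=1+E_{1,2}+\varpi^{-1}E_{1,3}+\varpi^{-1}E_{2,3}=(1+E_{1,2})\,t_I^{-1}(1+E_{2,3})\,t_I$.
This $u$ lies in the left-hand side, yet $u_{1,3}=\varpi^{-1}$. On the other hand, the $(1,3)$-entry of every element of $t_I^{-1}K_Ut_I$ is integral, because $1,3\notin I$. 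Reading ``row'' as ``column'' does not rescue the sentence either: the last column of $u^{-1}$ here is $(0,-\varpi^{-1},1)$, which is not integral. Your elementary-divisor fallback only restates what has to be proved.

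\textbf{The fix is to split on the right, using the last column of $u^{-1}$.} Put $\tilde u=\diag(u^{(n-1)},1)$ and $c'=\tilde u^{-1}u=1-\sum_{a<n}(u^{-1})_{a,n}E_{a,n}$. By your Step 1, $(u^{-1})_{a,n}\in\varpi^{-\chi_I(a)}\OF$. Since $\chi_I(n)=0$, this says exactly that $t_Ic't_I^{-1}\in K_U$, i.e.\ $c'\in t_I^{-1}K_Ut_I$.

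The induction then closes with no further computation. Induct on the statement ``the two conditions of Step 1 imply $u\in K_Ut_I^{-1}K_Ut_I$,'' for all $I$ including $I=\emptyset$.
\begin{itemize}
\item Because $u$ is upper triangular, the upper-left block of $u^{-1}$ is $(u^{(n-1)})^{-1}$. So $u^{(n-1)}$ inherits both conditions for $I'=I\cap\{1,\dots,n-1\}$.
\item If $u^{(n-1)}=k_1t_{I'}^{-1}k_2t_{I'}$, then $\diag(t_{I'}^{-1}k_2t_{I'},1)=t_I^{-1}\diag(k_2,1)t_I$, so $\tilde u\in K_Ut_I^{-1}K_Ut_I$.
\item For $n\in I$, your left splitting $u=c\,\tilde u$ with $c\in K_U$ is correct.
\item For $n\notin I$, write $u=\tilde u\,c'$ with $c'\in t_I^{-1}K_Ut_I$ as above.
\end{itemize}

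Repaired this way, your argument is a column-by-column version of the paper's proof. The paper instead clears the off-diagonal entries of $A=ut_I^{-1}$ in order of distance from the diagonal, using elementary matrices in $K_U$:
\begin{itemize}
\item on the left when $A_{c,c}=\varpi^{-1}$;
\item on the right when $A_{r,r}=\varpi^{-1}$;
\item on the left with an integral coefficient when both diagonal entries are $1$, using $A^{-1}\in\Matn(\OF)$.
\end{itemize}
Neither argument needs the factorization $u_-u_Lu_+$ from your Step 2.
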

\begin{proof}
The $(\supseteq)$ containment is trivial. For the reverse containment, let $u \in U \cap K t_I^{-1} K t_I$ and set $A = u t_I^{-1}$. It suffices to show that there exist $h_1, h_2 \in K_U$ such that $h_1 A h_2 = t_I^{-1}$.

Let $\mathbf{S} \coloneqq  \{(a,b) \in \Z^2: 1 \leq a < b \leq n\}$,
totally ordered by $(a_1,b_1) \prec (a_2,b_2)$ if
$b_1-a_1 < b_2-a_2$, or if
$b_1-a_1 = b_2-a_2$ and $a_1<a_2$. We proceed by induction on $\mathbf{S}$ with respect to $\prec$. 

We claim that for every $(r,c) \in \mathbf{S}$, there exist $x_1, x_2 \in K_U$ such that $A'\coloneqq  x_1 A x_2$ satisfies $A'_{a,b} = 0$ for all $(a,b) \prec (r,c)$, $A'_{a,a} = A_{a,a}$ for all $1 \leq a \leq n$, and $\varpi A', (A')^{-1} \in \Matn(\OF)$. This holds for the base case $(r,c) = (1,2)$ with $x_1 = x_2 = \Id$.

Assume the claim holds for some $(r,c) \in \mathbf{S}$. To show it holds for the successor of $(r,c)$, it suffices to find $z_1, z_2 \in K_U$ such that $A'' \coloneqq  z_1 A' z_2$ annihilates the $(r,c)$-entry while preserving the previous zeros and the diagonal. For any $\alpha\in F$, let $E_{a,b}(\alpha)$ denote the elementary matrix with $1$s on the
diagonal and $\alpha$ in the $(a,b)$ position.
Since $A'_{a,a} = (t_I^{-1})_{a,a} \in \{1,\varpi^{-1}\}$
for every $1\leq a\leq n$, we consider three (non-mutually exclusive) cases:
\begin{itemize}
    \item If $A'_{c,c} = \varpi^{-1}$, let $z_1 = E_{r,c}(-\varpi A'_{r,c})$ and $z_2 = \Id$. 
    \item If $A'_{r,r} = \varpi^{-1}$, let $z_1 = \Id$ and $z_2 = E_{r,c}(-\varpi A'_{r,c})$.
    \item If $A'_{r,r} = A'_{c,c} = 1$, consider the $(r,c)$-entry of the identity $\Id= A' (A')^{-1}$:
    $$0 = (A' (A')^{-1})_{r,c} = \sum_{\ell=r}^c A'_{r,\ell} ((A')^{-1})_{\ell,c} = ((A')^{-1})_{r,c} + A'_{r,c} + \sum_{\ell=r+1}^{c-1} A'_{r,\ell} ((A')^{-1})_{\ell,c}.$$
    Since $(r,\ell) \prec (r,c)$ for all $r<\ell<c$, the sum vanishes by the induction hypothesis. Then $A'_{r,c} = - (A')^{-1}_{r,c} \in \OF.$ Let $z_1 = E_{r,c}(-A'_{r,c})$ and $z_2 = \Id$.
\end{itemize}

In each case, $z_1, z_2 \in K_U$ and $A'' \coloneqq  z_1 A' z_2$ satisfies $A''_{a,b} = A'_{a,b}$ if $a=b$ or $(a,b) \prec (r,c)$. Moreover, $\varpi A'', (A'')^{-1} \in \Matn(\OF)$ since $z_1, z_2 \in K_U \subset K$. By induction, there exist $h_1, h_2 \in K_U$ such that $h_1 A h_2=t_I^{-1}$. Hence, $u = A t_I \in K_U t_I^{-1} K_U t_I.$
\end{proof}

\section{The image of \texorpdfstring{$\SVi$}{S(V,i)} for \texorpdfstring{$\GL_2(F)$}{GL2(F)}} \label{sec-GL_2} 

The goal of this section is to determine the image of the Satake transform
\bal 
\SVi: \mathcal{H}_G(V) \longrightarrow \HT(H^i(K_U,V))
\nal 
when $G=\GL_2(F)$ for $V$ a Serre weight. Since $\SVi$ is a $k$-algebra homomorphism and $\mathcal{H}_G(V)=k[T_1,T_2^{\pm 1}]$, the image is generated by $\SVi(T_1)$ and $\SVi(T_2)^{\pm1}$. As is explained in the proof of Theorem \ref{thm-GL2-SVi-support-formula}, the second generator is already completely understood: by Theorem \ref{thm-GLn-SVi-support-formula}, it is supported on $K_Tt_{\{1,2\}}$ and satisfies
\bal
\SVi(T_2)(t_{\{1,2\}})=\id_{H^i(K_U,V)}.
\nal
Thus, the main task is to understand $\SVi(T_1)$. By Theorem \ref{thm-GLn-SVi-support-formula}, we know that
\bal
\Supp\SVi(T_1)\subseteq K_T t_{\{1\}}\sqcup K_T t_{\{2\}}.
\nal
Since $\SVi(T_1)$ is $K_T$-equivariant, it is determined by its values at $t_{\{1\}}$ and $t_{\{2\}}$, which we call
\bal
\AVi\coloneqq \SVi(T_1)(t_{\{1\}})
\qquad\text{and}\qquad
\BVi\coloneqq \SVi(T_1)(t_{\{2\}}).
\nal
We first study the vanishing of $\AVi$, then the nilpotence of $\BVi$, and finally their interaction in order to describe the image of $\SVi$.

\subsection{Serre weights and cohomological coordinates}\label{subsec-GL2-notation}\subsubsection{Basic notation}
We retain the notation of Section~\ref{sec-GLn} with $n=2$. Thus, $G=\GL_2(F)$, $K=\GL_2(\OF)$, and $B=TU$ is the standard upper-triangular Borel subgroup. For $x \in F$, let 
\bal
u_\alpha(x)=\begin{pmatrix}1&x\\0&1\end{pmatrix}.
\nal
We identify $K_U = u_{\alpha}(\OF)$ and $u_{\alpha}(\varpi \OF)$ with the additive groups $(\OF,+)$ and $(\varpi \OF,+)$, respectively, and adopt additive notation throughout this section. Thus, we write $H^i(\OF,V)$, $H^i(\varpi\OF,V)$, and $\cores_{\varpi\OF}^{\OF}$ for the corresponding cohomology groups and corestriction map. We continue to use the subgroup notation $K_U$ in expressions involving invariants, coinvariants, and coefficient maps,
where the representation-theoretic meaning is clearer. For $x\in\OF$ and $v\in V$, the notation $x\cdot v$ means $u_\alpha(x)\cdot v$.

Throughout this section, we compute continuous group cohomology using inhomogeneous cochains. Thus, for a continuous $\OF$-module $E$, we write
\bal 
C^i(\OF,E)=\operatorname{Map}_{\mathrm{cts}}(\OF^i,E)
\nal 
for the continuous functions $\OF^i\to E$, with the usual inhomogeneous differential and cup product. We identify this complex with the homogeneous cochain complex used in the preceding sections via the standard isomorphism
\bal 
f(\alpha_1,\ldots,\alpha_i)
\longmapsto
\widetilde f(\alpha_0,\ldots,\alpha_i) =
\alpha_0\cdot
f(\alpha_1-\alpha_0,\ldots,\alpha_i-\alpha_{i-1}).
\nal 
When we use cochain-level formulas from the preceding sections, we implicitly pass to the corresponding inhomogeneous formulas via this identification.

For $t=\diag(\beta,\gamma)\in T$, conjugation on $U$ corresponds to scalar multiplication:
\bal
t u_\alpha(x)t^{-1}=u_\alpha(\beta\gamma^{-1}x).
\nal
For $r,s\in\Z$, set $t_{r,s}\coloneqq \diag(\varpi^r,\varpi^s)$. Thus, in the notation of Section \ref{sec-GLn}, we have
\bal
t_{1,0}=t_{\{1\}},\qquad t_{0,1}=t_{\{2\}},\qquad
t_{1,1}=t_{\{1,2\}}.
\nal
We write $T_1\coloneqq T_{\{1\}}$ and $T_2\coloneqq T_{\{1,2\}}$ for the two Hecke generators.
When $a\in k_F$, we write $a\cdot v$ for $[a]\cdot v$, where $[a]\in\OF$ denotes the Teichm\"uller lift. For $\alpha \in \OF$, $\overline{\alpha} \in k_F$ is the reduction modulo $\varpi$.

\subsubsection{Serre weights} Once and for all, fix an embedding $\kappa_0:k_F\hookrightarrow k$, and let $\kappa_j$ denote $\kappa_0\circ(x\mapsto x^{p^j})$ for $j\in\Z/f\Z$. For ease of notation, we identify $k_F$ with its image under $\kappa_0$.  Every Serre weight for $\GL_2(F)$ is of the form 
\bal
F(\ul a,\ul b)\coloneqq 
\bigotimes_{j=0}^{f-1}
\left(
\left(
\Sym^{a_j-b_j}(k_F^2)\otimes\mathrm{det}^{b_j}
\right)
\otimes_{k_F,\kappa_j}k
\right)
\nal
for some $\ul a=(a_j)_j,\ul b=(b_j)_j\in\Z^f$ satisfying $0\leq a_j-b_j\leq p-1$ for every $j$. 
Here $\GL_2(k_F)$ acts in the usual way on $k_F^2$, and we view $F(\ul a,\ul b)$ as a representation of $K$ by inflation along the reduction map $K\rightarrow\GL_2(k_F)$. For a Serre weight $V=F(\ul a,\ul b)$, we let
\bal
d_j&\coloneqq a_j-b_j, \qquad d\coloneqq \sum_{j=0}^{f-1}d_j p^j, \qquad b\coloneqq  \sum_{j=0}^{f-1} b_j p^j,\\
n_r(V)&\coloneqq \#\{j\in\{0,\dots,f-1\}:d_j=r\} \quad \text{for }r \in \{0, p-1\}.
\nal 
We call $V$ \textit{Steinberg} if $d_j = p-1$ for all $j$, and \textit{non-Steinberg} otherwise. 

Via the natural $\GL_2(k_F)$-equivariant injection $V\hookrightarrow \Sym^d(k^2)\otimes\mathrm{det}^b$ (see \cite[Example 2.2]{herzigmod}), we identify $V$ with the subrepresentation spanned by the monomials  $X^{d-c}Y^c$, where $c=\sum_{j=0}^{f-1}c_j p^j$ with $0 \leq c_j \leq d_j$ for each $j$. Under this identification,
\bal
V^{K_U}=kX^d
\qquad\text{and}\qquad
V_{K_U}=k\big(Y^d+(X)\big),
\nal
where $Y^d+(X)$ simply means the image of $Y^d$ in $V_{K_U}$.

Recall that $\KUbar$ denotes the lower unipotent subgroup of $K$. Let $q_{K_U}: V \twoheadrightarrow V_{K_U}$ denote the natural projection and $i_{K_U}: V^{K_U} \hookrightarrow V$ denote the natural injection, and similarly define $q_{\KUbar}, i_{\KUbar}$. Then $q_{K_U}\circ i_{\KUbar}:V^{\KUbar}\rightarrow V_{K_U}$ and $q_{\KUbar}\circ i_{K_U}:V^{K_U}\rightarrow V_{\KUbar}$ are isomorphisms. Let $\eta_Y$ and $\eta_X$ denote their inverses, respectively. Lastly, define the maps
\bal
\Pry\coloneqq  T_1(t_{1,0})&: V\overset{q_{K_U}}{\longtwoheadrightarrow} V_{K_U}
\overset{\eta_Y}{\longrightarrow}V^{\KUbar}\overset{i_{\KUbar}}{\longhookrightarrow} V,\\
\Prx \coloneqq  T_1(t_{0,1})&: V \overset{q_{\KUbar}}{\longtwoheadrightarrow} V_{\KUbar}
\overset{\eta_X}{\longrightarrow} V^{K_U}\overset{i_{K_U}}\longhookrightarrow V.
\nal
The images of $\Pry$ and $\Prx$ are the lines spanned by $Y^d$ and $X^d$, respectively. 

\subsubsection{Cohomological coordinates} Define the sets
\bal
\scrJ &\coloneqq \{(m,j):0\leq m\leq e-1,\ 0\leq j\leq f-1\},\\
\scrJi &\coloneqq  \{ S \subset \scrJ: \#S = i\} \quad \text{for all }i\geq 1.
\nal
Every element of $\OF/p\OF$ has a unique Teichm\"uller expansion $\sum_{m=0}^{e-1}[x_m]\varpi^m$ with $x_m\in k_F$. For $(m,j)\in\scrJ$, define the map $\kappa_{m,j}: \OF/ p\OF \rightarrow k$ by
\bal
\kappa_{m,j}\left(\sum_{r=0}^{e-1}[x_r]\varpi^r\right)
\coloneqq \kappa_j(x_m) = x_m^{p^j}.
\nal
The maps $\kappa_{m,j}$ form a $k$-basis of
$\Hom_{\Fp}(\OF/p\OF,k)$. We also write
\bal
\kappa_{m,j}:\OF\longrightarrow k,
\qquad \kappa_{m,j}(\alpha) \coloneqq \kappa_{m,j}(\alpha+p\OF),
\nal
for the corresponding inflated map. It is a continuous additive
homomorphism, hence an inhomogeneous $1$-cocycle. For a subset
$S=\{(m_1,j_1),\dots,(m_i,j_i)\}\subseteq\scrJ$, with its elements
ordered lexicographically, set
\bal
\kappa_S\coloneqq  \kappa_{m_1,j_1}\cup\cdots\cup\kappa_{m_i,j_i}.
\nal
Since $k$ has trivial $\OF$-action, this cup product is represented by
\bal
\kappa_S(\alpha_1,\ldots,\alpha_i) =\prod_{r=1}^i\kappa_{m_r,j_r}(\alpha_r).
\nal
By Lazard's theorem (see \cite[Ch. V, Proposition 2.5.7.1]{lazard65}), the classes $[\kappa_S]$, with $S\in\scrJi$,
form a $k$-basis of $H^i(\OF,k)$. Since
$H^0(\OF,V^{K_U})=V^{K_U}=kX^d$, the classes
$[\kappa_S]\cup X^d$ form a $k$-basis of
$H^i(\OF,V^{K_U})$.

Finally, define
\bal
\AVi\coloneqq \SVi(T_1)(t_{1,0}),
\qquad
\BVi\coloneqq \SVi(T_1)(t_{0,1}),
\qquad
\Bki\coloneqq \Ski(T_1)(t_{0,1}),
\nal
where we recall that we are assuming $V$ is a Serre weight.  We write $\AV$, $\BV$, and $\Bk$ for the corresponding graded operators on $H^*(\OF,V)$ and $H^*(\OF,k)$. Whenever $\AVi$ or $\BVi$ is nilpotent, we denote its nilpotence index by
\bal
M_i(V)&\coloneqq \min\{M\geq1:\AVi^M=0\},\\
N_i(V)&\coloneqq \min\{N\geq1:\BVi^N=0\}.
\nal

\subsection{The operators \texorpdfstring{$\AVi$}{A(V,i)} and \texorpdfstring{$\BVi$}{B(V,i)}}\label{subsec-explicit-description-SVi-GL2}

\begin{theorem}\label{thm-GL2-SVi-support-formula}
Let $F/\Q_p$ be a finite extension and let $G=\GL_2(F)$. For any Serre weight $V$ and integer $i\geq0$, we have
\bal
\Supp\SVi(T_2)\subseteq K_Tt_{1,1},\qquad
\Supp\SVi(T_1)\subseteq K_Tt_{1,0}\sqcup K_Tt_{0,1}.
\nal
Moreover, $\SVi(T_2)(t_{1,1})$ is the identity endomorphism of $H^i(\OF,V)$, while   
\begin{align}\label{eqn-GL2-SVi-formula}
\AVi  &= \cores_{\varpi\OF}^{\OF}\circ(\Pry)_*\circ m_{\varpi^{-1}}^*,\\
\BVi &= (\Prx)_*\circ m_\varpi^* \circ \res_{\varpi \OF}^{\OF},\nonumber
\end{align}
where $m_{\varpi^{-1}}:\varpi\OF\xrightarrow{\sim}\OF$ and $m_\varpi:\OF\overset{\sim}{\rightarrow}\varpi \OF$ are multiplication by $\varpi^{-1}$ and $\varpi$, respectively. 
\end{theorem}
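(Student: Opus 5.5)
The plan is to specialize Theorem~\ref{thm-GLn-SVi-support-formula} to $n=2$ and then unwind the resulting corestriction operators in the additive coordinates $K_U\cong\OF$. The support statements are immediate. For $j=1$ the subsets of size one are $\{1\}$ and $\{2\}$, giving $K_Tt_{1,0}\sqcup K_Tt_{0,1}$. For $j=2$ the only subset is $\{1,2\}$, giving $K_Tt_{1,1}$. The remaining work is to identify $K_{U,I}$ and $T_{j,I}$ for each $I$ and to simplify $\cores_{K_{U,I}}^{K_U}\circ T_{j,I}$.

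\emph{The case $I=\{1,2\}$.} Here $t_{1,1}=\varpi\cdot\mathrm{Id}$ is central, so $K_{U,\{1,2\}}=K_U$, conjugation by $t_{1,1}^{-1}$ is trivial on $K_U$, and the corestriction is the identity. Since $N_{\{1,2\}}$ and $N_\emptyset$ are trivial, $T_2(t_{1,1})=\eta_{\{1,2\}}$ is a projection that is an isomorphism $V\to V$, hence equals $\id_V$. Therefore $\SVi(T_2)(t_{1,1})=\id$.

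\emph{The case $I=\{1\}$.} Using $tu_\alpha(x)t^{-1}=u_\alpha(\beta\gamma^{-1}x)$, we get $t_{1,0}K_Ut_{1,0}^{-1}=u_\alpha(\varpi\OF)$. Hence $K_{U,\{1\}}=\varpi\OF$, and $\conj_{t_{1,0}^{-1}}$ restricted to it is multiplication by $\varpi^{-1}$, namely $m_{\varpi^{-1}}:\varpi\OF\to\OF$. By Lemma~\ref{lem-KUI-compatible-pair} and the definition $\Pry=T_1(t_{1,0})$, we have $T_{1,\{1\}}=(\Pry)_*\circ m_{\varpi^{-1}}^*$. Composing with $\cores_{\varpi\OF}^{\OF}$ gives the formula for $\AVi$ directly.

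\emph{The case $I=\{2\}$.} Now $t_{0,1}K_Ut_{0,1}^{-1}=u_\alpha(\varpi^{-1}\OF)\supseteq K_U$, so $K_{U,\{2\}}=K_U$ and the corestriction in Theorem~\ref{thm-GLn-SVi-support-formula} is the identity. The formula there then reads $\BVi=T_{1,\{2\}}=(\Prx)_*\circ c^*$, where $c=\conj_{t_{0,1}^{-1}}:\OF\to\OF$ is $x\mapsto\varpi x$. This map factors as $\OF\xrightarrow{m_\varpi}\varpi\OF\hookrightarrow\OF$.

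This factorization is the only step that needs care, namely matching the stated form $(\Prx)_*\circ m_\varpi^*\circ\res_{\varpi\OF}^{\OF}$. Functoriality of cohomology in compatible pairs gives $c^*=m_\varpi^*\circ\res^{\OF}_{\varpi\OF}$: restriction is pullback along the inclusion, and $m_\varpi^*$ is pullback along the isomorphism $m_\varpi$. On cochains both sides send $x$ to $x\circ c$. The pushforward $(\Prx)_*$ commutes with both pullbacks. Compatibility holds because $\Prx$ intertwines the action of $\varpi\alpha$ on the source with that of $\alpha$ on the target, which is exactly the identity $T_j(t_I)(\conj_{t_I^{-1}}(h)v)=h\,T_j(t_I)(v)$ already checked in Lemma~\ref{lem-KUI-compatible-pair}.

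Finally, all of this must be translated into inhomogeneous cochains via the stated identification. This is routine, because restriction, pullback along isomorphisms and pushforward along coefficient maps are given by the same formulas in both models. The identification of corestriction with its usual inhomogeneous version is standard \cite[Section 1.5]{NSW}.
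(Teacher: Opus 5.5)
Your proposal is correct and follows the paper's proof: you specialize Theorem~\ref{thm-GLn-SVi-support-formula} to $n=2$, identify $K_{U,I}$, the conjugation maps and the coefficient maps $T_j(t_I)\in\{\id_V,\Pry,\Prx\}$ for $I=\{1,2\},\{1\},\{2\}$, and substitute into \eqref{eqn-GLn-SVi-formula}. Your additional justifications fill in details the paper leaves implicit: that $T_2(t_{1,1})$ is an idempotent isomorphism and hence equals $\id_V$, and that pullback along $x\mapsto\varpi x$ factors as $m_\varpi^*\circ\res_{\varpi\OF}^{\OF}$.
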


\begin{proof}
The support containments follow immediately from Theorem \ref{thm-GLn-SVi-support-formula}. For $t_{1,1}$, the subgroup $K_{U,\{1,2\}}$ is $\OF$ (recall that we're identifying $\OF$ with $u_\alpha(\OF)= K_U$), conjugation is trivial, and $T_2(t_{1,1})$ is the identity. For $t_{0,1}$, we have $K_{U,\{2\}}=\OF$, conjugation by $t_{0,1}^{-1}$ is multiplication by $\varpi$, and the coefficient map is $\Prx$. For $t_{1,0}$, we have $K_{U,\{1\}}=\varpi\OF$, conjugation by $t_{1,0}^{-1}$ is multiplication by $\varpi^{-1}$, and the coefficient map is $\Pry$. Substituting these descriptions into \eqref{eqn-GLn-SVi-formula} gives \eqref{eqn-GL2-SVi-formula}. 
\end{proof}

In \eqref{eqn-GL2-SVi-formula}, pullback is understood with pulled-back coefficients. More precisely,
\bal
m_{\varpi^{-1}}^*:
H^i(\OF,V)
&\longrightarrow
H^i\bigl(\varpi\OF,m_{\varpi^{-1}}^*V\bigr),\\
m_\varpi^*:
H^i(\varpi\OF,V|_{\varpi\OF})
&\longrightarrow
H^i\bigl(\OF,m_\varpi^*(V|_{\varpi\OF})\bigr).
\nal
Here $\Pry$ is regarded as a $\varpi\OF$-equivariant map $\Pry: m_{\varpi^{-1}}^*V \to V|_{\varpi\OF},$ while $\Prx$ is regarded as an $\OF$-equivariant map $\Prx :m_\varpi^*(V|_{\varpi\OF}) \to V.$ 

Thus, the two composites in \eqref{eqn-GL2-SVi-formula} are precisely the maps induced by the compatible pairs appearing in Lemma~\ref{lem-KUI-compatible-pair}. We will henceforth omit these coefficient decorations when the relevant coefficient modules are clear from context.

\subsection{Vanishing of \texorpdfstring{$\AVi$}{A(V,i)}}\label{subsec-A-vanishing}
In this subsection, we prove that $\AVi$ vanishes for a range of cohomological degrees. The key idea is to factor $\AVi$ through
$H^i(\OF,V_{K_U})$. Unlike $H^i(\OF,V)$, this space has an explicit
basis represented by scalar-valued cocycles, which allows us to
analyze the corestriction sum directly.
\begin{lem}\label{lem-AVi-factorization} The endomorphism $\AVi$ factors as $\AVi = \AVi' \circ (q_{K_U})_*$ for $q_{K_U}:V\onto V_{K_U}$ as above, where   
\bal 
\AVi': H^i(\OF,V_{K_U}) \longrightarrow H^i(\OF,V)
\nal
is defined by
\bal
\AVi' =\cores_{\varpi\OF}^{\OF} \circ(i_{\KUbar})_* \circ(\eta_Y)_* \circ m_{\varpi^{-1}}^*.    
\nal 
\end{lem}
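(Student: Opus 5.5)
The plan is to start from the formula for $\AVi$ in Theorem~\ref{thm-GL2-SVi-support-formula},
\[
\AVi=\cores_{\varpi\OF}^{\OF}\circ(\Pry)_*\circ m_{\varpi^{-1}}^*,
\]
and to use the definition $\Pry=i_{\KUbar}\circ\eta_Y\circ q_{K_U}$. By functoriality of cohomology in the coefficients, $(\Pry)_*=(i_{\KUbar})_*\circ(\eta_Y)_*\circ(q_{K_U})_*$. It then only remains to move $(q_{K_U})_*$ past the pullback $m_{\varpi^{-1}}^*$ to the far right.

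The key step is checking that every map in the chain is a map of modules over the appropriate group. This ensures the factorization holds at the level of the compatible-pair maps of Lemma~\ref{lem-KUI-compatible-pair} and not merely of linear maps.

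\textbf{The quotient map.} Regard $q_{K_U}\colon m_{\varpi^{-1}}^*V\to m_{\varpi^{-1}}^*V_{K_U}$ as a map of $\varpi\OF$-modules. This is fine, since $q_{K_U}$ is $\OF$-equivariant, with $\OF$ acting trivially on $V_{K_U}$.

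\textbf{The remaining maps.} $\eta_Y$ and $i_{\KUbar}$ must be $\varpi\OF$-equivariant from $m_{\varpi^{-1}}^*V_{K_U}$ to $V|_{\varpi\OF}$. The source has trivial action. So it suffices that $u_\alpha(\varpi\OF)$ acts trivially on the image of $i_{\KUbar}$, namely the line $kY^d=V^{\KUbar}$.

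This is the one point needing a small argument. $V$ is inflated from $\GL_2(k_F)$, and $u_\alpha(\varpi\OF)$ lies in the kernel of reduction. Hence $u_\alpha(\varpi\OF)$ acts trivially on all of $V$. (This is also the content of the compatible-pair check in Lemma~\ref{lem-KUI-compatible-pair}.)

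With equivariance established, the identity
\[
(q_{K_U})_*\circ m_{\varpi^{-1}}^*=m_{\varpi^{-1}}^*\circ(q_{K_U})_*
\]
follows from naturality of the pullback along the group isomorphism $m_{\varpi^{-1}}\colon\varpi\OF\to\OF$ with respect to coefficient maps. At the cocycle level, both sides send $x$ to $q_{K_U}\circ x\circ m_{\varpi^{-1}}$. Here we use that $q_{K_U}\colon V\to V_{K_U}$ is $\OF$-equivariant before pullback.

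Composing everything gives
\[
\AVi=\cores_{\varpi\OF}^{\OF}\circ(i_{\KUbar})_*\circ(\eta_Y)_*\circ m_{\varpi^{-1}}^*\circ(q_{K_U})_*=\AVi'\circ(q_{K_U})_*.
\]

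I expect no real obstacle. The only care needed is bookkeeping of the pulled-back coefficient modules, so that each intermediate map, such as $(\eta_Y)_*\colon H^i(\varpi\OF,m_{\varpi^{-1}}^*V_{K_U})\to H^i(\varpi\OF,V^{\KUbar}|_{\varpi\OF})$, is a genuine equivariant coefficient map. The cocycle formula $x\mapsto \Pry\circ x\circ m_{\varpi^{-1}}$ makes the factorization immediate once that is verified.
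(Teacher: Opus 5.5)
Your proof is correct and follows the paper's argument. You split $(\Pry)_*$ by functoriality in the coefficients and then commute $(q_{K_U})_*$ past $m_{\varpi^{-1}}^*$ by naturality of pullback with respect to coefficient maps; your extra equivariance check (that $u_\alpha(\varpi\OF)$ acts trivially on $V$, being in the kernel of reduction to $\GL_2(k_F)$) simply makes explicit what the paper leaves to its convention on pulled-back coefficients.
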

\begin{proof}  Since $\Pry=i_{\KUbar}\circ\eta_Y\circ q_{K_U}$, we have by Theorem \ref{thm-GL2-SVi-support-formula}
\bal
\AVi = \cores_{\varpi\OF}^{\OF}
\circ(i_{\KUbar})_*
\circ(\eta_Y)_*
\circ(q_{K_U})_*
\circ m_{\varpi^{-1}}^*,
\nal
and $(q_{K_U})_*\circ m_{\varpi^{-1}}^* = m_{\varpi^{-1}}^*\circ(q_{K_U})_*$ by naturality of pullback with respect to coefficient maps. 
\end{proof}
By Lemma \ref{lem-AVi-factorization}, it suffices to understand when $\AVi'$ vanishes. To study this map, we write it explicitly at the level of inhomogeneous cocycles. For $a\in k_F$, define
\bal
\theta_a:\OF\longrightarrow\OF,
\qquad
\theta_a(\alpha) = \bigl([a]+\alpha-[a+\overline{\alpha}]\bigr)\varpi^{-1}.
\nal
For $\alpha_\bullet=(\alpha_1,\dots,\alpha_i)\in\OF^i$, set
\bal
s_0\coloneqq 0, \qquad s_r\coloneqq \alpha_1+\cdots+\alpha_r \quad(1\leq r\leq i).
\nal
Then
\bal
\theta_a(s_r)-\theta_a(s_{r-1}) =\theta_{a+\overline{s_{r-1}}}(\alpha_r)
\qquad(1\leq r\leq i).
\nal
In terms of inhomogeneous cocycles, the corestriction formula from the preceding sections becomes
\begin{align}\label{eqn-AVi-prime-cocycle}
\AVi'(x)(\alpha_1,\dots,\alpha_i)
&= \sum_{a\in k_F}
-a\cdot (i_{\KUbar}\circ\eta_Y)
\bigl(
x( \theta_a(\alpha_1),
\theta_{a+\overline{s_1}}(\alpha_2),
\dots,
\theta_{a+\overline{s_{i-1}}}(\alpha_i)
)
\bigr)
\end{align}
for every $x\in Z^i(\OF,V_{K_U})$, where for $i=0$ the input to $x$ is empty.

For $S=\{(m_1,j_1),\dots,(m_i,j_i)\}\in\scrJi$, with its elements ordered lexicographically, let
$x_S\in Z^i(\OF,V_{K_U})$ be the cocycle
\bal
x_S(\alpha_1,\dots,\alpha_i)
= \prod_{r=1}^i
\kappa_{m_r,j_r}(\alpha_r)
\bigl(Y^d+(X)\bigr).
\nal
The classes $[x_S]$, with $S\in\scrJi$, form a $k$-basis of
$H^i(\OF,V_{K_U})$. Since
\bal
\eta_Y\bigl(Y^d+(X)\bigr)=Y^d
\qquad\text{and}\qquad
(-a)\cdot Y^d=(Y-aX)^d,
\nal
equation \eqref{eqn-AVi-prime-cocycle} yields 
\begin{align}\label{eqn-A-xS}
\AVi'(x_S)(\alpha_\bullet)
&=
\sum_{\ell=0}^d \binom{d}{\ell}(-1)^\ell X^\ell Y^{d-\ell}
\sum_{a\in k_F}C_{a,\alpha_\bullet}a^\ell,
\end{align}
where $C_{a, \alpha_{\bullet}} \in k$ is defined as
\bal
C_{a,\alpha_\bullet}
\coloneqq 
\prod_{r=1}^i
\kappa_{m_r,j_r}
\bigl(
\theta_{a+\overline{s_{r-1}}}(\alpha_r)
\bigr).
\nal

We will prove the vanishing of $\AVi'$ by applying the finite field
power sum formula to the inner sum in
\eqref{eqn-A-xS}. For this, we must control the powers of $a$ that
can occur in $C_{a,\alpha_\bullet}$. The following lemma provides
the required control.

\begin{lem}\label{lem-theta-mod-p}
Let $x\in\OF$, let $b\in k_F$, and let $(m,j)\in\scrJ$. Let
$j'\in\{0,\dots,f-1\}$ such that
\bal
j'\equiv j-1\pmod f.
\nal
As a function of $a\in k_F$, the expression
$\kappa_{m,j}\bigl(\theta_{a+b}(x)\bigr)$ is independent of $a$
if $m<e-1$. If $m=e-1$, it is a $k$-linear combination of the
monomials
\bal
a^{cp^{j'}},
\qquad
0\leq c\leq p-1.
\nal
\end{lem}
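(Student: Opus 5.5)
The plan is to compute the Teichmüller digits of $\theta_{a+b}(x)$ modulo $p$ explicitly, using the Witt-vector description of Teichmüller addition. Put $c\coloneqq a+b\in k_F$ and $\bar x\coloneqq \overline{x}$. The numerator of $\theta_c(x)$ splits as
\[
[c]+x-[c+\bar x] = \bigl(x-[\bar x]\bigr) + D_c, \qquad D_c\coloneqq [c]+[\bar x]-[c+\bar x].
\]
The first summand lies in $\varpi\OF$ and does not depend on $a$. The second lies in $p\OF=\varpi^e\OF$, because $[u]+[v]\equiv[u+v]\pmod p$ in $W(k_F)\subseteq\OF$.

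Since $\kappa_{m,j}(\theta_c(x))$ only sees $\theta_c(x)$ modulo $p$, I only need the numerator modulo $\varpi^{e+1}$. The Witt-vector addition formula gives
\[
[c]+[\bar x]\equiv[c+\bar x]+p\,[S_1(c,\bar x)^{1/p}]\pmod{p^2},
\]
where $S_1(c,\bar x)$ is the reduction of $\bigl(c^p+\bar x^p-(c+\bar x)^p\bigr)/p$. This is a homogeneous polynomial of degree $p$ whose degree in $c$ ranges over $1,\dots,p-1$. Write $p=\varpi^e u$ with $u\in\OF^\times$. Then, modulo $p$,
\[
\theta_c(x)\equiv \frac{x-[\bar x]}{\varpi} + \varpi^{e-1}\,[\bar u\, S_1(c,\bar x)^{1/p}].
\]
Here $\bar u\in k_F$ is the residue of $u$, and the Teichmüller lift of a product of residues is harmless modulo $\varpi^e$.

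Next I read off the digits. Adding an element of $\varpi^{e-1}\OF$ to an element $y\in\OF$ leaves the Teichmüller digits $y_0,\dots,y_{e-2}$ modulo $p$ unchanged. The digit $y_{e-1}$ changes additively in $k_F$, since any carry lands in $\varpi^e\OF=p\OF$; this carry claim is the point I would check most carefully. Hence for $m<e-1$ the digit $m$ of $\theta_c(x)$ equals that of $(x-[\bar x])/\varpi$, which is independent of $a$. This gives the first assertion. For $m=e-1$ the digit equals $y_{e-1}+\bar u\,S_1(c,\bar x)^{1/p}$, with $y_{e-1}$ independent of $a$. Applying $\kappa_j=(\,\cdot\,)^{p^j}$ gives
\[
y_{e-1}^{p^j}+\bar u^{p^j}\,S_1(a+b,\bar x)^{p^{j-1}}.
\]

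Finally I track the powers of $a$. Expand $S_1(a+b,\bar x)$ binomially in $a$; every monomial in $a$ has exponent $c'\le p-1$, because the $c$-degree of $S_1$ is at most $p-1$. Raising to the power $p^{j-1}$ is additive in characteristic $p$, so it turns $a^{c'}$ into $a^{c'p^{j-1}}$. When $j=0$ this exponent is read via $a^{p^f}=a$ for $a\in k_F$, so $a^{c'p^{j-1}}=a^{c'p^{j'}}$ with $j'\equiv j-1\pmod f$. The constant term $y_{e-1}^{p^j}$ is the monomial with $c=0$. The statement follows.

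I expect the main obstacle to be making the Witt-vector congruence $D_c\equiv p[S_1^{1/p}]\pmod{p^2}$ precise inside $\OF$, together with the passage from $\bmod\ p^2$ to $\bmod\ \varpi^{e+1}$. When $e=1$ the factor $p/\varpi$ is a unit, and this case also needs a check, though it should go through the same way.
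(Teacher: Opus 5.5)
Your proposal is correct and follows essentially the same route as the paper. Both arguments use the first Witt carry congruence $[c]+[\bar x]-[c+\bar x]\equiv p\,[S_1(c,\bar x)^{1/p}]\pmod{p^2}$, divide by $\varpi$ to isolate the $a$-dependent term $[\bar u\,S_1^{1/p}]\varpi^{e-1}$ modulo $p$, and then apply the Frobenius twist $x^{1/p}=x^{p^{f-1}}$ on $k_F$.

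The only difference is cosmetic. The paper first replaces $x$ by its Teichm\"uller expansion modulo $p$, so that the lower digits of $\theta_{a+b}(x)$ are explicitly $x_{r+1}$. You instead keep $(x-[\bar x])/\varpi$ abstract and use the fact that adding a multiple of $\varpi^{e-1}$ only shifts the top digit additively; the paper uses this same fact implicitly in its reduction step.

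The points you flagged all go through without difficulty:
\begin{itemize}
\item the carry lands in $\varpi^{e}\OF=p\OF$;
\item the passage from $p^2$ to $\varpi^{e+1}$ holds because $2e\geq e+1$;
\item the case $e=1$ works the same way.
\end{itemize}
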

\begin{proof}
There exist unique $x_0,\dots,x_{e-1}\in k_F$ such that
\begin{align}\label{eqn-x-teich}
x\equiv x' \coloneqq  \sum_{r=0}^{e-1}[x_r]\varpi^r\pmod p.
\end{align}
Observe that $\theta_a(x) - \theta_a(x') = (x- x') \varpi^{-1}$ is congruent to an $a$-independent multiple of $\varpi^{e-1}$ modulo $p$. We may therefore assume
that $x = x'$. 

Let $W_1$ denote the first Witt carry polynomial, 
\bal
W_1(X,Y) = \frac{X^p+Y^p-(X+Y)^p}{p} \; \in\Z[X,Y],
\nal
and let $\overline{W_1}(X,Y) \in k_F[X,Y]$ be its reduction modulo $p$. Witt vector addition yields
\begin{align}\label{eqn-W1-congruence}
[a]+[x_0]-[a+x_0]
\equiv
p[\overline{W_1}(a,x_0)^{1/p}]
\pmod{p^2},
\end{align}
where $1/p$ denotes that we are applying the inverse Frobenius map.  If we let $u \in \OF$ such that $p = u \varpi^e$, then dividing \eqref{eqn-W1-congruence} by
$\varpi$ yields 
\bal
\theta_a(x)
\equiv
\sum_{r=0}^{e-2}[x_{r+1}]\varpi^r
+
[\overline u \overline{W_1}(a,x_0)^{1/p}]\varpi^{e-1}
\pmod p.
\nal
Here division preserves the congruence modulo $p$ because
$p^2\varpi^{-1}\OF\subseteq p\OF$. In particular, after replacing
$a$ by $a+b$, every coefficient of $\varpi^m$ with $m<e-1$
remains independent of $a$. Hence
$\kappa_{m,j}(\theta_{a+b}(x))$ is independent of $a$ whenever
$m<e-1$.

It remains to consider $m=e-1$. Let $\gamma_1(x_0), \dots, \gamma_{p-1}(x_0) \in k_F$ such that
\bal
\overline{W_1}(X,x_0) =\sum_{c=1}^{p-1}\gamma_c(x_0)X^c. 
\nal
Since the inverse Frobenius on $k_F$ is the same as the $p^{f-1}$ power map, we have
\bal
\overline{W_1}(a+b,x_0)^{1/p}
=
\sum_{c=1}^{p-1}
\gamma_c(x_0)^{p^{f-1}}
(a+b)^{cp^{f-1}}.
\nal
Expanding each term shows that the $\varpi^{e-1}$ coefficient of
$\theta_{a+b}(x)$ is a linear combination of the monomials $a^{rp^{f-1}}$, with $0\leq r\leq p-1$. Finally, $\kappa_{e-1,j}$ applies $\kappa_j$ to this coefficient, and
therefore raises each such monomial to its $p^j$-th power. Thus, its dependence on $a$ is a
$k$-linear combination of monomials $a^{rp^{f-1+j}}$ with $0 \leq r \leq p-1$. The desired result follows since $a^{p^f}=a$ for $a\in k_F$. 
\end{proof}
We can now prove the desired vanishing result. Recall that $n_{p-1}(V)$ is the number of indices $j$ for which $d_j = p-1$. 

\begin{theorem}\label{thm-A-vanishing}
Let $V=F(\underline a,\underline b)$ be a Serre weight. Then
\bal
\AVi=0
\qquad\text{for every}\qquad
 i<f-n_{p-1}(V).
\nal
\end{theorem}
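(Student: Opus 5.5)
By Lemma~\ref{lem-AVi-factorization} it suffices to show $\AVi'=0$. Since the classes $[x_S]$ with $S\in\scrJi$ form a basis of $H^i(\OF,V_{K_U})$, it is enough to show that each cocycle $\AVi'(x_S)$ vanishes identically for $\#S=i<f-n_{p-1}(V)$. I will use the explicit formula \eqref{eqn-A-xS}, which reduces this to showing that for every $\alpha_\bullet$ and every $0\leq\ell\leq d$ with $\binom{d}{\ell}\neq0$ in $k$, the inner sum $\sum_{a\in k_F}C_{a,\alpha_\bullet}a^\ell$ vanishes.

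First I record which exponents $\ell$ survive. By Lucas' theorem, $\binom{d}{\ell}\not\equiv0\pmod p$ exactly when $\ell=\sum_j\ell_jp^j$ with $0\leq\ell_j\leq d_j$ for all $j$.

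Next I expand $C_{a,\alpha_\bullet}$ as a polynomial in $a$ using Lemma~\ref{lem-theta-mod-p}. Each factor with $m_r<e-1$ is constant in $a$. Each factor with $m_r=e-1$ and $j_r=j$ is a combination of $a^{cp^{j-1}}$ with $0\leq c\leq p-1$. Let $J_S$ be the multiset of shifted indices $j_r-1$ coming from pairs with $m_r=e-1$; it has at most $i$ elements. The product $C_{a,\alpha_\bullet}$ is then a combination of monomials $a^{E}$ with $E=\sum_{r}c_rp^{j_r-1}$, where $0\leq c_r\leq p-1$. Note that $S$ consists of distinct pairs, so for fixed $m=e-1$ each $j$ occurs at most once. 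Hence the digits in distinct positions do not interact, and $E=\sum_{j'}c_{j'}p^{j'}$ has nonzero base-$p$ digits only at the at most $i$ positions in $J_S$, with no carries.

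Then I apply the finite field power sum: $\sum_{a\in k_F}a^{N}$ is nonzero only if $N>0$ and $(p^f-1)\mid N$, with the convention $0^0=1$, for which the sum is $0$. It therefore suffices to show that $E+\ell$ is never a positive multiple of $p^f-1$. Both $E$ and $\ell$ lie in $[0,p^f-1]$, and the only problematic total is $p^f-1$ itself, or possibly $2(p^f-1)$ at the extreme. I will compare digit sums modulo $p^f-1$, i.e.\ cyclically. If $E+\ell\equiv0\pmod{p^f-1}$ with $E+\ell>0$, then cyclic addition of the digit vectors must produce the all-$(p-1)$ digit vector. Since $\ell_j\leq d_j$ and $E$ has nonzero digits in at most $i$ positions, every position $j\notin J_S$ requires $\ell_j=p-1$, hence $d_j=p-1$. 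This gives $f-i\leq n_{p-1}(V)$, contradicting $i<f-n_{p-1}(V)$.

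I expect the main obstacle to be making the carry analysis rigorous. The sum $E+\ell$ could a priori involve carries that make it equal $p^f-1$ without the digits simply summing position-wise. I will handle this with the standard fact that if $x+y\equiv0\pmod{p^f-1}$ with $0\leq x,y\leq p^f-1$ not both zero, then either $x+y=p^f-1$, which happens with no carries, so the digits of $x$ and $y$ sum to $p-1$ in each position, or $x=y=p^f-1$. The second case forces all $\ell_j=p-1=d_j$, i.e.\ $n_{p-1}(V)=f$, which is again excluded. I also need to take care with the edge cases $i=0$ and $S$ containing no pair with $m=e-1$, where $E=0$. There the argument still applies, since it would force $\ell=p^f-1$, which requires $V$ to be Steinberg.
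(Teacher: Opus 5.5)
Your proposal is correct and follows the paper's proof essentially step for step. You factor through $H^i(\mathcal{O}_F,V_{K_U})$, expand via \eqref{eqn-A-xS}, and restrict $\ell$ by Lucas' theorem. You then control the $a$-dependence of $C_{a,\alpha_\bullet}$ by Lemma~\ref{lem-theta-mod-p}, where the shift $j_r-1$ should be read modulo $f$. Finally, you use the power-sum formula and a digit comparison to force $d_j=p-1$ at every position outside the at most $i$ positions coming from pairs $(e-1,j_r)\in S$.

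The only differences are cosmetic. You treat $i=0$ uniformly rather than separately. You also bound your $E$ and $\ell$ individually by $p^f-1$ and dispose of the total $2(p^f-1)$ via Steinberg-ness, whereas the paper uses $i<f$ to bound the total exponent strictly below $2(p^f-1)$ and then reduces successively modulo $p$.
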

\begin{proof} We first consider $i=0$. In this case, the assumption
$0<f-n_{p-1}(V)$ implies $V$ is non-Steinberg, and hence
$d<p^f-1$. Since $H^0(\OF,V_{K_U})$ is spanned by
$Y^d+(X)$, the cocycle formula \eqref{eqn-AVi-prime-cocycle} gives
\[
\mathbf{A}_{V,0}'\bigl(Y^d+(X)\bigr)
=
\sum_{a\in k_F}(-a)\cdot Y^d
=
\sum_{\ell=0}^d
\binom{d}{\ell}(-1)^\ell X^\ell Y^{d-\ell }
\sum_{a\in k_F}a^\ell .
\]
For $1\leq \ell \leq d<p^f-1$, the inner sum vanishes by the finite
field power-sum formula, while for $\ell =0$ it vanishes since
$\#k_F=p^f=0$ in $k$. Thus $\mathbf{A}_{V,0}'=0$, and hence
$\mathbf{A}_{V,0}=0$ by Lemma~\ref{lem-AVi-factorization}.

Now suppose that $1 \leq i<f-n_{p-1}(V)$. By Lemma
\ref{lem-AVi-factorization} and the basis of
$H^i(\OF,V_{K_U})$ described above, it suffices to prove that $\AVi'([x_S])=0$ for every $S\in\scrJi$.

Fix $S \in \scrJi$ and $\alpha_\bullet=(\alpha_1,\dots,\alpha_i)\in\OF^i$. In
\eqref{eqn-A-xS}, Lucas's theorem shows that $\binom{d}{\ell }$ can be
nonzero only when $\ell =\sum_{j} \ell _j p^j$ with $0 \leq \ell _j \leq d_j$ for each $j$. By Lemma \ref{lem-theta-mod-p}, the $r$-th factor of
$C_{a,\alpha_\bullet}$ is independent of $a$ unless $m_r=e-1$.
When $m_r=e-1$, its monomials have exponents
\bal
cp^{j_r'},
\qquad
0\leq c\leq p-1,
\qquad
j_r'\equiv j_r-1\pmod f.
\nal
The indices $j_r'$ arising in this way are distinct.

It follows that every monomial in
$C_{a,\alpha_\bullet}a^\ell $ has exponent
\bal
E=\sum_{j=0}^{f-1}(\ell _j+c_j)p^j,
\qquad
0\leq c_j\leq p-1,
\nal
where $c_j=0$ unless $j=j_r'$ for some
$(e-1,j_r)\in S$. Since $i<f$, the sum $\sum_j c_j p^j$ is supported
on fewer than $f$ indices. Hence
\bal
0\leq E<2(p^f-1).
\nal
The finite field power sum formula therefore shows that
$\sum_{a\in k_F}a^E$ can be nonzero only when
$E=p^f-1$, as the sum vanishes when $E=0$ since
$\#k_F=p^f=0$ in $k$.

Suppose that $E=p^f-1$. Since
$0\leq \ell _j+c_j\leq2(p-1)$, reducing successively modulo $p$ in
\bal
\sum_{j=0}^{f-1}(\ell _j+c_j)p^j
=
\sum_{j=0}^{f-1}(p-1)p^j
\nal
shows that $\ell _j + c_j = p-1$ for every $j$. If $d_j<p-1$, then $\ell _j\leq d_j<p-1$, and therefore $c_j>0$.
Thus every such $j$ must be one of the distinct indices $j_r'$
associated to a pair $(e-1,j_r)\in S$. Consequently,
\bal
f-n_{p-1}(V) \leq \#\{(e-1,j)\in S\} \leq i,
\nal
contrary to the assumption.

Therefore every inner sum in \eqref{eqn-A-xS} vanishes, so
$\AVi'([x_S])=0$ for every $S\in\scrJi$. Hence $\AVi'=0$, and
so $\AVi=0$ by Lemma \ref{lem-AVi-factorization}.
\end{proof}

\subsection{Nilpotence of \texorpdfstring{$\BVi$}{B(V,i)}}\label{subsec-B-nilpotence}

Let $\mu_\varpi: \OF \rightarrow \OF$ denote the map given by multiplication by $\varpi$. Since $\mu_{\varpi}$ factors as 
\bal
\mu_{\varpi}: \OF\xrightarrow{\,m_\varpi\,}\varpi\OF\hookrightarrow\OF,
\nal
we obtain the factorization $\mu_\varpi^* = m_\varpi^*\circ\res_{\varpi\OF}^{\OF}$ of maps on cohomology. Thus, by Theorem \ref{thm-GL2-SVi-support-formula},
\bal
\BV=(\Prx)_*\circ\mu_\varpi^*
\qquad\text{and}\qquad
\Bk=\mu_\varpi^*.
\nal
The cup product makes $H^*(\OF,k)$ a graded $k$-algebra and
$H^*(\OF,V)$ a graded module over it. The fact that $\BV$ and $\Bk$
are induced by the same pullback implies that $\BV$ is
$\Bk$-semilinear with respect to this module structure. More
precisely, we have the following.

\begin{lem}\label{lem-BV-semilinear}
For $[\phi]\in H^r(\OF,k)$ and $[x]\in H^s(\OF,V)$, we have
\bal
\mathbf{B}_{V,r+s}([\phi]\cup[x])=\mathbf{B}_{k,r}([\phi])\cup \mathbf{B}_{V,s}([x]).
\nal
\end{lem}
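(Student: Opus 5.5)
We use the two descriptions $\BV=(\Prx)_*\circ\mu_\varpi^*$ and $\Bk=\mu_\varpi^*$ recorded just before the statement, and verify the identity at the level of inhomogeneous cocycles. Represent $[\phi]$ by a cocycle $\phi\in Z^r(\OF,k)$ and $[x]$ by $x\in Z^s(\OF,V)$. Since $k$ carries the trivial $\OF$-action, the cup product is represented by
\bal
(\phi\cup x)(\alpha_1,\dots,\alpha_{r+s})=\phi(\alpha_1,\dots,\alpha_r)\,x(\alpha_{r+1},\dots,\alpha_{r+s}).
\nal
There is no twist by $\alpha_1+\cdots+\alpha_r$ in front of the second factor, because the left-hand coefficient module is trivial. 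This is the cup product $H^r(\OF,k)\otimes H^s(\OF,V)\to H^{r+s}(\OF,k\otimes_k V)=H^{r+s}(\OF,V)$ that makes $H^*(\OF,V)$ a module over $H^*(\OF,k)$.

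The argument proceeds in two steps.

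\textbf{Step 1: pullback is multiplicative.} The pullback $\mu_\varpi^*$ is induced by the group endomorphism $\mu_\varpi$ of $\OF$. On inhomogeneous cochains it is given by precomposition,
\bal
(\mu_\varpi^* y)(\alpha_\bullet)=y(\varpi\alpha_1,\dots,\varpi\alpha_n),
\eal
with coefficients pulled back along $\mu_\varpi$. Restricting a representation along a homomorphism commutes with tensor products, and substitution visibly commutes with the product formula above. Hence
\bal
\mu_\varpi^*(\phi\cup x)=\mu_\varpi^*\phi\cup\mu_\varpi^*x
\eal
already as cochains, where the right-hand side uses the cup product with coefficients $k\otimes\mu_\varpi^*(V)$. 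The only point needing care is that the pulled-back coefficients $\mu_\varpi^*V$ make sense. They do, because the restriction of $V$ to $\varpi\OF$ pulled back along $m_\varpi$ is an $\OF$-module, and the trivial module $k$ pulls back to itself.

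\textbf{Step 2: the coefficient map passes through the cup product.} The map $\Prx$ is regarded as the $\OF$-equivariant map $\mu_\varpi^*V\to V$; this is the compatibility used in Lemma~\ref{lem-KUI-compatible-pair}. The cup product is natural in the second coefficient module, and the map on the trivial factor $k$ is the identity. Therefore
\bal
(\Prx)_*(\psi\cup y)=\psi\cup(\Prx)_*y,
\eal
which is again evident from the cocycle formula, since $\Prx$ is $k$-linear. Combining the two steps gives
\bal
\mathbf B_{V,r+s}([\phi]\cup[x])=(\Prx)_*\bigl(\mu_\varpi^*[\phi]\cup\mu_\varpi^*[x]\bigr)=\mathbf B_{k,r}([\phi])\cup\mathbf B_{V,s}([x]).
\eal

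\textbf{Main obstacle.} The only genuinely delicate point is consistency of conventions. The paper's Satake formulas were derived with homogeneous cochains and only then passed to inhomogeneous ones, so we must check that $\BV$, as given by Theorem~\ref{thm-GL2-SVi-support-formula}, really is the plain precomposition formula above in the inhomogeneous model. The check is that the standard isomorphism $f\mapsto\widetilde f$ intertwines precomposition by a group homomorphism with precomposition. This holds because $\widetilde f(\alpha_\bullet)=\alpha_0\cdot f(\alpha_1-\alpha_0,\dots)$ involves only group operations and the action, and both are respected by $\mu_\varpi$ and the pulled-back module structure. Once this is verified, the remaining computations are formal.
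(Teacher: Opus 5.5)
Your proposal is correct and is essentially the paper's own proof: at the cochain level, pullback along $\mu_\varpi$ distributes over the cup product, and the $k$-linear, $\OF$-equivariant coefficient map $\Prx$ passes through it. One slip to fix: in the usual inhomogeneous convention, $\phi\cup x$ carries the twist $(\alpha_1+\cdots+\alpha_r)\cdot x(\alpha_{r+1},\dots,\alpha_{r+s})$ on the $V$-valued factor, and the twist disappears only when the \emph{second} coefficient module is trivial, so your untwisted expression need not be a cocycle. This is harmless, because after pullback the twist is by $\varpi(\alpha_1+\cdots+\alpha_r)$, which acts trivially on $V$, and $\Prx$ is equivariant with values in $V^{K_U}$.
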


\begin{proof}
At the level of cochains, pullback along $\mu_\varpi$ distributes over
the cup product, and $\Prx(cv)=c\Prx(v)$ for $c\in k$. The formula
follows.
\end{proof}
We next show that, after one application of $\BV$, all further
iterates are governed by $\Bk$. 
\begin{lem}\label{lem-BV-image}
For every $[x]\in H^i(\OF,V)$, there exists
$[\phi_x]\in H^i(\OF,k)$ such that
\begin{align}\label{eqn-BV-Bk-iterates}
\BVi^N([x])=\Bki^{N-1}([\phi_x])\cup X^d
\end{align}
for every $N\geq1$.
\end{lem}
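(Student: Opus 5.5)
We first locate the image of a single application of $\BVi$. Since $\BV=(\Prx)_*\circ\mu_\varpi^*$ and $\Prx$ factors as $i_{K_U}\circ\eta_X\circ q_{\KUbar}$, the map $\BVi$ factors through $(i_{K_U})_*:H^i(\OF,V^{K_U})\to H^i(\OF,V)$. The coefficient module $V^{K_U}=kX^d$ carries trivial $\OF$-action, so $H^i(\OF,V^{K_U})=H^i(\OF,k)\otimes X^d$; this is the basis $[\kappa_S]\cup X^d$ recorded earlier. Hence for each $[x]\in H^i(\OF,V)$ there is a class $[\phi_x]\in H^i(\OF,k)$ with
\[
\BVi([x])=(i_{K_U})_*\bigl([\phi_x]\cup X^d\bigr)=[\phi_x]\cup X^d .
\]
At the cochain level, $\phi_x$ is the scalar function obtained from $\Prx\circ x\circ\mu_\varpi$ by reading off the $X^d$-coordinate, i.e.\ by applying $\eta_X\circ q_{\KUbar}$. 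This gives the case $N=1$.

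For $N\geq 2$ we induct on $N$ using Lemma~\ref{lem-BV-semilinear} with $r=i$ and $s=0$. That lemma applied to the element $X^d$ of $H^0(\OF,V)$ gives
\[
\BVi\bigl([\psi]\cup X^d\bigr)=\Bki([\psi])\cup \mathbf{B}_{V,0}(X^d)
\]
for any $[\psi]\in H^i(\OF,k)$. So the key point is $\mathbf{B}_{V,0}(X^d)=X^d$. In degree $0$ the pullback $\mu_\varpi^*$ is the identity on $H^0(\OF,V)=V^{K_U}$. Moreover, $\Prx$ restricted to $V^{K_U}$ is $i_{K_U}\circ\eta_X\circ q_{\KUbar}\circ i_{K_U}$, which is the identity on $V^{K_U}$ because $\eta_X$ is by definition the inverse of $q_{\KUbar}\circ i_{K_U}$. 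Thus $\Prx(X^d)=X^d$.

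Granting this, if $\BVi^{N}([x])=\Bki^{N-1}([\phi_x])\cup X^d$, then
\[
\BVi^{N+1}([x])=\Bki^{N}([\phi_x])\cup X^d ,
\]
which completes the induction.

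The only step needing care is the identification of cup product with $X^d$ as the map $(i_{K_U})_*$ on cohomology with trivial coefficients $kX^d$. This uses the compatibility of cup products with the coefficient pairing $k\otimes V\to V$. It also requires that the $\OF$-action on $V^{K_U}$ is trivial, so that a $V^{K_U}$-valued cocycle is literally a $k$-valued cocycle times $X^d$. Neither point is substantive.
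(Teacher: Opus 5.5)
Your proof is correct and is essentially the paper's argument: you factor $\BVi$ through $(i_{K_U})_*$ on $H^i(\OF,V^{K_U})$ with $V^{K_U}=kX^d$ carrying trivial action, then induct via Lemma~\ref{lem-BV-semilinear} using $\mathbf{B}_{V,0}(X^d)=X^d$. Your check that $\Prx$ restricts to the identity on $V^{K_U}$, because $\eta_X$ inverts $q_{\KUbar}\circ i_{K_U}$, spells out a point the paper only asserts.
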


\begin{proof}
The factorization of $\Prx$ through $V^{K_U}$ shows that the image of
$\BVi$ lies in the image of
\bal
(i_{K_U})_*:H^i(\OF,V^{K_U})\longrightarrow H^i(\OF,V).
\nal
Since $V^{K_U}=kX^d$, every class in $H^i(\OF,V^{K_U})$ has the form
$[\phi]\cup X^d$ for some $[\phi]\in H^i(\OF,k)$. Thus,
\bal
\BVi([x])=[\phi_x]\cup X^d
\nal
for some $[\phi_x]\in H^i(\OF,k)$. Since $\mu_\varpi^*$ is the identity
in degree zero and $\Prx(X^d)=X^d$, we have $\BV(X^d)=X^d$. The result
now follows by induction from Lemma \ref{lem-BV-semilinear}.
\end{proof}
Thus, the nilpotence of $\BVi$ depends on the vanishing of the powers
of $\Bki$ on $H^i(\OF,k)$ and on the additional vanishing that may
occur after cupping with $X^d$. We first compute the action of $\Bk$
on the basis $[\kappa_S]$.

\begin{lem}\label{lem-Bk-action}
For $(m,j)\in\scrJ$, we have
\begin{align}\label{eqn-Bk-kappa}
\BBB_{k,1}([\kappa_{m,j}])= \begin{cases} [\kappa_{m-1,j}] &\text{if }m \geq 1,\\
0&\text{if }m=0.
\end{cases}
\end{align}
Consequently, if $S\in \scrJi$, then 
\bal
\Bki^N ([\kappa_S]) = 0 \iff  N > \min_{(m,j )\in S} m.
\nal
\end{lem}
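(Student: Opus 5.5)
The plan is to compute $\BBB_{k,1}=\mu_\varpi^*$ directly on the $1$-cocycles $\kappa_{m,j}$, and then to get the statement for $\kappa_S$ from the multiplicativity of pullback under cup products.

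\textbf{Degree one.} In degree one, $H^1(\OF,k)=\Hom_{\mathrm{cts}}(\OF,k)$ because the action is trivial and coboundaries vanish. Hence $\mu_\varpi^*[\kappa_{m,j}]$ is the class of the homomorphism $\alpha\mapsto\kappa_{m,j}(\varpi\alpha)$, and we must evaluate this function. Write $\alpha\equiv\sum_{r=0}^{e-1}[x_r]\varpi^r \pmod p$. Then
\[
\varpi\alpha\equiv\sum_{r=0}^{e-2}[x_r]\varpi^{r+1}+[x_{e-1}]\varpi^{e}\pmod{p}.
\]
Since $\varpi^e=pu^{-1}\in p\OF$, the last term drops out. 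So the Teichm\"uller expansion of $\varpi\alpha$ modulo $p$ has coefficient $0$ at $\varpi^0$ and coefficient $x_{m-1}$ at $\varpi^m$ for $m\geq 1$. Consequently $\kappa_{m,j}(\varpi\alpha)=\kappa_{m-1,j}(\alpha)$ for $m\geq1$ and $\kappa_{0,j}(\varpi\alpha)=0$. There is one point to check here: the digit map $\kappa_{m,j}$ is defined through the unique Teichm\"uller expansion modulo $p$, so shifting all digits by one power of $\varpi$ really does give the expansion of $\varpi\alpha$. This holds because the displayed sum is already in Teichm\"uller form with digits in $k_F$, and uniqueness of the expansion applies. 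This proves \eqref{eqn-Bk-kappa}.

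\textbf{Cup products.} For $S=\{(m_1,j_1),\dots,(m_i,j_i)\}$, pullback along the endomorphism $\mu_\varpi$ is multiplicative on cochains, exactly as in the proof of Lemma~\ref{lem-BV-semilinear}. Hence $\Bki([\kappa_S])=\bigcup_r \BBB_{k,1}[\kappa_{m_r,j_r}]$, and iterating gives
\[
\Bki^N[\kappa_S]=\bigcup_{r=1}^{i}[\kappa_{m_r-N,j_r}],
\]
where a factor is interpreted as $0$ if $m_r-N<0$.

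Suppose first that $N>\min_r m_r$. Then some factor is zero, so the product is zero.

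Suppose instead that $N\leq\min m_r$. Then every index $m_r-N$ lies in $\{0,\dots,e-1\}$. The shifted set $S-N\coloneqq\{(m_r-N,j_r)\}$ still has $i$ distinct elements, since subtracting $N$ from each $m$-coordinate is injective. Subtracting a constant from the first coordinate also preserves the lexicographic order. So the cup product equals $[\kappa_{S-N}]$ with no reordering sign, and in any case a sign $\pm$ would not affect vanishing. By Lazard's theorem $[\kappa_{S-N}]$ is a basis element of $H^i(\OF,k)$, hence nonzero. This gives the stated equivalence.

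\textbf{Main obstacle.} The step I expect to need the most care is confirming that no coboundary ambiguity arises beyond degree one. This is handled by working entirely at the cochain level: $\mu_\varpi^*\kappa_S$ equals $\kappa_{S-1}$ as a function, or $0$ if some $m_r=0$, on the nose. So the only input about cohomology needed is the linear independence of the classes $[\kappa_S]$, which Lazard's theorem supplies.
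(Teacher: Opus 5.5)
Your proposal is correct and follows essentially the same route as the paper. Both compute $\mu_\varpi^*$ on the cocycles $\kappa_{m,j}$ via the shift of Teichm\"uller digits, use that pullback distributes over cup products at the cochain level to get $\Bki^N(\kappa_S)=\kappa_{S-N}$ or $0$, and deduce nonvanishing from Lazard's basis. Your added checks on uniqueness of the Teichm\"uller expansion and on preservation of the lexicographic order are details the paper leaves implicit.
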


\begin{proof} At the level of inhomogeneous cochains, for a scalar $i$-cochain $\phi$ we have
\bal
(\Bki\phi)(\alpha_1,\dots,\alpha_i) =\phi(\varpi\alpha_1,\dots,\varpi\alpha_i).
\nal
From the definition of $\kappa_{m,j}$, it is clear that, for all $\alpha \in \OF$,   
\bal
\kappa_{m,j}(\varpi\alpha) = \begin{cases}
\kappa_{m-1,j}(\alpha)&\text{if }m\geq1,\\
0&\text{if }m=0.
\end{cases}
\nal
The formula $\Bk=\mu_\varpi^*$ now gives
\eqref{eqn-Bk-kappa}. Since $\Bk$ distributes over cup products,
iteration gives
\bal
\Bki^N([\kappa_S])= \begin{cases}
[\kappa_{S-N}] &\text{if }m\geq N\text{ for every }(m,j)\in S,\\
0&\text{otherwise},
\end{cases}
\nal
where $S-N\coloneqq \{(m-N,j):(m,j)\in S\}.$ This proves the second statement.
\end{proof}

The preceding lemma describes when a scalar class reaches the layer
$m=0$. The next lemma determines which basis classes in this layer
vanish after cupping with $X^d$ in $V$-valued cohomology.  

\begin{lem}\label{lem-kappa0-y1-vanishing}
For every $j\in\{0,\dots,f-1\}$, we have
\bal
[\kappa_{0,j}]\cup X^d=0
\qquad\Longleftrightarrow\qquad
d_j>0.
\nal
Consequently, if $S\in\scrJi$ contains $(0,j)$ for some $j$ with
$d_j>0$, then
\bal
[\kappa_S]\cup X^d=0
\qquad \text{in }H^i(\OF,V).
\nal
\end{lem}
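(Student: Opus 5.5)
The plan is to work directly with inhomogeneous $1$-cocycles. The class $[\kappa_{0,j}]\cup X^d\in H^1(\OF,V)$ is represented by the cocycle $\alpha\mapsto \kappa_{0,j}(\alpha)X^d=\overline{\alpha}^{\,p^j}X^d$. Such a class vanishes exactly when there is $v\in V$ with $\alpha\cdot v-v=\overline{\alpha}^{\,p^j}X^d$ for all $\alpha\in\OF$. Since $V$ is inflated from $\GL_2(k_F)$, the action of $u_\alpha(\alpha)$ factors through $\overline\alpha\in k_F$. In the monomial model $V\subseteq \Sym^d(k^2)\otimes\det^b$, the element $a\in k_F$ acts (with the convention making $X^d$ fixed, consistent with $V^{K_U}=kX^d$) by $X\mapsto X$, $Y\mapsto Y+aX$, possibly up to a harmless sign normalization. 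Hence
\[
a\cdot X^{d-c}Y^c=\sum_{c'}\binom{c}{c'}a^{c-c'}X^{d-c'}Y^{c'} .
\]

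For the direction $d_j>0\Rightarrow$ vanishing, I will exhibit an explicit primitive. Take $v=X^{d-p^j}Y^{p^j}$; this is an allowed monomial because its base-$p$ digit vector is $c_j=1\le d_j$ and all other digits are $0$. In characteristic $p$ we have $(Y+aX)^{p^j}=Y^{p^j}+a^{p^j}X^{p^j}$, so $a\cdot v-v=a^{p^j}X^d$. This is exactly the cocycle above (after the identification $\kappa_j$, and up to the sign convention, which can be absorbed by using $-v$). So $[\kappa_{0,j}]\cup X^d=0$.

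For the converse, assume $d_j=0$ and suppose, for contradiction, that $a\cdot v-v=a^{p^j}X^d$ for all $a\in k_F$. Write $v=\sum_c v_cX^{d-c}Y^c$, where $c$ ranges over admissible exponents, i.e.\ those with digits $c_\ell\le d_\ell$; in particular $c_j=0$. Comparing coefficients of $X^d$ using the binomial expansion gives the identity of functions on $k_F$
\[
\sum_{c\neq 0}v_c\,a^c=a^{p^j}\qquad (a\in k_F).
\]
Every admissible $c$ satisfies $c\le d<p^f-1$, since $d_j=0$ forces $V$ to be non-Steinberg. The monomial functions $a^c$ for $0\le c\le p^f-1$ are linearly independent on $k_F$, because they form a basis of all functions $k_F\to k$. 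The exponent $p^j$ is not admissible because its $j$-th digit is $1>d_j$, so it does not occur on the left-hand side. This contradicts the displayed identity. The only real care needed is in this step: one must ensure that no admissible $c$ coincides with $p^j$ and that all exponents stay below $p^f-1$. Both facts follow from the digit constraints.

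Finally, for the consequence, I use that the cup product is associative and graded-commutative. Reorder the factors of $\kappa_S$ so that $\kappa_{0,j}$ is last; this changes the product only by a sign. Then
\[
[\kappa_S]\cup X^d=\pm[\kappa_{S\setminus\{(0,j)\}}]\cup\bigl([\kappa_{0,j}]\cup X^d\bigr)=0
\]
by the first part.
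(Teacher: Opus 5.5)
Your proposal is correct and follows essentially the same route as the paper. The forward direction uses the same primitive $X^{d-p^j}Y^{p^j}$. The consequence comes from associativity and graded commutativity. For the converse, you compare only the $X^d$-coefficient and use linear independence of the power functions $a\mapsto a^c$ ($0\le c\le p^f-1$) on $k_F$, whereas the paper uses a root count to show the primitive must be $X^{d-p^j}Y^{p^j}+cX^d$; it also first restricts to $\mathcal{O}_{F_0}$, which is not needed.
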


\begin{proof}
Suppose first that $d_j>0$. Then the polynomial $v_j\coloneqq X^{d-p^j}Y^{p^j}$ is an element of $V$. We have
\bal
(\kappa_{0,j}\cup X^d)(\alpha) =\kappa_j(\bar\alpha)X^d.
\nal
Since the determinant twist is trivial on $K_U$, we have
\bal
\alpha\cdot v_j-v_j = X^{d-p^j}
\left(
\bigl(Y+\bar\alpha X\bigr)^{p^j}
- Y^{p^j} \right) = \kappa_j(\bar\alpha)X^d.
\nal
Thus, this cocycle is a coboundary.

Conversely, suppose that $[\kappa_{0,j}]\cup X^d=0.$ Let $F_0\subseteq F$ be the maximal unramified subextension, which has residue field $k_F$. The restriction of the cocycle $\kappa_{0,j}\cup X^d$ to $\mathcal O_{F_0}$ is
\bal
\alpha\longmapsto\kappa_j(\bar\alpha)X^d.
\nal 
Since this restriction is a coboundary, there exists 
$v=X^dP(Z)\in V$, with $Z\coloneqq Y/X$, such that
\bal
P\bigl(Z+a\bigr)-P(Z) = \kappa_j(a) = a^{p^j}
\qquad
\text{for every }a\in k_F.
\nal
Thus, the polynomial $Q(Z)\coloneqq P(Z)-Z^{p^j}$ satisfies
\bal 
Q\bigl(Z+a\bigr)=Q(Z)
\qquad
\text{for every }a\in k_F.
\nal
It follows that $Q-Q(0)$ vanishes on the $p^f$ elements of $k_F$. Moreover,
\bal
\deg\bigl(Q-Q(0)\bigr) \leq \max\{d,p^j\} < p^f.
\nal
Hence $Q$ is constant, and so $P(Z)=Z^{p^j}+c$ for some $c\in k$. Since $v, X^d \in V$, we obtain 
\bal 
v - c X^d = X^d (Z^{p^j} + c) - c X^d = X^{d-p^j} Y^{p^j} \in V.
\nal
As $V$ is spanned by monomials $X^{d-c} Y^c$ with $c = \sum_{r=0}^{f-1} c_r p^r$ and $0 \leq c_r \leq d_r$, this forces $d_j>0$.

The final statement follows from associativity and graded
commutativity of the cup product.
\end{proof}

For the lower bound on the nilpotence index, we will also need a
criterion ensuring that a class remains nonzero after passing to
$V$-valued cohomology.

\begin{lem}\label{lem-i_U0-nonvanishing}
Let $i\geq1$. If $[\phi]\in H^i(\OF,k)$ satisfies
$\Bki([\phi])\neq0$, then the image of
$[\phi]\cup X^d \in H^i(\OF,V^{K_U})$ under the map
\bal
(i_{K_U})_*:H^i(\OF,V^{K_U})\longrightarrow H^i(\OF,V)
\nal
is nonzero.
\end{lem}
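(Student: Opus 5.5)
The idea is to find a map out of $H^i(\OF,V)$ that undoes $(i_{K_U})_*$ up to an application of $\Bki$, landing in $H^i(\OF,V^{K_U})$. There the class $[\phi]\cup X^d$ is visibly nonzero exactly when $[\phi]\neq0$, because $V^{K_U}=kX^d$ is a trivial $\OF$-module. The natural candidate is $\BVi$ itself, stopped before the final inclusion $i_{K_U}$.

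First, by definition $\Prx=i_{K_U}\circ\eta_X\circ q_{\KUbar}$, and $\Prx$ is an $\OF$-equivariant map $m_\varpi^*(V|_{\varpi\OF})\to V$. Set $\rho\coloneqq\eta_X\circ q_{\KUbar}$, viewed as a map $m_\varpi^*(V|_{\varpi\OF})\to V^{K_U}$. Since $i_{K_U}$ is injective and equivariant, $\rho$ is also $\OF$-equivariant for the pulled-back action. Hence there is an induced map
\[
R\coloneqq \rho_*\circ\mu_\varpi^*:H^i(\OF,V)\longrightarrow H^i(\OF,V^{K_U}),
\]
and it satisfies $\BVi=(i_{K_U})_*\circ R$. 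This is the same factorization used in the proof of Lemma~\ref{lem-BV-image}.

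Next I compute $R\bigl((i_{K_U})_*([\phi]\cup X^d)\bigr)$ at the level of inhomogeneous cochains, as in Lemma~\ref{lem-BV-semilinear}. Pulling back along $\mu_\varpi$ commutes with applying a coefficient map. It also distributes over the cup product of a scalar cochain with the degree-zero class $X^d$, and $\mu_\varpi^*$ is the identity in degree $0$. Using $\rho(X^d)=\eta_X(q_{\KUbar}(X^d))=X^d$, which holds because $\Prx(X^d)=X^d$, this gives
\[
R\bigl((i_{K_U})_*([\phi]\cup X^d)\bigr)=\mu_\varpi^*[\phi]\cup X^d=\Bki([\phi])\cup X^d\in H^i(\OF,V^{K_U}).
\]

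Finally, $V^{K_U}=kX^d$ is a trivial one-dimensional $\OF$-module, since $K_U$ fixes $X^d$. Therefore $[\psi]\mapsto[\psi]\cup X^d$ is an isomorphism $H^i(\OF,k)\xrightarrow{\sim}H^i(\OF,V^{K_U})$, as noted earlier when describing the basis $[\kappa_S]\cup X^d$. The hypothesis $\Bki([\phi])\neq0$ then makes the displayed class nonzero. Since $R$ sends $(i_{K_U})_*([\phi]\cup X^d)$ to a nonzero class, that image is itself nonzero. The only step needing care is the equivariance of $\rho$ and the naturality of pullback with respect to coefficient maps; both follow from the corresponding facts for $\Prx$ already used in Theorem~\ref{thm-GL2-SVi-support-formula}.
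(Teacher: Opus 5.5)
Your argument is correct and is essentially the paper's: both pass to $\varpi\OF$, where the action on $V$ is trivial so that $(i_{K_U})_*$ splits. You get there via $\mu_\varpi^*=m_\varpi^*\circ\res_{\varpi\OF}^{\OF}$; the paper gets injectivity there from $H^i(\varpi\OF,V)\cong H^i(\varpi\OF,k)\otimes_k V$, whereas you write down the explicit left inverse $\rho_*$, with $\rho=\eta_X\circ q_{\KUbar}$, which gives the clean identity $R\circ(i_{K_U})_*=\Bki$ under $H^i(\OF,V^{K_U})\cong H^i(\OF,k)$.
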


\begin{proof}
Write $\res$ for restriction from $\OF$ to $\varpi\OF$. Since
\[
\Bki=m_\varpi^*\circ\res
\]
and $m_\varpi^*$ is an isomorphism, the condition
$\Bki([\phi])\neq0$ implies $\res([\phi]) \neq 0$ in $H^i(\varpi\OF,k)$.

Consider the commutative diagram
\[
\begin{tikzcd}
H^i(\OF,V^{K_U}) \arrow[r,"{(i_{K_U})_*}"] \arrow[d,"\res"'] &
H^i(\OF,V) \arrow[d,"\res"] \\
H^i(\varpi\OF,V^{K_U}) \arrow[r,"{(i_{K_U})_*}"'] &
H^i(\varpi\OF,V).
\end{tikzcd}
\]
The bottom horizontal map is injective. Indeed, $\varpi\OF$ acts trivially on both $V^{K_U}$ and $V$, so under the identifications
\[
H^i(\varpi\OF,V^{K_U})
\cong H^i(\varpi\OF,k)\otimes_kV^{K_U},
\qquad
H^i(\varpi\OF,V)
\cong H^i(\varpi\OF,k)\otimes_kV,
\]
it is identified with $\id_{H^i(\varpi\OF,k)}\otimes i_{K_U},$ which is injective.

Now $\res([\phi]\cup X^d)$ corresponds to
$\res([\phi])\otimes X^d$, which is nonzero. Hence, its image under the bottom arrow is nonzero. By commutativity, the restriction of
$(i_{K_U})_*([\phi]\cup X^d)$ is nonzero, so
\[
(i_{K_U})_*([\phi]\cup X^d)\neq0.
\]
\end{proof}

We now combine these results to bound the nilpotence index of
$\BVi$, which we recall is defined by
\bal
N_i(V)\coloneqq \min\{N\geq1:\BVi^N=0\}.
\nal
Also recall that $n_0(V)$ is the number of indices $j$ for which $d_j = 0$. 
\begin{theorem}\label{thm-nilpotence-bound}
Let $1\leq i\leq ef$ and $q=\lfloor (i-1) / f \rfloor$, so that  $i = qf + s$ for some $s \in \{1, \dots, f\}$. 
The nilpotence index of $\BVi$ satisfies
\bal
\max\{1,e-q-1\}
\leq N_i(V)\leq
e-q+1.
\nal
If $s>n_0(V)$, then the upper bound improves to $N_i(V)\leq e-q.$
\end{theorem}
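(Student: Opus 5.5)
The plan is to reduce everything to the scalar operator $\Bk=\mu_\varpi^*$ on $H^i(\OF,k)$ and to count how the $i$ elements of a subset $S\in\scrJi$ are distributed among the \emph{layers} $\{(m,j):0\leq j\leq f-1\}$ for fixed $m$. Each layer has exactly $f$ elements, and $i=qf+s$ with $1\leq s\leq f$. Two counting facts follow.
\begin{itemize}
\item Any $S\in\scrJi$ meets at least $q+1$ distinct layers, so $\min_{(m,j)\in S}m\leq e-1-q$.
\item If $\min_{(m,j)\in S}m=e-1-q$, then $S$ is contained in the top $q+1$ layers. The $q$ layers above $e-1-q$ hold at most $qf$ elements, so $S$ contains at least $s$ elements $(e-1-q,j)$ of the bottom layer, with distinct $j$.
\end{itemize}
Note that $q\leq e-1$ because $i\leq ef$.

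\textbf{Upper bound.} By the first counting fact and Lemma~\ref{lem-Bk-action}, $\Bki^{e-q}([\kappa_S])=0$ for every $S\in\scrJi$. Since these classes form a basis, $\Bki^{e-q}=0$ on $H^i(\OF,k)$. Lemma~\ref{lem-BV-image} then gives $\BVi^{N}([x])=\Bki^{N-1}([\phi_x])\cup X^d=0$ once $N-1\geq e-q$, so $N_i(V)\leq e-q+1$.

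\textbf{Improved upper bound.} Now assume $s>n_0(V)$ and take $N=e-q$. Expand $[\phi_x]=\sum_S c_S[\kappa_S]$; it suffices to show $\Bki^{e-q-1}([\kappa_S])\cup X^d=0$ for each $S$.
\begin{itemize}
\item If $\min m<e-1-q$, then $\Bki^{e-q-1}([\kappa_S])=0$ already by Lemma~\ref{lem-Bk-action}.
\item If $\min m=e-1-q$, then $\Bki^{e-q-1}([\kappa_S])=[\kappa_{S-(e-q-1)}]$, which contains the pairs $(0,j)$ coming from the bottom layer. By the second counting fact there are at least $s>n_0(V)$ distinct such $j$. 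Hence some $j$ has $d_j>0$, and Lemma~\ref{lem-kappa0-y1-vanishing} gives $[\kappa_{S-(e-q-1)}]\cup X^d=0$.
\end{itemize}
Thus $\BVi^{e-q}=0$.

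\textbf{Lower bound.} The bound $N_i(V)\geq1$ is automatic, so assume $e-q-1\geq 2$ and set $N\coloneqq e-q-2\geq1$. Choose $S\in\scrJi$ inside the top $q+1$ layers with $\min m=e-1-q$, which is possible since $q\leq e-1$. Take $[x]\coloneqq(i_{K_U})_*([\kappa_S]\cup X^d)$. We have $\BV(X^d)=X^d$ (as in the proof of Lemma~\ref{lem-BV-image}), so Lemma~\ref{lem-BV-semilinear} and Lemma~\ref{lem-Bk-action} give
\[
\BVi^{N}([x])=[\kappa_{S-N}]\cup X^d .
\]
The set $S-N$ has minimal layer index $1$, so $\Bki([\kappa_{S-N}])=[\kappa_{S-N-1}]\neq0$. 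Lemma~\ref{lem-i_U0-nonvanishing} (with $i\geq1$) then shows $\BVi^{N}([x])\neq0$ in $H^i(\OF,V)$. Therefore $N_i(V)\geq N+1=e-q-1$.

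\textbf{Expected obstacle.} The step needing most care is the improved upper bound. There one must check that the possibly non-basis class $[\phi_x]$ is handled by linearity, and that the layer counting really produces $s$ distinct indices $j$ in layer $0$ after shifting. Everything else is direct bookkeeping from Lemmas~\ref{lem-BV-image}--\ref{lem-i_U0-nonvanishing}.
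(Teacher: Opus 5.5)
Your proposal is correct and follows essentially the same route as the paper: layer counting gives $\min_{(m,j)\in S} m\le e-1-q$, which together with Lemma~\ref{lem-BV-image} and Lemma~\ref{lem-Bk-action} yields the upper bound. The at least $s$ bottom-layer indices pushed down to layer $0$, combined with Lemma~\ref{lem-kappa0-y1-vanishing}, give the improved bound. For the lower bound, you use a set $S$ with minimal layer $e-1-q$ (the paper's explicit $S^*$ is one such choice) together with Lemma~\ref{lem-i_U0-nonvanishing}.
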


\begin{proof}
By Lemma \ref{lem-BV-image}, the operator $\BVi^N$ vanishes whenever
$\Bki^{N-1}$ vanishes on $H^i(\OF,k)$. Let
\bal
m_S\coloneqq \min_{(m,j)\in S}m \qquad \text{for all }S \in \scrJi.
\nal
Then every $S \in \scrJi$ is contained in the set $\{(m,j)\in\scrJ:m\geq m_S\},$ which has cardinality $(e- m_S)f$. Thus, 
\bal 
\#S = i \leq (e-m_S)f,
\nal 
and it follows that $m_S\leq e-q-1$. Lemma \ref{lem-Bk-action} therefore shows that
$\Bki^{N-1}=0$ for $N\geq e-q+1$. This proves the upper bound.

For the lower bound, we may assume that $q\leq e-2$, since otherwise
the claimed bound is $N_i(V) \geq 1.$  Define the set $S^* \subset \scrJ$ by
\bal
S^* \coloneqq  \{(e-q-1,j):0\leq j\leq s-1\}
\sqcup \{(m,j)\in\scrJ:m\geq e-q\}.
\nal
Then $\#S^*=qf+s=i$, so $S^* \in \scrJi$, and $m_{S^*} =e-q-1$. Let 
\bal
[\phi^*]\coloneqq \Bki^{e-q-2}([\kappa_{S^*}]) \in H^i(\OF, k), \qquad [\xi^*]\coloneqq [\kappa_{S^*}]\cup X^d \in H^i(\OF,V).
\nal
By Lemma \ref{lem-Bk-action}, $\Bki([\phi^*])$ is nonzero. Then Lemma \ref{lem-i_U0-nonvanishing} implies 
$[\phi^*]\cup X^d$ is nonzero in $H^i(\OF,V)$. Since $\BV(X^d)=X^d$, repeated applications of Lemma
\ref{lem-BV-semilinear} then yield
\bal
\BVi^{e-q-2}([\xi^*]) = [\phi^*]\cup X^d \neq 0.
\nal
Thus, $N_i(V)\geq e-q-1$. Combining this with $N_i(V)\geq1$ proves the lower bound.

It remains to prove the improved upper bound. Suppose that
$s>n_0(V)$. For $[x]\in H^i(\OF,V)$, Lemma
\ref{lem-BV-image} gives
\bal
\BVi^{e-q}([x]) =\Bki^{e-q-1}([\phi_x])\cup X^d
\nal
for some $[\phi_x]\in H^i(\OF,k)$. Consider a basis element
$[\kappa_S]$ occurring in $[\phi_x]$. If
$\Bki^{e-q-1}([\kappa_S])\neq0$, then every $(m,j)\in S$ satisfies
$m\geq e-q-1$. There are only $qf$ elements of $\scrJ$ with
$m\geq e-q$, so at least $s$ elements of $S$ have first coordinate
$e-q-1$. After applying $\Bki^{e-q-1}$, these give at least $s$
distinct factors $[\kappa_{0,j}]$. Since $s>n_0(V)$, one of these
satisfies $d_j>0$, and Lemma \ref{lem-kappa0-y1-vanishing} shows that
\bal
\Bki^{e-q-1}([\kappa_S])\cup X^d=0.
\nal
This holds for every basis element occurring in $[\phi_x]$, and therefore
$\BVi^{e-q}=0$. Hence $N_i(V)\leq e-q$.
\end{proof}

\subsection{Adjointness of \texorpdfstring{$\AVi$}{A(V,i)} and \texorpdfstring{$\BBB_{V^\vee,ef-i}$}{B(V dual, ef-i)}} Since $\OF$ is a free $\Zp$-module of rank $[F:\Qp]=ef$, there are
topological group isomorphisms
\bal
(\OF,+)\cong\Zp^{ef}
\qquad\text{and}\qquad
(\varpi\OF,+)\cong\Zp^{ef}.
\nal
Thus, both groups are Poincar\'e pro-$p$ groups of dimension $ef$; see
\cite[Propositions~1.7.7 and~3.7.6 and
Theorem~3.7.4]{NSW}.

For a topological group $H$ and a finite-dimensional continuous $k[H]$-module $E$, define
\bal 
E^{\vee}\coloneqq \Hom_k(E,k).
\nal 
The group $H$ acts on $E^{\vee}$ via
$(h\cdot\lambda)(x)=\lambda(h^{-1}\cdot x)$
for $h\in H$, $\lambda\in E^{\vee}$, and $x\in E$.

\begin{lem}\label{lem-perfect-pairing}
Let $H$ denote either $(\OF,+)$ or $(\varpi\OF,+)$. Let
$k_0\subseteq k$ be a finite extension of $\Fp$, let $E_0$ be a
finite-dimensional continuous $k_0[H]$-module, and set
$E\coloneqq E_0\otimes_{k_0}k$. For $r\in\{0,\dots,ef\}$ and any
$k$-linear isomorphism
\bal
\operatorname{tr}_H:
H^{ef}(H,k)\overset{\sim}{\longrightarrow}k,
\nal
the cup product induces a perfect pairing
\begin{align}\label{eqn-k-pairing}
H^r(H,E)\times H^{ef-r}(H,E^\vee)
&\overset{\cup}{\longrightarrow}
H^{ef}(H,E\otimes_k E^\vee)
\overset{\operatorname{ev}_*}{\longrightarrow}
H^{ef}(H,k)
\overset{\operatorname{tr}_H}{\longrightarrow}
k.
\end{align}
\end{lem}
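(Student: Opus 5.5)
We will deduce the statement from Poincaré duality for $H$, together with a reduction to a finite field and to finite $H$-modules. By the discussion preceding the lemma, $H\cong\Zp^{ef}$ is a Poincaré pro-$p$ group of dimension $ef$. Hence $H^{ef}(H,\Fp)\cong\Fp$, and the dualizing module $I$ is trivial as an $H$-module, because $H$ is abelian and pro-$p$ and so its orientation character is trivial. Poincaré duality \cite[Theorem~3.7.4 and Proposition~3.7.6]{NSW} then applies to every finite $p$-primary discrete $H$-module $A$: the cup product followed by evaluation
\bal
H^r(H,A)\times H^{ef-r}\bigl(H,\Hom(A,I)\bigr)\longrightarrow H^{ef}(H,I)\cong\Q_p/\Z_p
\nal
is a perfect pairing of finite groups.

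\emph{Step 1: the case of $E_0$ over $k_0$.} The module $E_0$ is a finite $p$-primary $H$-module, and we take $A=E_0$. Since $k_0$ is a finite field of characteristic $p$, we can fix a nonzero $\Fp$-linear form $\operatorname{Tr}_{k_0/\Fp}\colon k_0\to\Fp\subset\Q_p/\Z_p$. Composing with it identifies $\Hom_{k_0}(E_0,k_0)$ with $\Hom_{\Z}(E_0,\Q_p/\Z_p)$ $H$-equivariantly, because the trace form is nondegenerate. Under this identification the abstract duality pairing becomes
\bal
\operatorname{Tr}_{k_0/\Fp}\circ\bigl(\text{the }k_0\text{-bilinear pairing }H^r(H,E_0)\times H^{ef-r}(H,E_0^\vee)\to H^{ef}(H,k_0)\bigr).
\nal
Here we use $H^{ef}(H,k_0)\cong H^{ef}(H,\Fp)\otimes k_0\cong k_0$. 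A $k_0$-bilinear pairing whose composite with a nondegenerate trace form is perfect over $\Fp$ is itself perfect over $k_0$. This gives perfectness of \eqref{eqn-k-pairing} over $k_0$, for any choice of $\operatorname{tr}_H$, since any two choices differ by a unit scalar.

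\emph{Step 2: base change to $k$.} The continuous cohomology of the profinite group $H$ with coefficients in the discrete module $E=E_0\otimes_{k_0}k$ commutes with the flat base change $-\otimes_{k_0}k$. One sees this at the level of continuous cochains: $E_0$ is finite, so continuous maps $H^i\to E$ have finite image in a finite-dimensional $k_0$-subspace, and tensoring is exact. Thus $H^r(H,E)=H^r(H,E_0)\otimes_{k_0}k$, and $E^\vee=E_0^\vee\otimes_{k_0}k$ compatibly with the $H$-action. The cup product and evaluation maps are the base changes of those over $k_0$. A perfect pairing of finite-dimensional spaces stays perfect after extension of scalars, since the determinant of the Gram matrix stays nonzero.

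The main obstacle is the careful identification in Step 1: the abstract Poincaré duality pairing, with values in $\Q_p/\Z_p$, must be matched with the concrete $k$-valued cup-product pairing, including the compatibility of $\Hom(E_0,I)$ with $E_0^\vee$ and the triviality of the dualizing module. The remaining steps are formal once that is settled.
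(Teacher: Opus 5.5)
Your proposal is correct and follows essentially the same route as the paper: Poincar\'e duality for $H$ with $p$-torsion coefficients (you phrase it through the dualizing module $\Q_p/\Z_p$, whose $p$-torsion is $\Fp$, where the paper works with $\Fp$ directly), then the trace form of $k_0/\Fp$ to pass from $\Fp$-linear to $k_0$-linear duals, then extension of scalars from $k_0$ to $k$, and finally the remark that any two choices of $\operatorname{tr}_H$ differ by a unit scalar. Your cochain-level justification of $H^r(H,E_0)\otimes_{k_0}k\cong H^r(H,E)$ is a detail the paper leaves implicit; note that the finiteness of the image of a continuous cochain comes from the compactness of $H^i$ and the discreteness of $E$, not from the finiteness of $E_0$.
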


\begin{proof}
Since $H$ is a Poincar\'e pro-$p$ group of dimension
$ef$ with trivial dualizing character, Poincar\'e duality
\cite[Theorem~3.4.6]{NSW} gives a perfect pairing
\bal
H^r(H,E_0)\times
H^{ef-r}(H,\Hom_{\Fp}(E_0,\Fp))
\longrightarrow\Fp,
\nal
induced by cup product, evaluation, and the trace map
\bal
\operatorname{tr}_{H,\Fp}:
H^{ef}(H,\Fp)\overset{\sim}{\longrightarrow}\Fp.
\nal

The trace pairing of the finite extension $k_0/\Fp$ gives an
$H$-equivariant isomorphism
\bal
\Hom_{k_0}(E_0,k_0)
\overset{\sim}{\longrightarrow}
\Hom_{\Fp}(E_0,\Fp),
\qquad
\lambda\longmapsto\Tr_{k_0/\Fp}\circ\lambda.
\nal
Under
$H^{ef}(H,k_0)\cong H^{ef}(H,\Fp)\otimes_{\Fp}k_0$, let
\bal
\operatorname{tr}_{H,k_0}^0
\coloneqq
\operatorname{tr}_{H,\Fp}\otimes\id_{k_0}.
\nal
It follows that cup product, evaluation, and
$\operatorname{tr}_{H,k_0}^0$ induce a perfect $k_0$-bilinear pairing
\bal
H^r(H,E_0)\times
H^{ef-r}(H,\Hom_{k_0}(E_0,k_0))
\longrightarrow k_0.
\nal

Extending scalars from $k_0$ to $k$ gives natural isomorphisms
\bal
H^i(H,E_0)\otimes_{k_0}k
&\overset{\sim}{\longrightarrow}
H^i(H,E),\\
H^i(H,\Hom_{k_0}(E_0,k_0))\otimes_{k_0}k
&\overset{\sim}{\longrightarrow}
H^i(H,E^\vee),
\nal
where we use the natural $H$-equivariant isomorphism
$\Hom_{k_0}(E_0,k_0)\otimes_{k_0}k\cong E^\vee$.
These isomorphisms are compatible with cup products and evaluation.
Thus, under
$H^{ef}(H,k)\cong H^{ef}(H,k_0)\otimes_{k_0}k$, the map $\operatorname{tr}_H^0
\coloneqq \operatorname{tr}_{H,k_0}^0\otimes\id_k$ 
induces a perfect $k$-bilinear pairing \eqref{eqn-k-pairing}.

Finally, $H^{ef}(H,k)$ is one-dimensional over $k$, so any other
$k$-linear isomorphism $\operatorname{tr}_H:H^{ef}(H,k)\to k$ is a
nonzero scalar multiple of $\operatorname{tr}_H^0$. The resulting
pairing is therefore also perfect.
\end{proof}

\begin{lem}\label{lem-cores-restriction-adjoint} Let
$k_0\subseteq k$ be a finite extension of $\Fp$, let $E_0$ be a
finite-dimensional continuous $k_0[\OF]$-module, and set
$E\coloneqq E_0\otimes_{k_0}k$. Fix a $k$-linear isomorphism
\bal
\operatorname{tr}_{\OF}:H^{ef}(\OF,k)\overset{\sim}{\longrightarrow}k,
\nal
and define
\bal
\operatorname{tr}_{\varpi \OF} \coloneqq \operatorname{tr}_{\OF}\circ\operatorname{cores}_{\varpi \OF}^{\OF} :
H^{ef}(\varpi \OF,k)\longrightarrow k.
\nal
Then $\operatorname{tr}_{\varpi \OF}$ is an isomorphism. Let
$\langle\cdot,\cdot\rangle_{\OF}$ and
$\langle\cdot,\cdot\rangle_{\varpi \OF}$ denote the perfect pairings of
Lemma \ref{lem-perfect-pairing} defined using
$\operatorname{tr}_{\OF}$ and $\operatorname{tr}_{\varpi \OF}$, respectively.
Then for every $r\in\{0,\ldots,ef\}$, the corestriction and restriction maps
are adjoint with respect to these pairings; that is,
\bal
\left\langle
\operatorname{cores}_{\varpi \OF}^{\OF}(\xi)  , \eta
\right\rangle_{\OF}
=\left\langle
\xi , \operatorname{res}_{\varpi \OF}^{\OF}(\eta)
\right\rangle_{\varpi \OF} 
\nal 
for all $\xi \in H^r(\varpi \OF, E)$ and $\eta \in H^{ef -r}(\OF, E^{\vee}).$ 
\end{lem}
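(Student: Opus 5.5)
The proof has two parts. First I show that $\operatorname{tr}_{\varpi\OF}$ is an isomorphism. Then I establish the adjunction by the projection formula for corestriction, followed by a functoriality argument.

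For the first part, since $H^{ef}(\varpi\OF,k)$ and $H^{ef}(\OF,k)$ are both one-dimensional over $k$, it suffices to show that $\cores_{\varpi\OF}^{\OF}$ is nonzero in top degree. I would use duality for pro-$p$ Poincaré groups: for an open subgroup $H'\leq H$ of a Poincaré group of dimension $n$, corestriction $H^n(H',\Fp)\to H^n(H,\Fp)$ is an isomorphism, and this is exactly how the dualizing module is compatible with restriction (NSW, Chapter III, \S3.3--3.4). One can also see it directly. Write $\varpi\OF\subseteq\OF$ as a chain of index-$p$ inclusions of groups $\cong\Zp^{ef}$. At each step choose a basis $\Zp^{ef-1}\times p\Zp\subseteq\Zp^{ef-1}\times\Zp$. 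Then the Künneth formula reduces the claim to $\cores\colon H^1(p\Zp,\Fp)\to H^1(\Zp,\Fp)$. This map is the transfer dual to $\Zp\to p\Zp$, $x\mapsto px$, which on $H^1=\Hom(-,\Fp)$ sends $\phi\mapsto \phi\circ(p\cdot)$, and this is an isomorphism $\Hom(p\Zp,\Fp)\cong\Hom(\Zp,\Fp)$. Tensoring with $k$ handles the coefficients.

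For the adjunction, I would use the projection formula $\cores(\xi\cup\res\eta)=\cores(\xi)\cup\eta$ (NSW, Proposition 1.5.3(iv)). It holds for cup products with coefficients in $E\otimes E^\vee$ and is natural in the coefficient map $\operatorname{ev}\colon E\otimes_k E^\vee\to k$. Since corestriction commutes with $\operatorname{ev}_*$ by functoriality in coefficients, I get
\[
\langle\cores\xi,\eta\rangle_{\OF}
=\operatorname{tr}_{\OF}\operatorname{ev}_*(\cores\xi\cup\eta)
=\operatorname{tr}_{\OF}\cores\operatorname{ev}_*(\xi\cup\res\eta)
=\operatorname{tr}_{\varpi\OF}\operatorname{ev}_*(\xi\cup\res\eta),
\]
which is $\langle\xi,\res\eta\rangle_{\varpi\OF}$. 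Perfectness of the pairing on $\varpi\OF$ is Lemma~\ref{lem-perfect-pairing}, applied with the trace $\operatorname{tr}_{\varpi\OF}$; this application is legitimate precisely because of the first part.

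The main obstacle I expect is sign and ordering conventions in the projection formula. NSW states it in the form $\cores(\res a\cup b)=a\cup\cores b$, and the other order follows by graded commutativity, with the same sign on both sides. I would therefore use whichever ordering matches \eqref{eqn-k-pairing}, keeping $E$ in the left slot throughout. I would also check that NSW's homogeneous-cochain corestriction (Definition~\ref{def-cores}) agrees with the one used here. This is immediate because the cup product and corestriction are both defined on the same complex, and the projection formula is proved there via dimension shifting from degree $0$, where it is the identity $\sum_g g^{-1}(x\cdot\res y)=(\sum_g g^{-1}x)\cdot y$.
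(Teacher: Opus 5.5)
Your proposal is correct and takes essentially the same route as the paper. The paper also derives the top-degree isomorphism $\cores_{\varpi\OF}^{\OF}\colon H^{ef}(\varpi\OF,k_0)\to H^{ef}(\OF,k_0)$ from NSW's cohomological-dimension results (Proposition~3.3.11, using that $\OF/\varpi\OF$ acts trivially on $H^{ef}(\varpi\OF,k_0)$) and then extends scalars to $k$; it proves the adjunction by your same chain of equalities, using the projection formula NSW~1.5.3(iv) and the naturality of corestriction with respect to $\operatorname{ev}$. Your explicit K\"unneth reduction to $H^1(p\Zp,\Fp)\to H^1(\Zp,\Fp)$ is a valid, more hands-on substitute for that citation, but it is not needed.
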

\begin{proof}
Since $\varpi\OF$ is an open normal subgroup of $\OF$,
$\operatorname{cd}_p(\OF)=ef$, and the action of $\OF/\varpi\OF$ on $H^{ef}(\varpi\OF,k_0)$ is trivial, \cite[Proposition~3.3.11]{NSW} gives an isomorphism
\bal
\operatorname{cores}_{\varpi\OF}^{\OF}:
H^{ef}(\varpi\OF,k_0)\overset{\sim}{\longrightarrow}
H^{ef}(\OF,k_0).
\nal
Extending scalars from $k_0$ to $k$ therefore gives an isomorphism
\bal
\operatorname{cores}_{\varpi\OF}^{\OF}:
H^{ef}(\varpi\OF,k)\overset{\sim}{\longrightarrow}
H^{ef}(\OF,k).
\nal
Hence, the composition
$\operatorname{tr}_{\varpi\OF}
= \operatorname{tr}_{\OF}\circ
\operatorname{cores}_{\varpi\OF}^{\OF}$
is also an isomorphism.

Now let $\xi\in H^r(\varpi\OF,E)$ and
$\eta\in H^{ef-r}(\OF,E^\vee)$. By \cite[Proposition~1.5.3(iv)]{NSW}, 
\begin{align}\label{eqn-NSW-cores-cup}
\operatorname{cores}_{\varpi\OF}^{\OF}
\left( \xi\cup\operatorname{res}_{\varpi\OF}^{\OF}(\eta)
\right)
=\operatorname{cores}_{\varpi\OF}^{\OF}(\xi)\cup\eta.
\end{align}
Corestriction is functorial in the coefficient module
\cite[Proposition~1.5.2]{NSW}, so it commutes with the map induced by
evaluation $E \otimes_k E^\vee\to k$. Therefore,
\bal
\left\langle \operatorname{cores}_{\varpi\OF}^{\OF}(\xi) ,  \eta
\right\rangle_{\OF} &= \operatorname{tr}_{\OF}
\left( \operatorname{ev}_*
\bigl( \operatorname{cores}_{\varpi\OF}^{\OF}(\xi)\cup\eta
\bigr) \right) \\ 
&= \operatorname{tr}_{\OF}
\left( \operatorname{cores}_{\varpi\OF}^{\OF}
\left( \operatorname{ev}_*
\bigl( \xi\cup\operatorname{res}_{\varpi\OF}^{\OF}(\eta)
\bigr) \right)
\right)\\
&= \operatorname{tr}_{\varpi\OF}\left( \operatorname{ev}_*
\bigl( \xi\cup\operatorname{res}_{\varpi\OF}^{\OF}(\eta)
\bigr) \right)\\
&= \left\langle \xi , \operatorname{res}_{\varpi\OF}^{\OF}(\eta)
\right\rangle_{\varpi\OF}.
\nal
Thus, with the normalization
$\operatorname{tr}_{\varpi\OF}=\operatorname{tr}_{\OF}\circ
\operatorname{cores}_{\varpi\OF}^{\OF}$,
corestriction is exactly adjoint to restriction.
\end{proof}

Recall from Subsection~\ref{subsec-GL2-notation} the definitions of the
coefficient maps $\Pry$ and $\Prx$. For the remainder of this
subsection, if $V$ is a Serre weight, we adorn these maps and the constituent maps
appearing in their definitions with a subscript $V$ to indicate the
coefficient representation. Thus,
\bal
\operatorname{Pr}_{V,Y}=i_{V,\KUbar}\circ\eta_{V,Y}\circ q_{V,K_U}
\qquad\text{and}\qquad
\operatorname{Pr}_{V,X}=i_{V,K_U}\circ\eta_{V,X}\circ q_{V,\KUbar},
\nal
where $\eta_{V,Y}$ and $\eta_{V,X}$ are the inverses of the isomorphisms
$q_{V,K_U}\circ i_{V,\KUbar}$ and
$q_{V,\KUbar}\circ i_{V,K_U}$, respectively.


\begin{lem}\label{lem-coefficient-duality}
For every Serre weight $V$, we have $\left(\operatorname{Pr}_{V,Y}\right)^\vee
=\operatorname{Pr}_{V^\vee,X}.$ Equivalently,  
\bal
\lambda\bigl(\operatorname{Pr}_{V,Y}(v)\bigr)
=\bigl(\operatorname{Pr}_{V^\vee,X}(\lambda)\bigr)(v) \qquad \text{for all }v \in V, \ \lambda \in V^{\vee}. 
\nal
\end{lem}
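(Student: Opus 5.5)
The plan is to dualize the factorization $\operatorname{Pr}_{V,Y}=i_{V,\KUbar}\circ\eta_{V,Y}\circ q_{V,K_U}$ factor by factor, and to identify each dual factor with the corresponding factor of $\operatorname{Pr}_{V^\vee,X}=i_{V^\vee,K_U}\circ\eta_{V^\vee,X}\circ q_{V^\vee,\KUbar}$. We read the order reversal as
\[
(\operatorname{Pr}_{V,Y})^\vee=q_{V,K_U}^\vee\circ\eta_{V,Y}^\vee\circ i_{V,\KUbar}^\vee .
\]

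The first step is the standard identifications for a finite group $H$ acting on a finite-dimensional $V$:
\[
(V_H)^\vee\cong (V^\vee)^H \qquad\text{and}\qquad (V^H)^\vee\cong (V^\vee)_H .
\]
Under these, $q_{V,H}^\vee\colon (V_H)^\vee\to V^\vee$ is the inclusion $i_{V^\vee,H}$, and $i_{V,H}^\vee\colon V^\vee\to (V^H)^\vee$ is the projection $q_{V^\vee,H}$. Both claims are immediate: a functional factors through $V_H$ exactly when it is $H$-invariant, and restriction to $V^H$ kills $\lambda-h\lambda$, with a dimension count giving the isomorphism. The group actions here need no compatibility beyond the action on $V^\vee$ defined in the paper. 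Applying this with $H=K_U$ and $H=\KUbar$ gives
\[
q_{V,K_U}^\vee=i_{V^\vee,K_U} \qquad\text{and}\qquad i_{V,\KUbar}^\vee=q_{V^\vee,\KUbar}.
\]

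The second step is the middle factor. Dualize the isomorphism $q_{V,K_U}\circ i_{V,\KUbar}\colon V^{\KUbar}\to V_{K_U}$. Its dual is
\[
i_{V,\KUbar}^\vee\circ q_{V,K_U}^\vee=q_{V^\vee,\KUbar}\circ i_{V^\vee,K_U}\colon (V^\vee)^{K_U}\to (V^\vee)_{\KUbar},
\]
which is exactly the isomorphism whose inverse is $\eta_{V^\vee,X}$. Since the dual of an inverse is the inverse of the dual, $\eta_{V,Y}^\vee=\eta_{V^\vee,X}$. Composing the three identifications gives $(\operatorname{Pr}_{V,Y})^\vee=\operatorname{Pr}_{V^\vee,X}$. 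The pointwise formula $\lambda(\operatorname{Pr}_{V,Y}(v))=(\operatorname{Pr}_{V^\vee,X}(\lambda))(v)$ is just the definition of the transpose.

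The only point needing care is that the identifications in the first step are used consistently. The same identification of $(V_{K_U})^\vee$ with $(V^\vee)^{K_U}$ must appear both in $q^\vee$ and in the domain of $\eta_{V,Y}^\vee$, and likewise for $(V^{\KUbar})^\vee\cong(V^\vee)_{\KUbar}$. Once this is fixed, everything is formal and uses nothing specific to Serre weights. One might worry that $\operatorname{Pr}_{V,Y}$ is the specific normalized projection $T_1(t_{1,0})$ rather than just some map with the right image and kernel. That causes no problem here, because the equality above is exactly the canonical-$\eta$ compatibility and involves no scalars.
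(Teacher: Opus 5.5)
Your proposal is correct and follows the paper's proof essentially step for step: you dualize each factor of $\operatorname{Pr}_{V,Y}$ using $(W_H)^\vee\cong(W^\vee)^H$ and $(W^H)^\vee\cong(W^\vee)_H$, obtain $\eta_{V,Y}^\vee=\eta_{V^\vee,X}$ by dualizing the isomorphism $q_{V,K_U}\circ i_{V,K_{\overline U}}$ (the dual of an inverse is the inverse of the dual), and then compose. The only difference is that you verify the two duality identifications directly, via invariance of functionals and a dimension count, whereas the paper cites the proof of Herzig's Proposition~1.30 for them.
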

\begin{proof}
By \cite[proof of Proposition 1.30]{herzigmod} and finite-dimensional
biduality, for any finite-dimensional representation $W$ of
$H\in\{K_U,\KUbar\}$, we have canonical isomorphisms
\bal
(W^H)^\vee\cong(W^\vee)_H
\qquad\text{and}\qquad
(W_H)^\vee\cong(W^\vee)^H.
\nal
Under these identifications, we have
\bal
\left(q_{W,H}\right)^\vee=i_{W^\vee,H}
\qquad\text{and}\qquad
\left(i_{W,H}\right)^\vee=q_{W^\vee,H}.
\nal
Since $\eta_{V,Y}=(q_{V,K_U}\circ i_{V,\KUbar})^{-1}$, we obtain
\bal
\left(\eta_{V,Y}\right)^\vee
= \left(\left(i_{V,\KUbar}\right)^\vee
\circ\left(q_{V,K_U}\right)^\vee\right)^{-1}
=\left(q_{V^\vee,\KUbar}\circ i_{V^\vee,K_U}\right)^{-1}
=\eta_{V^\vee,X}.
\nal
Note that $V^\vee$ is also a Serre weight. Therefore,
\bal
\left(\operatorname{Pr}_{V,Y}\right)^\vee&=\left(i_{V,\KUbar}\circ\eta_{V,Y}\circ q_{V,K_U}\right)^\vee\\
&=\left(q_{V,K_U}\right)^\vee\circ\left(\eta_{V,Y}\right)^\vee\circ \left(i_{V,\KUbar}\right)^\vee\\
&=i_{V^\vee,K_U}\circ\eta_{V^\vee,X}\circ q_{V^\vee,\KUbar}
=\operatorname{Pr}_{V^\vee,X}.
\nal
\end{proof}

\begin{lem}\label{lem-multiplication-trace}
There exists a unique scalar $c_{\varpi}\in k^\times$ such that
\bal
\operatorname{cores}_{\varpi\OF}^{\OF}\circ m_{\varpi^{-1}}^*
=
c_{\varpi}\,\id_{H^{ef}(\OF,k)}.
\nal
Equivalently, $\operatorname{tr}_{\varpi\OF}\circ m_{\varpi^{-1}}^* =c_{\varpi}\,\operatorname{tr}_{\OF}.$
\end{lem}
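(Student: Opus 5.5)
Since $H^{ef}(\OF,k)$ is one-dimensional over $k$ (Poincar\'e duality for $\OF\cong\Zp^{ef}$, as in Lemma~\ref{lem-perfect-pairing}), any $k$-linear endomorphism of it is multiplication by a unique scalar. So existence and uniqueness of $c_\varpi$ are automatic once we know that $\cores_{\varpi\OF}^{\OF}\circ m_{\varpi^{-1}}^*$ is a $k$-linear endomorphism of $H^{ef}(\OF,k)$. This holds because $m_{\varpi^{-1}}:\varpi\OF\to\OF$ is a group isomorphism, and with trivial coefficients the pulled-back coefficient module is again $k$. The real content is that $c_\varpi\neq0$, and I would get this from a composition of isomorphisms.

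First, $m_{\varpi^{-1}}^*:H^{ef}(\OF,k)\to H^{ef}(\varpi\OF,k)$ is an isomorphism. It is induced by an isomorphism of topological groups with trivial coefficients, and its inverse is $m_\varpi^*$. Second, the proof of Lemma~\ref{lem-cores-restriction-adjoint} already shows, via \cite[Proposition~3.3.11]{NSW}, that
\[
\cores_{\varpi\OF}^{\OF}:H^{ef}(\varpi\OF,k)\to H^{ef}(\OF,k)
\]
is an isomorphism after extending scalars from $k_0$ to $k$. The composite of these two maps is therefore an automorphism of a one-dimensional $k$-space, which is multiplication by some $c_\varpi\in k^\times$.

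For the equivalent formulation, I would compose with $\operatorname{tr}_{\OF}$ and use the definition $\operatorname{tr}_{\varpi\OF}=\operatorname{tr}_{\OF}\circ\cores_{\varpi\OF}^{\OF}$ from Lemma~\ref{lem-cores-restriction-adjoint}. This gives
\[
\operatorname{tr}_{\varpi\OF}\circ m_{\varpi^{-1}}^*=\operatorname{tr}_{\OF}\circ\bigl(c_\varpi\id\bigr)=c_\varpi\operatorname{tr}_{\OF}.
\]
Conversely, since $\operatorname{tr}_{\OF}$ is injective, the second identity implies the first. Uniqueness in either form follows because $\operatorname{tr}_{\OF}$ is nonzero.

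The only step I expect to need care is making sure the hypotheses of \cite[Proposition~3.3.11]{NSW} are met. These are that $\OF/\varpi\OF$ acts trivially on $H^{ef}(\varpi\OF,k_0)$, and that the scalar-extension compatibilities hold. Both were already checked in the proof of Lemma~\ref{lem-cores-restriction-adjoint}, so I would simply cite that argument. If a self-contained check were wanted, $\OF$ is abelian, so conjugation acts trivially on $\varpi\OF$ and hence on its cohomology. Alternatively, one can observe that $\cores\circ\res$ is multiplication by the index $p^f=0$, so some other argument is needed; this is why I rely on NSW's top-degree result rather than on $\cores\circ\res$.
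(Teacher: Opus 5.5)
Your proposal is correct and follows the paper's proof essentially verbatim. In both, $m_{\varpi^{-1}}^*$ is an isomorphism, and $\cores_{\varpi\OF}^{\OF}$ is an isomorphism in top degree by the \cite[Proposition~3.3.11]{NSW} argument from Lemma~\ref{lem-cores-restriction-adjoint}; so the composite is an automorphism of the one-dimensional space $H^{ef}(\OF,k)$, and the trace formulation follows from $\operatorname{tr}_{\varpi\OF}=\operatorname{tr}_{\OF}\circ\cores_{\varpi\OF}^{\OF}$.
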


\begin{proof} Since $m_{\varpi^{-1}}:\varpi\OF\overset{\sim}{\longrightarrow}\OF$ is an isomorphism, so is the induced map $m_{\varpi^{-1}}^*$. The corestriction $\operatorname{cores}_{\varpi\OF}^{\OF}$ is also an isomorphism by Lemma \ref{lem-cores-restriction-adjoint}. Thus, their composition is an automorphism of the
1-dimensional $k$-vector space $H^{ef}(\OF,k)$, and hence equals
$c_{\varpi}\,\id$ for a unique $c_{\varpi}\in k^\times$. The second
identity follows from the definition
$\operatorname{tr}_{\varpi\OF}
=\operatorname{tr}_{\OF}\circ
\operatorname{cores}_{\varpi\OF}^{\OF}$.
\end{proof}

\begin{remark}
One can show using a direct computation that, if $F/\Q_p$ is unramified and $\varpi=p$, we have $c_\varpi=c_p=1$.  The idea of the proof is as follows.  Recall that $H^{ef}(\OF,k)=H^f(\OF,k)$ is 1-dimensional with basis 
$$[\kappa_\scrJ]=[\kappa_{0,0}]\cup\cdots\cup [\kappa_{0,f-1}].$$
As an inhomogeneous cochain, 
$$\cores_{p\OF}^{\OF}(m_{p\inv}^*(\kappa_\scrJ))(x_0,\ldots, x_{f-1})=\sum_{a\in k_F} \prod_{j=0}^{f-1} \overline{\theta_{a+\overline{s_{j-1}}}(x_j)}^{p^j}$$
where $s_{j-1}=x_0+\cdots+x_{j-1}$ (and is zero if $j=0$). 
One can use an argument similar to that in the proof of Lemma \ref{lem-theta-mod-p}, notably using Witt vector computations, to show that this expression is precisely equal to
$$(-1)^{f+1}\prod_{j=0}^{f-1}\overline{x_j}^{p^{j-1}},$$
which is a computation that heavily relies on $x_j-[\overline{x_j}]$ being a multiple of $p$.  This proves that 
$$\cores_{p\OF}^{\OF}(m_{p\inv}^*(\kappa_\scrJ))=(-1)^{f+1}\kappa_{0,f-1}\cup\kappa_{0,0}\cup\cdots \cup \kappa_{0,f-2}$$
as cochains. One can conclude using the graded commutativity of the cup product that, after taking cohomology, we have 
$$\cores_{p\OF}^{\OF}(m_{p\inv}^*([\kappa_\scrJ]))=[\kappa_\scrJ],$$
hence $c_\varpi=c_p=1$ if $F/\Q_p$ is unramified.  By contrast, in the ramified setting, the constant will depend in general on our choice of uniformizer, so the computation of $c_\varpi$ is more intricate.
\end{remark}

\begin{theorem}\label{thm-AV-BV-duality}
Let $V$ be a Serre weight and let $i\in\{0,\dots,ef\}$. For every
$\xi\in H^i(\OF,V)$ and
$\eta\in H^{ef-i}(\OF,V^\vee)$, we have
\begin{align}\label{eqn-AB-pairing}
\left\langle \AVi(\xi),\eta\right\rangle_{\OF}
=c_{\varpi} \left\langle \xi, \BBB_{V^\vee,ef-i}(\eta)\right\rangle_{\OF}.
\end{align}
\end{theorem}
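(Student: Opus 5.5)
We unwind both sides using the factorizations of Theorem~\ref{thm-GL2-SVi-support-formula}:
\[
\AVi=\cores_{\varpi\OF}^{\OF}\circ(\operatorname{Pr}_{V,Y})_*\circ m_{\varpi^{-1}}^*,
\qquad
\BBB_{V^\vee,ef-i}=(\operatorname{Pr}_{V^\vee,X})_*\circ m_\varpi^*\circ\res_{\varpi\OF}^{\OF}.
\]
We then move each factor across the pairing in turn. The first step is Lemma~\ref{lem-cores-restriction-adjoint}, applied with $E=V$; note that $V$ is defined over a finite field $k_0$, since $V=F(\ul a,\ul b)$ comes from $k_F$-representations. It gives
\[
\langle \AVi(\xi),\eta\rangle_{\OF}
=\bigl\langle (\operatorname{Pr}_{V,Y})_* m_{\varpi^{-1}}^*\xi,\ \res(\eta)\bigr\rangle_{\varpi\OF}.
\]

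The second step moves the coefficient map. Here $\operatorname{Pr}_{V,Y}$ is a $\varpi\OF$-equivariant map $m_{\varpi^{-1}}^*V\to V|_{\varpi\OF}$, and its transpose is $\operatorname{Pr}_{V^\vee,X}$ by Lemma~\ref{lem-coefficient-duality}, viewed as a $\varpi\OF$-map $V^\vee|_{\varpi\OF}\to (m_{\varpi^{-1}}^*V)^\vee=m_{\varpi^{-1}}^*(V^\vee)$. For a linear map $\phi\colon E\to E'$ of modules, naturality of the cup product together with $\operatorname{ev}_{E'}\circ(\phi\otimes 1)=\operatorname{ev}_E\circ(1\otimes\phi^\vee)$ gives $\operatorname{ev}_*(\phi_*a\cup b)=\operatorname{ev}_*(a\cup\phi^\vee_*b)$. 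This holds at the level of cochains before applying the trace. Hence
\[
\langle \AVi(\xi),\eta\rangle_{\OF}
=\operatorname{tr}_{\varpi\OF}\Bigl(\operatorname{ev}_*\bigl(m_{\varpi^{-1}}^*\xi\cup(\operatorname{Pr}_{V^\vee,X})_*\res(\eta)\bigr)\Bigr).
\]

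The third step transports along the isomorphism $m_{\varpi^{-1}}\colon\varpi\OF\to\OF$. Write $\zeta=(\operatorname{Pr}_{V^\vee,X})_*\res(\eta)\in H^{ef-i}(\varpi\OF,m_{\varpi^{-1}}^*V^\vee)$. Since $m_\varpi$ and $m_{\varpi^{-1}}$ are mutually inverse, $\zeta=m_{\varpi^{-1}}^*m_\varpi^*\zeta$. Pullback along a group isomorphism commutes with cup products and with evaluation, so
\[
\operatorname{ev}_*\bigl(m_{\varpi^{-1}}^*\xi\cup\zeta\bigr)
=m_{\varpi^{-1}}^*\,\operatorname{ev}_*\bigl(\xi\cup m_\varpi^*\zeta\bigr),
\]
where the right-hand side is computed in $H^{ef}(\OF,k)$. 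Moreover, $m_\varpi^*\zeta=(\operatorname{Pr}_{V^\vee,X})_*m_\varpi^*\res(\eta)=\BBB_{V^\vee,ef-i}(\eta)$ by naturality of pullback in the coefficients.

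The final step applies Lemma~\ref{lem-multiplication-trace}: $\operatorname{tr}_{\varpi\OF}\circ m_{\varpi^{-1}}^*=c_\varpi\operatorname{tr}_{\OF}$ on $H^{ef}(\OF,k)$. This yields $c_\varpi\langle\xi,\BBB_{V^\vee,ef-i}(\eta)\rangle_{\OF}$, which is \eqref{eqn-AB-pairing}.

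The main obstacle is bookkeeping rather than any single estimate. One must keep the twisted coefficient modules straight, namely $m_{\varpi^{-1}}^*V$ versus $V|_{\varpi\OF}$ and their duals. Equivariance of the transposed map $\operatorname{Pr}_{V^\vee,X}$ between these twisted duals needs checking; it follows from the equivariance in Lemma~\ref{lem-KUI-compatible-pair}. Finally, the pairings of Lemma~\ref{lem-perfect-pairing} should be extended verbatim to these pulled-back coefficient modules, which is harmless since the trace and evaluation only involve trivial coefficients $k$.
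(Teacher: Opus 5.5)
Your proposal is correct and follows the paper's proof essentially step for step. Both proofs use the same four moves in the same order: corestriction--restriction adjointness (Lemma~\ref{lem-cores-restriction-adjoint}), transposing the coefficient map via Lemma~\ref{lem-coefficient-duality}, transporting along $m_{\varpi^{-1}}$ using $\zeta=m_{\varpi^{-1}}^*m_\varpi^*\zeta$, and finishing with Lemma~\ref{lem-multiplication-trace} together with $m_\varpi^*\zeta=\BBB_{V^\vee,ef-i}(\eta)$.
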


\begin{proof}
By the definition of a Serre weight, there is a finite-dimensional
$k_F[K]$-module $V_0$ such that $V\cong V_0\otimes_{k_F}k$; the same
remains true after duality, restriction, and pullback.
By Theorem \ref{thm-GL2-SVi-support-formula},
\bal
\AVi =\operatorname{cores}_{\varpi\OF}^{\OF}
\circ(\operatorname{Pr}_{V,Y})_*
\circ m_{\varpi^{-1}}^*.
\nal
Thus, Lemma \ref{lem-cores-restriction-adjoint} and
Lemma \ref{lem-coefficient-duality} give us
\bal
\left\langle \AVi(\xi),\eta\right\rangle_{\OF}
&=\left\langle(\operatorname{Pr}_{V,Y})_* \ m_{\varpi^{-1}}^*(\xi),
\operatorname{res}_{\varpi\OF}^{\OF}(\eta)
\right\rangle_{\varpi\OF}\\
&=\left\langle
m_{\varpi^{-1}}^*(\xi),
(\operatorname{Pr}_{V^\vee,X})_*
\operatorname{res}_{\varpi\OF}^{\OF}(\eta)
\right\rangle_{\varpi\OF}.
\nal
The second equality follows from the identity
\bal
\operatorname{ev}\bigl(\operatorname{Pr}_{V,Y}(v),\lambda\bigr)
=
\operatorname{ev}\bigl(v,\operatorname{Pr}_{V^\vee,X}(\lambda)\bigr)
\nal
and the naturality of cup products with respect to maps of coefficient
modules. Thus,
\bal
\left\langle \AVi(\xi),\eta\right\rangle_{\OF}
= \left\langle m_{\varpi^{-1}}^*(\xi),\zeta \right\rangle_{\varpi\OF},
\nal
where, using the isomorphism $\bigl(m_{\varpi^{-1}}^*V\bigr)^\vee \cong m_{\varpi^{-1}}^*(V^\vee),$ we set
\bal
\zeta \coloneqq  (\operatorname{Pr}_{V^\vee,X})_* \operatorname{res}_{\varpi\OF}^{\OF}(\eta) \in H^{ef-i}\bigl( \varpi\OF,m_{\varpi^{-1}}^*(V^\vee) \bigr).
\nal
Since $m_{\varpi^{-1}}$ and $m_\varpi$ are inverse isomorphisms, we have
\bal
m_\varpi^*(\zeta)\in H^{ef-i}(\OF,V^\vee)
\qquad\text{and}\qquad
\zeta = m_{\varpi^{-1}}^* m_\varpi^* (\zeta).
\nal
By the naturality of cup products and evaluation, we therefore have
\bal
\operatorname{ev}_*
\bigl(m_{\varpi^{-1}}^*(\xi)\cup\zeta\bigr)
= m_{\varpi^{-1}}^* \operatorname{ev}_* \bigl(\xi\cup m_\varpi^*(\zeta)\bigr).
\nal
Applying the trace maps and Lemma
\ref{lem-multiplication-trace}, we obtain
\bal
\left\langle
m_{\varpi^{-1}}^*(\xi),\zeta
\right\rangle_{\varpi\OF}
=
c_{\varpi}
\left\langle
\xi,m_\varpi^*(\zeta)
\right\rangle_{\OF}.
\nal
Finally, naturality of pullback with respect to coefficient maps and
Theorem \ref{thm-GL2-SVi-support-formula} yield
\bal
m_\varpi^*(\zeta)
=(\operatorname{Pr}_{V^\vee,X})_* m_\varpi^*
\operatorname{res}_{\varpi\OF}^{\OF}(\eta)
=\BBB_{V^\vee,ef-i}(\eta).
\nal
\end{proof}

\begin{cor}\label{cor-AV-BV-nilpotence}
Let $i\in\{0,\dots,ef\}$. For every $n\geq1$ and every
$\xi\in H^i(\OF,V)$, $\eta\in H^{ef-i}(\OF,V^\vee)$, 
\begin{align}\label{eqn-AVi-BVefi-pairing}
\left\langle \AVi^n(\xi),\eta\right\rangle_{\OF}
= c_{\varpi}^n
\left\langle
\xi,\BBB_{V^\vee,ef-i}^n(\eta)
\right\rangle_{\OF}.
\end{align}
It follows that 
\begin{align}\label{eqn-AVi-BVe-n-power-vanishing}
\AVi^n=0
\iff 
\BBB_{V,ef-i}^n=0.
\end{align}
In particular, we have 
\begin{align}\label{eqn-Mi-Nefi}
M_i(V)=N_{ef-i}(V)
\qquad
\text{for every }0\leq i\leq ef-1. 
\end{align}
\end{cor}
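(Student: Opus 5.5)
The plan is to get the iterated pairing identity \eqref{eqn-AVi-BVefi-pairing} by induction on $n$ from Theorem~\ref{thm-AV-BV-duality}. Then we combine it with the perfectness of the pairings in Lemma~\ref{lem-perfect-pairing} to obtain the vanishing equivalence. The only subtlety is bookkeeping: $\AVi$ preserves degree $i$ and $\BBB_{V^\vee,ef-i}$ preserves degree $ef-i$, so Theorem~\ref{thm-AV-BV-duality} can be applied repeatedly in the same pair of degrees.

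First, the identity. For $n=1$ it is exactly \eqref{eqn-AB-pairing}. For the inductive step, apply \eqref{eqn-AB-pairing} to the class $\AVi^{n-1}(\xi)\in H^i(\OF,V)$. This gives
\[
\langle \AVi^n(\xi),\eta\rangle_{\OF}=c_\varpi\langle \AVi^{n-1}(\xi),\BBB_{V^\vee,ef-i}(\eta)\rangle_{\OF}.
\]
Applying the inductive hypothesis with $\eta$ replaced by $\BBB_{V^\vee,ef-i}(\eta)\in H^{ef-i}(\OF,V^\vee)$ yields the factor $c_\varpi^{n}$ and the operator $\BBB_{V^\vee,ef-i}^n$.

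Second, the equivalence. Since $c_\varpi\in k^\times$ by Lemma~\ref{lem-multiplication-trace}, the identity shows that $\AVi^n=0$ if and only if $\langle\xi,\BBB^n_{V^\vee,ef-i}(\eta)\rangle_{\OF}=0$ for all $\xi,\eta$. Lemma~\ref{lem-perfect-pairing} applies here because $V$ is defined over a finite field $k_0$, as noted in the proof of Theorem~\ref{thm-AV-BV-duality}. So the pairing $H^i(\OF,V)\times H^{ef-i}(\OF,V^\vee)\to k$ is perfect. Hence the right-hand condition holds if and only if $\BBB^n_{V^\vee,ef-i}=0$, and we get $\AVi^n=0\iff \BBB^n_{V^\vee,ef-i}=0$.

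The statement \eqref{eqn-AVi-BVe-n-power-vanishing} is written with $V$ rather than $V^\vee$ in the $\BBB$-term, and this is the step I expect to be the main obstacle. I would first try to show directly that $\BBB_{V^\vee,j}^n=0\iff\BBB_{V,j}^n=0$ via the explicit analysis of Subsection~\ref{subsec-B-nilpotence}. By Lemmas~\ref{lem-BV-image}, \ref{lem-Bk-action} and \ref{lem-kappa0-y1-vanishing}, vanishing of $\BBB_{V,j}^n$ is governed by which $[\kappa_S]$ survive $\Bki^{n-1}$, together with the set $\{j: d_j>0\}$.

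1. For $n\ge 2$: by Lemma~\ref{lem-BV-image}, $\BBB_{V,j}^n=0$ if and only if $\Bk^{n-1}([\phi_x])\cup X^d=0$ for all $x$. I would identify the possible $[\phi_x]$ explicitly, using $\BV=(\Prx)_*\mu_\varpi^*$ applied to classes $[\kappa_S]\cup v$. This should show the condition depends only on the combinatorics of $S$ and the set $\{j:d_j>0\}$.
2. Check that this combinatorial data is unchanged under $V\mapsto V^\vee$. Indeed $V^\vee\cong F(-\ul b,-\ul a)$ has the same $d_j$.
3. For $n=1$ the statement is $\BBB_{V,j}\ne 0$. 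For $j\ge1$ this follows from Lemma~\ref{lem-i_U0-nonvanishing} whenever some class survives $\Bk$; otherwise the criterion depends only on the $d_j$ again.

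If this direct route becomes messy, the fallback is to interpret the statement as concerning $V^\vee$, which is again a Serre weight with the same invariants. For the nilpotence index \eqref{eqn-Mi-Nefi}, one would then read off $M_i(V)=\min\{n:\AVi^n=0\}=\min\{n:\BBB^n_{V,ef-i}=0\}=N_{ef-i}(V)$ directly from the equivalence. The range $0\le i\le ef-1$ ensures $ef-i\ge1$, so $N_{ef-i}(V)$ is finite by Theorem~\ref{thm-nilpotence-bound}.
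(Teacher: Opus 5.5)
Your induction from Theorem~\ref{thm-AV-BV-duality}, the perfect-pairing step giving $\AVi^n=0\iff\BBB^n_{V^\vee,ef-i}=0$, and the final use of Theorem~\ref{thm-nilpotence-bound} all match the paper. The gap is the step you flagged, replacing $V^\vee$ by $V$, and neither of your routes closes it.

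Your direct route rests on two things that fail.
\begin{enumerate}
\item $[\kappa_S]\cup v$ is only a cohomology class when $v\in V^{K_U}=kX^d$, so $H^j(\OF,V)$ is not spanned by such classes. Identifying the possible $[\phi_x]$ of Lemma~\ref{lem-BV-image} is exactly what is out of reach in general, which is why Theorem~\ref{thm-nilpotence-bound} only gives bounds.
\item Vanishing of $\BBB^n_{V,j}$ is \emph{not} determined by $S$ and the set $\{j:d_j>0\}$. Take $e=1$, $p$ odd, $f\geq2$ and $(d_j)=(0,\dots,0,1)$, and write a cocycle as $z=z_XX^d+z_YY^d$. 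The cocycle relation makes $z_Y$ additive and gives $z(p\alpha)=\binom{p}{2}\kappa_{f-1}(\bar\alpha)z_Y(\alpha)X^d=0$, so $\BBB_{V,1}=0$. By contrast, the paper's example with $(d_j)=(0,\dots,0,p-1)$, which has the same set $\{j:d_j>0\}$, has $N_1(V)=2$.
\end{enumerate}
Your fallback of reading the statement ``as concerning $V^\vee$'' just proves a different statement. Both \eqref{eqn-AVi-BVe-n-power-vanishing} and \eqref{eqn-Mi-Nefi} are about $\BBB_{V,ef-i}$ and $N_{ef-i}(V)$, and ``same invariants'' needs an actual argument.

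The missing idea is structural, not computational. Your own observation $V^\vee\cong F(-\ul b,-\ul a)$ gives a $K$-equivariant isomorphism $\Phi:V^\vee\xrightarrow{\sim}W\coloneqq V\otimes(\chi\circ\det)$ for some character $\chi$; this is the paper's appeal to Herzig. Since $\det$ is trivial on $K_U$ and $\KUbar$, $W$ and $V$ have the same underlying space with the same actions of $K_U\supseteq\varpi\OF$ and of $\KUbar$. Hence $\operatorname{Pr}_{W,X}=\operatorname{Pr}_{V,X}$, and $\BBB_{W,j}=(\operatorname{Pr}_{W,X})_*\circ\mu_\varpi^*=\BBB_{V,j}$ on the nose. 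Because $\operatorname{Pr}_X$ is built functorially from invariants, coinvariants and the canonical $\eta_X$, it commutes with $\Phi$. Naturality then gives $\Phi_*\circ\BBB_{V^\vee,j}=\BBB_{V,j}\circ\Phi_*$. So $\BBB_{V^\vee,j}$ and $\BBB_{V,j}$ are conjugate, and $\BBB^n_{V^\vee,j}=0\iff\BBB^n_{V,j}=0$ for every $n$. This is exactly how the paper finishes, and it yields both \eqref{eqn-AVi-BVe-n-power-vanishing} and $M_i(V)=N_{ef-i}(V)$.
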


\begin{proof}
The identity \eqref{eqn-AVi-BVefi-pairing} follows from Theorem
\ref{thm-AV-BV-duality} via induction on $n$. Since the pairing is perfect, this implies 
\begin{align}\label{eqn-in-proof-AVi-BVefvee}
\AVi^n=0
\iff
\BBB_{V^\vee,ef-i}^n=0.
\end{align}

By \cite[Exercise~3.22]{herzigmod}, there is a character
$\chi_V:k_F^\times\to k^\times$ and a $K$-equivariant isomorphism
\bal
\Phi:V^\vee\overset{\sim}{\longrightarrow}
W\coloneqq V\otimes(\chi_V\circ\det).
\nal
Since the character $\chi_V\circ\det$ is trivial on $K_U$ and $\KUbar$, the restrictions of $W$ and $V$ to these subgroups are equal as representations. It follows that
\bal
W^{K_U}=V^{K_U},
\qquad
W_{\KUbar}=V_{\KUbar},
\qquad
\operatorname{Pr}_{W,X}=\operatorname{Pr}_{V,X}.
\nal
The actions of $\OF$ and $\varpi\OF$, viewed as subgroups of $K_U$,
are likewise equal on $W$ and $V$. Thus the restriction and pullback
maps in the formula for $\BBB_{W,j}$ and $\BBB_{V,j}$ agree, which yields
\bal
\BBB_{W,j}=\BBB_{V,j}.
\nal
On the other hand, naturality with respect to the $K$-equivariant
isomorphism $\Phi$ gives
\bal
\Phi_*\circ\BBB_{V^\vee,j}
=
\BBB_{W,j}\circ\Phi_*
=
\BBB_{V,j}\circ\Phi_*.
\nal
Therefore $\BBB_{V^\vee,j}$ and $\BBB_{V,j}$ are conjugate, and so
\bal
\BBB_{V^\vee,j}^n=0 \iff \BBB_{V,j}^n=0
\nal
for every $n\geq1$. Hence,
\eqref{eqn-AVi-BVe-n-power-vanishing} follows from
\eqref{eqn-in-proof-AVi-BVefvee}.

If $0\leq i\leq ef-1$, then $ef-i\geq1$, so Theorem
\ref{thm-nilpotence-bound} shows that
$\BBB_{V^\vee,ef-i}$ is nilpotent. Thus $\AVi$ is nilpotent
with the same nilpotence index, which proves \eqref{eqn-Mi-Nefi}. 
\end{proof}

\subsection{The image of \texorpdfstring{$\SVi$}{S(V,i)}}\label{subsec-image}
We now combine the preceding vanishing, nilpotence, and adjointness results to describe the image of $\SVi$. This description is made possible by the observation that the operators $\AVi$ and $\BVi$ annihilate one another when $V$ is non-Steinberg. Consequently, every convolution power of $\SVi(T_1)$ has only a purely dominant and a purely antidominant contribution. The nilpotence of $\BVi$ truncates the antidominant contribution, and by Corollary \ref{cor-AV-BV-nilpotence}, the dominant contribution is also truncated in every degree $1\leq i\leq ef-1$. Thus, in the interior cohomological degrees, the image of $\SVi$ is supported on a finite nilpotent thickening of the diagonal.

\begin{lem}\label{lem-AV-BV-orthogonal}
If $V$ is non-Steinberg (i.e., if $d < p^f-1$), then
\bal
\AVi\circ\BVi=\BVi\circ\AVi=0.
\nal
\end{lem}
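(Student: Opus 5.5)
The plan is to reduce both composites, via the formulas of Theorem~\ref{thm-GL2-SVi-support-formula}, to maps induced by a single endomorphism of the coefficient module. We then check that this coefficient endomorphism is zero, or that the resulting map is multiplication by $p^f=0$. Throughout we use naturality of pullback, restriction and corestriction with respect to coefficient maps, exactly as in the proof of Lemma~\ref{lem-AVi-factorization}.

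\emph{The composite $\AVi\circ\BVi$.} Write
\[
\AVi\circ\BVi=\cores_{\varpi\OF}^{\OF}\circ(\Pry)_*\circ m_{\varpi^{-1}}^*\circ(\Prx)_*\circ m_\varpi^*\circ\res_{\varpi\OF}^{\OF}.
\]
Since $(\Prx)_*$ commutes with $m_{\varpi^{-1}}^*$ and $m_{\varpi^{-1}}^*m_\varpi^*=\id$, this becomes
\[
\cores_{\varpi\OF}^{\OF}\circ(\Pry\circ\Prx)_*\circ\res_{\varpi\OF}^{\OF}.
\]
Here one must check that the coefficient twists cancel. This holds because the two pullbacks are along mutually inverse group isomorphisms, so the composite coefficient map is just the linear map $\Pry\circ\Prx$ of $V|_{\varpi\OF}$. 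The image of $\Prx$ is $kX^d$, so there are two cases.
\begin{itemize}
\item If $d>0$, then $X^d\in (X)$, so $q_{K_U}(X^d)=0$ and $\Pry\circ\Prx=0$; hence the composite vanishes.
\item If $d=0$, then $V$ is a character, $\Pry=\Prx=\id$, and the composite is $\cores\circ\res$. This is multiplication by $[\OF:\varpi\OF]=p^f=0$ in $k$.
\end{itemize}

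\emph{The composite $\BVi\circ\AVi$.} Write
\[
\BVi\circ\AVi=(\Prx)_*\circ m_\varpi^*\circ\bigl(\res\circ\cores\bigr)\circ(\Pry)_*\circ m_{\varpi^{-1}}^*.
\]
Since $\OF$ is abelian and $\varpi\OF$ is normal, the double coset formula gives $\res_{\varpi\OF}^{\OF}\circ\cores_{\varpi\OF}^{\OF}=\sum_{a}c_a$ over $a\in\OF/\varpi\OF$. Here $c_a$ is conjugation by $a$; it is trivial on the group $\varpi\OF$ and acts on coefficients by $u_\alpha([a])$. So $\res\circ\cores$ is the map induced on coefficients by the norm element $\mathcal N\coloneqq\sum_{a\in k_F}u_\alpha([a])$, which commutes with the $\varpi\OF$-action. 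Moving all coefficient maps together as before, $\BVi\circ\AVi$ is induced by the coefficient map $\Prx\circ\mathcal N\circ\Pry$, composed with $m_\varpi^*m_{\varpi^{-1}}^*=\id$. Since the image of $\Pry$ is $kY^d$, it suffices to show $\mathcal N(Y^d)=0$. We have
\[
\sum_{a\in k_F}a\cdot Y^d=\sum_{\ell=0}^d\binom d\ell X^\ell Y^{d-\ell}\sum_{a\in k_F}a^\ell,
\]
up to the sign convention $a\mapsto -a$, which is irrelevant after summing over $k_F$. Each inner sum vanishes: for $1\le\ell\le d<p^f-1$ by the power-sum formula, and for $\ell=0$ because $p^f=0$ in $k$. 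This is exactly the computation in the $i=0$ case of Theorem~\ref{thm-A-vanishing}, and it is where the non-Steinberg hypothesis enters.

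\emph{Main obstacle.} The delicate step is justifying rigorously that the twisted pullbacks and coefficient maps may be commuted and collapsed. In particular we must verify that $\res\circ\cores$ on $H^i(\varpi\OF,-)$ with the twisted coefficients $m_{\varpi^{-1}}^*V$ is indeed the coefficient norm. I would do this at the cochain level using Definition~\ref{def-cores} with coset representatives $[a]$. Since $\OF$ is abelian, the arguments $\overline g\sigma(\overline{g\sigma})^{-1}$ reduce to $\sigma$ for $\sigma\in\varpi\OF$, leaving $\sum_a [a]^{-1}\cdot x(\sigma_\bullet)$. Equivalently, one can cite \cite[Proposition~1.5.6]{NSW}.
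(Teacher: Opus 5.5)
Your proposal is correct and follows essentially the same route as the paper. Both arguments collapse the two composites to maps induced by coefficient endomorphisms, namely $\cores_{\varpi\OF}^{\OF}\circ(\Pry\circ\Prx)_*\circ\res_{\varpi\OF}^{\OF}$ and $\bigl(\sum_{a\in k_F}\Prx\circ a\circ\Pry\bigr)_*$. They then conclude from $\Pry\circ\Prx=0$ (or $p^f=0$ when $d=0$) and from the finite-field power-sum vanishing. The only cosmetic difference is in $\BVi\circ\AVi$: you show that the full norm $\sum_a a\cdot Y^d$ vanishes, which handles $d=0$ and $d>0$ uniformly. The paper instead uses only the $X^d$-coefficient $\sum_a a^d$ that survives $\Prx$, and treats $d=0$ separately.
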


\begin{proof}
By Theorem \ref{thm-GL2-SVi-support-formula} and the naturality of
pullback with respect to coefficient maps, we have
\bal
\AVi\circ\BVi
=
\cores_{\varpi\OF}^{\OF}
\circ(\Pry\circ\Prx)_*
\circ\res_{\varpi\OF}^{\OF}.
\nal
If $d>0$, then $\Pry\circ\Prx=0$, so
$\AVi\circ\BVi=0$. If $d=0$, then $V$ is a one-dimensional
determinant character; hence, $\Prx=\Pry=\id_V$, and $\OF$ acts
trivially on $V$. Therefore, $d=0$ implies
\bal
\AVi\circ\BVi =\cores_{\varpi\OF}^{\OF} \circ\res_{\varpi\OF}^{\OF}=[\OF:\varpi\OF]\,\id
=p^f\,\id=0.
\nal

For the reverse composition, Theorem
\ref{thm-GL2-SVi-support-formula} gives us
\bal
\BVi\circ\AVi
= (\Prx)_* \circ m_\varpi^* \circ\res_{\varpi\OF}^{\OF}
\circ\cores_{\varpi\OF}^{\OF} \circ(\Pry)_*
\circ m_{\varpi^{-1}}^*.
\nal
Since $\OF$ is abelian, the restriction--corestriction formula yields
\bal
\res_{\varpi\OF}^{\OF}
\circ\cores_{\varpi\OF}^{\OF}
=\sum_{a\in k_F}(a \cdot)_*
\; \in \End_k H^i(\varpi\OF,V).
\nal
Since pullback commutes with coefficient maps and
$m_\varpi^*\circ m_{\varpi^{-1}}^*=\id$, it follows that
$\BVi\circ\AVi$ is the map on cohomology induced by the coefficient
endomorphism
\bal
\sum_{a\in k_F}\Prx\circ a\circ\Pry
\in\End_k(V).
\nal
If $d>0$ and $\Pry(v)=cY^d$, then
\bal
\sum_{a\in k_F}\Prx\bigl(a\cdot\Pry(v)\bigr)
=cX^d\sum_{a\in k_F}a^d =0,
\nal
because $d<p^f-1$. If $d=0$, then $\Prx=\Pry=\id_V$ and every
$a\in k_F$ acts trivially on $V$, so the preceding coefficient
endomorphism is
\bal
\sum_{a\in k_F}\id_V
=
p^f\,\id_V
=
0.
\nal
Thus, $\BVi\circ\AVi=0$.
\end{proof}

The adjointness results determine the operators
in the extreme cohomological degrees.

\begin{lem}\label{lem-extreme-degree-operators}
If $V$ is non-Steinberg, then
\bal
\mathbf{A}_{V,0}=0,
\qquad
\mathbf{B}_{V,0}=\id,
\qquad
\mathbf{A}_{V,ef}=c_\varpi\,\id,
\qquad
\mathbf{B}_{V,ef}=0.
\nal
\end{lem}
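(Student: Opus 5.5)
The plan is to get each of the four identities from results already established, using the duality of Theorem~\ref{thm-AV-BV-duality} and Corollary~\ref{cor-AV-BV-nilpotence} to pass between the two extreme degrees.

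\emph{Degree $0$.} Since $V$ is non-Steinberg, some $d_j<p-1$, so $n_{p-1}(V)<f$. Hence $0<f-n_{p-1}(V)$, and Theorem~\ref{thm-A-vanishing} gives $\mathbf{A}_{V,0}=0$. For $\mathbf{B}_{V,0}$, recall that $H^0(\OF,V)=V^{K_U}=kX^d$. By Theorem~\ref{thm-GL2-SVi-support-formula} we have $\mathbf{B}_V=(\Prx)_*\circ\mu_\varpi^*$. In degree zero $\mu_\varpi^*$ is the identity, and $\Prx(X^d)=X^d$; this is the same observation used in the proof of Lemma~\ref{lem-BV-image}. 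Therefore $\mathbf{B}_{V,0}=\id$. This argument uses only that $V$ is a Serre weight, so it also applies verbatim to $V^\vee$ and gives $\mathbf{B}_{V^\vee,0}=\id$.

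\emph{Degree $ef$.} I would avoid Theorem~\ref{thm-nilpotence-bound}. With $i=ef$ we have $q=e-1$ and $s=f$, and it only yields $N_{ef}(V)\le 1$ when $f>n_0(V)$, i.e.\ when $d>0$. The case $d=0$ would need separate treatment there. Instead, apply the equivalence \eqref{eqn-AVi-BVe-n-power-vanishing} of Corollary~\ref{cor-AV-BV-nilpotence} with $i=0$ and $n=1$: it says $\mathbf{A}_{V,0}=0$ if and only if $\mathbf{B}_{V,ef}=0$. The first statement was proved above, so $\mathbf{B}_{V,ef}=0$ uniformly in $d$.

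For $\mathbf{A}_{V,ef}$, apply Theorem~\ref{thm-AV-BV-duality} with $i=ef$. For all $\xi\in H^{ef}(\OF,V)$ and $\eta\in H^0(\OF,V^\vee)$ it gives
\[
\langle \mathbf{A}_{V,ef}(\xi),\eta\rangle_{\OF}
=c_\varpi\langle \xi,\mathbf{B}_{V^\vee,0}(\eta)\rangle_{\OF}
=c_\varpi\langle \xi,\eta\rangle_{\OF},
\]
using $\mathbf{B}_{V^\vee,0}=\id$ from the first step. The pairing of Lemma~\ref{lem-perfect-pairing} is perfect, so $\mathbf{A}_{V,ef}-c_\varpi\,\id$ pairs to zero against all of $H^0(\OF,V^\vee)$ and hence vanishes. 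Thus $\mathbf{A}_{V,ef}=c_\varpi\,\id$.

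The only point needing care is the claim $\mathbf{B}_{V^\vee,0}=\id$. It requires $V^\vee$ to be a Serre weight, which was noted in the proof of Lemma~\ref{lem-coefficient-duality}; it does not even require $V^\vee$ to be non-Steinberg. The rest of the argument is bookkeeping with the already established duality, so I expect no real obstacle beyond checking the degree conventions in Corollary~\ref{cor-AV-BV-nilpotence} at the endpoint $i=0$.
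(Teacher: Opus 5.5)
Your proof is correct and follows essentially the same route as the paper. It uses Theorem~\ref{thm-A-vanishing} for $\mathbf{A}_{V,0}=0$, the direct observation on $H^0(\OF,V)=V^{K_U}$ for $\mathbf{B}_{V,0}=\id$ and $\mathbf{B}_{V^\vee,0}=\id$, Theorem~\ref{thm-AV-BV-duality} at $i=ef$ with perfectness of the pairing for $\mathbf{A}_{V,ef}=c_\varpi\,\id$, and Corollary~\ref{cor-AV-BV-nilpotence} at $i=0$ for $\mathbf{B}_{V,ef}=0$; the paper phrases this last step as $M_0(V)=1\Rightarrow N_{ef}(V)=1$, which is the same equivalence you invoke with $n=1$, and your remark that Theorem~\ref{thm-nilpotence-bound} alone would not handle $d=0$ is accurate.
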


\begin{proof}
Since $d<p^f-1$, we have $n_{p-1}(V)<f$, so Theorem
\ref{thm-A-vanishing} implies $\mathbf{A}_{V,0}=0$. Moreover,
\bal
H^0(\OF,V)=V^{K_U}.
\nal
Pullback along $\mu_\varpi$ fixes $V^{K_U}$ and
$\Prx$ restricts to the identity on $V^{K_U}$, so $\mathbf{B}_{V,0}=\id$.

The same argument gives $\mathbf{B}_{V^\vee,0}=\id$. By applying
Theorem \ref{thm-AV-BV-duality} with $i=ef$, we obtain
\bal
\left\langle
\mathbf{A}_{V,ef}(\xi),\eta
\right\rangle_{\OF}
=
c_\varpi
\left\langle
\xi,\eta
\right\rangle_{\OF}.
\nal
Since the pairing is perfect,
$\mathbf{A}_{V,ef}=c_\varpi\,\id$. Finally, $M_0(V)=1$ since $\mathbf{A}_{V,0}=0$, so Corollary
\ref{cor-AV-BV-nilpotence} gives us $N_{ef}(V)=1$. Thus, $\mathbf{B}_{V,ef}=0$.
\end{proof}

Next we determine the convolution powers of $\AVi$ and $\BVi$, along with the diagonal translations arising from
$\SVi(T_2)$.

\begin{cor}\label{cor-SVi-convolution-formulas}
Let $V$ be a non-Steinberg Serre weight. Then for every $m, n, r, s \in\Z$ with $n\geq1$,  
\bal
\left(
\SVi(T_2)^{\star m}\star\SVi(T_1)^{\star n}
\right)(t_{r,s})
=
\begin{cases}
\AVi^n,
&\text{if }(r,s)=(m+n,m),\\
\BVi^n,
&\text{if }(r,s)=(m,m+n),\\
0,
&\text{otherwise}.
\end{cases}
\nal
For $n=0$, the function $\SVi(T_2)^{\star m}$ is supported on
$K_Tt_{m,m}$ and satisfies
\bal
\SVi(T_2)^{\star m}(t_{m,m})=\id.
\nal
\end{cor}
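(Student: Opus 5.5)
We compute convolutions in $\HT(H^i(K_U,V))$, where $T/K_T\cong\Z^2$ is abelian. For $K_T$-bi-equivariant functions we have
\[
(\Phi\star\Psi)(t)=\sum_{t'\in T/K_T}\Phi(t')\circ\Psi(t'^{-1}t).
\]
Each summand is well defined: $K_T$ acts on $H^i(K_U,V)$, and every value of a $K_T$-equivariant function commutes with this action, since $T$ is abelian. By Theorem~\ref{thm-GL2-SVi-support-formula}, the function $\SVi(T_2)$ is supported on $K_Tt_{1,1}$ with value $\id$. Since $\SVi(T_2)$ is invertible with inverse $\SVi(T_2^{-1})$, this inverse must be the function supported on $K_Tt_{-1,-1}$ with value $\id$. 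By induction on $|m|$, using the convolution formula above, $\SVi(T_2)^{\star m}$ is supported on $K_Tt_{m,m}$ with value $\id$. This settles the case $n=0$. It also shows that convolving with $\SVi(T_2)^{\star m}$ simply translates supports by $(m,m)$ and leaves values unchanged.

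It therefore suffices to treat $m=0$. We prove by induction on $n\geq1$ that $\SVi(T_1)^{\star n}$ is supported on $K_Tt_{n,0}\sqcup K_Tt_{0,n}$, with value $\AVi^n$ at $t_{n,0}$ and $\BVi^n$ at $t_{0,n}$. The case $n=1$ is exactly Theorem~\ref{thm-GL2-SVi-support-formula} together with the definitions of $\AVi$ and $\BVi$. For the inductive step, write
\[
\SVi(T_1)^{\star(n+1)}=\SVi(T_1)\star\SVi(T_1)^{\star n}.
\]
The first factor is supported on $\{(1,0),(0,1)\}$ and the second on $\{(n,0),(0,n)\}$, so the product is supported on the sumset
\[
\{(n+1,0),\ (1,n),\ (n,1),\ (0,n+1)\}.
\]
We now read off the values at these four points.
\begin{itemize}
\item At $(n+1,0)$ the only contribution is $\AVi\circ\AVi^n$.
\item At $(0,n+1)$ the only contribution is $\BVi\circ\BVi^n$.
\item At $(1,n)$ the only contribution is $\AVi\circ\BVi^n$, which vanishes by Lemma~\ref{lem-AV-BV-orthogonal}.
\item At $(n,1)$ the only contribution is $\BVi\circ\AVi^n$, which also vanishes by Lemma~\ref{lem-AV-BV-orthogonal}.
\end{itemize}
When $n=1$, the two middle points coincide at $(1,1)$, and the value there is $\AVi\BVi+\BVi\AVi$. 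This is again zero by Lemma~\ref{lem-AV-BV-orthogonal}, which is precisely where the non-Steinberg hypothesis enters.

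Finally, we combine the two parts: convolving with $\SVi(T_2)^{\star m}$ translates $(n,0)$ to $(m+n,m)$ and $(0,n)$ to $(m,m+n)$, which gives the stated formula. The main point of care is the identification of the values in the convolution: equivariance ensures each coset contributes a single composite, and the order of composition, $\Phi(t')\circ\Psi(t'^{-1}t)$, must be checked against the algebra structure coming from $\End_T(\ind_{K_T}^T-)$. Since only orthogonality is used, and that holds in both orders, either convention gives the same result.
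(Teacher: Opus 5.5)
Your proof is correct and follows essentially the same route as the paper. Convolving with $\SVi(T_2)^{\star m}$ translates supports by $(m,m)$ without changing values, since that function is supported on $K_Tt_{m,m}$ with value $\id$, using invertibility of $T_2$ for $m<0$; all mixed composites of $\AVi$ and $\BVi$ vanish by Lemma~\ref{lem-AV-BV-orthogonal}. The paper expands $\SVi(T_1)^{\star n}$ into words of length $n$ in the two support classes rather than inducting on $n$, which is the same argument; your explicit check of the coincident point $(1,1)$ for $n=1$ is the case covered by the paper's remark that ``every mixed composition is zero.''
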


\begin{proof}
The statement for $T_2$ follows from Theorem
\ref{thm-GL2-SVi-support-formula}, since $T_2$ is invertible and
$\SVi(T_2)(t_{1,1})=\id$. For $T_1$, the case $n=1$ is the same
theorem. Since $T/K_T\cong\Z^2$, every convolution summand in
$\SVi(T_1)^{\star n}$ corresponds to a word of length $n$ in the two
support classes $t_{1,0}$ and $t_{0,1}$, and its value is the
corresponding composition of $\AVi$ and $\BVi$. By Lemma
\ref{lem-AV-BV-orthogonal}, every mixed composition is zero. The
all-$\AVi$ term is supported at $t_{n,0}$, while the all-$\BVi$ term
is supported at $t_{0,n}$. Convolution with
$\SVi(T_2)^{\star m}$ translates these support classes by
$t_{m,m}$ without changing their values.
\end{proof}

We can now determine the support of the image exactly. We find that $\mathcal{S}_V^0$ is supported on the antidominant cone, $\mathcal{S}_V^{ef}$ is supported on the dominant cone, and for all interior degrees, $\SVi$ is supported on a strip of finite width along the diagonal.
\begin{theorem}\label{thm-GL2-image}
Let $V$ be a non-Steinberg Serre weight. Then, for every
$0\leq i\leq ef$,
\bal
\Supp(\operatorname{im}\SVi)
=
\begin{cases}
K_T\{t_{r,s}:r-s\leq0\},
&\text{if }i=0,\\[.4em]
K_T\{t_{r,s}:-N_i(V)<r-s<N_{ef-i}(V)\},
&\text{if }1\leq i\leq ef-1,\\[.4em]
K_T\{t_{r,s}:r-s\geq0\},
&\text{if }i=ef.
\end{cases}
\nal
\end{theorem}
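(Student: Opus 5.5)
The image of $\SVi$ is spanned over $k$ by the elements $\SVi(T_2)^{\star m}\star\SVi(T_1)^{\star n}$ with $m\in\Z$ and $n\geq0$, since $\mathcal H_G(V)=k[T_1,T_2^{\pm1}]$. By Corollary~\ref{cor-SVi-convolution-formulas}, each such element with $n\geq1$ is supported on at most the two cosets $K_Tt_{m+n,m}$ and $K_Tt_{m,m+n}$, where it takes the values $\AVi^n$ and $\BVi^n$ respectively. For $n=0$ it is supported on $K_Tt_{m,m}$ with value $\id$. The support of the image is the union of the supports of all its elements, so it suffices to determine, for each $(r,s)$, whether some element of the image is nonzero at $t_{r,s}$.

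First we show that every listed coset lies in the support. On the diagonal this is immediate: $\SVi(T_2)^{\star m}$ is nonzero at $t_{m,m}$. For $r-s=n>0$ with $(r,s)=(m+n,m)$, the element $\SVi(T_2)^{\star m}\star\SVi(T_1)^{\star n}$ is nonzero at $t_{r,s}$ exactly when $\AVi^n\neq0$. Symmetrically, for $r-s=-n<0$ it is nonzero exactly when $\BVi^n\neq0$.

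Conversely, consider an arbitrary element of the image, a finite linear combination $\sum c_{m,n}\SVi(T_2)^{\star m}\star\SVi(T_1)^{\star n}$. Its value at $t_{r,s}$ with $r-s=n>0$ is $\sum_m c_{m,n}$ times $\AVi^n$, where only the single term with $m=s$ contributes. So it vanishes whenever $\AVi^n=0$, and likewise on the other side with $\BVi^n$. Hence the support is precisely the diagonal, together with those $t_{r,s}$ with $r-s=n>0$ and $\AVi^n\neq0$, together with those with $s-r=n>0$ and $\BVi^n\neq0$.

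It remains to translate the vanishing of powers into the stated inequalities. For $1\leq i\leq ef-1$, $\BVi$ is nilpotent by Theorem~\ref{thm-nilpotence-bound}, so $\BVi^n\neq0$ if and only if $n<N_i(V)$. Likewise $\AVi^n\neq0$ if and only if $n<M_i(V)$, and $M_i(V)=N_{ef-i}(V)$ by Corollary~\ref{cor-AV-BV-nilpotence}, using $ef-i\geq1$. This gives the strip $-N_i(V)<r-s<N_{ef-i}(V)$.

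For $i=0$, Lemma~\ref{lem-extreme-degree-operators} gives $\mathbf A_{V,0}=0$ and $\mathbf B_{V,0}=\id$. So $\mathbf B_{V,0}^n=\id\neq0$ for all $n$, since $H^0(\OF,V)=V^{K_U}\neq0$, while all positive powers of $\mathbf A_{V,0}$ vanish. The support is therefore $r-s\leq0$.

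For $i=ef$, the same lemma gives $\mathbf A_{V,ef}=c_\varpi\,\id$ with $c_\varpi\in k^\times$, and $\mathbf B_{V,ef}=0$. We have $H^{ef}(\OF,V)\neq0$ by the perfect pairing of Lemma~\ref{lem-perfect-pairing} with $H^0(\OF,V^\vee)\neq0$. So $\mathbf A_{V,ef}^n\neq0$ for all $n$, and the support is $r-s\geq0$.

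The only step needing care is the bookkeeping that different $m$ do not produce cancellation at a fixed coset. This holds because each coset $t_{r,s}$ determines the pair $(m,n)$ uniquely on each side of the diagonal, so no real obstacle is expected.
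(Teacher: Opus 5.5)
Your proposal is correct and follows the paper's proof essentially step for step. You span the image by $\SVi(T_2)^{\star m}\star\SVi(T_1)^{\star n}$, read off supports and values from Corollary~\ref{cor-SVi-convolution-formulas}, convert nonvanishing of powers into the strip via $M_i(V)=N_{ef-i}(V)$ from Corollary~\ref{cor-AV-BV-nilpotence}, and treat $i\in\{0,ef\}$ with Lemma~\ref{lem-extreme-degree-operators}; you also spell out the no-cancellation check and the nonvanishing of $H^0(\OF,V)$ and $H^{ef}(\OF,V)$ more explicitly than the paper does.

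One point that both you and the paper leave implicit is that the diagonal lies in the support for $1\leq i\leq ef-1$. This requires $H^i(\OF,V)\neq0$, which is true but not immediate from the paper's results. For instance, $H^i(\OF,V)\cong\operatorname{Tor}^{\Lambda}_{ef-i}(k,V)$ with $\Lambda=k[[\OF]]\cong k[[x_1,\dots,x_{ef}]]$, and a finite-length $\Lambda$-module has projective dimension $ef$, so this Tor group is nonzero.
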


\begin{proof}
Since $\mathcal{H}_G(V)=k[T_1,T_2^{\pm1}]$, the image of $\SVi$ is
spanned by
\bal
\SVi(T_2)^{\star m}\star\SVi(T_1)^{\star n},
\qquad
m\in\Z,
\quad
n\geq0.
\nal
Suppose first that $1\leq i\leq ef-1$. By Corollary
\ref{cor-AV-BV-nilpotence}, $M_i(V)=N_{ef-i}(V).$ Hence, 
\bal
\AVi^n\neq0
\iff
0\leq n<N_{ef-i}(V) \quad \text{and} \quad
\BVi^n\neq0
\iff
0\leq n<N_i(V),
\nal
where the case $n=0$ denotes the identity contribution on the
diagonal. Corollary \ref{cor-SVi-convolution-formulas} now shows that
the dominant support classes are precisely those with
\bal
0\leq r-s<N_{ef-i}(V),
\nal
while the antidominant support classes are precisely those with
\bal
-N_i(V)<r-s\leq0.
\nal
Their union is the asserted support. 

When $i=0$, Lemma \ref{lem-extreme-degree-operators} gives
$\mathbf{A}_{V,0}=0$ and $\mathbf{B}_{V,0}=\id$, so only the
antidominant classes occur. When $i=ef$, it gives
$\mathbf{A}_{V,ef}=c_\varpi\,\id$ and $\mathbf{B}_{V,ef}=0$, so only
the dominant classes occur.  
\end{proof}

For any Serre weight $V$ and any $1\leq i\leq ef-1$, set
\bal
R_i(V)\coloneqq \max\{N_i(V),N_{ef-i}(V)\}.
\nal
Thus, $R_i(V)=R_{ef-i}(V)$.

\begin{cor}\label{cor-exact-image}
Let $V$ be a non-Steinberg Serre weight. Then for every $0\leq i\leq ef$, there is a $k$-algebra isomorphism
\bal
\operatorname{im}(\SVi)
\overset{\sim}{\longrightarrow}
\begin{cases}
k[x,y^{\pm1}],
&\text{if }i\in\{0,ef\},\\[.4em]
k[x,y^{\pm1}]/\bigl(x^{R_i(V)}\bigr),
&\text{if }1\leq i\leq ef-1,
\end{cases}
\nal
which sends $\SVi(T_1)$ to $x$ and $\SVi(T_2)$ to $y$. Moreover, for every $1\leq i\leq ef-1$, 
\begin{align}\label{eqn-Ri-bounds}
R_i(V)\in
\{e-1-r_i,\ e-r_i,\ e+1-r_i\},
\qquad
r_i\coloneqq 
\left\lfloor
\frac{\min\{i,ef-i\}-1}{f}
\right\rfloor.
\end{align}
\end{cor}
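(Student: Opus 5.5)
The plan is to build an explicit surjection $k[x,y^{\pm1}]\to\operatorname{im}\SVi$ and identify its kernel from the support description in Theorem~\ref{thm-GL2-image}. Since $\mathcal H_G(V)=k[T_1,T_2^{\pm1}]$ is commutative and $\SVi$ is an algebra map, the assignment $x\mapsto\SVi(T_1)$, $y\mapsto\SVi(T_2)$ gives a surjective $k$-algebra homomorphism
\[
\pi:k[x,y^{\pm1}]\longrightarrow\operatorname{im}\SVi .
\]
The corestriction formulas for $\SVi(T_1)$ and $\SVi(T_2)$ are not needed again here; everything follows from Corollary~\ref{cor-SVi-convolution-formulas}.

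To compute $\ker\pi$, I would use that corollary. It says that $\pi(y^mx^n)$ is supported on $K_Tt_{m+n,m}\sqcup K_Tt_{m,m+n}$ for $n\ge1$, with values $\AVi^n$ and $\BVi^n$ there, and on $K_Tt_{m,m}$ with value $\id$ for $n=0$. Distinct pairs $(m,n)$ with $n\geq 1$ give disjoint support sets, and these are also disjoint from the diagonal classes. So a finite combination $\sum c_{m,n}y^mx^n$ maps to zero if and only if $c_{m,n}=0$ whenever $\AVi^n\neq0$ or $\BVi^n\neq0$. Hence $\ker\pi$ is spanned by the monomials $y^mx^n$ with $\AVi^n=\BVi^n=0$. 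That condition is monotone in $n$, so $\ker\pi=(x^{R})$, where $R$ is the least $n$ with $\AVi^n=\BVi^n=0$, and $\ker\pi=0$ if no such $n$ exists.

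The cases then separate by degree.
\begin{itemize}
\item For $i=0$, Lemma~\ref{lem-extreme-degree-operators} gives $\mathbf B_{V,0}=\id$, so $\mathbf B_{V,0}^n\ne0$ for all $n$ and $\pi$ is injective.
\item For $i=ef$, the same lemma gives $\mathbf A_{V,ef}=c_\varpi\id$ with $c_\varpi\in k^\times$, so again $\pi$ is injective.
\item For $1\le i\le ef-1$, Corollary~\ref{cor-AV-BV-nilpotence} gives $M_i(V)=N_{ef-i}(V)$, so $R=\max\{M_i(V),N_i(V)\}=\max\{N_i(V),N_{ef-i}(V)\}=R_i(V)$.
\end{itemize}

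For the bounds~\eqref{eqn-Ri-bounds}, I would apply Theorem~\ref{thm-nilpotence-bound} to both $N_i$ and $N_{ef-i}$. Write $q_i=\lfloor (i-1)/f\rfloor$ and $q_{ef-i}=\lfloor(ef-i-1)/f\rfloor$. Since $\lfloor (j-1)/f\rfloor$ is nondecreasing in $j$, the smaller of $i$ and $ef-i$ has the smaller $q$, and that $q$ equals $r_i$. The theorem then gives
\[
\max\{1,e-r_i-1\}\leq R_i(V)\leq e-r_i+1,
\]
because the maximum of the upper bounds is $e-r_i+1$, and $R_i(V)$ is at least $N$ at the smaller index, which is at least $\max\{1,e-r_i-1\}$.

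The main obstacle is the edge case where the lower bound $1$ exceeds $e-1-r_i$, namely $e-1-r_i\le0$. Here I would check that $r_i\le e-1$ always holds: $\min\{i,ef-i\}\le ef/2$ gives $r_i\le\lfloor(ef/2-1)/f\rfloor\le e-1$. So the only problematic case is $r_i=e-1$, which is only possible when $e=1$ and $r_i=0$. In that case the allowed values $\{0,1,2\}$ still contain the true value, since $R_i(V)\in\{1,2\}$ follows from $1\le R_i\le e-r_i+1=2$. So membership in the stated three-element set holds in all cases, and this finishes the corollary.
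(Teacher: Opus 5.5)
Your proposal is correct and follows essentially the same route as the paper. You read off the kernel of $k[x,y^{\pm1}]\to\operatorname{im}\SVi$ from the disjoint supports in Corollary~\ref{cor-SVi-convolution-formulas}, use Lemma~\ref{lem-extreme-degree-operators} at the endpoints and $M_i(V)=N_{ef-i}(V)$ in the interior, and get the bounds by applying Theorem~\ref{thm-nilpotence-bound} to both $i$ and $ef-i$. Your ``edge case'' is not actually an obstacle, since $\max\{1,e-r_i-1\}\geq e-r_i-1$ already places $R_i(V)$ in $[e-r_i-1,\,e-r_i+1]$.
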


\begin{proof}
Assume first that $1\leq i\leq ef-1$. By Corollary
\ref{cor-SVi-convolution-formulas},
\bal
\SVi(T_1)^{\star n}=0
\iff
\AVi^n=\BVi^n=0.
\nal
Using $M_i(V)=N_{ef-i}(V)$, this is equivalent to $n\geq R_i(V).$ Thus, $x^{R_i(V)}$ is the only possible relation involving the image
of $T_1$. Indeed, for distinct pairs $(m,n)$ with $m\in\Z$ and
$n\geq0$, Corollary \ref{cor-SVi-convolution-formulas} shows that
the nonzero images of the monomials $T_2^m T_1^n$ have disjoint
supports. They are therefore linearly independent. This proves the
description of the image in the interior degrees.

If $i=0$ or $i=ef$, Lemma \ref{lem-extreme-degree-operators} shows
that no positive convolution power of $\SVi(T_1)$ vanishes. The same
support argument shows that there are no further relations, and so $\operatorname{im}(\SVi) \cong k[x,y^{\pm 1}]$.

Finally, \eqref{eqn-Ri-bounds} follows by applying Theorem
\ref{thm-nilpotence-bound} to $i$ and $ef-i$ and taking the maximum.
\end{proof}

In the unramified case, the image in every interior degree is either
$k[y^{\pm1}]$ or a square-zero thickening of it. For each Serre weight $V$, define the integer $n(V)\coloneqq \min\{n_0(V),n_{p-1}(V)\}$. 

\begin{cor}\label{cor-unramified-image}
Let $F/\Q_p$ be unramified and let $V$ be a Serre weight. Then for every $1 \leq i \leq f-1$, there is a $k$-algebra isomorphism
    \begin{align}\label{eqn-in-cor-unram-image}
       \operatorname{im}(\SVi) \overset{\sim}{\longrightarrow} k[x,y^{\pm1}]/\bigl(x^{R_i(V)}\bigr)
    \quad \text{with}\quad R_i(V)\in\{1,2\},
    \end{align}
    which sends $\SVi(T_1)$ to $x$ and $\SVi(T_2)$ to $y$. Moreover, we have $R_i(V) = 1$, and hence $\operatorname{im}\SVi \cong k[y^{\pm 1}]$, if either of the following conditions holds: 
    \begin{enumerate}
        \item $n(V)<i<f-n(V)$; \quad \text{or}
        \item  $p\geq5$ and $(d_0,\ldots,d_{f-1})$ does not contain a cyclically contiguous subsequence of the form
    \begin{align}\label{eqn-bad-subsequence}
    (d_s, d_{s+1}, \dots, d_{s+t-1}, d_{s+t}) = (p-1,p-2,\dots,p-2,0)
    \end{align}
    for some $s\in\Z/f\Z$ and $t\geq1$, where the indices are taken
    modulo $f$.

    \end{enumerate}
\end{cor}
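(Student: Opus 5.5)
We treat two cases: the non-Steinberg case, where we refine the bounds of Corollary~\ref{cor-exact-image} using $e=1$, and the Steinberg case, which must be handled separately because Lemma~\ref{lem-AV-BV-orthogonal} is not available there.

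\emph{Non-Steinberg weights: the general bound.} Assume first that $V$ is non-Steinberg. Since $e=1$ and $1\leq i\leq f-1$, we have $\min\{i,f-i\}-1\leq f-2$, so $r_i=0$. Corollary~\ref{cor-exact-image} then gives the isomorphism \eqref{eqn-in-cor-unram-image} with $R_i(V)\in\{0,1,2\}$. The value $0$ is excluded because nilpotence indices are at least $1$. Hence $R_i(V)\in\{1,2\}$.

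\emph{Non-Steinberg weights: condition (1).} Here we must show $N_i(V)=N_{f-i}(V)=1$. Suppose first that $n(V)=n_0(V)$. Then in Theorem~\ref{thm-nilpotence-bound} we have $q=0$, $s=i$ (respectively $s=f-i$), and both exceed $n_0(V)$. The improved bound gives $N_i(V)\leq 1$ and $N_{f-i}(V)\leq1$. Suppose instead that $n(V)=n_{p-1}(V)$. Since $i<f-n_{p-1}(V)$, Theorem~\ref{thm-A-vanishing} gives $\AVi=0$, so $M_i(V)=1$. Since $f-i<f-n_{p-1}(V)$, the same theorem gives $\mathbf A_{V,f-i}=0$, so $M_{f-i}(V)=1$. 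By \eqref{eqn-Mi-Nefi} these equal $N_{f-i}(V)$ and $N_i(V)$, respectively. In either subcase $R_i(V)=1$.

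\emph{Steinberg weights.} The restriction of $V$ to $K_U$ factors through $k_F$ and is the regular representation there, so $V\cong\Ind_{p\OF}^{\OF}k$. Shapiro's lemma then identifies $H^i(\OF,V)$ with $H^i(p\OF,k)$. Under this identification, $(i_{K_U})_*:H^i(\OF,kX^d)\to H^i(\OF,V)$ becomes restriction $H^i(\OF,k)\to H^i(p\OF,k)$. This restriction kills every $[\kappa_S]$ with $i\geq1$, because $e=1$ forces every element of $S$ to have the form $(0,j)$ and $\kappa_{0,j}$ vanishes on $p\OF$. Since $\BVi$ factors through $(i_{K_U})_*$, we get $\BVi=0$ for $i\geq1$. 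The dual weight $V^\vee$ is again Steinberg, so Theorem~\ref{thm-AV-BV-duality} gives $\AVi=0$ for $i\leq f-1$. As $\SVi(T_2)(t_{1,1})=\id$ and $\SVi(T_1)=0$, the image is $k[y^{\pm1}]$ with $x\mapsto0$, so $R_i(V)=1$.

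\emph{Non-Steinberg weights: condition (2), the main obstacle.} By Lemma~\ref{lem-BV-image} and Lemma~\ref{lem-Bk-action}, the image of $\BVi$ lies in the span of the classes $[\kappa_S]\cup X^d$ with $S\subseteq\{(0,j)\}$. By Lemma~\ref{lem-kappa0-y1-vanishing}, only those $S$ with $d_j=0$ for all $(0,j)\in S$ can survive. So it suffices to show that no such class lies in the image of $\BVi$; by duality the same argument then handles $\AVi$. I would first try to use $K_T$-equivariance of $\BVi$. The class $[\kappa_S]\cup X^d$ has an explicit $K_T$-eigencharacter, built from $X^d$ and the characters $\alpha\mapsto\alpha^{p^j}$ for $(0,j)\in S$. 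Kozio{\l}--P\'epin's description of $H^i(K_U,V)$ for $p\geq5$ decomposes it into explicit $K_T$-eigencharacters. The goal is to show that this eigencharacter occurs in the source $H^i(\OF,V)$ only when $(d_j)$ contains a cyclic string of the form \eqref{eqn-bad-subsequence}, via a carry analysis on base-$p$ digits. This digit-matching step is where I expect the real difficulty to lie.
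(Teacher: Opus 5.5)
Your arguments for $R_i(V)\in\{1,2\}$, for condition (1), and for Steinberg weights are correct.

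In the Steinberg case you take a different route from the paper, and it is valid. You use that $V|_{K_U}$ is the regular representation of $k_F$, then Shapiro's lemma, to see that $(i_{K_U})_*$ becomes the zero restriction map in positive degree. The paper instead notes $n_0(V)=0$. The improved bound of Theorem~\ref{thm-nilpotence-bound} then gives $N_j(V)=1$ for all $1\leq j\leq f$, and Corollary~\ref{cor-AV-BV-nilpotence} gives $M_i(V)=N_{f-i}(V)=1$. The paper's version is shorter because it reuses Lemma~\ref{lem-kappa0-y1-vanishing} (all $d_j>0$) rather than re-deriving the vanishing.

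The genuine gap is condition (2). The step you identify as the main obstacle is never carried out, and the target you set for it is not the right one.

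\begin{itemize}
\item \emph{The stated goal cannot hold.} $\BVi$ is an endomorphism of $H^i(\OF,V)$. So any nonzero class $[\kappa_S]\cup X^d$ already lies in the source, and its $K_T$-character certainly occurs there; no carry analysis can show otherwise.
\item \emph{What is needed is multiplicity one.} This is exactly the content of \cite[Lemma~2.5.4]{koziol2024derived}. For $p\geq5$, no substring \eqref{eqn-bad-subsequence}, and $(d_0,\dots,d_{f-1})\neq(p-2,\dots,p-2)$, each $H^j(\OF,V)$ is a direct sum of $\binom{f}{j}$ pairwise distinct $K_T$-characters.
\item \emph{Given this, no digit-matching is required.} Each value $\SVi(T_1)(t)$ commutes with $K_T$, by $K_T$-biequivariance and commutativity of $T$. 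So $\BBB_{V,j}$ preserves every eigenline and acts on it by a scalar. Being nilpotent, it vanishes. Applying this for $j=i$ and $j=f-i$ gives $N_i(V)=N_{f-i}(V)=1$.
\end{itemize}

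The paper packages the same point as reducedness. By \cite[Proposition~6.2]{herzig2011classification}, $\HT(H^i(\OF,V))$ is a product of copies of $k[x^{\pm1},y^{\pm1}]$. Hence its subalgebra $\operatorname{im}\SVi$ is reduced, which forces $R_i(V)=1$.

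You still have to treat $(d_j)=(p-2,\dots,p-2)$ separately. It satisfies the hypothesis of (2) but is excluded from the Kozio{\l}--P\'epin lemma. There $n(V)=0$, so condition (1) covers it.
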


\begin{proof} Suppose first that $V$ is Steinberg. Then $d_j=p-1$ for every $j$, so $n_0(V)=0$. Since $e=1$, the improved bound of Theorem
\ref{thm-nilpotence-bound} yields $N_i(V)\leq 1$ for every
$1\leq i\leq f$. By definition, $N_i(V) \geq 1$, and so
$N_i(V)=1$ for every $1\leq i\leq f$. In particular,
$R_i(V)=1$ for every $1\leq i\leq f-1$.
Corollary \ref{cor-AV-BV-nilpotence} then gives $M_i(V)=1$ for every
$0\leq i\leq f-1$. Thus, for every $1\leq i\leq f-1$, we have
$\AVi=\BVi=0$, and so Theorem \ref{thm-GL2-SVi-support-formula} gives
$\SVi(T_1)=0$.

Moreover, by Theorem \ref{thm-GL2-SVi-support-formula},
$\SVi(T_2)^{\star m}$ is supported on $K_T t_{m,m}$ and is nonzero there
for every $m\in\Z$. Hence these Laurent powers are linearly independent.
Since $\mathcal H_G(V)=k[T_1,T_2^{\pm1}]$ and $\SVi(T_1)=0$, we obtain $\operatorname{im}(\SVi)\cong k[y^{\pm1}],$ with $\SVi(T_2)$ corresponding to $y$.

We now assume that $V$ is non-Steinberg. Since $e=1$, Theorem \ref{thm-nilpotence-bound} gives
$N_j(V)\in\{1,2\}$ for every $1\leq j\leq f$. By Corollary
\ref{cor-AV-BV-nilpotence}, we also have $M_i(V)=N_{f-i}(V)$ for every $1\leq i\leq f-1$. Now \eqref{eqn-in-cor-unram-image} follows from Corollary \ref{cor-exact-image}. 

We next prove the assertion involving $n(V)$. The improved bound in
Theorem \ref{thm-nilpotence-bound} gives $N_j(V)=1$ whenever
$j>n_0(V)$. If $j>n_{p-1}(V)$, then
$f-j<f-n_{p-1}(V)$, so Theorem \ref{thm-A-vanishing} shows that
$\mathbf{A}_{V,f-j}=0$. Corollary
\ref{cor-AV-BV-nilpotence} then gives us
$N_j(V)=M_{f-j}(V)=1$. Consequently, $N_j(V)=1$ whenever
$j>n(V)$. If $n(V)<i<f-n(V)$, then both $i$ and $f-i$ are greater than
$n(V)$. Hence $N_i(V)=N_{f-i}(V)=1$, so $R_i(V)=1$. 

It remains to prove that $R_i(V)=1$ under the second condition. If no subsequence of the form \eqref{eqn-bad-subsequence} exists, and moreover $(d_0, \dots, d_{f-1}) \neq (p-2, \dots, p-2)$, then \cite[Lemma 2.5.4]{koziol2024derived}\footnote{See also Lemma 0.0.2 and Remark 0.0.3 in the erratum to \cite{koziol2024derived}.} decomposes $H^i(\OF,V)$ into a direct sum of $\binom{f}{i}$ distinct $K_T$-characters. It then follows from \cite[Proposition~6.2]{herzig2011classification}
that $\mathcal{H}_T(H^i(\OF,V))$ decomposes as a direct product of
$\binom{f}{i}$ Hecke algebras, each isomorphic to
$k[x^{\pm1},y^{\pm1}]$. In particular, $\mathcal{H}_T(H^i(\OF,V))$ is reduced in this case. Thus, 
$$\operatorname{im}(\SVi) \subset \mathcal{H}_T(H^i(\OF,V))$$
must also be reduced (which forces $R_i(V) = 1$) if the second condition holds and $(d_0, \dots, d_{f-1}) \neq (p-2, \dots, p-2).$ 

Lastly, note that $n(V) = 0$ if $d_j = p-2$ for all $j$, so in that case we also have $R_i(V)=1$ for all $i$ by the first condition.  
\end{proof}

\subsection{Examples and sharpness of the nilpotence bounds}

We now give examples illustrating how the nilpotence bounds govern the thickness of the image along the diagonal. The first realizes the middle value in \eqref{eqn-Ri-bounds} in every interior cohomological degree, while the latter two realize the upper and lower values for $R_1(V)$.

We first consider Serre weights that are determinant twists of the trivial character. In this case, the exact formula for $R_i(V)$ gives a reduced image when $F/\Qp$ is unramified and a nontrivial nilpotent thickening in every interior degree when $F/\Qp$ is ramified. In the ramified case, the thickening is largest in the extreme interior degrees and becomes thinner in steps toward the middle.

\begin{example}
Let $F/\Qp$ be a nontrivial extension, let $V$ be a Serre weight for
$\GL_2(F)$ with $d_j=0$ for every $j$, and let
$i\in\{1,\dots,ef-1\}$. Since $d=0$, the representation $V$ is one-dimensional, $\OF$ acts trivially on $V$, and $\Prx=\id_V$. Moreover, $X^d$ spans $V$, so
every class in $H^i(\OF,V)$ has the form $[\phi]\cup X^d$ for a unique
$[\phi]\in H^i(\OF,k)$. Then Lemma~\ref{lem-BV-semilinear} gives us
\bal
\BVi^N\bigl([\phi]\cup X^d\bigr)
= \Bki^N([\phi])\cup X^d.
\nal
Thus, $N_i(V)$ is the nilpotence index of $\Bki$, which the proof of
Theorem~\ref{thm-nilpotence-bound} shows is
\begin{align}\label{eqn-d0-nilpotence-index}
N_i(V) = e-\left\lfloor\frac{i-1}{f}\right\rfloor = 1+\left\lfloor\frac{ef-i}{f}\right\rfloor.
\end{align}

Applying \eqref{eqn-d0-nilpotence-index} with $ef-i$ in place of
$i$ similarly yields $N_{ef-i}(V) = 1 + \lfloor i / f \rfloor$. Hence,
\begin{align}\label{eqn-d=0-Ri}
R_i(V) =  1+ \max\left\{
\left\lfloor\frac{ef-i}{f}\right\rfloor,\,
\left\lfloor\frac{i}{f}\right\rfloor
\right\} = 1+\left\lfloor
\frac{\max\{i,ef-i\}}{f}
\right\rfloor.
\end{align}
Corollary~\ref{cor-exact-image} therefore yields
\bal
\operatorname{im}(\mathcal S_V^i)
\cong
k[x,y^{\pm 1}] \big/ 
\big( x^{1+\left\lfloor
\frac{\max\{i,ef-i\}}{f}
\right\rfloor } \big) \qquad
\text{for all }1\leq i\leq ef-1.
\nal
Note that \eqref{eqn-d=0-Ri} implies 
\bal
1+\left\lfloor\frac{e}{2}\right\rfloor
\leq R_i(V)\leq e.
\nal
Thus, if $e=1$, then $R_i(V)=1$ and
$\operatorname{im}(\mathcal S_V^i)\cong k[y^{\pm1}]$ for every
$1\leq i\leq f-1$. If $e\geq2$, then $R_i(V)\geq 2$ and $\operatorname{im}(\mathcal S_V^i)$ is nonreduced for every
$1\leq i\leq ef-1$.
\end{example}

The next example realizes the upper value $R_1(V)=e+1$ for every
$e \geq 1$ and $f\geq2$. Consequently,
$\operatorname{im}(\mathcal S_V^1)$ is a nilpotent thickening of
order $e+1$.

\begin{example} Let $e \geq 1$ and $f \geq 2$. Let $F_0/\Q_p$ be the unramified extension of degree $f$ and set $F=F_0(\varpi)$, where $\varpi^e=p$. Thus,
$k_F=\F_{p^f}$. Let $V$ be a Serre weight for $\GL_2(F)$ with
\bal
(d_0,\dots,d_{f-1})=(0,\dots,0,p-1).
\nal
Thus, $d = (p-1)p^{f-1}$. We will show that $N_1(V)=e+1$.

Since Theorem~\ref{thm-nilpotence-bound} proves $N_1(V)\leq e+1$,
it is enough to construct a class $[z]\in H^1(\OF,V)$ such that
$\BBB_{V,1}^{\,e}([z])\neq0$. Since $\BV(X^d)=X^d$, Lemmas~\ref{lem-BV-semilinear} and~\ref{lem-Bk-action} show that
\begin{align}\label{eqn-BV-kappa-action}
\BBB_{V,1}
\bigl([\kappa_{m,0}]\cup X^d\bigr)
=
[\kappa_{m-1,0}]\cup X^d
\qquad
\text{for }1\leq m\leq e-1.
\end{align}
Moreover, $d_0 = 0$ implies $[\kappa_{0,0}]\cup X^d \neq 0$ by Lemma \ref{lem-kappa0-y1-vanishing}, so it suffices to construct a class $[z]\in H^1(\OF,V)$ which satisfies 
\begin{align}\label{eqn-BV1-z-WTS}
\BBB_{V,1}([z])
= [\kappa_{e-1,0}]\cup X^d. 
\end{align}

For this, it is enough to construct a continuous $1$-cocycle $z_0:\mathcal O_{F_0}\rightarrow V$ such that
\begin{align}\label{eqn-z0-p-property}
z_0(p\beta) = \bar\beta X^d
\qquad
\text{for every }\beta\in\mathcal O_{F_0}.
\end{align}
Indeed, every $\alpha\in\OF$ can be uniquely expressed as $\alpha
=\sum_{m=0}^{e-1}a_m\varpi^m$ for some $a_m \in \mathcal{O}_{F_0}.$ Define $z(\alpha)\coloneqq z_0(a_0)$. The map $\alpha\mapsto a_0$ is additive, and $\alpha$ and $a_0$ have the same reduction modulo $\varpi$, so they act identically on $V$.
It follows that $z$ is a continuous $1$-cocycle on $\OF$. Moreover,
\begin{align}\label{eqn-varpi-alpha-expansion}
\varpi\alpha
=
pa_{e-1}
+
\sum_{m=1}^{e-1}a_{m-1}\varpi^m.
\end{align}
The cochain formula for $\BBB_{V,1}$ and
\eqref{eqn-z0-p-property} therefore yield
\bal
(\BBB_{V,1}z)(\alpha)=
\Prx\bigl(z(\varpi\alpha)\bigr) = 
\Prx\bigl(z_0(pa_{e-1})\bigr) = \bar a_{e-1}X^d.
\nal
By the definition of $\kappa_{e-1,0}$, this is
$(\kappa_{e-1,0}\cup X^d)(\alpha)$. Hence, \eqref{eqn-BV1-z-WTS} holds.

To construct $z_0$, define the carry polynomial
\bal 
C_p(A,B)
\coloneqq
\overline{\left(\frac{A^p+B^p-(A+B)^p}{p}\right)}
\in k[A,B],
\nal
where the bar denotes coefficient-wise reduction modulo $p$ of the entire
parenthesized expression. It satisfies the polynomial cocycle identity
\begin{align}\label{eqn-carry-polynomial-identity}
C_p(A,B)+C_p(A+B,C)-C_p(A,B+C)
= C_p(B,C)
\end{align}
in $k[A,B,C]$. For $a,b\in k_F$, let $\gamma(a,b)\in\mathcal O_{F_0}$ be defined by
\bal
[a]+[b]=[a+b]+p\gamma(a,b).
\nal
Since inverse Frobenius on $\F_{p^f}$ is the
$p^{f-1}$-power map, the length-two Witt addition formula gives
\bal
\overline{\gamma(a,b)}
= C_p(a,b)^{p^{f-1}}
= C_p\bigl(a^{p^{f-1}},b^{p^{f-1}}\bigr).
\nal
Thus, we obtain
\begin{align}\label{eqn-witt-carry-general-f}
\overline{\gamma(a,b)}
= C_p\bigl(\kappa_{f-1}(a),\kappa_{f-1}(b)\bigr).
\end{align}

Set $Z\coloneqq (Y / X)^{p^{f-1}}$ and define, for all $a \in k_F$,
\bal
h(a)\coloneqq  X^d C_p(Z,\kappa_{f-1}(a)).
\nal
The monomials occurring in $h(a)$ are of the form $X^{d-rp^{f-1}}Y^{rp^{f-1}}$ with $1 \leq r \leq p-1$, so $h(a)\in V$. Moreover, $[a]$ sends $Z$ to
$Z+\kappa_{f-1}(a)$. Therefore,
\eqref{eqn-carry-polynomial-identity} and
\eqref{eqn-witt-carry-general-f} yield 
\begin{align}\label{eqn-h-carry}
h(a)+[a]\cdot h(b)-h(a+b) = \overline{\gamma(a,b)}X^d.
\end{align}

Every element of $\mathcal O_{F_0}$ has a unique expression
$[a]+p\beta$, with $a\in k_F$ and
$\beta\in\mathcal O_{F_0}$. Define
\bal
z_0([a]+p\beta) \coloneqq  h(a)+\bar\beta X^d.
\nal
Equation~\eqref{eqn-h-carry} shows that $z_0$ is a $1$-cocycle. It
factors through $\mathcal O_{F_0}/p^2\mathcal O_{F_0}$ and is
therefore continuous. Since $h(0)=0$, it also satisfies
\eqref{eqn-z0-p-property}.

Combining \eqref{eqn-BV-kappa-action} and
\eqref{eqn-BV1-z-WTS}, we obtain
\bal
\BBB_{V,1}^{e}([z])
=
[\kappa_{0,0}]\cup X^d
\neq0.
\nal
Hence, $N_1(V) = e+1$. Consequently, $R_1(V)=R_{ef-1}(V)=e+1$, and
Corollary~\ref{cor-exact-image} yields
\bal
\operatorname{im}(\mathcal S_V^1)
\cong
\operatorname{im}(\mathcal S_V^{ef-1})
\cong
k[x,y^{\pm1}]/(x^{e+1}).
\nal
\end{example}

We conclude this subsection with a family of Serre weights realizing the lower value
$R_1(V)=e-1$.

\begin{example}
Assume that $e\geq2$. Let $F_0/\Qp$ be the unramified extension of
degree $f$, and set $F=F_0(\varpi)$, where $\varpi^e=p$. Let $V$ be
a Serre weight for $\GL_2(F)$ such that
\bal
d_j>0\quad\text{for every }0\leq j\leq f-1 \qquad \text{and}
\qquad
\sum_{j=0}^{f-1}d_j<p-1.
\nal

To show that the lower bound $N_1(V)\geq e-1$ provided by Theorem \ref{thm-nilpotence-bound} is sharp, it suffices
to prove that $\BBB_{V,1}^{e-1}=0$. Let
$z:\OF\rightarrow V$ be a continuous inhomogeneous $1$-cocycle and set
$w\coloneqq \BBB_{V,1}^{e-1}z$. Then
\bal
w(\varpi\beta)
=
\Prx\bigl(z(p\beta)\bigr)
=
\sum_{a\in\F_p}
\Prx\bigl((a\beta)\cdot z(\beta)\bigr)
\qquad
\text{for every }\beta\in\OF.
\nal
The weight $V$ is spanned by the monomials $X^{d-c}Y^c$, where $c=\sum_{j=0}^{f-1}c_j p^j$ with $0 \leq c_j \leq d_j$ for each $j$. For each such monomial,
\bal
\Prx\bigl((a\beta)\cdot X^{d-c}Y^c\bigr)
\nal
is a scalar multiple of
$a^{\sum_j c_j}X^d$. Hence, for every $v\in V$, the function
$a\mapsto\Prx((a\beta)\cdot v)$ is polynomial in $a$ of degree at
most $\sum_j d_j<p-1$. Since the sum $\sum_{a \in \Fp} a^r$ vanishes for all $0 \leq r < p-1$, the cocycle $w$ vanishes on $\varpi\OF$. Moreover, $w$ takes values in $k X^d$, on which $\OF$ acts trivially. Thus, its cohomology class has the form
\bal
\sum_{j=0}^{f-1}
\lambda_j\bigl([\kappa_{0,j}]\cup X^d\bigr)
\qquad
\text{for some }\lambda_0,\dots,\lambda_{f-1}\in k.
\nal
Since $d_j>0$ for every $j$, all these classes vanish by
Lemma~\ref{lem-kappa0-y1-vanishing}. Hence
$\BBB_{V,1}^{e-1}=0$, and therefore $N_1(V)=e-1$. Since $n_0(V)=0$, Theorem~\ref{thm-nilpotence-bound} also gives $N_{ef-1}(V)=1$. Thus,
\bal
R_1(V)=R_{ef-1}(V)=e-1,
\nal
and so Corollary~\ref{cor-exact-image} yields
\bal
\operatorname{im}(\mathcal S_V^1)
\cong
\operatorname{im}(\mathcal S_V^{ef-1})
\cong
k[x,y^{\pm1}]/(x^{e-1}).
\nal
\end{example}

\subsection{Nonnilpotence in higher rank}
\label{subsec-GLn-nonnilpotence}

We conclude by contrasting the preceding description for $\GL_2(F)$ with the situation in higher rank. A key feature of the $\GL_2(F)$ analysis is the nilpotence of the two support components of the single noncentral Hecke generator in interior cohomological degrees. This nilpotence phenomenon does not persist in higher rank. Already for $\GL_3(F)$, an individual support component can be nonnilpotent.

\begin{example}[Nonnilpotence for $\GL_3(F)$]
\label{ex-GL3-nonnilpotence}
For a finite extension $F / \Qp$, let $G = \GL_3(F)$. Let $V=k$ be the
trivial weight and take $i = 1$. For $x,y,z \in F$, define
\bal
u(x,y,z)\coloneqq 
\begin{pmatrix}
1 & x & y\\
0 & 1 & z\\
0 & 0 & 1
\end{pmatrix}
\in U.
\nal
By Theorem \ref{thm-GLn-SVi-support-formula}, we have
\begin{align}\label{eqn-ex-supp}
\Supp \mathcal{S}_k^1(T_1)
\subseteq
K_T t_{\{1\}}
\sqcup
K_T t_{\{2\}}
\sqcup
K_T t_{\{3\}}.
\end{align}
We consider the component indexed by $I \coloneqq  \{3\}$. In this case,
$t_I = \diag(1,1,\varpi)$ and
\begin{align}\label{eqn-ex-tI-conj}
t_I u(x,y,z)t_I^{-1}
=
u(x,\varpi^{-1}y,\varpi^{-1}z).
\end{align}
Thus, $K_U \subset t_I K_U t_I^{-1}$, and hence
$K_{U,I}=K_U$. Since $V=k$ is trivial, the coefficient map
$T_1(t_I)$ is the identity. Therefore, Theorem
\ref{thm-GLn-SVi-support-formula} gives
\bal
\mathcal{S}_k^1(T_1)(t_I)
=
\bigl(\conj_{t_I^{-1}}\bigr)^*
\in
\End_k\bigl(H^1(K_U,k)\bigr).
\nal

Choose a nonzero $\F_p$-linear map
$\lambda:k_F\longrightarrow k$, and let $\overline{x}\in k_F$ denote
the reduction of $x\in\OF$ modulo $\varpi$. Define
$\xi:K_U\to k$ by
\bal
\xi\bigl(u(x,y,z)\bigr)\coloneqq \lambda(\overline{x});
\nal
one can easily see that this is a continuous group homomorphism, and hence gives a nonzero class $[\xi]\in H^1(K_U,k)$ as the action of $K_U$ on $k$ is trivial.

For every $x,y,z\in\OF$, we have
\begin{align*}
\bigl(\conj_{t_I^{-1}}\bigr)^*(\xi)
\bigl(u(x,y,z)\bigr)
&=
\xi\bigl(t_I^{-1}u(x,y,z)t_I\bigr) \\
&=
\xi\bigl(u(x,\varpi y,\varpi z)\bigr) \\
&=
\lambda(\overline{x}) \\
&=
\xi\bigl(u(x,y,z)\bigr).
\end{align*}
It follows that
$\bigl(\conj_{t_I^{-1}}\bigr)^*([\xi])=[\xi]$. Therefore,
\bal
\left(\mathcal{S}_k^1(T_1)(t_I)\right)^m([\xi])
=
[\xi]\neq 0
\qquad\text{for every }m\geq 1.
\nal
Hence, $\mathcal{S}_k^1(T_1)(t_I)$ is not nilpotent.

In fact, $\mathcal{S}_k^1(T_1)$ itself is not nilpotent. Fix
$m\geq1$. By iterating the convolution formula and using
\eqref{eqn-ex-supp}, every potentially nonzero summand in
\bal
\mathcal{S}_k^1(T_1)^{\star m}(t_{\{3\}}^m)
\nal
is indexed by a word of length $m$ in the three support cosets
\bal
K_T t_{\{1\}},\qquad
K_T t_{\{2\}},\qquad
K_T t_{\{3\}},
\nal
whose product is $K_Tt_{\{3\}}^m$. Comparing the three diagonal
valuations shows that every letter in such a word must be
$K_Tt_{\{3\}}$. Thus, there is only one potentially nonzero summand,
and
\bal
\mathcal{S}_k^1(T_1)^{\star m}(t_{\{3\}}^m)
=
\left(
\mathcal{S}_k^1(T_1)(t_{\{3\}})
\right)^m.
\nal
The right-hand side fixes $[\xi]\neq0$, and is therefore nonzero.
Hence
\bal
\mathcal{S}_k^1(T_1)^{\star m}\neq0
\nal
for every $m\geq1$, so $\mathcal{S}_k^1(T_1)$ is not nilpotent.
\end{example}

The same mechanism occurs for every $n>2$. Indeed, take
$I=\{n\}$. Conjugation by $t_I^{-1}$ fixes the root subgroups
$U_{a,b}(\OF)$ with $b<n$, while it sends $U_{a,n}(\OF)$ into
$U_{a,n}(\varpi\OF)$. Thus, the corresponding component of
$\mathcal{S}_k^1(T_1)$ fixes the upper-left copy of
$U_{n-1}(\OF)$ and, in particular, fixes nonzero degree-one
cohomology classes arising from the simple-root coordinates of this
subgroup. This upper-left subgroup is the upper-unipotent subgroup
of the standard Levi $\GL_{n-1}\times\GL_1$.

Thus, for $n>2$, even the pure powers of an individual support
component need not vanish. Moreover, convolution powers may involve
compositions of components supported on different cosets. A
description of the image for $\GL_n(F)$ therefore requires control
both of the individual support components and of their interactions
under convolution.

By contrast, for $n=2$, the only proper standard Levi is the
diagonal torus, whose unipotent subgroup is trivial. The mechanism
of nonnilpotence exhibited above is therefore absent. In addition,
the single noncentral standard Hecke generator has only two support
components, and we showed that these components annihilate one
another. Their nilpotence orders, together with the adjointness
between complementary degrees, are consequently sufficient to
determine the image of $\mathcal{S}_V^i$ for $\GL_2(F)$. The mechanism underlying the exact image theorem for $\GL_2(F)$ is therefore genuinely rank two.

\appendix 

\section{Standard facts about cohomological delta-functors}

\begin{lem}
\label{delta1}
Let $\mc A, \mc A'$ be abelian categories, and let $H=(H^i)_{i\geq 0}$ be any $\delta$-functor $\mc A\to \mc A'$.  Suppose that $F,G:\mc A\to \mc A$ are any exact additive functors equipped with a natural transformation $\eta:F\to G$.  Then $H\circ F\coloneqq (H^i\circ F)_{i\geq 0}$ and $H\circ G\coloneqq (H^i\circ G)_{i\geq 0}$ are $\delta$-functors $\mc A\to \mc A'$, and we have a natural morphism of $\delta$-functors induced by $\eta$:
$$H(\eta):H\circ F\to H\circ G,\qquad (H(\eta))^i\coloneqq H^i(\eta):H^i\circ F\to H^i\circ G.$$
\end{lem}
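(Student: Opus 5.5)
The plan is to verify the two requirements directly: each composite family is a $\delta$-functor, and the components $H^i(\eta)$ commute with the connecting morphisms. Nothing beyond the definitions and the naturality of the connecting maps of $H$ should be needed.

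First, $H\circ F$ is a $\delta$-functor. Given a short exact sequence $0\to A''\to A\to A'\to 0$ in $\mc A$, exactness of $F$ gives a short exact sequence $0\to F(A'')\to F(A)\to F(A')\to 0$. Applying $H$ yields connecting maps
$$\delta^i_{F}\coloneqq\delta^i_{F(A'')\to F(A)\to F(A')}:H^i(F(A'))\to H^{i+1}(F(A'')).$$
The resulting long sequence is a complex (indeed exact, if one requires that) because it is literally the long sequence of $H$ attached to a short exact sequence. A morphism of short exact sequences in $\mc A$ is sent by $F$ to a morphism of short exact sequences. Naturality of the $\delta^i_F$ therefore follows from naturality of the connecting maps of $H$. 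The same argument applies verbatim to $G$. Additivity of each $H^i\circ F$ is clear, since $F$ is additive.

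Second, each $H^i(\eta)$ is a natural transformation $H^i\circ F\to H^i\circ G$, being the whiskering of $\eta$ by $H^i$. The key step is compatibility with $\delta$. Naturality of $\eta$ gives a commutative ladder
$$0\to F(A'')\to F(A)\to F(A')\to0 \quad\text{mapping via }(\eta_{A''},\eta_A,\eta_{A'})\text{ to}\quad 0\to G(A'')\to G(A)\to G(A')\to0.$$
This ladder is a morphism of short exact sequences in $\mc A$. Naturality of the connecting homomorphisms of $H$ with respect to such morphisms then gives exactly
$$H^{i+1}(\eta_{A''})\circ\delta^i_F=\delta^i_G\circ H^i(\eta_{A'}),$$
which is the defining square for a morphism of $\delta$-functors.

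I do not expect a genuine obstacle here. The only point needing care is to state clearly that "$\delta$-functor" includes naturality of $\delta$ with respect to morphisms of short exact sequences. That axiom is used twice: once for $F$ and $G$ separately, and once for the ladder induced by $\eta$.
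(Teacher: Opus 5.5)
Your proposal is correct and follows essentially the same route as the paper: exactness of $F$ and $G$ makes $H\circ F$ and $H\circ G$ into $\delta$-functors, and the naturality ladder $(\eta_{A''},\eta_A,\eta_{A'})$ between the two short exact sequences, combined with naturality of the connecting maps of $H$, gives the required commuting square. Your version is slightly more explicit than the paper's about why the connecting maps of $H\circ F$ are natural, but the argument is the same.
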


\begin{proof}
That $H\circ F$ and $H\circ G$ are $\delta$-functors is easy: since $F$ is exact, $H\circ F$ indeed functorially turns short exact sequences into long exact sequences, as required, and similarly for $H\circ G$.  To see that $\eta$ induces a morphism of $\delta$-functors, let
$$0\to A\to B\to C\to 0$$
be a short exact sequence in $\mc A$.  We then get a commuting diagram in $\mc A$ with exact rows:
\[\begin{tikzcd}
	0 & FA & FB & FC & 0 \\
	0 & GA & GB & GC & 0.
	\arrow[from=1-1, to=1-2]
	\arrow[from=1-2, to=1-3]
	\arrow["{\eta_A}", from=1-2, to=2-2]
	\arrow[from=1-3, to=1-4]
	\arrow["{\eta_B}", from=1-3, to=2-3]
	\arrow[from=1-4, to=1-5]
	\arrow["{\eta_C}", from=1-4, to=2-4]
	\arrow[from=2-1, to=2-2]
	\arrow[from=2-2, to=2-3]
	\arrow[from=2-3, to=2-4]
	\arrow[from=2-4, to=2-5]
\end{tikzcd}\]
Since $H$ is a $\delta$-functor, we conclude that for any $i\geq 0$, there is a commuting square
\[\begin{tikzcd}
	{H^i(FC)} & {H^{i+1}(FA)} \\
	{H^i(GC)} & {H^{i+1}(GA)}.
	\arrow["\delta", from=1-1, to=1-2]
	\arrow["{H^i(\eta_C)}"', from=1-1, to=2-1]
	\arrow["{H^{i+1}(\eta_A)}", from=1-2, to=2-2]
	\arrow["\delta", from=2-1, to=2-2]
\end{tikzcd}\]
Since the top and bottom $\delta$-homomorphisms are exactly the $\delta$-homomorphisms coming from the $\delta$-functors $H\circ F$ and $H\circ G$, respectively, we conclude that $\eta$ indeed gives us a morphism of $\delta$-functors, as claimed.
\end{proof}

\begin{lem}
\label{delta2}
Let $\mc A,\mc A',\mc A''$ be abelian categories.  Suppose that $H,\tilde H$ are $\delta$-functors $\mc A'\to \mc A''$ equipped with a $\delta$-functor morphism $\Phi:H\to \tilde H$.  Let $F:\mc A\to \mc A'$ be any exact functor.  Then
$$\Phi_F:H\circ F\to \tilde H\circ F$$
given by $\Phi_{F,i}=\Phi_{i,{F(-)}}:H^i\circ F\to \tilde H^i\circ F$ is a $\delta$-functor morphism.
\end{lem}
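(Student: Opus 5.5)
This is essentially the same as Lemma \ref{delta1}: a naturality statement that reduces to checking one commuting square.

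First, observe that $H\circ F$ and $\tilde H\circ F$ are $\delta$-functors $\mc A\to\mc A''$. Since $F$ is exact, it sends a short exact sequence $0\to A\to B\to C\to 0$ in $\mc A$ to a short exact sequence $0\to FA\to FB\to FC\to 0$ in $\mc A'$. Applying $H$ to the latter gives a long exact sequence, which is functorial in morphisms of short exact sequences because $F$ is a functor. Define the connecting maps of $H\circ F$ to be
$$\delta^i_{H\circ F}\coloneqq\delta^i_H\ \text{applied to the sequence } 0\to FA\to FB\to FC\to 0,$$
and define those of $\tilde H\circ F$ in the same way.

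Next, check that each $\Phi_{F,i}$ is a natural transformation $H^i\circ F\to\tilde H^i\circ F$. Naturality of $\Phi_i$ on $\mc A'$ is a statement about all morphisms of $\mc A'$; whiskering it with $F$ restricts it to the morphisms $F(f)$, so this is immediate.

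Then verify compatibility with the connecting homomorphisms, and this is the only real content. Given a short exact sequence in $\mc A$, its image under $F$ is short exact in $\mc A'$. Because $\Phi$ is a morphism of $\delta$-functors on $\mc A'$, the square
$$\tilde H^{i+1}(FA)\circ\Phi_{i}\ \text{vs.}\ \Phi_{i+1}\circ\delta$$
commutes for that image sequence, i.e.
$$\Phi_{i+1,FA}\circ\delta_H=\delta_{\tilde H}\circ\Phi_{i,FC}.$$
By the definition above, these $\delta$'s are exactly the connecting maps of $H\circ F$ and $\tilde H\circ F$. So the square required for $\Phi_F$ to be a $\delta$-functor morphism is literally this square.

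There is no genuine obstacle. The only point needing care is to state clearly that the $\delta$-structure on the composites is the one induced through $F$; that requires exactness of $F$, so that short exact sequences are preserved.
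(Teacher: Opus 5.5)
Your proposal is correct and follows essentially the same argument as the paper. Exactness of $F$ carries the short exact sequence into $\mathcal{A}'$, the connecting maps of $H\circ F$ and $\tilde H\circ F$ are those of $H$ and $\tilde H$ on the image sequence, and the required square is then literally the $\delta$-compatibility square of $\Phi$. Your explicit check that each $\Phi_{F,i}$ is natural (by whiskering with $F$) is a small point the paper leaves implicit.
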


\begin{proof}
The proof of this is the same idea as that of the previous lemma: given a short exact sequence 
$$0\to A\to B\to C\to 0$$
in $\mc A$, we also get a short exact sequence
$$0\to FA\to FB\to FC\to 0$$
in $\mc A'$.  One then needs to verify that the following diagram commutes:
\[\begin{tikzcd}
	{H^i(FC)} & {H^{i+1}(FA)} \\
	{\tilde H^i(FC)} & {\tilde H^{i+1}(FA)}
	\arrow["\delta", from=1-1, to=1-2]
	\arrow["{\Phi_{i,FC}}"', from=1-1, to=2-1]
	\arrow["{\Phi_{i+1,FA}}", from=1-2, to=2-2]
	\arrow["\delta", from=2-1, to=2-2]
\end{tikzcd}\]
Here, the top $\delta$ is the $\delta$-homomorphism obtained by applying $H\circ F$ to our first short exact sequence, while the bottom $\delta$ is obtained by applying $\tilde H\circ F$. The proof of Lemma \ref{delta1} shows that these are the same as the $\delta$-homomorphisms obtained by applying $H$ and $\tilde H$ to the second short exact sequence $0\to FA\to FB\to FC\to 0$. Since $\Phi$ is a morphism of $\delta$-functors, the diagram therefore commutes.
\end{proof}

\begin{lem}
\label{delta3}
Let $\mc A,\mc A_1, \mc A_2, \mc A'$ all be abelian categories, and suppose that we have additive functors
$$G_i:\mc A\to \mc A_i,\qquad F_i:\mc A_i\to \mc A'$$
for $i=1,2$ such that the $F_i$ are left-exact, the $G_i$ are exact, and the $G_i$ send injective objects to $F_i$-acyclic objects, respectively.  Assume also that there is a natural transformation $\eta:F_1\circ G_1\to F_2\circ G_2$ of functors $\mc A\to \mc A'$.  Then we have natural identifications of $\delta$-functors $(\mathrm{R}F_i)\circ G_i=\mathrm{R}(F_i\circ G_i)$, and $\eta$ induces a morphism of $\delta$-functors
$$\mathrm{R}\eta:\mathrm{R}(F_1)\circ G_1\to \mathrm{R}(F_2)\circ G_2$$
given by composing the aforementioned isomorphisms with the natural maps $\mathrm{R}^i\eta:\mathrm{R}^i(F_1\circ G_1)\to \mathrm{R}^i(F_2\circ G_2)$.
\end{lem}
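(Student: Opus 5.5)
The plan is the standard Grothendieck argument comparing derived functors of a composite with derived functors precomposed by an exact functor. First, for each $i=1,2$, I will identify $(\mathrm{R}F_i)\circ G_i$ with $\mathrm{R}(F_i\circ G_i)$ as $\delta$-functors $\mc A\to\mc A'$. The family $(\mathrm{R}^nF_i\circ G_i)_{n\geq0}$ is a $\delta$-functor because $G_i$ is exact; this is the argument of Lemma~\ref{delta1}. In degree $0$ it equals $F_i\circ G_i$, since $F_i$ is left exact. For $n>0$ it is effaceable. Indeed, assuming $\mc A$ has enough injectives (implicit in forming $\mathrm{R}(F_i\circ G_i)$, and true in all our applications to $\Rep_k(K_P)$), embed any $A\into I$ with $I$ injective. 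Then $G_i(I)$ is $F_i$-acyclic by hypothesis, so $\mathrm{R}^nF_i(G_iI)=0$. Hence $(\mathrm{R}^nF_i\circ G_i)_n$ is universal, and the canonical map from the universal $\delta$-functor $\mathrm{R}(F_i\circ G_i)$ extending the degree-$0$ identity is an isomorphism of $\delta$-functors.

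Concretely, one can also see this as follows. Take an injective resolution $A\to I^\bullet$. Then $G_i(I^\bullet)$ is an $F_i$-acyclic resolution of $G_iA$, so $\mathrm{R}^nF_i(G_iA)\iso H^n(F_iG_iI^\bullet)=\mathrm{R}^n(F_iG_i)(A)$. This is the identification I intend to use, and it matches the universal one because both agree in degree $0$.

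Second, $\eta:F_1G_1\to F_2G_2$ is a natural transformation of left exact functors on $\mc A$. By universality of $\mathrm{R}(F_1G_1)$, it extends uniquely to a morphism of $\delta$-functors $\mathrm{R}^\bullet\eta:\mathrm{R}(F_1G_1)\to\mathrm{R}(F_2G_2)$. On an injective resolution this is simply $H^n(\eta_{I^\bullet})$, and it commutes with connecting maps by the horseshoe lemma and the naturality of $\eta$. Composing with the isomorphisms from the first step gives
\[
\mathrm{R}\eta:\ (\mathrm{R}F_1)\circ G_1\iso\mathrm{R}(F_1G_1)\to\mathrm{R}(F_2G_2)\iso(\mathrm{R}F_2)\circ G_2,
\]
and a composite of morphisms of $\delta$-functors is again one.

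The only delicate point is the first step: one must check that the identification $(\mathrm{R}F_i)\circ G_i\iso\mathrm{R}(F_iG_i)$ is compatible with the connecting homomorphisms, and not merely an isomorphism degree by degree. I would handle this through the universality argument above rather than by a cochain check, since the universality argument gives $\delta$-compatibility for free. Naturality in $A$ follows from the uniqueness statements.
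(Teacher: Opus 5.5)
Your proof is correct. Your second step is essentially the paper's argument: horseshoe resolutions plus naturality of the connecting homomorphism, here framed by the uniqueness clause of universality. Your first step takes a different route.

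\begin{itemize}
\item The paper obtains the identification $(\mathrm{R}F_i)\circ G_i=\mathrm{R}(F_i\circ G_i)$ as an ``easy application'' of the degenerate Grothendieck spectral sequence. That spectral sequence gives the isomorphisms degree by degree, but their compatibility with connecting maps is only asserted, not shown.
\item You instead show that $(\mathrm{R}^nF_i\circ G_i)_{n\geq 0}$ is effaceable and hence universal. The key input is embedding $A$ into an injective $I$, so that $G_i(I)$ is $F_i$-acyclic. This produces the identification directly as the unique isomorphism of $\delta$-functors extending the identity in degree $0$.
\end{itemize}

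Your route buys exactly what the paper leaves implicit, namely $\delta$-compatibility and uniqueness of the identification. This is also the form in which the lemma is used later, where universal $\delta$-functors agreeing in degree $0$ are compared. It is the same effaceability argument the paper itself uses in Remark~\ref{deltafunctorremark}. You also make explicit the enough-injectives hypotheses that the paper leaves tacit.

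One small correction. You say the resolution-level isomorphism $\mathrm{R}^nF_i(G_iA)\iso H^n(F_iG_iI^\bullet)$ ``matches the universal one because both agree in degree $0$''. That is not justified as written. The uniqueness in the universal property applies only to morphisms of $\delta$-functors, and a degreewise natural isomorphism that is the identity in degree $0$ need not commute with connecting maps. You can fix this in either of two ways:
\begin{itemize}
\item drop the remark and work only with the universal identification, as your final paragraph proposes; or
\item verify the $\delta$-compatibility of the resolution-level map directly. Apply $G_i$ to a degreewise split horseshoe sequence of injective resolutions; it stays exact after applying $F_i$.
\end{itemize}
This does not affect the correctness of your main argument.
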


\begin{proof}
The claim that there is a natural identification of $\delta$-functors $(\mathrm{R}F_i)\circ G_i=\mathrm{R}(F_i\circ G_i)$ is an easy application of the Grothendieck spectral sequence: the $G_i$ are exact so $\mathrm{R}^j(G_i)=0$ for $j>0$, and they send injective objects to $F_i$-acyclic objects, respectively.  The remaining claim now follows from the following, where we have merely replaced $F_i$ by the (still left-exact) functor $F_i\circ G_i$: given a natural transformation $\eta:F_1\to F_2$ of functors $\mc A\to \mc A'$, the natural maps $\mathrm{R}^i\eta:\mathrm{R}^iF_1\to \mathrm{R}^iF_2$ form a morphism of $\delta$-functors $\mathrm{R}F_1\to \mathrm{R}F_2$.

To verify this claim, let $0\to A\to B\to C\to 0$ be a short exact sequence in $\mc A$.  Pick injective resolutions $A\to I_A^\bullet$ and $C\to I_C^\bullet$; by the horseshoe lemma, we can get an injective resolution $B\to I_B^\bullet$ such that there is a short exact sequence of complexes
$$0\to I_A^\bullet\to I_B^\bullet\to I_C^\bullet\to 0.$$
Applying $F_1$ and $F_2$ to this short exact sequence gives us two short exact sequences of complexes
\[\begin{tikzcd}
	0 & {F_1(I_A^\bullet)} & {F_1(I_B^\bullet)} & {F_1(I_C^\bullet)} & 0 \\
	0 & {F_2(I_A^\bullet)} & {F_2(I_B^\bullet)} & {F_2(I_C^\bullet)} & 0
	\arrow[from=1-1, to=1-2]
	\arrow[from=1-2, to=1-3]
	\arrow["{\eta_{I_A^\bullet}}", from=1-2, to=2-2]
	\arrow[from=1-3, to=1-4]
	\arrow["{\eta_{I_B^\bullet}}", from=1-3, to=2-3]
	\arrow[from=1-4, to=1-5]
	\arrow["{\eta_{I_C^\bullet}}", from=1-4, to=2-4]
	\arrow[from=2-1, to=2-2]
	\arrow[from=2-2, to=2-3]
	\arrow[from=2-3, to=2-4]
	\arrow[from=2-4, to=2-5]
\end{tikzcd}\]
with maps in every entry from the top to the bottom induced by the natural transformation $\eta$. The $\delta$-homomorphisms in the induced long exact sequences with respect to $F_1$ and $F_2$ are exactly those obtained by applying the snake lemma to the appropriate diagrams arising from the above short exact sequences of complexes. Moreover, the chain maps induced by $\eta$ yield, after taking cohomology, the maps $\mathrm{R}^i\eta: \mathrm{R}^iF_1\to \mathrm{R}^i F_2$.  The fact that these commute with the maps the snake lemma provides follows directly from the naturality of the snake lemma; see for example \cite[Proposition 1.3.4]{Weibel}.
\end{proof}

\begin{lem}
\label{delta4}
Let $\mc A$, $\mc A'$, $\mc A''$ be abelian categories.  Let $H=(H^i)_{i\geq 0}$ be any $\delta$-functor $\mc A\to \mc A'$, and $F,G:\mc A'\to \mc A''$ be any exact additive functors equipped with a natural transformation $\eta:F\to G$.  Then $F\circ H\coloneqq (F\circ H^i)_{i\geq 0}$ and $G\circ H\coloneqq (G\circ H^i)_{i\geq 0}$ are $\delta$-functors $\mc A\to \mc A''$, and we have a natural morphism of $\delta$-functors induced by $\eta$:
$$\eta:F\circ H\to G\circ H,\qquad \eta^i\coloneqq \eta_{H^i(-)}:F(H^i(-))\to G(H^i(-)).$$
\end{lem}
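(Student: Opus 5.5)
This is the mirror image of Lemma~\ref{delta1}: there one precomposes the $\delta$-functor with exact functors, here one postcomposes. First I show that $F\circ H$ and $G\circ H$ are $\delta$-functors. Then I show that the components $\eta_{H^i(-)}$ commute with the connecting homomorphisms. Exactness of $F$ and $G$ is used in the first step, and naturality of $\eta$ in the second.

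For the first step, fix a short exact sequence $0\to A\to B\to C\to 0$ in $\mc A$. Since $H$ is a $\delta$-functor, this yields a long exact sequence
\[
\cdots\to H^i(A)\to H^i(B)\to H^i(C)\xrightarrow{\delta} H^{i+1}(A)\to\cdots
\]
in $\mc A'$, which is functorial in morphisms of short exact sequences. Exact additive functors preserve exactness of arbitrary (long) sequences, since exactness at each spot is a statement about a three-term complex. So applying $F$ gives a long exact sequence in $\mc A''$ with connecting maps $F(\delta)$. Functoriality of these maps in the short exact sequence follows by applying $F$ to the commuting squares that express functoriality of $\delta$ for $H$. The same argument applies to $G$.

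For the second step, each $\eta^i=\eta_{H^i(-)}$ is natural in the argument, because it is the whiskering of $\eta$ along $H^i$. It remains to check, for every short exact sequence, the square
\[
\begin{tikzcd}
F(H^i(C)) \arrow[r,"F(\delta)"] \arrow[d,"\eta_{H^i(C)}"'] & F(H^{i+1}(A)) \arrow[d,"\eta_{H^{i+1}(A)}"] \\
G(H^i(C)) \arrow[r,"G(\delta)"'] & G(H^{i+1}(A)).
\end{tikzcd}
\]
This square is exactly the naturality square of $\eta:F\to G$ applied to the single morphism $\delta:H^i(C)\to H^{i+1}(A)$ in $\mc A'$, so it commutes. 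The squares not involving $\delta$ commute for the same reason, applied to the maps $H^i(A)\to H^i(B)\to H^i(C)$.

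I do not expect a genuine obstacle; the proof amounts to recognizing the last square as a naturality square. The one point to state explicitly is that the connecting morphisms of $F\circ H$ are by definition $F(\delta)$, which is what allows that square to be read as naturality of $\eta$.
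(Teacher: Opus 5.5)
Your proposal is correct and follows essentially the same route as the paper: exactness of $F$ and $G$ carries the long exact sequence of $H$ to long exact sequences with connecting maps $F(\delta)$ and $G(\delta)$. Compatibility of $\eta_{H^i(-)}$ with these connecting maps is then exactly the naturality square of $\eta$ at the morphism $\delta:H^i(C)\to H^{i+1}(A)$.
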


\begin{proof}
The proof is essentially the same as that of Lemma \ref{delta1}.  The $\delta$-functor $H$ functorially maps short exact sequences to long exact sequences, and applying $F$ or $G$ preserves this exactness, so $F\circ H$ and $G\circ H$ are automatically $\delta$-functors.

Given a short exact sequence
$$0\to A\to B\to C\to 0$$
in $\mc A$, we get for any $i\geq 0$ a $\delta$-homomorphism $\delta:H^i(C)\to H^{i+1}(A)$ in $\mc A'$.  Since $\eta$ is a natural transformation, we get a commuting diagram
\[\begin{tikzcd}
	{F(H^i(C))} & {F(H^{i+1}(A))} \\
	{G(H^i(C))} & {G(H^{i+1}(A))}
	\arrow["{F\delta}", from=1-1, to=1-2]
	\arrow["{\eta_{H^i(C)}}"', from=1-1, to=2-1]
	\arrow["{\eta_{H^{i+1}(A)}}", from=1-2, to=2-2]
	\arrow["{G\delta}", from=2-1, to=2-2]
\end{tikzcd}\]
where the top and bottom maps are exactly the $\delta$-homomorphisms coming from the $\delta$-functors $F\circ H$ and $G\circ H$.  This proves the claim.
\end{proof}

\section{Compatibility results for the double coset formula}
The aim of this section is to prove the following lemma required in the proof of Proposition \ref{prop-explicit-formula}.
\begin{lem}
\label{lem-kappa&satake}
Let $\Phi\in \mathcal{H}_G(V) = \End_G(\ind^G_K V)$ and $\Phi_1: \ind^P_{K_P}\Res^K_{K_P}V\rightarrow \ind^P_{K_P}\Res^K_{K_P} V$ be the induced morphism from the Iwasawa decomposition $G=PK$. Then the diagram below commutes:
\[
\begin{tikzcd}
{\ind^M_{M^+}H^i(K_U,\ind^P_{K_P}V)} \arrow[dd, "\ind^M_{M^+}H^i(\Phi_1)"'] \arrow[rr,swap]{ur}{\cong}[swap]{\kappa} &  & {\ind^M_{K_M}H^i(K_U,V)} \arrow[dd, "\mathcal{S}^i_V(\Phi)"] \\
\\
{\ind^M_{M^+}H^i(K_U,\ind^P_{K_P}V)} \arrow[rr,swap]{ur}{\cong}[swap]{\kappa}                                        &  & {\ind^M_{K_M}H^i(K_U,V)} 
\end{tikzcd}
\]
\end{lem}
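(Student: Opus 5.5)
We will reduce the statement to how $\mathcal S_V$ is built from Heyer's natural isomorphism \eqref{natiso} and Theorem~\ref{thm-B}. By definition, $\mathcal S_V(\Phi)$ is obtained by applying the functor $L(U,-)=L_U\circ\mathrm{RRes}^G_P$ to $\Phi$ and transporting along $L(U,\ind_K^GV)\cong\ind_{K_M}^M L(K_U,V)$. Taking cohomology in degree $i-\dim K_U$ and twisting away $\delta_P$ gives $\SVi(\Phi)$. The plan is to show that this transport, in cohomological degree $i$, is exactly $\kappa$. Commutativity of the square then becomes naturality of the first functor in the chain.

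\textbf{Step 1: the left column.} Identify $L(U,\ind_K^GV)$ in degree $i-\dim K_U$. Since $\mathrm{RRes}^G_P\ind_K^GV\cong\ind_{K_P}^P V$ via Iwasawa, and $\Phi$ corresponds to $\Phi_1$ under this identification, Theorem~\ref{thm-B} gives
\[
H^{i-\dim K_U}L(U,\ind_K^GV)\cong \ind_{M^+}^M H^{i}\bigl(K_U,\omega_{P^+}\otimes\ind_{K_P}^PV\bigr)\cong \ind_{M^+}^M H^i(K_U,\ind_{K_P}^PV)\otimes\delta_P.
\]
Theorem~\ref{thm-B} is an isomorphism of functors. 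Hence the endomorphism induced by $\Phi$ is $\ind^M_{M^+}H^i(\Phi_1)$ (tensored with the identity on the character). The twist by $\omega_{P^+}$ and $\delta_P$ is handled exactly as in \cite[Explanation~4.3.3]{heyer2023left}; since it is a character, it commutes with everything and will be suppressed.

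\textbf{Step 2: the isomorphism \eqref{natiso} in degree $i$ equals $\kappa$.} This is the main obstacle. Heyer's isomorphism \eqref{natiso} is constructed in \cite[Section~4.3]{heyer2023left} functorially in $V$ and compatibly with the $\delta$-functor structure. Both sides of the would-be identity $H^{i}(\text{\eqref{natiso}})=\kappa$ are morphisms of $\delta$-functors $\Rep_k(K_P)\to\Rep_k(M)$ in the variable $V$ (or $W$), and they are universal. For the source, this follows by the argument of Remark~\ref{deltafunctorremark}, using Lemmas~\ref{delta1}--\ref{delta4}. For $\kappa$, it follows since it is assembled from $j_g^{(i)}$, $\cores$, $\ind^M_{M^+}$ and the maps of Lemmas~\ref{Exp1Step1} and~\ref{Exp1Step2}, all of which are $\delta$-morphisms. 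Heyer \cite[Proposition~4.3.8]{heyer2023left} proves that in degree $0$ his map is given by $1\otimes j_g^{(0)}(\xi)\mapsto[\pr_M(g),\cores_g^{(0)}\xi]$, i.e.\ by $\kappa$ for $i=0$. By universality the two agree in all degrees. The delicate point is verifying that Heyer's derived isomorphism, after passing to cohomology, really is a morphism of $\delta$-functors in $V$ with the degree shift by $\dim K_U$ matched correctly. I would check this by unwinding his construction through the Mackey decomposition, as in the proof of Lemma~\ref{jplus}, using Lemma~\ref{delta3}.

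\textbf{Step 3: conclude.} With Steps 1 and 2, $\SVi(\Phi)$ is by definition the conjugate of $\ind^M_{M^+}H^i(\Phi_1)$ by the transport isomorphism, and that isomorphism is $\kappa$. This yields $\SVi(\Phi)\circ\kappa=\kappa\circ\ind^M_{M^+}H^i(\Phi_1)$, which is the claimed commutativity.
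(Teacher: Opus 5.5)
Your argument is correct in outline, but it takes a genuinely different route from the paper. The paper stays at the derived level throughout. Proposition~\ref{Exp2} builds a natural isomorphism $\Gamma=\gamma_2\circ\ind^M_{M^+}(\gamma_1)$ whose cohomology is $\kappa^{-1}$ in every degree; $\gamma_1$ comes from the proof of \cite[Lemma~3.4.13]{heyer2023left} and $\gamma_2$ from the unit $\Id\Rightarrow\Res^P_{P^+}\ind^P_{P^+}$. The paper then twists $\Gamma$ to $\tilde\Gamma$ and uses mate calculus (Proposition~\ref{diag} and Corollaries~\ref{cor1} and~\ref{cor2}) to show that $\tilde\Gamma$ agrees with the composite of Heyer's isomorphisms and commutes with the $\omega$-untwisting. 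Only at the very end does it take cohomology in degree $i-\dim U$.

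You instead anchor the identification in degree $0$ using \cite[Proposition~4.3.8]{heyer2023left} and propagate it to all degrees by universality of $\delta$-functors. This is the same device the paper uses in Lemmas~\ref{jplus} and~\ref{eta_lemma} and Theorem~\ref{thm-explicit-formula}. Your route is far shorter, and it legitimately lets you suppress the $\omega$/$\delta_P$ bookkeeping: the untwisting isomorphisms are natural and $\delta$-compatible, so universality absorbs them, whereas the paper must check this by hand.

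The price is two inputs the paper's argument does not need.

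\begin{enumerate}
\item You must read Heyer's degree-zero result as identifying the \emph{entire} transport with $\kappa$ for every $W\in\Rep_k(K_P)$, not merely as saying that the degree-zero $\kappa$ is an isomorphism. The transport here means Theorem~\ref{thm-B} on the source side, \cite[Proposition~3.2.11]{heyer2023left}, and \cite[Corollary~3.1.8]{heyer2023left}, all with the same untwisting conventions.
\item The transport must be a morphism of $\delta$-functors. The Mackey unwinding via Lemma~\ref{delta3} that you propose is not the right tool for this, because $L_U$ and $L_{K_U}$ are abstract left adjoints, not derived functors of left exact functors. The correct reason is that Heyer's isomorphisms are isomorphisms of triangulated functors: they are built from units, counits and mates of triangulated adjunctions, so they are compatible with shifts. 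Evaluating them on the triangle $V''\to V\to V'\to V''[1]$ then shows that they commute with connecting maps. The sign $(-1)^{\dim U}$ introduced by the shift in $\omega$ occurs on both source and target, so it does not affect compatibility with the classical connecting maps.
\end{enumerate}

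With this justification in place, your proof goes through. The source $\delta$-functor is effaceable as in Remark~\ref{deltafunctorremark}, and this is preserved by the exact functor $\ind^M_{M^+}$.
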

To prove this lemma, we need some preliminary results.

\vspace{0.2in}

We first record two compatibility results for mates that will be useful later. 
\begin{lem}[Compatibility of mates with units and counits]
\label{mate_unit_compatibility}
Consider the diagram of functors 
\begin{center}
\begin{tikzcd}
	{\mathcal{C}} & \perp & {\mathcal{D}} \\
	\\
	{\mathcal{C'}} & \perp & {\mathcal{D'}} 
	\arrow["{L}", curve={height=-12pt}, from=1-1, to=1-3]
	\arrow["F"', from=1-1, to=3-1]
	\arrow["{R}", curve={height=-12pt}, from=1-3, to=1-1]
	\arrow["G", from=1-3, to=3-3]
	\arrow["{L'}", curve={height=-12pt}, from=3-1, to=3-3]
	\arrow["{R'}", curve={height=-12pt}, from=3-3, to=3-1]
\end{tikzcd}
\end{center}
with the natural bijection of mates
\begin{equation*}
\mathrm{Nat}(L'F,GL)\overset{\cong}{\longleftrightarrow}\mathrm{Nat}(FR,R'G)
\end{equation*}
given by
\[
\alpha\longleftrightarrow\beta.
\]
Let $\eta,\epsilon$ (respectively, $\eta',\epsilon'$) denote the unit and counit of $L\dashv R$ (respectively, $L'\dashv R'$). Then the following diagrams of natural transformations commute:
\begin{center}
\[\begin{tikzcd}
	{L'FR} && GLR &&& F && {R'L'F} \\
	\\
	{L'R'G} && G &&& FRL && {R'GL}
	\arrow["{\alpha R}",Rightarrow, from=1-1, to=1-3]
	\arrow["{L'\beta}"', Rightarrow, from=1-1, to=3-1]
	\arrow["{G\epsilon}", Rightarrow, from=1-3, to=3-3]
	\arrow["{\eta'F}", Rightarrow, from=1-6, to=1-8]
	\arrow["{F\eta}"', Rightarrow, from=1-6, to=3-6]
	\arrow["{R'\alpha}", Rightarrow, from=1-8, to=3-8]
	\arrow["{\epsilon'G}"', Rightarrow, from=3-1, to=3-3]
	\arrow["{\beta L}"', Rightarrow, from=3-6, to=3-8]
\end{tikzcd}\]
\end{center}
\end{lem}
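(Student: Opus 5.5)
The plan is to verify both identities componentwise, directly from the definition of mates. Recall that $\beta$ is obtained from $\alpha$ as the composite
\[
FR\xRightarrow{\eta' FR} R'L'FR\xRightarrow{R'\alpha R} R'GLR\xRightarrow{R'G\epsilon} R'G,
\]
and dually $\alpha$ is recovered from $\beta$ as
\[
L'F\xRightarrow{L'F\eta} L'FRL\xRightarrow{L'\beta L} L'R'GL\xRightarrow{\epsilon' GL} GL.
\]
Both commutative squares follow by substituting one of these formulas and then using naturality together with the triangle identities.

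For the left square, I substitute the formula for $\beta$ into $\epsilon' G\circ L'\beta$. This gives the composite
\[
\epsilon'G\circ L'R'G\epsilon\circ L'R'\alpha R\circ L'\eta'FR.
\]
Naturality of $\epsilon'$ lets me move $\epsilon'$ past $L'R'G\epsilon$ and $L'R'\alpha R$. The composite then becomes
\[
G\epsilon\circ \alpha R\circ \epsilon'L'FR\circ L'\eta'FR.
\]
The triangle identity $\epsilon'L'\circ L'\eta'=\mathrm{id}_{L'}$, whiskered by $FR$, removes the last two factors. What remains is exactly $G\epsilon\circ\alpha R$, which is the other path around the square.

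The right square is the dual computation. I substitute the formula for $\alpha$ into $R'\alpha\circ\eta'F$. Naturality of $\eta'$ then moves $\eta'$ past $L'F\eta$ and $L'\beta L$, producing the composite
\[
R'\epsilon'GL\circ\eta'R'GL\circ\beta L\circ F\eta.
\]
The triangle identity $R'\epsilon'\circ\eta'R'=\mathrm{id}_{R'}$, whiskered by $GL$, reduces this to $\beta L\circ F\eta$, as required.

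The only real obstacle is bookkeeping: I must check that each naturality square is applied to the correctly whiskered transformation, for instance naturality of $\epsilon'$ with respect to the morphism $R'G\epsilon\circ R'\alpha R$ evaluated at each object. Objectwise, this is the statement $\epsilon'_Y\circ L'R'(g)=g\circ\epsilon'_X$ for $g\colon X\to Y$. Writing the argument componentwise at an object $c$ (respectively $d$) makes this transparent. No facts from the body of the paper are needed.
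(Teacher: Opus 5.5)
Your proposal is correct and takes the same approach as the paper, which likewise derives both squares from the explicit formula for the mate bijection (citing Heyer, Proposition~2.1.9), naturality of the units and counits, and the triangle identities. Your computations spell out the naturality and triangle-identity steps that the paper leaves implicit, and I checked that they are right.
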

\begin{proof}
The proof follows from the explicit formula for the bijection of mates in \cite[Proposition 2.1.9]{heyer2023left}, along with naturality of the units and counits and the triangle identities.
\end{proof}
\begin{lem}[Compatibility of mates with horizontal composition]
\label{mates}
Consider a diagram of functors
\begin{center}
\begin{tikzcd}
	{\mathcal{C}} & \perp & {\mathcal{D}} & \perp & {\mathcal{E}} \\
	\\
	{\mathcal{C'}} & \perp & {\mathcal{D'}} & \perp & {\mathcal{E'}}
	\arrow["{L_1}", curve={height=-12pt}, from=1-1, to=1-3]
	\arrow["F"', from=1-1, to=3-1]
	\arrow["{R_1}", curve={height=-12pt}, from=1-3, to=1-1]
	\arrow["{L_2}", curve={height=-12pt}, from=1-3, to=1-5]
	\arrow["G", from=1-3, to=3-3]
	\arrow["{R_2}", curve={height=-12pt}, from=1-5, to=1-3]
	\arrow["H", from=1-5, to=3-5]
	\arrow["{L_1'}", curve={height=-12pt}, from=3-1, to=3-3]
	\arrow["{R_1'}", curve={height=-12pt}, from=3-3, to=3-1]
	\arrow["{L_2'}", curve={height=-12pt}, from=3-3, to=3-5]
	\arrow["{R_2'}", curve={height=-12pt}, from=3-5, to=3-3]
\end{tikzcd}
\end{center}
Given the natural bijection from the left square
\begin{equation*}
\mathrm{Nat}(L_1'F,GL_1)\overset{\cong}{\longleftrightarrow}\mathrm{Nat}(FR_1,R_1'G)
\end{equation*}
with $$\alpha_1 \longleftrightarrow \beta_1,$$ and the natural bijection from the right square
\begin{equation*}
\mathrm{Nat}(L_2'G,HL_2)\overset{\cong}{\longleftrightarrow}\mathrm{Nat}(GR_2,R_2'H)
\end{equation*}
with $$\alpha_2\longleftrightarrow \beta_2,$$ the natural bijection from the combined square 
\begin{equation*}
\mathrm{Nat}(L_2'L_1'F,HL_2L_1)\overset{\cong}{\longleftrightarrow}\mathrm{Nat}(FR_1R_2,R_1'R_2'H)
\end{equation*}
satisfies $$\alpha_2L_1\circ L_2'\alpha_1\longleftrightarrow R_1'\beta_2\circ \beta_1R_2$$
\end{lem}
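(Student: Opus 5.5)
We reduce the statement to a computation with units and counits of adjunctions. Write $\alpha_1,\beta_1$ for the mates attached to the square formed by the adjunctions $L_1\dashv R_1$, $L_1'\dashv R_1'$ and the functors $F,G$, and $\alpha_2,\beta_2$ for those attached to $L_2\dashv R_2$, $L_2'\dashv R_2'$ and $G,H$. The composite adjunctions $L_2L_1\dashv R_1R_2$ and $L_2'L_1'\dashv R_1'R_2'$ have units and counits
\[
\eta=R_1\eta_2L_1\circ\eta_1,\qquad \epsilon=\epsilon_2\circ L_2\epsilon_1R_2,
\]
and similarly for the primed versions. We use the explicit mate formula of \cite[Proposition 2.1.9]{heyer2023left}: the mate of $\alpha:L'F\Rightarrow GL$ is
\[
\beta=R'G\epsilon\circ R'\alpha R\circ \eta' FR .
\]
Applying this to the composite square with $\alpha=\alpha_2L_1\circ L_2'\alpha_1$, the mate is the vertical composite
\[
R_1'R_2'H\epsilon_2\circ R_1'R_2'HL_2\epsilon_1R_2\circ R_1'R_2'\alpha_2L_1R_1R_2\circ R_1'R_2'L_2'\alpha_1R_1R_2\circ R_1'\eta_2'L_1'FR_1R_2\circ \eta_1'FR_1R_2 .
\]
The goal is to show that this equals $R_1'\beta_2\circ\beta_1R_2$ after expanding $\beta_1$ and $\beta_2$ by the same formula.

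The main step is an interchange-law rearrangement. We slide the two middle whiskered transformations $R_1'R_2'\alpha_2L_1R_1R_2$ and $R_1'R_2'HL_2\epsilon_1R_2$ past each other. This is justified by naturality of $\alpha_2$ with respect to the morphism $\epsilon_1R_2:L_1R_1R_2\Rightarrow R_2$, which gives
\[
HL_2\epsilon_1R_2\circ\alpha_2L_1R_1R_2=\alpha_2R_2\circ L_2'G\epsilon_1R_2 .
\]
Similarly, naturality of $\eta_2'$ lets us move $R_1'\eta_2'$ past the block $G\epsilon_1R_2\circ \alpha_1R_1R_2$. After these moves the expression factors as
\[
R_1'\bigl(R_2'H\epsilon_2\circ R_2'\alpha_2R_2\circ\eta_2'GR_2\bigr)\circ\bigl(R_1'G\epsilon_1\circ R_1'\alpha_1R_1\circ\eta_1'FR_1\bigr)R_2 ,
\]
which is exactly $R_1'\beta_2\circ\beta_1R_2$.

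I expect the main obstacle to be bookkeeping: keeping track of the whiskerings and checking that each swap is a genuine instance of naturality or of the interchange law. To control this, I will draw the composite as a string diagram, or a pasting diagram of squares. In that form the claim is the standard fact that mates are compatible with pasting, and each step is visibly a slide of a box along a string. The triangle identities are not needed here, unlike in Lemma \ref{mate_unit_compatibility}.
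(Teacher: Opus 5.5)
Your proposal is correct. The paper gives no argument of its own; it simply cites \cite[Proposition 2.2]{2-Cat}, the standard fact that the mates correspondence respects pasting of squares. You instead give a self-contained verification, and it checks out. Inserting the composite unit $R_1'\eta_2'L_1'\circ\eta_1'$ and counit $\epsilon_2\circ L_2\epsilon_1R_2$ into the mate formula of \cite[Proposition 2.1.9]{heyer2023left} gives exactly your six-term composite. The interchange identity $HL_2\epsilon_1R_2\circ\alpha_2L_1R_1R_2=\alpha_2R_2\circ L_2'G\epsilon_1R_2$ is correct. So is the subsequent naturality of $\eta_2'$ against $\theta\coloneqq G\epsilon_1R_2\circ\alpha_1R_1R_2$, namely $R_1'R_2'L_2'\theta\circ R_1'\eta_2'L_1'FR_1R_2=R_1'\eta_2'GR_2\circ R_1'\theta$. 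Together these regroup the composite as $R_1'\beta_2\circ\beta_1R_2$, and, as you say, no triangle identity is needed. The citation buys brevity and the more general pasting statement, covering vertical as well as horizontal composition of squares. Your computation buys self-containment and uses exactly the conventions the paper relies on elsewhere: Heyer's explicit mate formula, as in Lemma \ref{mate_unit_compatibility}, and the standard composite adjunction $L_2L_1\dashv R_1R_2$. So nothing has to be reconciled between the conventions of different sources.
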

For a proof, we refer to \cite[Proposition 2.2]{2-Cat}.

\vspace{0.2in}
\textit{Notation.} Throughout the commutative diagrams below, whenever a proposition, lemma, or corollary from \cite{heyer2023left} labels an arrow, that arrow denotes the corresponding natural transformation or morphism constructed there.
\\

We now promote the isomorphism in Proposition \ref{Exp1} to the derived level:
\begin{prop}
\label{Exp2}
There is a natural isomorphism of functors $D(K_P)\rightarrow D(M)$ $$\Gamma: \ind^M_{K_M}\mathrm{R}H^0(K_U,-)\Rightarrow \ind^M_{M^+}\mathrm{R}H^0(K_U,-)\Res^P_{P^+}\ind^P_{K_P}$$ which on $i$-th cohomology is given by $\kappa^{-1}$ as above.
\end{prop}
\begin{proof}
From \cite[equation (3.4.14)]{heyer2023left}, we have a natural transformation 
$$\gamma_1: \ind^{M^+}_{K_M}\mathrm{R}H^0(K_U,-)\xRightarrow[]{\cong} \mathrm{R}H^0(K_U,-)\ind^{P^+}_{K_P}$$
We also have the natural transformation
\begin{align*}
\gamma_2: \ind^M_{M^+}\mathrm{R}H^0(K_U,-)\ind^{P^+}_{K_P}&\xRightarrow[]{\eta} \ind^M_{M^+}\mathrm{R}H^0(K_U,-)\Res^P_{P^+}\ind^P_{P^+}\ind^{P^+}_{K_P}\\
&\qquad\cong \ind^M_{M^+}\mathrm{R}H^0(K_U,-)\Res^P_{P^+}\ind^{P}_{K_P}
\end{align*}
induced by the natural transformation $\eta:\Id_{P^+}\Rightarrow \Res^P_{P^+}\ind^P_{P^+}$. Now, we let $$\Gamma:=\gamma_2\circ \ind^M_{M^+}(\gamma_1).$$ From the proof of \cite[Lemma 3.4.13]{heyer2023left}, we see that $\gamma_1$ is a natural isomorphism which on $i$-th cohomology is given by $\alpha^{-1}$ from Lemma \ref{Exp1Step1}. 
Also, we see that, by definition of $\phi$ (from the proof of Lemma \ref{Exp1Step2}), $H^i(\gamma_2)=\phi$. 
Since $\phi$ was an isomorphism ($\phi=f^{-1}$), we have that $\gamma_2$ is also an isomorphism. Now the map $\kappa=\ind^M_{M^+}(\alpha)\circ f\Rightarrow$  $\kappa^{-1}=f^{-1}\circ \ind^M_{M^+}(\alpha^{-1})=\phi\circ \ind^M_{M^+}(\alpha^{-1})$. Thus, we see that $\Gamma$ is also a natural isomorphism which on $i$-th cohomology is given by $\kappa^{-1}$.
\end{proof}
\begin{remark}
\label{rem1}
By \cite[Lemma 3.4.13]{heyer2023left}, the right adjoint of the isomorphism $\gamma_1$ is precisely the isomorphism $\Res^{P^+}_{K_P}F^{P^+}_{M^+}\xRightarrow[]{\cong}F^{K_P}_{K_M}\Res^{M^+}_{K_M}$. Thus, by the definition of $\Gamma$, taking right adjoints gives the following commutative diagram (we denote the morphism induced by $\Gamma$ on right adjoints by $R(\Gamma)$):
\begin{center}
\begin{tikzcd}
	{\Res^P_{K_P}\RInd^P_{P^+}F^{P^+}_{M^+}\Res^M_{M^+}} && {F^{K_P}_{K_M}\Res^M_{K_M}} \\
	{\Res^{P^+}_{K_P}\Res^P_{P^+}\RInd^P_{P^+}F^{P^+}_{M^+}\Res^M_{M^+}} \\
	{\Res^{P^+}_{K_P}F^{P^+}_{M^+}\Res^M_{M^+}} && {F^{K_P}_{K_M}\Res^{M^+}_{K_M}\Res^M_{M^+}}
	\arrow[swap, Rightarrow, from=1-1, to=1-3]{ur}{\cong}[swap]{R(\Gamma)}
	\arrow["\cong", Rightarrow, from=1-1, to=2-1]
	\arrow["\epsilon", Rightarrow, from=2-1, to=3-1]
	\arrow[swap, Rightarrow, from=3-1, to=3-3]{ur}{ \mathrm{Lemma}\; 3.4.13}[swap]{\cong}
	\arrow["\cong"', Rightarrow, from=3-3, to=1-3]
\end{tikzcd}
\end{center}
\end{remark}
We can now slightly modify $\Gamma$ by precomposing it with $\omega_{K_P}\otimes_{k}-:D(K_P)\rightarrow D(K_P)$ to get a natural isomorphism
$$\Gamma\circ (\omega_{K_P}\otimes_k-):\ind^M_{K_M}\mathrm{R}H^0(K_U,\omega_{K_P}\otimes_{k}-)\xRightarrow[]{\cong} \ind^M_{M^+}\mathrm{R}H^0(K_U,-)\Res^P_{P^+}\ind^P_{K_P}(\omega_{K_P}\otimes_{k}-).$$
From the fact that $\omega_{K_P}=\Res^{P^+}_{K_P}\omega_{P^+}$ (by \cite[Lemma 3.4.13]{heyer2023left}) and $\omega_{P^+}=\Res^P_{P^+}\omega_P$ (by \cite[Remark 3.4.21]{heyer2023left}), we have natural isomorphisms $\Res^{P^+}_{K_P}\circ (\omega_{P^+}^{-1}\otimes_k -)\cong (\omega_{K_P}^{-1}\otimes_k -)\circ \Res^{P^+}_{K_P}$ and $\Res^{P}_{P^+}\circ (\omega_{P}^{-1}\otimes_k -)\cong (\omega_{P^+}^{-1}\otimes_k -)\circ \Res^{P}_{P^+}$. Upon taking left adjoints, we get natural isomorphisms $(\omega_{P^+}\otimes_k-) \circ \ind^{P^+}_{K_P}\cong \ind^{P^+}_{K_P}\circ (\omega_{K_P}\otimes_k -)$ and $(\omega_{P}\otimes_k-) \circ \ind^{P}_{P^+}\cong \ind^{P}_{P^+}\circ (\omega_{P^+}\otimes_k -)$. Thus, we get an isomorphism of functors $D(K_P)\rightarrow D(M)$:
$$\tilde{\Gamma}:\ind^M_{K_M}\mathrm{R}H^0(K_U,\omega_{K_P}\otimes_{k}-)\xRightarrow[]{\cong} \ind^M_{M^+}\mathrm{R}H^0(K_U,\omega_{P^+}\otimes_{k}-)\Res^P_{P^+}\ind^P_{K_P}$$ given by (as in the proof of the above proposition) $\tilde{\Gamma}=\tilde{\gamma_2}\circ \ind^M_{M^+}(\tilde{\gamma_1})$ where 
$$\tilde{\gamma_1}: \ind^{M^+}_{K_M}\mathrm{R}H^0(K_U, \omega_{K_P}\otimes_k -)\xRightarrow[]{\cong}\mathrm{R}H^0(K_U, \omega_{P^+}\otimes_k -)\ind^{P^+}_{K_P}$$ and 
$$\tilde{\gamma_2}: \ind^M_{M^+}\mathrm{R}H^0(K_U, \omega_{P^+}\otimes_k -)\ind^{P^+}_{K_P}\xRightarrow[]{\cong}\ind^M_{M^+}\mathrm{R}H^0(K_U, \omega_{P^+}\otimes_k -)\Res^P_{P^+}\ind^P_{K_P}.$$

\begin{prop}
\label{diag}
The following diagram of functors $D(K_P)\rightarrow D(M)$ commutes up to natural isomorphism: 
\begin{center}
\begin{tikzcd}
\ind^M_{M^+}L_{K_U}\Res^P_{P^+}\ind^P_{K_P} \arrow[d,swap, Rightarrow]{ur}{\cong}[swap]{\mathrm{Proposition}\; 3.4.6} &  &  & {\ind^M_{M^+}\mathrm{R}H^0(K_U, \omega_{P^+}\otimes_k -)\Res^P_{P^+}\ind^P_{K_P}} \arrow[lll, swap, Rightarrow]{ur}{\mathrm{Corollary}\;   3.4.20}[swap]{\cong}                      \\
L_U\ind^P_{K_P} \arrow[d, swap,Rightarrow]{ur}{\cong}[swap]{ \mathrm{Proposition}\; 3.2.11}                            &  &  &                                                                                                                                                             \\
\ind^M_{K_M}L_{K_U}                                                                                   &  &  & {\ind^M_{K_M}\mathrm{R}H^0(K_U, \omega_{K_P}\otimes_k -)} \arrow[uu, swap,Rightarrow]{ur}{\tilde{\Gamma}}[swap]{\cong} \arrow[lll,swap,Rightarrow]{ur}{\mathrm{Corollary}\; 3.1.8}[swap]{\cong}
\end{tikzcd}
\end{center}
\end{prop}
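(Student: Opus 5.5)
We propose to verify the commutativity of the diagram in Proposition \ref{diag} by working with the right adjoints of all the functors involved, since every arrow in the square is built from Heyer's constructions via mates. Concretely, the left column (Proposition 3.4.6 and Proposition 3.2.11 of \cite{heyer2023left}) and the horizontal arrows (Corollary 3.4.20 and Corollary 3.1.8) are all defined as mates of isomorphisms between right adjoints, namely composites of $\Res$, $\RInd$, $\RInf$ and twists by $\omega^{-1}$. The arrow $\tilde\Gamma$ has right adjoint $R(\tilde\Gamma)$ described (up to the $\omega$-twist) in Remark \ref{rem1}: it is the composite of the restriction isomorphism, the counit $\epsilon$ of $\Res^P_{P^+}\dashv\RInd^P_{P^+}$, and the isomorphism of \cite[Lemma 3.4.13]{heyer2023left}. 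By the uniqueness of left adjoints, and the fact that passing to mates is a bijection compatible with composition, it suffices to show that the corresponding square of right adjoint natural isomorphisms commutes. This is a square of functors $D(M)\to D(K_P)$.

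The steps would be as follows. First, we twist Remark \ref{rem1} by $\omega^{-1}$, using $\omega_{K_P}=\Res^{P^+}_{K_P}\omega_{P^+}$ and $\omega_{P^+}=\Res^P_{P^+}\omega_P$, to describe $R(\tilde\Gamma)$ explicitly. Second, we unwind the right adjoints of the two horizontal isomorphisms. Corollary 3.1.8 identifies $\RInf^{K_P}_{K_M}$ with $\omega_{K_P}^{-1}\otimes F^{K_P}_{K_M}$. Corollary 3.4.20 identifies $\Res^M_{M^+}$-restricted inflation from $P^+$ similarly. Third, we unwind the left column. Its right adjoint is
\[
\Res^P_{K_P}\RInf^P_M \cong \RInf^{K_P}_{K_M}\Res^M_{K_M},
\]
which is the obvious compatibility of inflation with restriction. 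The other piece, Proposition 3.4.6, comes from $\Res^M_{M^+}$ and the counit for $\ind^M_{M^+}\dashv\Res^M_{M^+}$. Here we would apply Lemma \ref{mates} to split the composite squares into elementary pieces, and Lemma \ref{mate_unit_compatibility} to move the units and counits (in particular $\epsilon$ in Remark \ref{rem1} and $\eta$ in $\gamma_2$) across mates.

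After these reductions, each path around the square becomes a composite of canonical isomorphisms ``restriction of inflation is inflation of restriction'' together with one counit. Commutativity should then follow from naturality of counits and the triangle identities. The main obstacle is bookkeeping. We must check that Heyer's isomorphism of Lemma 3.4.13 and the one in Corollary 3.4.20 are built from the same underlying identification $F^{P^+}_{M^+}\cong\omega_{P^+}\otimes\RInf$, so that the two resulting counits match. We would first try to reduce everything to the level of the underived functors $F^{(-)}_{(-)}$ on $K_U$-invariants, where both sides can be compared on representations directly. Only afterwards would we pass back to the derived setting via injective resolutions.
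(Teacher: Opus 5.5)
Your reduction to right adjoints is legitimate. Conjugation $\mathrm{Nat}(L,L')\cong\mathrm{Nat}(R',R)$ is a bijection compatible with vertical composition and, by Lemma~\ref{mates}, with whiskering, so the square commutes if and only if the conjugate square of functors $D(M)\to D(K_P)$ does. This organizes the argument differently from the paper. The paper stays on the left-adjoint side for the $M^+$-level square (Lemmas~\ref{diag0} and~\ref{diag1}, via Lemma~\ref{mates} and \cite[Example~2.1.12]{heyer2023left}), and passes to right adjoints only in Lemma~\ref{diag2}. On the right-adjoint side the horizontal arrows become, up to the $\omega$-twist, Heyer's Proposition~3.4.18 and Corollary~3.1.7 themselves rather than their left mates. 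So the work of Lemma~\ref{diag0} largely disappears, and once completed your route would be shorter.

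One correction: the arrow of Proposition~3.4.6 is the mate of $S_\eta$. It is built from the unit $\eta\colon\Id\Rightarrow\RInd^P_{P^+}\Res^P_{P^+}$, which is invertible by \cite[Lemma~3.4.5(i)]{heyer2023left}, and not from a counit of $\ind^M_{M^+}\dashv\Res^M_{M^+}$. It is exactly this unit that the counit $\epsilon$ in $R(\tilde\Gamma)$ cancels via a triangle identity, as in Lemma~\ref{diag2}.

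The genuine gap is the one step that is not formal. After naturality of $\epsilon$ and the triangle identity, what remains (up to twisting by $\omega_{K_P}^{-1}$ and precomposing with $\Res^M_{M^+}$) is that the two isomorphisms
\[
\begin{aligned}
&(\omega_{K_P}\otimes_k-)\Res^{P^+}_{K_P}\Inf^{P^+}_{M^+}
\cong\Res^{P^+}_{K_P}(\omega_{P^+}\otimes_k-)\Inf^{P^+}_{M^+}\\
&\qquad\xRightarrow{\,3.4.18\,}\Res^{P^+}_{K_P}F^{P^+}_{M^+}
\xRightarrow{\,3.4.13\,}F^{K_P}_{K_M}\Res^{M^+}_{K_M}
\end{aligned}
\]
and
\[
(\omega_{K_P}\otimes_k-)\Res^{P^+}_{K_P}\Inf^{P^+}_{M^+}
\cong(\omega_{K_P}\otimes_k-)\Inf^{K_P}_{K_M}\Res^{M^+}_{K_M}
\xRightarrow{\,3.1.7\,}F^{K_P}_{K_M}\Res^{M^+}_{K_M}
\]
coincide. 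You call this bookkeeping and propose to verify it on representations through underived versions of the $F$'s, then pass back via injective resolutions. That does not work, for three reasons:
\begin{itemize}
\item $F^{K_P}_{K_M}$ and $F^{P^+}_{M^+}$ are right adjoints of derived invariants on unbounded derived categories and have no abelian-level counterpart; $K_U$-invariants is not right exact, so it has no right adjoint on $\Rep_k$.
\item Their identifications with $\omega\otimes\Inf$ come from duality and involve the shift by $-\dim U$.
\item Two natural isomorphisms of functors on $D$ that agree on representations need not be equal.
\end{itemize}
The compatibility is instead a feature of how Heyer constructs the isomorphism: it is the commutative diagram in the proof of \cite[Proposition~3.4.18]{heyer2023left} with $X=\mathbf 1$. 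In the notation of Lemma~\ref{diag1} this says $\tilde\beta\circ\beta_1=\tilde\delta\circ\delta_1$, and it is exactly what the paper invokes at this point. With that citation in place of your fallback, your right-adjoint argument goes through.
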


To prove the above proposition, we first prove some intermediate lemmas. Consider the morphism 
$$\beta_1:(\omega_{K_P}\otimes_k-)\Res^{P^+}_{K_P}\Inf^{P^+}_{M^+}\xRightarrow[]{\cong}\Res^{P^+}_{K_P}(\omega_{P^+}\otimes_k -)\Inf^{P^+}_{M^+}\xRightarrow[]{\cong}\Res^{P^+}_{K_P}F^{P^+}_{M^+}$$ where the latter isomorphism comes from \cite[Proposition 3.4.18]{heyer2023left} and its left mate (with $F=\omega_{K_P}\otimes_k-$ and $G=\Id$) 
$$L(\beta_1):\mathrm{R}H^0(K_U,-)\ind^{P^+}_{K_P}(\omega_{K_P}\otimes_k -)\Rightarrow L_{K_U}\ind^{P^+}_{K_P}.$$
\begin{lem}
\label{diag0}
The following diagram of functors $D(K_P)\rightarrow D(M^+)$ commutes up to natural isomorphism:
\begin{center}
\begin{tikzcd}
{\mathrm{R}H^0(K_U,-)\ind^{P^+}_{K_P}(\omega_{K_P}\otimes_k -)} \arrow[rr, "\cong", Rightarrow] \arrow[rrdd, "L(\beta_1)"', Rightarrow] &  & {\mathrm{R}H^0(K_U,-)(\omega_{P^+}\otimes_k-)\ind^{P^+}_{K_P}} \arrow[dd, swap, Rightarrow]{ur}{\cong}[swap]{\mathrm{Corollary}\;   3.4.20} \\
                                                                                                                               &  &                                                                                                               \\
                                                                                                                               &  & L_{K_U}\ind^{P^+}_{K_P}                                                                                      
\end{tikzcd}     
\end{center}
\end{lem}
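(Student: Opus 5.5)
We will prove Lemma~\ref{diag0} by comparing both composites after passing to right adjoints. All functors in the triangle are left adjoints, and a natural transformation between left adjoints is determined by its mate between the right adjoints. So it suffices to show that the two corresponding natural transformations between the right adjoints agree.

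We first identify the right adjoints of the three edges.
\begin{enumerate}
\item Source $\mathrm{R}H^0(K_U,-)\ind^{P^+}_{K_P}(\omega_{K_P}\otimes_k-)$: its right adjoint is $(\omega_{K_P}^{-1}\otimes_k-)\Res^{P^+}_{K_P}\Inf^{P^+}_{M^+}$, since the right adjoint of $\mathrm{R}H^0(K_U,-)$ is inflation.
\item Intermediate term $\mathrm{R}H^0(K_U,-)(\omega_{P^+}\otimes_k-)\ind^{P^+}_{K_P}$: its right adjoint is $\Res^{P^+}_{K_P}(\omega_{P^+}^{-1}\otimes_k-)\Inf^{P^+}_{M^+}$.
\item Target $L_{K_U}\ind^{P^+}_{K_P}$: its right adjoint is $\Res^{P^+}_{K_P}F^{P^+}_{M^+}$.
\end{enumerate}

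Next we identify the mates of the edges.
\begin{enumerate}
\item The top horizontal isomorphism is, by construction, the left mate of the canonical isomorphism $\Res^{P^+}_{K_P}\circ(\omega_{P^+}^{-1}\otimes-)\cong(\omega_{K_P}^{-1}\otimes-)\circ\Res^{P^+}_{K_P}$ coming from $\omega_{K_P}=\Res^{P^+}_{K_P}\omega_{P^+}$, whiskered by $\Inf^{P^+}_{M^+}$.
\item The vertical arrow is Heyer's \cite[Corollary~3.4.20]{heyer2023left}. We will check from its construction that it is the mate of the isomorphism $(\omega_{P^+}\otimes-)\Inf^{P^+}_{M^+}\cong F^{P^+}_{M^+}$ of \cite[Proposition~3.4.18]{heyer2023left}, after whiskering by $\Res^{P^+}_{K_P}$ and $\ind^{P^+}_{K_P}$.
\item The diagonal $L(\beta_1)$ is by definition the left mate of $\beta_1$.
\end{enumerate}

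The key step is to apply Lemma~\ref{mates}, compatibility of mates with horizontal and vertical composition. The composite of the top and right edges then has as mate the composite of the two right-adjoint isomorphisms in the opposite order:
$$(\omega_{K_P}\otimes-)\Res^{P^+}_{K_P}\Inf^{P^+}_{M^+}\cong\Res^{P^+}_{K_P}(\omega_{P^+}\otimes-)\Inf^{P^+}_{M^+}\cong\Res^{P^+}_{K_P}F^{P^+}_{M^+}.$$
This is precisely $\beta_1$, up to the convention of writing it with $\omega$ or $\omega^{-1}$, which is where $F=\omega_{K_P}\otimes_k-$ enters. Since mates are a bijection, the two left-adjoint transformations coincide, and the triangle commutes.

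We expect the main obstacle to be unwinding \cite[Corollary~3.4.20]{heyer2023left}. The risk is that the corollary was obtained there by a chain (Proposition 3.4.18 together with Proposition 3.4.6) rather than directly as a mate. In that case the plan is to expand each link of the chain into its mate, using Lemma~\ref{mate_unit_compatibility} to handle the unit/counit steps. Then the composite can be regrouped via Lemma~\ref{mates}. A secondary bookkeeping point is that the mate bijection with $F=\omega_{K_P}\otimes-$ must be taken in a square whose left adjoint is $\omega_{K_P}\otimes-$ and whose right adjoint is $\omega_{K_P}^{-1}\otimes-$. We must ensure the isomorphisms $\omega\otimes\omega^{-1}\cong k$ are used consistently on both sides; they cancel by the triangle identities.
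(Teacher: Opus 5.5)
Your key step is correct and gives a shorter proof than the paper's. However, the set-up before it contains a false claim that you should remove.

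\textbf{The false claim.} The right adjoint of $\mathrm{R}H^0(K_U,-)\colon D(P^+)\to D(M^+)$ is not inflation. In this setting $\Inf^{P^+}_{M^+}$ is right adjoint to $L_{K_U}$. By contrast, $\mathrm{R}H^0(K_U,-)$ is left adjoint to $F^{P^+}_{M^+}\cong(\omega_{P^+}\otimes_k-)\Inf^{P^+}_{M^+}$, which is a shifted twist of inflation; this is why Heyer's Theorem~B involves the twist by $\omega_{P^+}$.

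Your list therefore interchanges $\Inf^{P^+}_{M^+}$ and $F^{P^+}_{M^+}$. The correct right adjoints of the three vertices are $(\omega_{K_P}^{-1}\otimes_k-)\Res^{P^+}_{K_P}F^{P^+}_{M^+}$, $\Res^{P^+}_{K_P}(\omega_{P^+}^{-1}\otimes_k-)F^{P^+}_{M^+}$ and $\Res^{P^+}_{K_P}\Inf^{P^+}_{M^+}$. With your list, the conjugates of the edges would not have the sources and targets you assign them.

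\textbf{Why the key step survives.} The key step never uses that framing. It uses the mate correspondence with vertical functor $\omega_{K_P}\otimes_k-$. There $\beta_1\colon(\omega_{K_P}\otimes_k-)\Res\Inf\Rightarrow\Res F^{P^+}_{M^+}$ has the correct shape, since $\Res\Inf$ is right adjoint to $L_{K_U}\ind$ and $\Res F^{P^+}_{M^+}$ is right adjoint to $\mathrm{R}H^0(K_U,-)\ind$. You should make the pasting explicit:
top row $D(K_P)\xrightarrow{\ind}D(P^+)\xrightarrow{L_{K_U}}D(M^+)$;
bottom row $D(K_P)\xrightarrow{\ind}D(P^+)\xrightarrow{\mathrm{R}H^0(K_U,-)}D(M^+)$;
vertical functors $\omega_{K_P}\otimes_k-$, $\omega_{P^+}\otimes_k-$, $\Id$.
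Let $A\colon(\omega_{P^+}\otimes_k-)\Inf^{P^+}_{M^+}\cong F^{P^+}_{M^+}$ be the isomorphism of Heyer's Proposition~3.4.18. Lemma~\ref{mates} then shows that the top arrow followed by $L(A)\ind$ has mate $\Res A\circ(\mathrm{iso})\Inf=\beta_1$. Hence this composite equals $L(\beta_1)$.

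\textbf{Comparison with the paper.} The paper does not paste the $\omega$-twisted $\ind\dashv\Res$ square. Instead it proceeds in three stages.
First, it applies Lemma~\ref{mates} twice in degenerate configurations, where one of the two left-hand adjunctions is the identity. This gives $L(A_1)=L(A)\circ\epsilon$ for $A_1=\Res A$, and $L(\beta_1)=L(A_2)\ind\circ\eta$ for $A_2=\beta_1$ viewed as a mate with vertical functor $(\omega_{K_P}\otimes_k-)\Res$.
Second, it relates $L(A_1)$ and $L(A_2)$ through $(\omega_{K_P}\otimes_k-)\Res\cong\Res(\omega_{P^+}\otimes_k-)$.
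Third, it checks that the resulting $\epsilon\,\ind\circ\mathrm{iso}\circ\eta$ is the top arrow, using naturality and the counit compatibility of Lemma~\ref{mate_unit_compatibility}.

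That final check is the unit/counit formula for the mate in your left square, i.e.\ exactly the input your pasting needs (your ``secondary bookkeeping point''). The paper likewise takes for granted that the top arrow is the mate of the canonical $\omega$-isomorphism when it invokes Lemma~\ref{mate_unit_compatibility}. It also simply cites Heyer for Corollary~3.4.20 being the left mate of $A$, so the unwinding of Heyer's chain that you anticipate is not required.

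Your route is briefer and avoids comparing mates taken with respect to different vertical functors. The paper's route makes the unit/counit bookkeeping explicit.
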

\begin{proof}
By \cite[Corollary 3.4.20]{heyer2023left}, the left mate of the natural isomorphism 
$$A: (\omega_{P^+}\otimes_k -) \Inf^{P^+}_{M^+}\xRightarrow[]{\cong} F^{P^+}_{M^+}$$ taking $F=\omega_{P^+}\otimes_k -, G= \Id$ is 
$$L(A):\mathrm{R}H^0(K_U,-)(\omega_{P^+}\otimes_k-)\xRightarrow[]{\cong}L_{K_U}.$$
Now, the left mate of 
$$A_1=\Res^{P^+}_{K_P}A: \Res^{P^+}_{K_P}(\omega_{P^+}\otimes_k -) \Inf^{P^+}_{M^+}\xRightarrow[]{\cong} \Res^{P^+}_{K_P}F^{P^+}_{M^+}$$ 
taking $F= \Res^{P^+}_{K_P}(\omega_{P^+}\otimes_k -), G=\Id$ is
$$L(A_1):\mathrm{R}H^0(K_U,-)\ind^{P^+}_{K_P}\Res^{P^+}_{K_P}(\omega_{P^+}\otimes_k-)\Rightarrow L_{K_U}.$$ 
The left mates $L(A)$ and $L(A_1)$ are then related by the commutativity of the following diagram:
\begin{center}
\begin{tikzcd}
{\mathrm{R}H^0(K_U,-)\ind^{P^+}_{K_P}\Res^{P^+}_{K_P}(\omega_{P^+}\otimes_k-)} \arrow[rr, "\epsilon", Rightarrow] \arrow[rrd, "L(A_1)"', Rightarrow] &  & {\mathrm{R}H^0(K_U,-)(\omega_{P^+}\otimes_k-)} \arrow[d, "L(A)", Rightarrow] \\
                                                                                                                                            &  & L_{K_U}                                                            
\end{tikzcd}
\end{center}
This follows from Lemma \ref{mates} by taking $F=\Res^{P^+}_{K_P}(\omega_{P^+}\otimes_k-)$, $G= \omega_{P^+}\otimes_k-$, $H=\Id$, $L_1=R_1=\mathrm{Id}$, $L_1'=\ind^{P^+}_{K_P}$, $R_1'=\Res^{P^+}_{K_P}$, $L_2=L_{K_U}$, $R_2=\Inf^{P^+}_{M^+}$, $L_2'=\mathrm{R}H^0(K_U,-)$, $R_2'=F^{P^+}_{M^+}$ and appropriate choices of the relevant categories.

Now, the left mate of 
$$A_2=\beta_1: (\omega_{K_P}\otimes_k-)\Res^{P^+}_{K_P}\Inf^{P^+}_{M^+}\xRightarrow[]{\cong}\Res^{P^+}_{K_P}F^{P^+}_{M^+}$$ 
taking $F=(\omega_{K_P}\otimes_k-)\Res^{P^+}_{K_P}$ and $G=\mathrm{Id}$ is 
$$L(A_2): \mathrm{R}H^0(K_U,-)\ind^{P^+}_{K_P}(\omega_{K_P}\otimes_k -)\Res^{P^+}_{K_P}\Rightarrow L_{K_U}.$$
The left mates $L(A_2)$ and $L(\beta_1)$ are related by the commutativity of the following diagram:
\begin{center}
\begin{tikzcd}
{\mathrm{R}H^0(K_U,-)\ind^{P^+}_{K_P}(\omega_{K_P}\otimes_k-)} \arrow[rr, "\eta", Rightarrow] \arrow[rrd, "L(\beta_1)"', Rightarrow] &  & {\mathrm{R}H^0(K_U,-)\ind^{P^+}_{K_P}(\omega_{K_P}\otimes_k-)\Res^{P^+}_{K_P}\ind^{P^+}_{K_P}} \arrow[d, "L(A_2)\ind^{P^+}_{K_P}", Rightarrow] \\
                                                                                                                            &  & L_{K_U}\ind^{P^+}_{K_P}                                                                                                              
\end{tikzcd}
\end{center}
This follows from Lemma \ref{mates} by taking $F=\omega_{K_P}\otimes_k-$, $G=(\omega_{K_P}\otimes_k-)\Res^{P^+}_{K_P}$, $H=\mathrm{Id}$, $L_1=\ind^{P^+}_{K_P}$, $R_1=\Res^{P^+}_{K_P}$, $L_1'= R_1'=\mathrm{Id}$, $L_2=L_{K_U}$, $R_2=\Inf^{P^+}_{M^+}$, $L_2'=\mathrm{R}H^0(K_U,-)\ind^{P^+}_{K_P}$, $R_2'=\Res^{P^+}_{M^+}F^{P^+}_{M^+}$ and appropriate choices of the relevant categories.
From the commutative diagram
\begin{center}
\begin{tikzcd}
\Res^{P^+}_{K_P}(\omega_{P^+}\otimes_k-)\Inf^{P^+}_{M^+} \arrow[rr, "A_1", Rightarrow]                                     &  & \Res^{P^+}_{K_P}F^{P^+}_{M^+} \\
                                                                                                                           &  &                               \\
(\omega_{K_P}\otimes_k -)\Res^{P^+}_{K_P}\Inf^{P^+}_{M^+} \arrow[uu, "\cong", Rightarrow] \arrow[rruu, "A_2"', Rightarrow] &  &                              
\end{tikzcd}
\end{center}
we find that the corresponding diagram of left mates commutes:
\begin{center}
\begin{tikzcd}
{\mathrm{R}H^0(K_U,-)\ind^{P^+}_{K_P}\Res^{P^+}_{K_P}(\omega_{P^+}\otimes_k-)} \arrow[rr, "L(A_1)", Rightarrow]                                    &  & L_{K_U} \\
                                                                                                                                          &  &         \\
{\mathrm{R}H^0(K_U,-)\ind^{P^+}_{K_P}(\omega_{K_P}\otimes_k-)\Res^{P^+}_{K_P}} \arrow[uu, "\cong", Rightarrow] \arrow[rruu, "L(A_2)"', Rightarrow] &  &        
\end{tikzcd}
\end{center}
Thus, we see that $L(\beta_1)$ is given by the following (outer) commutative diagram:
\begin{center}
\begin{tikzcd}
{\mathrm{R}H^0(K_U,-)\ind^{P^+}_{K_P}(\omega_{K_P}\otimes_k-)} \arrow[rr, "\eta", Rightarrow] \arrow[rrddd, "\cong", Rightarrow] \arrow[rrddddd, "L(\beta_1)"', Rightarrow] &  & {\mathrm{R}H^0(K_U,-)\ind^{P^+}_{K_P}(\omega_{K_P}\otimes_k-)\Res^{P^+}_{K_P}\ind^{P^+}_{K_P}} \arrow[d, "\cong", Rightarrow]                        \\
                                                                                                                                                                   &  & {\mathrm{R}H^0(K_U,-)\ind^{P^+}_{K_P}\Res^{P^+}_{K_P}(\omega_{P^+}\otimes_k-)\ind^{P^+}_{K_P}} \arrow[dd, "\epsilon\; \ind^{P^+}_{K_P}", Rightarrow] \\
                                                                                                                                                                   &  &                                                                                                                                             \\
                                                                                                                                                                   &  & {\mathrm{R}H^0(K_U,-)(\omega_{P^+}\otimes_k-)\ind^{P^+}_{K_P}} \arrow[dd, "L(A)\ind^{P^+}_{K_P}", Rightarrow]                                        \\
                                                                                                                                                                   &  &                                                                                                                                             \\
                                                                                                                                                                   &  & L_{K_U}\ind^{P^+}_{K_P}                                                                                                                    
\end{tikzcd}
\end{center}
Therefore, it suffices to show that the inner triangle in the above diagram commutes. But that follows by applying $\mathrm{R}H^0(K_U,-)$ to the following commutative diagram (the left square commutes by naturality of the vertical isomorphism whereas the right square commutes by using counit compatibility in Lemma \ref{mate_unit_compatibility} with $L'=L=\ind^{P^+}_{K_P}, R'=R= \Res^{P^+}_{K_P}, F= \omega_{K_P}\otimes_k -, G = \omega_{P^+}\otimes_k -$ and appropriate choices of the relevant categories):
\begin{center}
\begin{tikzcd}
(\omega_{P^+}\otimes_k-)\ind^{P^+}_{K_P} \arrow[r, "\eta", Rightarrow]                                  & (\omega_{P^+}\otimes_k-)\ind^{P^+}_{K_P}\Res^{P^+}_{K_P}\ind^{P^+}_{K_P} \arrow[r, "\epsilon", Rightarrow]                               & (\omega_{P^+}\otimes_k-)\ind^{P^+}_{K_P}                                                                    \\
\ind^{P^+}_{K_P}(\omega_{K_P}\otimes_k-) \arrow[u, "\cong", Rightarrow] \arrow[r, "\eta", Rightarrow]   & \ind^{P^+}_{K_P}(\omega_{K_P}\otimes_k-)\Res^{P^+}_{K_P}\ind^{P^+}_{K_P} \arrow[u, "\cong", Rightarrow] \arrow[r, "\cong", Rightarrow]   & \ind^{P^+}_{K_P}\Res^{P^+}_{K_P}(\omega_{P^+}\otimes_k-)\ind^{P^+}_{K_P} \arrow[u, "\epsilon"', Rightarrow]
\end{tikzcd}
\end{center}
\end{proof}

\begin{lem}
\label{diag1}
The following diagram of functors $D(K_P)\rightarrow D(M^+)$ commutes:
\begin{center}
\begin{tikzcd}
L_{K_U}\ind^{P^+}_{K_P}  &  &  & \mathrm{R}H^0(K_U, \omega_{P^+}\otimes_k -)\ind^{P^+}_{K_P} \arrow[lll, swap, Rightarrow]{ur}{\mathrm{Corollary}\;   3.4.20}[swap]{\cong}  
                \\                   
                          &  &  &                                                                                                                                                             \\
\ind^{M^+}_{K_M}L_{K_U} \arrow[uu,swap, Rightarrow]{ur}{\mathrm{Proposition}\; 3.2.11}[swap]{\cong}                                                                                 &  &  & {\ind^{M^+}_{K_M}\mathrm{R}H^0(K_U, \omega_{K_P}\otimes_k -)} \arrow[uu, swap,Rightarrow]{ur}{\tilde{\gamma_1}}[swap]{\cong} \arrow[lll,swap,Rightarrow]{ur}{\mathrm{Corollary}\; 3.1.8}[swap]{\cong}
\end{tikzcd}
\end{center}
\end{lem}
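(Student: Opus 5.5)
The strategy is to show that both composites are left mates of one and the same natural transformation between right adjoints. Once that is established, the diagram commutes by uniqueness of mates.

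Each of the four arrows is defined by Heyer as a left mate of an explicit isomorphism of right adjoints.
\begin{itemize}
\item Corollary 3.1.8 gives $\ind^{M^+}_{K_M}\mathrm{R}H^0(K_U,\omega_{K_P}\otimes_k-)\Rightarrow\ind^{M^+}_{K_M}L_{K_U}$. It is $\ind^{M^+}_{K_M}$ applied to the left mate of $(\omega_{K_P}\otimes_k-)\Inf^{K_P}_{K_M}\cong F^{K_P}_{K_M}$.
\item Proposition 3.2.11 is the left mate of the base-change isomorphism $\Res^{P^+}_{K_P}\Inf^{P^+}_{M^+}\cong\Inf^{K_P}_{K_M}\Res^{M^+}_{K_M}$.
\item Corollary 3.4.20 is $L(A)\ind^{P^+}_{K_P}$ in the notation of Lemma~\ref{diag0}.
\item $\tilde\gamma_1$ is the twisted version of the isomorphism from Heyer's equation (3.4.14). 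By Remark~\ref{rem1}, its right-adjoint partner is the isomorphism $\Res^{P^+}_{K_P}F^{P^+}_{M^+}\cong F^{K_P}_{K_M}\Res^{M^+}_{K_M}$ of Heyer's Lemma 3.4.13, twisted by $\omega$.
\end{itemize}

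First, use Lemma~\ref{diag0} to replace the top arrow composed with $\tilde\gamma_1$ by a composite that ends in $L(\beta_1)$. Lemma~\ref{diag0} identifies Corollary 3.4.20 precomposed with the $\omega$-commutation isomorphism as $L(\beta_1)$. Here $\beta_1$ is the composite $(\omega_{K_P}\otimes_k-)\Res^{P^+}_{K_P}\Inf^{P^+}_{M^+}\cong\Res^{P^+}_{K_P}F^{P^+}_{M^+}$. The upper-right path thus becomes $L(\beta_1)\circ(\text{untwisting of }\tilde\gamma_1)$. Using Lemma~\ref{mates}, applied to a horizontal pasting of squares with adjunctions $\ind^{M^+}_{K_M}\dashv\Res^{M^+}_{K_M}$ and $\mathrm{R}H^0(K_U,-)\dashv F$, I would rewrite this path as the left mate of the pasted transformation
\[
(\omega_{K_P}\otimes_k-)\Res^{P^+}_{K_P}\Inf^{P^+}_{M^+}\Rightarrow\Res^{P^+}_{K_P}F^{P^+}_{M^+}\Rightarrow F^{K_P}_{K_M}\Res^{M^+}_{K_M}.
\]
The second arrow here is the Lemma 3.4.13 isomorphism. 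Similarly, Lemma~\ref{mates} turns the lower-left path into the left mate of
\[
(\omega_{K_P}\otimes_k-)\Res^{P^+}_{K_P}\Inf^{P^+}_{M^+}\cong(\omega_{K_P}\otimes_k-)\Inf^{K_P}_{K_M}\Res^{M^+}_{K_M}\Rightarrow F^{K_P}_{K_M}\Res^{M^+}_{K_M}.
\]
Here the last arrow is $\Res^{M^+}_{K_M}$ applied to the Corollary 3.1.8 isomorphism.

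It therefore remains to check that these two transformations of right adjoints agree. This is a statement about the explicit functors $F^{P^+}_{M^+}$, $F^{K_P}_{K_M}$ and the characters $\omega_{P^+}$, $\omega_{K_P}=\Res^{P^+}_{K_P}\omega_{P^+}$. I expect this compatibility to be the main obstacle. My first attempt would be to unwind Heyer's construction of Proposition 3.4.18 and Lemma 3.4.13: the isomorphism $(\omega_{P^+}\otimes-)\Inf\cong F^{P^+}_{M^+}$ is constructed by restricting to $K_P$ and using the $K_P$-version from Corollary 3.1.8. If so, the required square is essentially the definition of the Lemma 3.4.13 isomorphism, and the check reduces to naturality of the $\omega$-twisting isomorphisms and of the base-change isomorphism $\Res\Inf\cong\Inf\Res$.

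If this unwinding is too opaque, the fallback is to verify equality on cohomology only. Both sides are natural isomorphisms between functors computed by $\mathrm{R}H^0$, so it would suffice to compare them in degree $-\dim U$ (top cohomology) on injectives. Uniqueness of maps of universal $\delta$-functors, used repeatedly in Section~\ref{expl-formulas-sec}, would then propagate the agreement. Finally, conclude by uniqueness of mates, since the mate bijection is a bijection.
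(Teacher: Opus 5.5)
Your plan is essentially the paper's proof. Both composites are identified with the left mate $L(\delta)$ of $\delta=\tilde\beta\circ\beta_1=\tilde\delta\circ\delta_1$, using Lemma~\ref{diag0} and compatibility of mates with vertical composition; the paper invokes Heyer's Example 2.1.12(i),(ii) for that step and reserves Lemma~\ref{mates} for identifying $L(\tilde\delta)$ with Corollary 3.1.8. The compatibility you flag as the main obstacle is exactly the commutative diagram in the proof of Heyer's Proposition 3.4.18 with $X=\mathbf{1}$, which the paper simply cites, so your first attempt goes through and the cohomological fallback is unnecessary (it would not in any case pin down a natural transformation of functors on $D(K_P)$).
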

\begin{proof}
From the commutative diagram in the proof of \cite[Proposition 3.4.18]{heyer2023left}, we have the commutativity of the following (outer) diagram (we take $X=\mathbf{1}$ in Heyer's diagram):
\begin{center}
\begin{tikzcd}
\Res^{P^+}_{K_P}(\omega_{P^+}\otimes_k-)\Inf^{P^+}_{M^+} \arrow[rr, swap,Rightarrow]{ur}{\cong}[swap]{ \mathrm{Prop}\; 3.4.18}                                                     &  & \Res^{P^+}_{K_P}F^{P^+}_{M^+} \arrow[dd,swap,Rightarrow]{ur}{ \cong \tilde{\beta}}[swap]{\mathrm{Lemma}\; 3.4.13} \\
(\omega_{K_P}\otimes_k-)\Res^{P^+}_{K_P}\Inf^{P^+}_{M^+} \arrow[d,swap, Rightarrow]{ur}{\cong}[swap]{\delta_1} \arrow[u, "\cong", Rightarrow] \arrow[rru, swap,Rightarrow]{ur}{\beta_1}[swap]{\cong} \arrow[rrd, "\cong \delta", Rightarrow] &  &                                                                                                     \\
(\omega_{K_P}\otimes_k-)\Inf^{K_P}_{K_M}\Res^{M^+}_{K_M} \arrow[rr,swap,Rightarrow]{ur}{\mathrm{Corollary}\; 3.1.7} [swap]{\cong \tilde{\delta}}                                  &  & F^{K_P}_{K_M}\Res^{M^+}_{K_M}                                      \end{tikzcd}
\end{center}
Now, the left mate of the natural transformation $\tilde{\beta}:\Res^{P^+}_{K_P}F^{P^+}_{M^+}\xRightarrow[]{\cong} F^{K_P}_{K_M}\Res^{M^+}_{K_M}$ (with $F=G=\Id$) is 
$$\gamma_1: \ind^{M^+}_{K_M}\mathrm{R}H^0(K_U,-)\Rightarrow \mathrm{R}H^0(K_U,-)\ind^{P^+}_{K_P}.$$
Thus, using $\delta=\tilde{\beta}\circ \beta_1$ and \cite[Example 2.1.12(ii)]{heyer2023left}, we get that the left mates of $\delta$ and $\beta_1$ (denoted by $L(\delta)$ and $L(\beta_1)$), taken with $F=\omega_{K_P}\otimes_k-$ and $G=\Id$, are related by the following commutative diagram:
\begin{center}
\begin{tikzcd}
{\mathrm{R}H^0(K_U,-)\ind^{P^+}_{K_P}(\omega_{K_P}\otimes_k-)} \arrow[rr, "L(\beta_1)", Rightarrow]                                                              &  & L_{K_U}\ind^{P^+}_{K_P}                                          \\
{\ind^{M^+}_{K_M}\mathrm{R}H^0(K_U,-)(\omega_{K_P}\otimes_k-)} \arrow[u,swap,Rightarrow]{ur}{\cong}[swap]{ \gamma_1(\omega_{K_P}\otimes_k-)} \arrow[rr, "L(\delta)", Rightarrow] &  & L_{K_U}\ind^{P^+}_{K_P} \arrow[u,swap,Rightarrow]{ur}{\Id}[swap]{\cong}
\end{tikzcd}
\end{center}
By definition, $\tilde{\gamma_1}: \ind^{M^+}_{K_M}\mathrm{R}H^0(K_U,-)(\omega_{K_P}\otimes_k-)\Rightarrow \mathrm{R}H^0(K_U,-)(\omega_{P^+}\otimes_k-)\ind^{P^+}_{K_P}$ is the composition 
\begin{multline*}\ind^{M^+}_{K_M}\mathrm{R}H^0(K_U,-)(\omega_{K_P}\otimes_k-)\xRightarrow[]{\gamma_1(\omega_{K_P}\otimes_k-)}\mathrm{R}H^0(K_U,-)\ind^{P^+}_{K_P}(\omega_{K_P}\otimes_k-)\\
\xRightarrow[]{\cong}\mathrm{R}H^0(K_U,-)(\omega_{P^+}\otimes_k-)\ind^{P^+}_{K_P},
\end{multline*}
hence we get (using Lemma \ref{diag0}) that the following commutes:
\begin{equation}
\label{diagram1}
\begin{tikzcd}
{\ind^{M^+}_{K_M}\mathrm{R}H^0(K_U,-)(\omega_{K_P}\otimes_k-)} \arrow[rr, swap,Rightarrow]{ur}{\cong}[swap]{\tilde{\gamma_1}} \arrow[rrd,swap,Rightarrow]{ur}{L(\delta)}[swap]{\cong} &  & {\mathrm{R}H^0(K_U,-)(\omega_{P^+}\otimes_k-)\ind^{P^+}_{K_P}} \arrow[d,swap,Rightarrow]{ur}{\cong}[swap]{\mathrm{Corollary}\; 3.4.20} \\
                                                                                                                                                  &  & L_{K_U}\ind^{P^+}_{K_P}                                                                                       
\end{tikzcd}
\end{equation}
Now, the left mate of the natural transformation $\Res^{P^+}_{K_P}\Inf^{P^+}_{M^+}\xRightarrow[]{\cong}\Inf^{K_P}_{K_M}\Res^{M^+}_{K_M}$ (with $F=G=\Id$) is
$$\ind^{M^+}_{K_M}L_{K_U}\xRightarrow[]{\cong}L_{K_U}\ind^{P^+}_{K_P}$$
We note that the above is precisely the isomorphism induced by \cite[Proposition 3.2.11]{heyer2023left}.
Thus, using $\delta=\tilde{\delta}\circ \delta_1$ and \cite[Example 2.1.12(i)]{heyer2023left}, we get that the left mates of $\delta$ and $\tilde{\delta}$ (denoted by $L(\delta)$ and $L(\tilde{\delta})$), taken with $F=\omega_{K_P}\otimes_k-$ and $G=\Id$, are related by the following commutative diagram:
\begin{center}
\begin{tikzcd}
{\ind^{M^+}_{K_M}\mathrm{R}H^0(K_U,-)(\omega_{K_P}\otimes_k-)} \arrow[rr, " L(\tilde{\delta})", Rightarrow] \arrow[d,swap,Rightarrow]{ur}{\mathrm{Id}}[swap]{\cong} &  & \ind^{M^+}_{K_M}L_{K_U} \arrow[d,swap,Rightarrow]{ur}{\cong}[swap]{\mathrm{Proposition}\; 3.2.11} \\
{\ind^{M^+}_{K_M}\mathrm{R}H^0(K_U,-)(\omega_{K_P}\otimes_k-)} \arrow[rr, "L(\delta)", Rightarrow]                                                   &  & L_{K_U}\ind^{P^+}_{K_P}                                
\end{tikzcd}
\end{center}
We  now claim that $L(\tilde{\delta})$ is induced from the left mate of the isomorphism in \cite[Corollary 3.1.7]{heyer2023left}, i.e. the isomorphism in \cite[Corollary 3.1.8]{heyer2023left}. This follows from Lemma \ref{mates} by taking $F=\omega_{K_P}\otimes_k-$, $G=\mathrm{Id}$, $H=\mathrm{Id}$, $L_1=L_{K_U}$, $R_1=\Inf^{K_P}_{K_M}$, $L_1'=\mathrm{R}H^0(K_U,-)$, $R_1'=F^{K_P}_{K_M}$, $L_2=L_2'=\ind^{M^+}_{K_M}$, $R_2=R_2'=\Res^{M^+}_{K_M}$ and appropriate choices of the relevant categories. Thus, we get that $L(\delta)$ fits into the following commutative diagram:
\begin{equation}
\label{diagram2}
\begin{tikzcd}
{\ind^{M^+}_{K_M}\mathrm{R}H^0(K_U,-)(\omega_{K_P}\otimes_k-)} \arrow[rr,swap,Rightarrow]{ur}{\cong}[swap]{\mathrm{Corollary}\; 3.1.8} \arrow[rrd,swap,Rightarrow]{ur}{L(\delta)}[swap]{\cong} &  & \ind^{M^+}_{K_M}L_{K_U} \arrow[d,swap,Rightarrow]{ur}{\cong}[swap]{\mathrm{Proposition}\; 3.2.11} \\
                                                                                                                                                     &  & L_{K_U}\ind^{P^+}_{K_P}                                                           
\end{tikzcd}
\end{equation}
Combining diagrams \ref{diagram1} and \ref{diagram2}, we get the required commutative diagram. 
\end{proof}

\begin{lem}
\label{diag2}
The following diagram of functors $D(K_P)\rightarrow D(M)$ commutes (i.e., going anticlockwise around the square below yields the identity functor):
\begin{center}
\begin{tikzcd}
\ind^M_{M^+}L_{K_U}\Res^P_{P^+}\ind^P_{K_P}  \arrow[d,swap, Rightarrow]{ur}{\cong}[swap]{\mathrm{Proposition}\; 3.4.6} &  & \ind^M_{M^+}L_{K_U}\Res^P_{P^+}\ind^P_{P^+}\ind^{P^+}_{K_P} \arrow[ll, "\cong", Rightarrow] \\
L_U\ind^P_{K_P} \arrow[d, swap,Rightarrow]{ur}{\cong}[swap]{ \mathrm{Proposition}\; 3.2.11}                            &  & \ind^M_{M^+}L_{K_U}\ind^{P^+}_{K_P} \arrow[u,"\eta"', Rightarrow]                                    \\
\ind^M_{K_M}L_{K_U} \arrow[rr, "\cong"', Rightarrow]                                                  &  & \ind^M_{M^+}\ind^{M^+}_{K_M}L_{K_U} \arrow[u, swap,Rightarrow]{ur}{\mathrm{Proposition}\; 3.2.11}[swap]{\cong}    \end{tikzcd}
\end{center}
\end{lem}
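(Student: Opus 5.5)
The plan is to reduce the claim to the defining compatibility of the isomorphism of Proposition 3.2.11 in \cite{heyer2023left}, namely that it is the left mate of the canonical isomorphism $\Res\Inf\cong\Inf\Res$. The square in question compares two isomorphisms $\ind^M_{K_M}L_{K_U}\Rightarrow \ind^M_{M^+}L_{K_U}\Res^P_{P^+}\ind^P_{K_P}$. One goes through $L_U\ind^P_{K_P}$ via Propositions 3.2.11 and 3.4.6. The other goes through $\ind^M_{M^+}\ind^{M^+}_{K_M}L_{K_U}\cong\ind^M_{M^+}L_{K_U}\ind^{P^+}_{K_P}$, followed by the unit $\eta$ of $\ind^P_{P^+}\dashv\Res^P_{P^+}$ and the transitivity isomorphism. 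Since all arrows are left adjoints of explicit transformations between right adjoints, I would pass to right adjoints. Taking mates is a bijection compatible with horizontal composition (Lemma \ref{mates}) and with units and counits (Lemma \ref{mate_unit_compatibility}), so it suffices to show that the corresponding square of right adjoints commutes. That square lives among the functors $\Res^P_{K_P}\RInd^P_{P^+}\RInf^{P^+}_{M^+}\Res^M_{M^+}$ and $\RInf^{K_P}_{K_M}\Res^M_{K_M}$, i.e.\ among composites of inflation, restriction and induction.

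First I would write down the right adjoint of each arrow. The right adjoint of the Proposition 3.2.11 isomorphisms is the base-change isomorphism $\Res\,\Inf\cong\Inf\,\Res$, for both $(K_P,K_M)\subset(P,M)$ and $(K_P,K_M)\subset(P^+,M^+)$. The right adjoint of Proposition 3.4.6 is the isomorphism $\Res^P_{P^+}\RInd^P_{P^+}\RInf^{P^+}_{M^+}\Res^M_{M^+}\cong\RInf^{P}_{M}$ restricted appropriately, which Heyer builds from the counit $\epsilon$ of $\Res\dashv\RInd$. The right adjoint of $\eta$ is this counit $\epsilon$. Applying Lemma \ref{mate_unit_compatibility} converts the $\eta$ on the right column into an $\epsilon$, so the right-adjoint diagram becomes a pasting of $\epsilon$ with the restriction/inflation exchange isomorphisms, which is exactly the shape of the diagram in Remark \ref{rem1}.

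Second, I would check that the resulting diagram of right adjoints commutes. At the underived level this is a diagram of explicit functors between categories of smooth representations. It can be verified objectwise: for $X\in\Rep_k(M)$, each composite sends $X$ to $X$ viewed as a $K_P$-representation, and the maps are identities on underlying spaces once $\epsilon$ is evaluation at $1$. Derived functors are obtained from these by Lemma \ref{delta3}-type reasoning, since inflation and restriction are exact and $\RInd$ is computed by injectives. Hence commutativity at the underived level together with the universality of the mate construction gives the derived statement.

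I expect the main obstacle to be bookkeeping: correctly identifying the right adjoint of Heyer's Proposition 3.4.6 isomorphism in terms of $\epsilon$. This requires unwinding its proof, which constructs it from $\ind^M_{M^+}\dashv\Res^M_{M^+}$ and the commutation $\Res^P_{P^+}\RInf^P_M\cong\RInf^{P^+}_{M^+}\Res^M_{M^+}$. I would first try to express it as the mate of exactly that commutation, pasted with the unit of $\ind^P_{P^+}\dashv\Res$. Then Lemma \ref{mates} applied twice, once per horizontal square, should make the two routes identical.
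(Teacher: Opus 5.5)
Your overall plan (pass to right adjoints and check the loop there) is the paper's. However, you misidentify the one ingredient that makes the loop close.

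The right adjoint of Heyer's Proposition~3.4.6 isomorphism is not built from the counit. It is
\[
S_\eta\colon \Res^P_{K_P}\Inf^P_M\xRightarrow{\ \eta\ }\Res^P_{K_P}\RInd^P_{P^+}\Res^P_{P^+}\Inf^P_M\cong\Res^P_{K_P}\RInd^P_{P^+}\Inf^{P^+}_{M^+}\Res^M_{M^+},
\]
built from the unit $\eta\colon \Id_{D(P)}\Rightarrow\RInd^P_{P^+}\Res^P_{P^+}$, which is an isomorphism by Heyer's Lemma~3.4.5(i). The arrow labelled $\eta$ in the square is the unit of $\ind^P_{P^+}\dashv\Res^P_{P^+}$, and its mate is the counit $\epsilon\colon\Res^P_{P^+}\RInd^P_{P^+}\Rightarrow\Id$. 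So the right-adjoint loop contains $\Res^P_{P^+}\eta$ and, later, $\epsilon\Res^P_{P^+}$. The proof is precisely the triangle identity $\epsilon\Res^P_{P^+}\circ\Res^P_{P^+}\eta=\id$; the remaining cells commute by naturality and the definition of the exchange isomorphisms $\Res\Inf\cong\Inf\Res$.

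Your last paragraph proposes writing Proposition~3.4.6 as the mate of the exchange isomorphism ``pasted with the unit of $\ind^P_{P^+}\dashv\Res^P_{P^+}$''. That is essentially a restatement of the lemma (it is the other route around the square), so as stated it is circular. By construction, 3.4.6 is pasted with the counit $\ind^P_{P^+}\Res^P_{P^+}\Rightarrow\Id$ instead. The triangle identity is exactly what converts one description into the other.

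Your second step does not repair this as written. Lemma~\ref{delta3} concerns morphisms of $\delta$-functors $R^iF$ on abelian categories, not natural transformations between functors on unbounded derived categories. A natural transformation of functors $D(M)\to D(K_P)$ is not determined by its components on objects of $\Rep_k(M)$, so ``commutativity at the underived level'' does not by itself give the derived statement. Moreover, your objectwise check (evaluation at $1$ after the map $x\mapsto(p\mapsto px)$) is just the underived triangle identity. Making it rigorous with K-injective resolutions would only reprove the triangle identity for the derived adjunction $\Res^P_{P^+}\dashv\RInd^P_{P^+}$, which holds formally. Replace that step by the triangle identity; no explicit computation is needed.
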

\begin{proof}
By \cite[Proposition 3.4.6]{heyer2023left}, the isomorphism $\ind^{M}_{M^+}L_{K_U}\Res^P_{P^+}\ind^{P}_{K_P}\xRightarrow[]{\cong}L_{U}\ind^P_{K_P}$ is obtained by taking left adjoints of the isomorphism
$$S_\eta:\Res^P_{K_P}\Inf^P_{M}\xRightarrow[\eta]{\cong}\Res^P_{K_P}\RInd^P_{P^+}\Res^P_{P^+}\Inf^P_{M}\xRightarrow[]{\cong}\Res^P_{K_P}\RInd^P_{P^+}\Inf^{P^+}_{M^+}\Res^M_{M^+}$$
where $\eta:\Id_{D(P)}\xRightarrow[]{\cong}\RInd^P_{P^+}\Res^P_{P^+}$ in the first morphism is an isomorphism by \cite[Lemma 3.4.5 (i)]{heyer2023left}. We then have that the following diagram commutes:
$$\begin{tikzcd}[cramped, column sep=small]
	{\Res^{P^+}_{K_P}\Res^P_{P^+}\Inf^P_M} & {\Res^P_{K_P}\Inf^P_M} & {\Res^P_{K_P}\RInd^P_{P^+}\Inf^{P^+}_{M^+}\Res^M_{M^+}} \\
	& {\Res^{P^+}_{K_P}\Res^P_{P^+}\RInd^P_{P^+}\Res^P_{P^+}\Inf^P_M} & {\Res^{P^+}_{K_P}\Res^P_{P^+}\RInd^P_{P^+}\Inf^{P^+}_{M^+}\Res^M_{M^+}} \\
	{\Res^P_{K_P}\Inf^P_M} & {\Res^{P^+}_{K_P}\Res^P_{P^+}\Inf^P_M} & {\Res^{P^+}_{K_P}\Inf^{P^+}_{M^+}\Res^M_{M^+}} \\
	& {\Inf^{K_P}_{K_M}\Res^{M}_{K_M}} & {\Inf^{K_P}_{K_M}\Res^{M^+}_{K_M}\Res^M_{M^+}}
	\arrow[swap, Rightarrow, from=1-1, to=2-2]{ur}{\eta}[swap]{\cong}
	\arrow["{\mathrm{Id}}"', curve={height=12pt}, Rightarrow, from=1-1, to=3-2]
	\arrow["\cong"', Rightarrow, from=1-2, to=1-1]
	\arrow[swap, Rightarrow,  from=1-2, to=1-3]{ur}{S_\eta}[swap]{\cong}
	\arrow["\cong", Rightarrow,  from=1-3, to=2-3]
	\arrow["\cong", Rightarrow,  from=2-2, to=2-3]
	\arrow["\epsilon", Rightarrow,  from=2-2, to=3-2]
	\arrow["\epsilon", Rightarrow,  from=2-3, to=3-3]
	\arrow["\cong"', Rightarrow,  from=3-1, to=3-2]
	\arrow["\cong", Rightarrow,  from=3-2, to=3-3]
	\arrow["\cong", Rightarrow,  from=3-3, to=4-3]
	\arrow["\cong", Rightarrow,  from=4-2, to=3-1]
	\arrow["\cong"', Rightarrow,  from=4-3, to=4-2]
\end{tikzcd}$$
In the above diagram, $\epsilon:\Res^P_{P^+}\RInd^P_{P^+}\Rightarrow \mathrm{Id}$ is the counit of the adjunction. Indeed, the top pentagon, central square and bottom pentagon in the above diagram are easily seen to commute. The left vertical composition of $\eta$ and $\epsilon$ is the identity morphism by the triangle identities for the unit/counit of the adjunction $\Res^P_{P^+}\dashv \RInd^P_{P^+}$. Tracing through the above diagram, we see that going around the square below clockwise starting from $\Res^P_{K_P}\Inf^P_M$ gives the identity functor.
\begin{equation*}
\begin{tikzcd}
	{\Res^P_{K_P}\RInd^P_{P^+}\Inf^{P^+}_{M^+}\Res^M_{M^+}} && {\Res^{P^+}_{K_P}\Res^P_{P^+}\RInd^P_{P^+}\Inf^{P^+}_{M^+}\Res^M_{M^+}} \\
	{\Res^P_{K_P}\Inf^P_M} && {\Res^{P^+}_{K_P}\Inf^{P^+}_{M^+}\Res^M_{M^+}} \\
	{\Inf^{K_P}_{K_M}\Res^{M}_{K_M}} && {\Inf^{K_P}_{K_M}\Res^{M^+}_{K_M}\Res^M_{M^+}}
	\arrow["\cong", Rightarrow, from=1-1, to=1-3]
	\arrow["\epsilon", Rightarrow, from=1-3, to=2-3]
	\arrow[swap, Rightarrow, from=2-1, to=1-1]{ur}{S_\eta}[swap]{\cong}
	\arrow[swap, Rightarrow, from=2-3, to=3-3]{ur}{\cong}[swap]{\mathrm{Proposition}\; 3.2.11}
	\arrow[swap, Rightarrow, from=3-1, to=2-1]{ur}{\mathrm{Proposition}\; 3.2.11}[swap]{\cong}
	\arrow["\cong", Rightarrow, from=3-3, to=3-1]
\end{tikzcd}
\end{equation*}
Observe that the left adjoint of the counit of an adjunction is the unit of the adjunction. Taking left adjoint of each functor in the above square then gives the required commutative square. 
\end{proof}
We now combine the above lemmas to prove Proposition \ref{diag}.

\begin{proof}[Proof of Proposition \ref{diag}]
From the isomorphism in \cite[Corollary 3.4.20]{heyer2023left} and the morphism $\Id\Rightarrow \Res^P_{P^+}\ind^P_{P^+}$, we get a commutative diagram
\begin{center}
\begin{tikzcd}
\ind^M_{M^+}L_{K_U}\Res^P_{P^+}\ind^P_{P^+}\ind^{P^+}_{K_P}  &  &  & \ind^M_{M^+}\mathrm{R}H^0(K_U, \omega_{P^+}\otimes_k -)\Res^P_{P^+}\ind^P_{P^+}\ind^{P^+}_{K_P} \arrow[lll, swap, Rightarrow]{ur}{\mathrm{Corollary}\;   3.4.20}[swap]{\cong}  
                \\                   
                          &  &  &                                                                                                                                                             \\
\ind^{M}_{M^+}L_{K_U}\ind^{P^+}_{K_P} \arrow[uu,swap, Rightarrow]{ur}{}[swap]{\cong}                                                                                 &  &  & {\ind^{M}_{M^+}\mathrm{R}H^0(K_U, \omega_{P^+}\otimes_k -)\ind^{P^+}_{K_P}} \arrow[uu, swap,Rightarrow]{ur}{\tilde{\gamma_2}}[swap]{\cong} \arrow[lll,swap,Rightarrow]{ur}{\mathrm{Corollary}\; 3.4.20}[swap]{\cong}
\end{tikzcd}
\end{center}
Now, applying $\ind^M_{M^+}$ to the commutative diagram in Lemma \ref{diag1} and composing with the above commutative diagram yields the required commutative diagram (using Lemma \ref{diag2}).
\end{proof}

\begin{lem}
We have a natural isomorphism of functors $D(P^+)\rightarrow D(M^+)$
$$(\Res^{P^+}_{M^+}\omega_{P^+}\otimes_k-)\mathrm{R}H^0(K_U,-)\xRightarrow[]{\cong} \mathrm{R}H^0(K_U,-)(\omega_{P^+}\otimes_{k}-).$$ Analogously, we have a natural isomorphism of functors $D(K_P)\rightarrow D(K_M)$
$$(\Res^{K_P}_{K_M}\omega_{K_P}\otimes_k-)\mathrm{R}H^0(K_U,-)\xRightarrow[]{\cong}\mathrm{R}H^0(K_U,-)(\omega_{K_P}\otimes_k -).$$
\end{lem}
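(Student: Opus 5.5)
The plan is to reduce to the derived projection formula: the functor $\mathrm{R}H^0(K_U,-)$ commutes with tensoring by a representation on which $K_U$ acts trivially. Write $\omega\coloneqq\omega_{P^+}$. This is a smooth character of $P^+$ placed in a single cohomological degree, and its restriction to $K_U$ is trivial. Indeed, $K_U$ is pro-$p$, and the character takes values in $k^\times$, which has no nontrivial $p$-power torsion images of a pro-$p$ group. One can also note that $\omega_{P^+}$ is inflated from $M^+$, by its construction in \cite{heyer2023left}.

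First, I would construct the map at the underived level. For $X\in\Rep_k(P^+)$, the map
\[
\omega\otimes_k X^{K_U}\longrightarrow (\omega\otimes_k X)^{K_U},\qquad w\otimes x\mapsto w\otimes x,
\]
is well defined because $K_U$ acts trivially on $\omega$. It is an isomorphism because $\omega$ is one-dimensional, so tensoring with it is just a twist of the action. It is $M^+$-equivariant, with $M^+$ acting through the Hecke action, since the Hecke action on invariants is built from $P^+$-translates and $\omega$ is a $P^+$-character. This gives a natural isomorphism of functors $\Rep_k(P^+)\to\Rep_k(M^+)$,
\[
(\Res^{P^+}_{M^+}\omega\otimes_k-)\circ H^0(K_U,-)\cong H^0(K_U,-)\circ(\omega\otimes_k-).
\]

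Next, I would derive this isomorphism. The functors $\omega\otimes_k-$ on $\Rep_k(P^+)$ and on $\Rep_k(M^+)$ are exact autoequivalences, with inverse $\omega^{-1}\otimes_k-$. An autoequivalence preserves injective objects, and more to the point it preserves whichever class of $H^0(K_U,-)$-acyclic objects is used to compute $\mathrm{R}H^0(K_U,-)$ in \cite{heyer2023left}. Hence both sides compute the right derived functor of the same left exact functor. Concretely, $\mathrm{R}(F\circ E)\cong \mathrm{R}F\circ E$ and $\mathrm{R}(E'\circ F)\cong E'\circ\mathrm{R}F$ for exact equivalences $E,E'$, in the spirit of Lemma \ref{delta3}. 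Applying $\mathrm{R}$ to the underived isomorphism then gives the desired isomorphism on unbounded derived categories. The shift coming from the degree in which $\omega$ sits commutes with everything and causes no trouble.

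The $K_P$-version follows by the identical argument with $\omega_{K_P}=\Res^{P^+}_{K_P}\omega_{P^+}$. Here $K_M$ acts on $K_U$-invariants by conjugation, and the equivariance check is immediate.

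The step I expect to be the main obstacle is making sure the derivation is legitimate for unbounded complexes in $D(P^+)$ when $P^+$ is only a monoid. Specifically, it has to be checked that Heyer's $\mathrm{R}H^0(K_U,-)$ is computed by K-injective resolutions, or by a class preserved under $\omega\otimes_k-$. The first thing I would try is to observe that $\omega\otimes_k-$ is an equivalence of the underlying abelian categories, so it carries K-injective complexes to K-injective complexes. That would settle the issue without further work.
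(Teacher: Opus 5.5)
Your proof is correct, but it takes a different route from the paper.

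\textbf{The paper's route.} For the $P^+$ statement, the paper constructs nothing. It observes that $\omega_{P^+}=\Inf^{P^+}_{M^+}\Res^{P^+}_{M^+}\omega_{P^+}$, because $\omega_P=\delta_P[\dim U]$ and $\delta_P$ is trivial on $U$. It then quotes the projection-formula isomorphism $(-\otimes_k Y)\circ\mathrm{R}H^0(K_U,-)\cong\mathrm{R}H^0(K_U,-)\circ(-\otimes_k\Inf^{P^+}_{M^+}Y)$ from the proof of \cite[Proposition~3.4.18]{heyer2023left}. That isomorphism holds for arbitrary $Y\in D(M^+)$, and the paper specializes it to $Y=\Res^{P^+}_{M^+}\omega_{P^+}$.

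For the $K_P$ statement, the paper instead uses that $K_P$ is a group, so $\mathrm{R}H^0(K_U,-)$ is right adjoint to $\Inf^{K_P}_{K_M}$. It passes to right adjoints in the evident isomorphism $(\omega_{K_P}^{-1}\otimes_k-)\Inf^{K_P}_{K_M}\cong\Inf^{K_P}_{K_M}((\Res^{K_P}_{K_M}\omega_{K_P})^{-1}\otimes_k-)$. This adjunction trick is unavailable over $P^+$ by Heyer's Warning~3.4.10, which explains why the paper treats the two cases asymmetrically.

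\textbf{Your route.} You give the underived identification $\omega\otimes_k X^{K_U}=(\omega\otimes_k X)^{K_U}$, check it against the Hecke action, and then derive it. The derivation uses that $\omega\otimes_k-$ is an exact autoequivalence and therefore preserves K-injective complexes. This handles both cases uniformly and is self-contained; your pro-$p$ argument for triviality of $\omega_{P^+}$ on $K_U$ also works in place of the paper's appeal to $\delta_P$. Your argument relies on $\omega$ being invertible, whereas Heyer's projection formula does not, but invertibility is all that is needed here.

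\textbf{What each route buys.} The paper's isomorphisms are, by construction, Heyer's projection-formula map and a mate respectively. That is exactly the form fed into the later diagram chases (Lemma~\ref{ResFlem}, Corollaries~\ref{cor1} and~\ref{cor2}). If your isomorphism were used in those places, you would first need the routine check that it coincides with Heyer's. For the lemma as stated, which only asserts that some natural isomorphism exists, your argument suffices.
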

\begin{proof} 
By \cite[Lemma A.3]{hoff2025ordinary}, $\omega_{P^+}^{-1}$ exists
even though $P^+$ need not be a group. We also have $\omega_{P^+}=\Res^{P}_{P^+}\omega_P$. Moreover, as $\omega_P=\delta_P[\dim U]$, we have $\omega_{P^+}=\Inf^{P^+}_{M^+}\Res^{P^+}_{M^+}\omega_{P^+}$ (since $\delta_P$ is necessarily trivial on $U$, we know $\omega_{P^+}$ is trivial on $K_U$). The same is true for $\omega_{K_P}$. Thus, the statement for $\omega_{P^+}$ follows from the isomorphism constructed in the proof of \cite[Proposition 3.4.18]{heyer2023left}
$$\mathrm{R}H^0(K_U,-)\circ (- \otimes_k \Inf^{P^+}_{M^+}Y) \xLeftarrow[]{\cong}(-\otimes_k Y)\circ \mathrm{R}H^0(K_U, -)$$ by taking $Y=\Res^{P^+}_{M^+}\omega_{P^+}$. The statement about $\omega_{K_P}$ can be obtained by passing to the right adjoints in the following:
$$(\omega_{K_P}^{-1}\otimes_k-)\Inf^{K_P}_{K_M}\xRightarrow[]{\cong}\Inf^{K_P}_{K_M}(\Res^{K_P}_{K_M}\omega_{K_P}^{-1}\otimes_k -)\xRightarrow[]{\cong}\Inf^{K_P}_{K_M}((\Res^{K_P}_{K_M}\omega_{K_P})^{-1}\otimes_k -).$$ 
\end{proof}
\begin{remark} We note some observations:
\begin{itemize}
\item Heyer notes in \cite[Warning 3.4.10]{heyer2023left} that $\mathrm{R}H^0(K_U, -): D(P^+)\rightarrow D(M^+)$ is not the right adjoint of $\Inf^{P^+}_{M^+}$, so we cannot use the compatibility of $\omega_{P^+}\otimes_k -$ with $\Inf^{P^+}_{M^+}$ and pass to right adjoints to obtain the statement for $\mathrm{R}H^0(K_U, -)$ (unlike what we did for $\omega_{K_P}$).
\item Replacing $\omega_{P^+}$ by $\omega_{P^+}^{-1}$ in the above lemma and passing to right adjoints, we get a natural isomorphism
$$(\omega_{P^+}\otimes_k-)F^{P^+}_{M^+}\xRightarrow[]{\cong} F^{P^+}_{M^+}(\Res^{P^+}_{M^+}\omega_{P^+}\otimes_k-)$$ of functors $D(M^+)\to D(P^+)$. We also obtain an analogous result for $\omega_{K_P}$ and $F^{K_P}_{K_M}$.
\item The equality $\omega_{P^+}=\Res^P_{P^+}\omega_P$ implies an isomorphism $$\Res^P_{P^+}(\omega_{P}^{-1}\otimes_k-)\xRightarrow[]{\cong}(\omega_{P^+}^{-1}\otimes_k-)\Res^P_{P^+}$$ (we noted this earlier to get a statement about $\ind^P_{P^+}$ by passing to left adjoints). Passing to right adjoints then yields a natural isomorphism
$$\RInd^P_{P^+}(\omega_{P^+}\otimes_k-)\xRightarrow[]{\cong}(\omega_P\otimes_k -)\RInd^P_{P^+}.$$
\end{itemize}
\end{remark}

\begin{lem}
\label{epslem}
The following diagram of functors $D(P^+)\rightarrow D(P^+)$ commutes:
\begin{center}
\begin{tikzcd}
	{\Res^P_{P^+}\RInd^P_{P^+}(\omega_{P^+}\otimes_k-)} && {(\omega_{P^+}\otimes_k-)\Res^P_{P^+}\RInd^P_{P^+}} \\
	\\
	& {(\omega_{P^+}\otimes_k-)}
	\arrow["\cong", Rightarrow, from=1-1, to=1-3]
	\arrow["{\epsilon \circ \mathrm{Id}}"', Rightarrow, from=1-1, to=3-2]
	\arrow["{\mathrm{Id}\circ \epsilon}", Rightarrow, from=1-3, to=3-2]
\end{tikzcd}
\end{center}
\end{lem}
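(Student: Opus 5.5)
The plan is to reduce the claim to a general fact about the interaction of a counit with a natural isomorphism that commutes with both adjoints. The horizontal isomorphism
\[
\Res^P_{P^+}\RInd^P_{P^+}(\omega_{P^+}\otimes_k-)\cong(\omega_{P^+}\otimes_k-)\Res^P_{P^+}\RInd^P_{P^+}
\]
is the composite of two isomorphisms from the preceding remark. The first is $\Res^P_{P^+}$ applied to $\RInd^P_{P^+}(\omega_{P^+}\otimes_k-)\cong(\omega_P\otimes_k-)\RInd^P_{P^+}$, which was obtained as the right mate of $\Res^P_{P^+}(\omega_P^{-1}\otimes_k-)\cong(\omega_{P^+}^{-1}\otimes_k-)\Res^P_{P^+}$. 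The second is the canonical isomorphism $\Res^P_{P^+}(\omega_P\otimes_k-)\cong(\omega_{P^+}\otimes_k-)\Res^P_{P^+}$, coming from $\omega_{P^+}=\Res^P_{P^+}\omega_P$ and the monoidality of restriction. I would first write out the right mate explicitly, via the formula for mates in \cite[Proposition 2.1.9]{heyer2023left}, as
\[
\RInd(\omega_{P^+}\otimes-)\xRightarrow{\eta}\RInd\,(\omega_{P^+}\otimes-)\Res\,\RInd\cong\RInd\,\Res\,(\omega_P\otimes-)\RInd\xRightarrow{(\omega_P\otimes-)\RInd\,\epsilon\ \text{wait}}
\]
More precisely, it is obtained by first using $\eta\colon\Id\Rightarrow\RInd\,\Res$ on the $D(P)$ side, then the inverse of the given isomorphism conjugated by $\omega_P^{\pm1}$, and finally $\epsilon$.

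Rather than computing directly, the cleaner route is Lemma~\ref{mate_unit_compatibility}. In the square with $L=L'=\Res^P_{P^+}$, $R=R'=\RInd^P_{P^+}$, $F=\omega_P\otimes_k-$ and $G=\omega_{P^+}\otimes_k-$, take $\alpha\colon\Res(\omega_P\otimes-)\Rightarrow(\omega_{P^+}\otimes-)\Res$ to be the canonical isomorphism and $\beta$ its mate. Using the invertibility of $\omega$ and $\alpha$, one checks that $\beta$ is the inverse of the isomorphism $\RInd(\omega_{P^+}\otimes-)\cong(\omega_P\otimes-)\RInd$ from the remark. The key step is exactly this identification: the remark produced that isomorphism as the mate of $\alpha^{-1}$ twisted by $\omega^{-1}$, and I need it to agree with the inverse of the mate of $\alpha$. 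I would verify this by observing that twisting by $\omega^{\pm1}$ are mutually inverse autoequivalences, so conjugating a square by them turns mates into mates. This is the step I expect to be the main obstacle, since it is pure bookkeeping of conventions.

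Once this identification is in place, the counit square of Lemma~\ref{mate_unit_compatibility} reads
\[
G\epsilon\circ\alpha R=\epsilon' G\circ L'\beta,
\]
as maps $\Res(\omega_P\otimes-)\RInd\Rightarrow(\omega_{P^+}\otimes-)$. Precomposing with the isomorphism $L'\beta^{-1}$ and identifying $\Res(\omega_P\otimes-)\RInd$ with $\Res\,\RInd(\omega_{P^+}\otimes-)$ through $\beta^{-1}$ turns this into the identity $(\Id\circ\epsilon)\circ(\text{horizontal iso})=\epsilon\circ\Id$. That is precisely the commutativity of the triangle in Lemma~\ref{epslem}. The last step is routine and uses only the naturality of $\epsilon$.
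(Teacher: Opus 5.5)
Your proposal is correct and is essentially the paper's proof. The paper also just applies the counit compatibility of Lemma~\ref{mate_unit_compatibility} with $L=L'=\Res^P_{P^+}$, $R=R'=\RInd^P_{P^+}$, $F=\omega_P\otimes_k-$, $G=\omega_{P^+}\otimes_k-$. It leaves implicit the identification you single out, namely that the horizontal isomorphism is $\alpha R\circ L'\beta^{-1}$, i.e.\ that the isomorphism from the preceding remark is the inverse of the mate $\beta$. Your sketch of this via the inverse autoequivalences $\omega^{\pm1}\otimes_k-$ is the right bookkeeping, but you should delete the abandoned, garbled display in your first paragraph.
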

\begin{proof}
This immediately follows by using counit compatibility in Lemma \ref{mate_unit_compatibility} with $L'=L=\Res^{P}_{P^+}$, $R'=R=\RInd^{P}_{P^+}$, $F=\omega_{P}\otimes_k -$, $G=\omega_{P^+}\otimes_k -$ and appropriate choices of the relevant categories.
\end{proof}

\begin{lem}
\label{ResFlem}
The following diagram of functors $D(M^+)\rightarrow D(K_P)$ commutes:
\begin{center}
\begin{tikzcd}
	{\Res^{P^+}_{K_P}F^{P^+}_{M^+}(\Res^{P^+}_{M^+}(\omega_{P^+})\otimes_k-)} && {(\omega_{K_P}\otimes_k-)\Res^{P^+}_{K_P}F^{P^+}_{M^+}} \\
	\\
	{F^{K_P}_{K_M}\Res^{M^+}_{K_M}(\Res^{P^+}_{M^+}(\omega_{P^+})\otimes_k-)} && {(\omega_{K_P}\otimes_k-)F^{K_P}_{K_M}\Res^{M^+}_{K_M}}
	\arrow["\cong"', Rightarrow, from=1-1, to=1-3]
	\arrow[swap, Rightarrow, from=1-1, to=3-1]{ur}
    {\mathrm{Lemma}\; 3.4.13}[swap]{\cong}
	\arrow[swap, Rightarrow, from=1-3, to=3-3]{ur}{\cong}[swap]{\mathrm{Lemma}\; 3.4.13}
	\arrow["\cong", Rightarrow, from=3-1, to=3-3]
\end{tikzcd}
\end{center}
\end{lem}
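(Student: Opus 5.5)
The plan is to reduce the square to the compatibility of mates with units and counits, Lemma \ref{mate_unit_compatibility}, in the same way as Lemma \ref{epslem}. Both vertical arrows come from \cite[Lemma 3.4.13]{heyer2023left}, which identifies the isomorphism $\Res^{P^+}_{K_P}F^{P^+}_{M^+}\cong F^{K_P}_{K_M}\Res^{M^+}_{K_M}$ as the right mate of the left-adjoint isomorphism $\gamma_1$. Both horizontal arrows are right mates of the $\omega$-twisting isomorphisms for $\mathrm{R}H^0(K_U,-)$ from the preceding lemma, composed with $\omega_{K_P}=\Res^{P^+}_{K_P}\omega_{P^+}$. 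Equivalently, they are the twist isomorphisms $(\omega\otimes_k-)F\cong F(\Res\,\omega\otimes_k-)$ noted in the remark.

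I would first rewrite each of the four arrows as a mate of an explicit natural transformation between left adjoints. The left adjoint of the top-left corner is $(\Res\,\omega_{P^+}^{-1}\otimes_k-)\,\mathrm{R}H^0(K_U,-)\ind^{P^+}_{K_P}$, and the other corners have analogous left adjoints. By uniqueness of mates, commutativity of the square of right adjoints is then equivalent to commutativity of the corresponding square of left adjoints, by Lemma \ref{mates} applied to the horizontal pasting. On the left side, the top composite becomes $\gamma_1$ followed by the twist isomorphism for $\mathrm{R}H^0(K_U,-)\ind^{P^+}_{K_P}$. The bottom composite becomes the twist isomorphism for $\mathrm{R}H^0(K_U,-)$ on $K_P$ followed by $\gamma_1$.

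Next I would check that this left-adjoint square commutes. Here $\gamma_1$ is built in \cite[(3.4.14)]{heyer2023left} from the Mackey decomposition of $\Res_{K_U}\ind^{P^+}_{K_P}$ together with Shapiro's lemma. The twist isomorphisms come from the projection formula $\mathrm{R}H^0(K_U,-\otimes_k\Inf Y)\cong \mathrm{R}H^0(K_U,-)\otimes_k Y$ of \cite[Proposition 3.4.18]{heyer2023left}. Because $\omega_{P^+}$ is inflated from $M^+$ and is trivial on $K_U$, tensoring with it commutes with each Mackey summand. It also commutes with Shapiro's isomorphism, since Shapiro is natural in the coefficients and compatible with the projection formula. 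Naturality of each constituent then gives the required commutativity.

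The main obstacle is bookkeeping. We must confirm that the isomorphism $\Res^{P^+}_{K_P}(\omega_{P^+}\otimes-)\cong(\omega_{K_P}\otimes-)\Res^{P^+}_{K_P}$ used for the top arrow is the same one used in defining $\tilde\gamma_1$. We must also handle the fact that $\mathrm{R}H^0(K_U,-)$ on $D(P^+)$ is not a right adjoint of inflation (\cite[Warning 3.4.10]{heyer2023left}), so twisting cannot be transported there purely formally. I would resolve this by working with the explicit construction in the proof of \cite[Proposition 3.4.18]{heyer2023left}, which is natural in $Y$ and in the group. I would then take $Y=\Res^{P^+}_{M^+}\omega_{P^+}$ and restrict along $K_M\subseteq M^+$, where both twists reduce to the same projection-formula isomorphism.
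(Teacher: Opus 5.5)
Your last paragraph points at the right input, but the argument you lay out before it has a gap at its central step. After passing to left adjoints, you propose to show that $\gamma_1$ is compatible with the $\omega$-twists by checking that the projection formula commutes with each Mackey summand of $\Res_{K_U}\ind^{P^+}_{K_P}$ and with Shapiro's isomorphism.

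At best, such a check gives agreement after applying the forgetful functor to $D(k)$, or on cohomology. (The latter is how the paper uses Mackey to identify $H^i(\gamma_1)$ with $\alpha^{-1}$.) Neither shows that two natural isomorphisms of functors $D(K_P)\to D(M^+)$ coincide, because morphisms in $D(M^+)$ are not detected after forgetting the $M^+$-structure. Moreover, that structure on $\mathrm{R}H^0(K_U,-)$ is the Hecke action. It sends the summand indexed by $g$ to the one indexed by $mg$ (Claim~\ref{mplusaction}), so a summand-by-summand argument does not see it at all. Your opening reduction to Lemma~\ref{mate_unit_compatibility} ``as in Lemma~\ref{epslem}'' also does not apply: this square contains no unit or counit.

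What actually closes the argument is the step you defer to ``bookkeeping.'' The claim that Heyer's construction is natural in the group is precisely the content of the lemma, so it cannot be assumed. It is supplied by the commutative diagram in the proof of \cite[Proposition~3.4.18]{heyer2023left}. For arbitrary $X\in D(M^+)$ and arbitrary $Y$, that diagram already relates the projection-formula isomorphisms for $F^{P^+}_{M^+}$ and $F^{K_P}_{K_M}$ through the isomorphisms of \cite[Lemma~3.4.13]{heyer2023left}.

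The paper's proof is exactly this. Take $Y=\Res^{P^+}_{M^+}\omega_{P^+}$. Note that $\Inf^{P^+}_{M^+}Y=\omega_{P^+}$, because $\omega_{P^+}$ is trivial on $K_U$, and that $\Inf^{K_P}_{K_M}\Res^{M^+}_{K_M}Y=\omega_{K_P}$. Then read off the square, naturally in $X$.

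Citing that diagram directly makes the passage to left adjoints, the mates bookkeeping, and the Mackey/Shapiro analysis unnecessary. Without it, your proposal has no valid derived-level verification.
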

\begin{proof}
Say $Y=\Res^{P^+}_{M^+}\omega_{P^+}$, and let $X\in D(M^+)$ be arbitrary. Note that 
$$\Inf^{P^+}_{M^+}(Y)=\Inf^{P^+}_{M^+}\Res^{P^+}_{M^+}(\omega_{P^+})=\omega_{P^+}$$
as $\omega_{P^+}$ is trivial on $K_U$. Also, we have 
$$\Inf^{K_P}_{K_M}\Res^{P^+}_{K_M}(\omega_{P^+})=\Inf^{K_P}_{K_M}\Res^{K_P}_{K_M}(\omega_{K_P})=\omega_{K_P}.$$
Plugging this value of $Y$ in the commutative diagram in the proof of \cite[Proposition 3.4.18]{heyer2023left} we get that the following commutes:
\begin{center}
\begin{tikzcd}[cramped, column sep=small]
	{\omega_{K_P}\otimes_k \Res^{P^+}_{K_P}F^{P^+}_{M^+}(X)\cong \Res^{P^+}_{K_P}(\omega_{P^+}\otimes_k F^{P^+}_{M^+}(X))} & {\Res^{P^+}_{K_P}(F^{P^+}_{M^+}(\Res^{P^+}_{M^+}(\omega_{P^+})\otimes_k X))} \\
	\\
	{\omega_{K_P}\otimes_k F^{K_P}_{K_M}\Res^{M^+}_{K_M}(X)} & {F^{K_P}_{K_M}(\Res^{M^+}_{K_M}(\Res^{P^+}_{M^+}(\omega_{P^+}))\otimes_k \Res^{M^+}_{K_M}(X))}
	\arrow["\cong"', from=1-1, to=1-2]
	\arrow["{\cong \mathrm{Lemma}\; 3.4.13}"', from=1-1, to=3-1]
	\arrow["{\cong \mathrm{Lemma}\; 3.4.13}", from=1-2, to=3-2]
	\arrow["\cong", from=3-1, to=3-2]
\end{tikzcd}
\end{center}
Since $X\in D(M^+)$ is arbitrary in the above diagram, we get the required commutative diagram. 
\end{proof}

\begin{cor}
\label{cor1}
The following diagram of functors $D(M)\rightarrow D(K_P)$ commutes:
\begin{center}
\begin{tikzcd}
	{\Res^P_{K_P}\RInd^P_{P^+}F^{P^+}_{M^+}\Res^M_{M^+}(\Res^P_M\omega_P^{-1}\otimes_k -)} && {(\omega_{K_P}^{-1}\otimes_k -)\Res^P_{K_P}\RInd^P_{P^+}F^{P^+}_{M^+}\Res^M_{M^+}} \\
	\\
	{F^{K_P}_{K_M}\Res^M_{K_M}(\Res^P_M\omega_P^{-1}\otimes_k-)} && {(\omega_{K_P}^{-1}\otimes_k -)F^{K_P}_{K_M}\Res^M_{K_M}}
	\arrow["{ R(\Gamma)\circ \mathrm{Id}}"', Rightarrow, from=1-1, to=3-1]
	\arrow["\cong", Rightarrow, from=1-3, to=1-1]
	\arrow["{\mathrm{Id}\circ R(\Gamma)}", Rightarrow, from=1-3, to=3-3]
	\arrow["\cong"', Rightarrow, from=3-3, to=3-1]
\end{tikzcd}
\end{center}
\end{cor}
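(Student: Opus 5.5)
The plan is to assemble the square of Corollary~\ref{cor1} from the two squares already established: the definition of $R(\Gamma)$ in Remark~\ref{rem1}, and the compatibility of Lemma~\ref{ResFlem} (with $\omega_{P^+}$ replaced by $\omega_{P^+}^{-1}$, equivalently $\omega_{K_P}^{-1}$). By Remark~\ref{rem1}, $R(\Gamma)$ factors as a composite of three pieces. First, the isomorphism $\Res^P_{K_P}\cong\Res^{P^+}_{K_P}\Res^P_{P^+}$. Second, the counit $\epsilon:\Res^P_{P^+}\RInd^P_{P^+}\Rightarrow\Id$. Third, the Lemma~3.4.13 isomorphism $\Res^{P^+}_{K_P}F^{P^+}_{M^+}\cong F^{K_P}_{K_M}\Res^{M^+}_{K_M}$, precomposed with $\Res^M_{M^+}$. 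I would therefore subdivide the big square horizontally into three smaller squares, one for each of these pieces, and check that each commutes. Each horizontal edge is the appropriate ``move $\omega^{-1}\otimes_k-$ past the functor'' isomorphism.

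For the first square, commutativity follows from how the isomorphism $\Res^P_{K_P}(\omega_P^{-1}\otimes-)\cong(\omega_{K_P}^{-1}\otimes-)\Res^P_{K_P}$ is built. It comes from the equalities $\omega_{K_P}=\Res^{P^+}_{K_P}\omega_{P^+}$ and $\omega_{P^+}=\Res^P_{P^+}\omega_P$, and it is compatible with composing restrictions. This is a formal coherence check for restriction, which is strict monoidal.

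For the second square, I would use Lemma~\ref{epslem} with $\omega_{P^+}^{-1}$ in place of $\omega_{P^+}$; its proof goes through verbatim via Lemma~\ref{mate_unit_compatibility}. One first rewrites $\RInd^P_{P^+}F^{P^+}_{M^+}\Res^M_{M^+}(\Res^P_M\omega_P^{-1}\otimes-)$ as $\RInd^P_{P^+}(\omega_{P^+}^{-1}\otimes-)F^{P^+}_{M^+}\Res^M_{M^+}$, using the remark's isomorphism $(\omega\otimes-)F\cong F(\Res\omega\otimes-)$. Then one moves $\omega^{-1}$ out of $\RInd^P_{P^+}$ using the remark's third bullet. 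Naturality of these rewrites in the $F^{P^+}_{M^+}\Res^M_{M^+}$ variable reduces the square to exactly the triangle of Lemma~\ref{epslem}.

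For the third square, I would apply Lemma~\ref{ResFlem} (again with $\omega^{-1}$) precomposed with $\Res^M_{M^+}$. I would also need $\Res^M_{M^+}(\Res^P_M\omega_P^{-1}\otimes-)=\Res^{P^+}_{M^+}\omega_{P^+}^{-1}\otimes\Res^M_{M^+}(-)$ and $\Res^{M^+}_{K_M}\Res^M_{M^+}=\Res^M_{K_M}$.

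The main obstacle I expect is bookkeeping rather than mathematics. One must check that the horizontal isomorphisms obtained piecewise (through $F^{P^+}_{M^+}$, through $\RInd^P_{P^+}$, through restriction) compose to the single horizontal isomorphisms drawn in the statement. This means verifying that all the ways of commuting $\omega^{-1}\otimes-$ past composites are coherent. I would handle this by defining the top horizontal map as that composite. I would then note that every constituent isomorphism is a mate, under Lemma~\ref{mates}, of a restriction-compatibility isomorphism, so the coherence reduces to the obvious coherence of restrictions of characters.
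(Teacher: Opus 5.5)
Your proposal is correct and follows the paper's own argument: you decompose $R(\Gamma)$ via Remark~\ref{rem1} and check compatibility with the twist piece by piece, using Lemma~\ref{epslem} for the counit and Lemma~\ref{ResFlem} for the Lemma~3.4.13 isomorphism. Your extra points (applying both lemmas with $\omega^{-1}$ in place of $\omega$, the trivial restriction square, and the coherence of the composite horizontal isomorphisms) just make explicit what the paper's two-line proof leaves implicit.
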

\begin{proof}
By the description of $R(\Gamma)$ in Remark \ref{rem1}, we essentially only need to show the commutativity of $\omega\otimes_k-$ with the natural transformations 
$$\epsilon: \Res^P_{P^+}\RInd^P_{P^+}\Rightarrow \mathrm{Id} \qquad\text{and}\qquad \Res^{P^+}_{K_P}F^{P^+}_{M^+}\xRightarrow[]{\cong} F^{K_P}_{K_M}\Res^{M^+}_{K_M}.$$ 
The former is done in Lemma \ref{epslem} and the latter in Lemma \ref{ResFlem}.
\end{proof}
\begin{cor}
\label{cor2}
The following diagram of functors $D(K_P)\rightarrow D(M)$ commutes:
\begin{center}
\begin{tikzcd}
	{(\Res^P_M\omega_P\otimes_k-)\ind^M_{K_M}\mathrm{R}H^0(K_U,-)} && {(\Res^P_M\omega_P\otimes_k-)\ind^M_{M^+}\mathrm{R}H^0(K_U,-)\Res^P_{P^+}\ind^{P}_{K_P}} \\
	\\
	{\ind^M_{K_M}\mathrm{R}H^0(K_U,-)(\omega_{K_P}\otimes_k-)} && {\ind^M_{M^+}\mathrm{R}H^0(K_U,-)\Res^P_{P^+}\ind^{P}_{K_P}(\omega_{K_P}\otimes_k-)} \\
	\\
	&& {\ind^M_{M^+}\mathrm{R}H^0(K_U,\omega_{P^+}\otimes_{k}-)\Res^P_{P^+}\ind^P_{K_P}}
	\arrow["{ \mathrm{Id}\circ \Gamma}", Rightarrow, from=1-1, to=1-3]
	\arrow["\cong", Rightarrow, from=1-1, to=3-1]
	\arrow["\cong"', Rightarrow, from=1-3, to=3-3]
	\arrow["{\Gamma\circ \mathrm{Id}}"', Rightarrow, from=3-1, to=3-3]
	\arrow["{\tilde{\Gamma}}"', Rightarrow, from=3-1, to=5-3]
	\arrow["\cong"', Rightarrow, from=5-3, to=3-3]
\end{tikzcd}
\end{center}
\end{cor}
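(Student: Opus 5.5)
The plan is to define $\tilde\Gamma=\tilde\gamma_2\circ\ind^M_{M^+}(\tilde\gamma_1)$ and $\Gamma=\gamma_2\circ\ind^M_{M^+}(\gamma_1)$, and to split the pentagon in Corollary~\ref{cor2} into two pieces. The first is the square comparing $\mathrm{Id}\circ\Gamma$ with $\Gamma\circ\mathrm{Id}$, with vertical maps the isomorphisms of the preceding lemma on commuting $\omega\otimes_k-$ past $\mathrm{R}H^0(K_U,-)$. The second is the triangle expressing $\tilde\Gamma$ as $\Gamma\circ(\omega_{K_P}\otimes_k-)$ followed by moving $\omega$ past $\Res^P_{P^+}\ind^P_{K_P}$ and into $\mathrm{R}H^0(K_U,-)$.

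The triangle is essentially definitional. By construction, $\tilde\gamma_1$ is $\gamma_1(\omega_{K_P}\otimes_k-)$ followed by the isomorphism $\ind^{P^+}_{K_P}(\omega_{K_P}\otimes_k-)\cong(\omega_{P^+}\otimes_k-)\ind^{P^+}_{K_P}$. Likewise, $\tilde\gamma_2$ is $\gamma_2$ conjugated by the analogous isomorphisms for $\ind^P_{P^+}$. The triangle therefore reduces to one compatibility: the unit $\eta:\Id\Rightarrow\Res^P_{P^+}\ind^P_{P^+}$ must intertwine these twisting isomorphisms. This is the unit half of Lemma~\ref{mate_unit_compatibility}, applied with $L=L'=\ind^P_{P^+}$, $R=R'=\Res^P_{P^+}$, $F=\omega_{P^+}\otimes_k-$, $G=\omega_P\otimes_k-$.

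For the square, I would pass to right adjoints, since every functor involved is a left adjoint and $\omega\otimes_k-$ is invertible, so mates determine the diagram. The right adjoint of $\Gamma$ is $R(\Gamma)$, described in Remark~\ref{rem1}, and the right adjoints of the vertical isomorphisms are the twisting isomorphisms by $\omega^{-1}$. The mate of the square is then exactly the square of Corollary~\ref{cor1}. Invoking Lemma~\ref{mates} (horizontal compatibility) converts the commutativity of Corollary~\ref{cor1} back into commutativity of the left-adjoint square, because the bijection of mates is natural and respects composition.

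The main obstacle is bookkeeping in this mate step. I must check that the left mate of the twisting isomorphism $(\omega_{K_P}^{-1}\otimes)\,F^{K_P}_{K_M}\cong F^{K_P}_{K_M}(\Res\,\omega^{-1}\otimes)$ is precisely the isomorphism $(\Res\,\omega_{K_P}\otimes)\mathrm{R}H^0\cong\mathrm{R}H^0(\omega_{K_P}\otimes)$ from the preceding lemma, and not its inverse or a variant differing by a unit or counit. The same check is needed for $\ind^M_{M^+}$ and $\ind^M_{K_M}$. I would settle this first by appealing to the second bullet of the remark after that lemma, which constructs the $F$-side isomorphism by passing to right adjoints from the $\omega^{-1}$ version. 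This makes the two isomorphisms mates by definition.
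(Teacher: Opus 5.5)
Your proposal is correct and follows the paper's proof. The square is the left-adjoint (mate) form of Corollary~\ref{cor1}, and the triangle comes from the definition of $\tilde\Gamma$. Your explicit check that the unit $\eta:\Id\Rightarrow\Res^P_{P^+}\ind^P_{P^+}$ intertwines the twisting isomorphisms, via the unit half of Lemma~\ref{mate_unit_compatibility}, just spells out what the paper calls ``clear from the definition'': that $\tilde\gamma_2\circ\ind^M_{M^+}(\tilde\gamma_1)$ agrees with $\Gamma(\omega_{K_P}\otimes_k-)$ transported along the twisting isomorphisms.
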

\begin{proof}
The topmost square is obtained by passing to left adjoints in Corollary \ref{cor1} and hence is commutative. The fact that the bottom triangle commutes is clear from the definition of $\tilde{\Gamma}$.
\end{proof}

We can now prove the required lemma:
\begin{proof}[Proof of Lemma \ref{lem-kappa&satake}]
We first observe that by Corollary \ref{cor2} the following diagram commutes: 
\begin{center}
\begin{tikzcd}
	{\ind^M_{M^+}\mathrm{R}H^0(K_U,\omega_{P^+}\otimes_k-)\Res^P_{P^+}\ind^P_{K_P}} & {(\Res^P_M\omega_P\otimes_k-)\ind^M_{M^+}\mathrm{R}H^0(K_U,-)\Res^P_{P^+}\ind^P_{K_P}} \\
	\\
	{\ind^M_{K_M}\mathrm{R}H^0(K_U,\omega_{K_P}\otimes_k-)} & {(\Res^P_M\omega_P\otimes_k-)\ind^M_{K_M}\mathrm{R}H^0(K_U,-)}
	\arrow["\cong", Rightarrow, from=1-1, to=1-2]
	\arrow[swap, Rightarrow, from=3-1, to=1-1]{ur}{\cong}[swap]{\tilde{\Gamma}}
	\arrow["\cong"', Rightarrow, from=3-1, to=3-2]
	\arrow[swap, Rightarrow, from=3-2, to=1-2]{ur}{\Id\circ \Gamma}[swap]{\cong}
\end{tikzcd}
\end{center}
Combining the above diagram with that of Proposition \ref{diag} and applying each functor on $V \in \Rep_k(K_P)$ (more precisely, on $\Res^K_{K_P}V$) we get
\begin{center}
\begin{tikzcd}
	{\ind^M_{M^+}L_{K_U}(\Res^P_{P^+}\ind^P_{K_P}V)} && {\Res^P_M\omega_P\otimes_k\ind^M_{M^+}\mathrm{R}H^0(K_U,\Res^P_{P^+}\ind^P_{K_P}V)} \\
	{L_U(\ind^P_{K_P}V)} \\
	{\ind^M_{K_M}L_{K_U}(V)} && {\Res^P_M\omega_P\otimes_k\ind^M_{K_M}\mathrm{R}H^0(K_U,V)}
	\arrow["\cong", from=1-1, to=1-3]
	\arrow["\cong", from=2-1, to=1-1]
	\arrow["\cong"', from=2-1, to=3-1]
	\arrow["\cong"', from=3-1, to=3-3]
	\arrow[swap, from=3-3, to=1-3]{ur}{\Id\otimes_k \Gamma(V)}[swap]{\cong}
\end{tikzcd}
\end{center}
Taking cohomology in degree $i-\dim U$ and using the fact that $\omega_P=\delta_P[\dim U]$ for some smooth character $\delta_P$ of $P$ and that $H^i(\Gamma)=\kappa^{-1}$, we get
\begin{center}
\begin{tikzcd}
	{\ind^M_{M^+}L_{K_U}^{i-\dim U}(\Res^P_{P^+}\ind^P_{K_P}V)} && {\Res^P_M\delta_P\otimes_k\ind^M_{M^+}H^i(K_U,\Res^P_{P^+}\ind^P_{K_P}V)} \\
	{L_U^{i-\dim U}(\ind^P_{K_P}V)} \\
	{\ind^M_{K_M}L_{K_U}^{i-\dim U}(V)} && {\Res^P_M\delta_P\otimes_k\ind^M_{K_M}H^i(K_U,V)}
	\arrow["\cong", from=1-1, to=1-3]
	\arrow["\cong", from=2-1, to=1-1]
	\arrow["\cong"', from=2-1, to=3-1]
	\arrow["\cong"', from=3-1, to=3-3]
	\arrow[swap, from=3-3, to=1-3]{ur}{\Id\otimes_k \kappa^{-1}}[swap]{\cong}
\end{tikzcd}
\end{center}
Since the Satake morphism $\mathcal{S}^i_V(\Phi)$ is induced from the isomorphism 
$$L_U^{i-\dim U}(\ind^P_{K_P}V)\xrightarrow[\mathrm{Proposition}\; 3.2.11]{\cong}\ind^M_{K_M}L_{K_U}^{i-\dim U}(V)$$
and tensoring with a character $\Res^P_M\delta_P$ has no effect on endomorphisms, we then see that the following diagram commutes:
\begin{center}
\begin{tikzcd}
	{\ind^M_{M^+}H^i(K_U, \Res^P_{P^+}\ind^P_{K_P}V)} && {\ind^M_{K_M}H^i(K_U,V)} \\
	\\
	{\ind^M_{M^+}H^i(K_U, \Res^P_{P^+}\ind^P_{K_P}V)} && {\ind^M_{K_M}H^i(K_U,V)}
	\arrow["{\ind^M_{M^+}H^i(K_U,\Res^P_{P^+}\Phi_1)}"', from=1-1, to=3-1]
	\arrow[swap, from=1-3, to=1-1]{ur}{\cong}[swap]{\kappa^{-1}}
	\arrow["{\mathcal{S}^i_V(\Phi)}", from=1-3, to=3-3]
	\arrow[swap, from=3-3, to=3-1]{ur}{\cong}[swap]{\kappa^{-1}}
\end{tikzcd}
\end{center}
\end{proof}

\bibliographystyle{amsalpha}
\bibliography{Satake}
\end{document}